\def\newaliasedtheorem#1[#2]#3{
  \newaliascnt{#1@alt}{#2}
  \newtheorem{#1}[#1@alt]{#3}
  \expandafter\newcommand\csname #1@altname\endcsname{#3}
}
\numberwithin{equation}{section}
\newtheoremstyle{slanted}{\topsep}{\topsep}{\slshape}{}{\bfseries}{.}{.5em}{}
\theoremstyle{plain}
\newtheorem{theorem}{Theorem}[section]
\theoremstyle{definition}
\theoremstyle{remark}
\newcommand{\setR}{\mathbb{R}}
\newcommand{\eps}{\varepsilon}
\let\altphi\phi
\let\phi\varphi
\let\varphi\altphi
\let\altphi\undefined
\DeclareMathOperator{\supp}{supp}
\newcommand{\Ch}{{\sf Ch}}
\DeclareMathOperator{\Lip}{Lip}
\newcommand{\haus}{\mathscr{H}}
\newcommand{\dist}{\mathsf{d}}
\newcommand{\meas}{\mathfrak{m}}
\newcommand{\bist}{\mathsf{b}}
\DeclareMathOperator{\RCD}{RCD}
\DeclareMathOperator{\CD}{CD}
\newfont{\tmpf}{cmsy10 scaled 2500}
\newcommand{\intav}{{\mathop{\int\kern-10pt\rotatebox{0}{\textbf{--}}}}}
\renewcommand{\d}{\mathrm{d}}
\renewcommand{\ }{\text{ }}
\def\<{\langle}
\def\>{\rangle}
\newcommand{\ra}{\rightarrow}
\newcommand{\R}{\setR}
\def\Prod#1#2{\mathop{\prod}\limits_{#1}^{#2}}
\newcommand{\Vol}{\mathop{\mathrm{Vol}}}
\def\esssup#1{\mathop{\mathrm{ess\text{ }sup}}\limits_{#1}}
\def\essinf#1{\mathop{\mathrm{ess\text{ }inf}}\limits_{#1}}
\newcommand{\lip}{\mathop{\mathrm{lip}}}
\newcommand{\h}{\mathrm{h}}
\begin{document}
\title{Sharp gradient estimate, rigidity and almost rigidity of Green functions on non-parabolic $\RCD(0,N)$ spaces}
 
\author{
Shouhei Honda
\thanks{Mathematical Institute, Tohoku University; \url{shouhei.honda.e4@tohoku.ac.jp}}
\,and 	Yuanlin Peng
\thanks{Mathematical Institute, Tohoku University; JSPS Research Fellow; \url{peng.yuanlin.p6@dc.tohoku.ac.jp }} } 
\maketitle
\begin{abstract}
	Inspired by a result in \cite{C12} of Colding, %\cite[Theorem 3.1]{C12}%
	 the present paper studies the Green function $G$ on a non-parabolic $\RCD(0,N)$ space $(X, \dist, \meas)$ for some finite $N>2$. Defining $\bist_x=G(x, \cdot)^{\frac{1}{2-N}}$ for a point $x \in X$, which plays a role of a smoothed distance function from $x$,  %\rev{Let $G_x$ denote the Green function at the pole $x$ and let $b_x:=G_x^\frac{1}{2-N}$.}% 
	 we prove that the gradient $|\nabla \bist_x|$ has the canonical pointwise representative with the sharp upper bound in terms of the $N$-volume density $\nu_x=\lim_{r\to 0^+}\frac{\meas (B_r(x))}{r^N}$ of $\meas$ at $x$;
	 \begin{equation*}
	 |\nabla \bist_x|(y) \le \left(N(N-2)\nu_x\right)^{\frac{1}{N-2}}, \quad \text{for any $y \in X \setminus \{x\}$}.
	 \end{equation*}
	 %(after a suitable regularization) is bounded above by some positive constant dependent only on $N$ and the volume density of $x$; and% 
	 Moreover the rigidity is obtained, namely, the upper bound is attained at a point $y \in X \setminus \{x\}$ %apart from the pole, then 
	 if and only if the space is isomorphic to the $N$-metric measure cone over an $\RCD(N-2, N-1)$ space. %(thus in particular the upper bound is attained at any point $z \in X \setminus \{x\}$)% 
	 In the case when $x$ is an $N$-regular point, the rigidity states an isomorphism to the $N$-dimensional Euclidean space $\mathbb{R}^N$, thus, this extends the result of Colding %\cite[Theorem 3.1]{C12}%
	  to $\RCD(0,N)$ spaces. It is emphasized that the almost rigidities  are also proved, which are new even in the smooth framework. 
	  %Moreover, we also prove the \rev{corresponding} almost rigidity result \del{of $|\nabla b_x|$}, namely if $|\nabla b_x|$ is close to the upper bound at any point apart from $x$, then the space is close to a metric \rev{measure} cone in pointed measured Gromov-Hausdorff sense\del{, where the closeness is dependent only on constants determined by the space and the points $x$ and $z$}.
	\end{abstract}
\tableofcontents
\section{Introduction}
\subsection{Green function and concerned problems in the smooth framework}
In the classical PDE theory, the (positive) Green function $G_x$ at the pole $x$ of the Laplace operator on the $N$-dimensional Euclidean space $\R^N$ is the solution to the heat equation
\begin{equation}
\Delta u=-\delta_x
\end{equation}
as measures, where $\delta_x$ is the Dirac measure at $x$. In the case when $N\geqslant 2$, it is well-known that this equation is solved by
\begin{equation}\label{eq:01}
G_x=\left\{
\begin{aligned}
&-\frac1{2\pi}\log\dist_x,&N=2\\
&\frac{\dist_x^{2-N}}{N(N-2)\omega_N},&N\geqslant 3
\end{aligned}\right.
\end{equation}
where $\dist_x(\cdot)$ is the Euclidean distance function from $x$ and $\omega_N:=\pi^{\frac{N}{2}}\Gamma (\frac{N}{2}+1)^{-1}$ is the volume of a unit ball $B_1(0_N)$ in $\mathbb{R}^N$.

We can also discuss the Green functions for more general classes of spaces along the same line. For instance, it is known that for an $N$-dimensional complete Riemannian manifold $(M^N,g)$ ($N\geqslant 2$) with non-negative Ricci curvature, the existence of the (global) Green function $G$ is equivalent to the following non-parabolic assumption:
\begin{equation}
\int_1^\infty\frac{r}{\Vol B_r(x)}\ \d r<\infty, \quad \forall x\in M^N,
\end{equation}
where $B_r(x)$ denotes the open ball centered at $x$ of radius $r$ with respect to the induced distance $\dist$ by $g$, and $\Vol$ denotes the Riemannian volume measure by $g$.
See \cite{Varopoulos} by Varapoulos for the details. In this case, it is well-known that the following asymptotic behavior for the Green function $G_x$ at the pole $x \in M^N$ holds as $\dist_x\ra 0^+$:
\begin{equation}\label{eq:03}
G_x=\left\{
\begin{aligned}
&-\frac 1{2\pi}\log \dist_x+o(-\log\dist_x), &N=2\\
&\frac{\dist_x^{2-N}}{N(N-2)\omega_N}+o(\dist_x^{2-N}), &N\geqslant 3
\end{aligned}
\right.
\end{equation}
%See for instance {\color{red}reference} for the details.
This fact indicates that in the case when $N\geqslant 3$, the function
\begin{equation}\label{def bist}
\bist_x:=G_x^\frac 1{2-N}
\end{equation}
should be a counterpart of the distance function from $x$ up to a multiplication of a dimensional positive constant;
\begin{equation}
(N(N-2)\omega_N)^\frac{1}{N-2} \dist_x. 
\end{equation}
%where denoting by $\mathscr L^N$ the $N$-dimensional Lebesgue measure with $\omega_N:=\mathscr L^N(B_1(0_N))=\pi^{\frac{N}{2}}\Gamma (\frac{N}{2}+1)^{-1}$. %put
%\begin{equation}\label{sharp constant}
 %\bar{\mathscr C}_N:=(N(N-2)\omega_N)^\frac{1}{N-2}.
%\end{equation}
Colding \cite{C12} proved the sharp gradient estimate for $\bist_x$ and the rigidity as follows.
\begin{theorem}[Theorem 3.1 of \cite{C12}]\label{thm:C12}
	Let  $(M^N,g)$ be an $N$-dimensional ($N \geqslant 3$), complete and non-parabolic Riemannian manifold  with non-negative Ricci curvature and let $x\in M^N$. Then we have the following. 
	\begin{enumerate}
		\item{(Sharp gradient estimate)} We have 
		\begin{equation}\label{sharp constant}
		|\nabla \bist_x|(z)\leqslant  (N(N-2)\omega_N)^\frac{1}{N-2}
		\end{equation} for any $z \in M^N \setminus \{x\}$.
		\item{(Rigidity)} $(M^N,g)$ is isometric to the $N$-dimensional Euclidean space $\mathbb{R}^N$ with $\bist_x=(N(N-2)\omega_N)^\frac{1}{N-2}\dist_x$ if the equality of (\ref{sharp constant}) holds for some $z \in M^N \setminus \{x\}$.
	\end{enumerate}
	\end{theorem}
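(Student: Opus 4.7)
The plan is to analyze the function $b := \bist_x = G_x^{1/(2-N)}$, which is smooth and positive on $M^N\setminus\{x\}$ and behaves like a constant multiple of the distance $\dist_x$ near $x$ thanks to \eqref{eq:03}. In particular the bound \eqref{sharp constant} is equivalent to showing that $|\nabla b|$ is dominated by its limiting value at $x$, namely $(N(N-2)\omega_N)^{1/(N-2)}$. The strategy is to combine Bochner's formula with a PDE for $b$ inherited from the harmonicity of $G_x$, deduce that a function built from $|\nabla b|^2$ is a subsolution to a degenerate elliptic equation, and then conclude by the maximum principle using the boundary behavior at the pole and at infinity.

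First I would derive a PDE for $b$. Since $G_x$ is harmonic on $M^N\setminus\{x\}$ and equals a constant multiple of $b^{2-N}$, a direct computation yields
\[
b\,\Delta b = (N-1)|\nabla b|^2 \quad \text{on } M^N\setminus\{x\}.
\]
Applying Bochner's formula to $b$, using $\mathrm{Ric}\geq 0$, and expanding $\langle \nabla b, \nabla \Delta b\rangle$ via the identity above, one obtains
\[
\tfrac{1}{2}\Delta |\nabla b|^2 \geq |\Hess b|^2 + \frac{N-1}{b}\langle \nabla b,\nabla |\nabla b|^2\rangle - \frac{(N-1)|\nabla b|^4}{b^2}.
\]
I would then invoke the refined Kato inequality for functions with prescribed Laplacian, combined with $\Delta b = (N-1)|\nabla b|^2/b$, to bound $|\Hess b|^2$ from below by a judicious combination of $\bigl|\nabla|\nabla b|\bigr|^2$ and $|\nabla b|^4/b^2$. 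With the right power ansatz — considering, for instance, $|\nabla b|^2$ as a subsolution of the drift-Laplace operator $\Delta - \alpha\, b^{-1}\langle \nabla b,\nabla\cdot\rangle$ for an appropriate constant $\alpha$ — the above inequality can be repackaged as an elliptic maximum principle for $|\nabla b|^2$ on $M^N\setminus\{x\}$.

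Finally I would apply the maximum principle on annular domains $B_R(x)\setminus\overline{B_\varepsilon(x)}$. The asymptotics \eqref{eq:03} give $|\nabla b|(y) \to (N(N-2)\omega_N)^{1/(N-2)}$ as $y\to x$, while Cheng--Yau type gradient estimates for the positive harmonic function $G_x$ combined with non-parabolicity and Bishop--Gromov volume comparison control $|\nabla b|$ at infinity. Letting $\varepsilon\to 0^+$ and $R\to\infty$ yields the pointwise bound \eqref{sharp constant}. For the rigidity, if equality holds at some interior point $z$ then the strong maximum principle forces $|\nabla b|\equiv (N(N-2)\omega_N)^{1/(N-2)}$ on $M^N\setminus\{x\}$; tracing back the inequalities, equality must hold in both Bochner and Kato, so $\mathrm{Ric}(\nabla b,\nabla b)\equiv 0$ and the Hessian of $b$ is proportional to the metric. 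Combined with $b\Delta b = (N-1)|\nabla b|^2$, this forces $b = (N(N-2)\omega_N)^{1/(N-2)}\dist_x$ globally, and a standard cone-splitting argument identifies $(M^N,g)$ isometrically with $\mathbb{R}^N$. The main obstacle will be the second step: packaging Bochner, the refined Kato inequality, and the identity $b\Delta b=(N-1)|\nabla b|^2$ into a single clean elliptic inequality whose only constant extremizer is $(N(N-2)\omega_N)^{1/(N-2)}$, and justifying the comparison on the unbounded end, for which a Phragmén--Lindelöf style barrier argument using the volume growth control is needed.
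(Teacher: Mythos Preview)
Your strategy matches Colding's and the paper's $\RCD$ generalization (Proposition~\ref{prop:subha}, Theorems~\ref{thm:sharp.gradient.estimate} and~\ref{thm:rigidity}; the smooth statement itself is quoted from \cite{C12}). Two packaging points sharpen your outline. First, apply Bochner to $b^2$ rather than $b$: since $\Delta b^2 = 2N|\nabla b|^2$ (Lemma~\ref{lem form}), the dimensional term $(\Delta b^2)^2/N$ in the Bochner inequality already yields both $\mathscr{L}|\nabla b|^2\ge 0$ for $\mathscr{L}=\Delta-2(N-2)b^{-1}\<\nabla b,\nabla\cdot\>$ and the ordinary subharmonicity of $|\nabla b|^2 G_x$, with no refined Kato needed. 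If you run Bochner on $b$ instead, the residual inequality you must verify is precisely $|\Hess(b^2)|^2\ge(\Delta b^2)^2/N$ rewritten, so the passage to $b^2$ is the content, not a convenience; standard refined Kato for $b$ does not supply it. Second, the boundary at infinity is handled not by a Phragm\'en--Lindel\"of barrier but by exploiting the $\Delta$-subharmonicity of $|\nabla b|^2G_x$: one applies the weak maximum principle to $|\nabla b|^2G_x - (C_{\mathrm{opt},x}^2+\eps)G_x - L^2\eps$ on annuli, where $L=\||\nabla b|\|_{L^\infty}$. Since $G_x\to 0$ at infinity (Corollary~\ref{coro:vanish.at.infinity}) and $|\nabla b|$ is merely bounded (this is all your Cheng--Yau input gives, and all that is needed), the outer boundary contribution is $\le 0$ automatically. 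For rigidity, once the strong maximum principle forces $|\nabla b|$ constant, the paper sets $u=b^2/(2\mathscr{C}_N^2\nu_x^{2/(N-2)})$, checks $\Delta u=N$ and $|\nabla\sqrt{2u}|=1$, and invokes the cone rigidity of \cite{GV}; in the smooth case this is exactly your Hessian/cone-splitting endgame.
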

	Note that Colding used the normalized one, $(N(N-2)\omega_N)^\frac{1}{2-N}G_x^\frac 1{2-N}$, as the definition of $\bist_x$. Thus the sharp upper bound in \cite{C12} was exactly $1$ instead of the right-hand-side of (\ref{sharp constant}). See the footnote $4$ in \cite{C12}.
	
In particular the rigidity indicates that $\bist_x$ exactly coincides with $(N(N-2)\omega_N)^\frac{1}{N-2} \dist_x$ if and only if the manifold is Euclidean. %It is also emphasized that the rigidity is justified by the existence of a single point $y \in M^N$.
%here we only need the value of a specific function at a single point to get such a global rigidity result, which is, at least in the author's opinion, intuitively surprising.  
Given this rigidity result, it is natural to ask whether the quantitative almost rigidity result is satisfied or not:
\begin{itemize}
	\item[(Q)]\label{Q} If $|\nabla \bist_x|(y)$ is close to the sharp upper bound $(N(N-2)\omega_N)^\frac{1}{N-2}$ at some point $y \in M^N \setminus \{x\}$, then can we conclude that the manifold is pointed Gromov-Hausdorff (pGH) close to $\mathbb{R}^N$? 
\end{itemize}
It is worth mentioning that
\begin{equation}\label{as shar}
|\nabla \bist_x|(y) \to  (N(N-2)\omega_N)^\frac{1}{N-2}
\end{equation}
whenever $y \to x$. Therefore in order to give a positive answer to the question (Q), we need to find an additional assumption on $y$.

We are now in a position to introduce the first main result of the paper. %where we use a standard notation in this topic, $\Psi$, %which denotes a positive constant depending only on $\eps, N, \tau$ and $r$ satisfying that $\Psi(\eps |N, \tau, r) \to 0$ if $\eps \to 0^+$ for fixed $N, \tau$ and $r$, 
%see Section \ref{sec:2} for the precise definition.
\begin{theorem}[Almost rigidity]\label{thm:smooth almost}
For any integer $N \geqslant 3$, all $0<\eps<1$, $0<r<R$, $1 \leqslant p <\infty$ and $\phi \in L^1([0,\infty), \haus^1)$ there exists $\delta:=\delta(N, \eps, r, R, p, \phi)>0$ such that if 
an $N$-dimensional ($N \geqslant 3$) complete Riemannian manifold with non-negative Ricci curvature $(M^N, g)$ satisfies
\begin{equation}
\frac{s}{\Vol B_s(x)} \leqslant \phi(s),\quad \text{for $\haus^1$-a.e. $s \in [1, \infty)$}
\end{equation}
for some $x \in M^N$ and that
\begin{equation}\label{almost max point}
(N(N-2)\omega_N)^\frac{1}{N-2}-|\nabla \bist_x|(y) \leqslant \delta 
\end{equation}
holds for some $y \in \overline{B}_R(x) \setminus B_r(x)$. Then we have 
%\begin{equation}
%\bar{\mathscr C}_N-|\nabla \bist_x|(z) \leqslant \Psi(\eps |N, \tau, \Lambda), \quad \forall \Lambda \geqslant 1, \quad \forall z \in B_{\Lambda r}(x)
%\end{equation}
%and
\begin{align}\label{r lower}
\dist_{\mathrm{pmGH}}\left( (M^N, \dist, \Vol, x), (\mathbb{R}^N, \dist_{\mathbb{R}^N}, \haus^N, 0_N) \right)<\eps
\end{align}
and 
\begin{align}\label{impsosirbb}
&\left\| \bist_x-(N(N-2)\omega_N)^{\frac{1}{N-2}}\dist_x\right\|_{L^{\infty}(B_R(x))} \nonumber \\
&+ \left\| \bist_x-(N(N-2)\omega_N)^{\frac{1}{N-2}}\dist_x\right\|_{H^{1, p}(B_R(x), \dist, \Vol /\Vol B_R(x))}\leqslant \eps
\end{align}
in particular
\begin{equation}\label{elpest222}
\|(N(N-2)\omega_N)^\frac{1}{N-2}-|\nabla \bist_x| \|_{L^p(B_R(x), \Vol /\Vol B_R(x))} \leqslant \eps
\end{equation}
where $\dist_{\mathrm{pmGH}}$ denotes any fixed distance metrizing the pointed measured Gromov-Hausdorff (pmGH) convergence. %Furthermore for all $1\leqslant p<\infty$ and $R \geqslant 1$, we have
%where $\meas_{x, R}:= \meas/\meas (B_R(x))$ and $\Psi=\Psi(\eps |N, \tau , r, R, p)$.
\end{theorem}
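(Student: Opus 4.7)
The plan is a contradiction-and-compactness argument built on the rigidity statement announced in the abstract together with a stability theory for Green functions under pointed measured Gromov--Hausdorff convergence (presumably developed in the preceding sections of the paper). Suppose the conclusion fails. Then, for fixed $N, \eps, r, R, p, \phi$, there exist smooth sequences $(M^N_i, g_i, x_i)$ satisfying the hypothesized $\phi$-decay and points $y_i \in \overline{B}_R(x_i) \setminus B_r(x_i)$ with
\[
(N(N-2)\omega_N)^{\frac{1}{N-2}} - |\nabla \bist_{x_i}|(y_i) \to 0,
\]
but for which one of (\ref{r lower}), (\ref{impsosirbb}), (\ref{elpest222}) fails with gap $\eps$. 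By Gromov precompactness under non-negative Ricci curvature, a subsequence converges to a pointed $\RCD(0,N)$ space $(X_\infty, \dist_\infty, \meas_\infty, x_\infty)$, with $y_i \to y_\infty \in \overline{B}_R(x_\infty) \setminus B_r(x_\infty)$. The pointwise $\phi$-bound passes to the limit (via volume convergence and Fatou), so $X_\infty$ remains non-parabolic and admits a Green function $G_{x_\infty}$.

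The next step is to invoke the stability of the Green function: $\bist_{x_i} \to \bist_{x_\infty}$ locally uniformly on $X_\infty \setminus \{x_\infty\}$, and $|\nabla \bist_{x_i}|(y_i) \to |\nabla \bist_{x_\infty}|(y_\infty)$ (this is the heart of the compactness argument and the step I expect to be the main obstacle). Combining this with the sharp gradient estimate from the abstract gives
\[
(N(N-2)\omega_N)^{\frac{1}{N-2}} \leqslant |\nabla \bist_{x_\infty}|(y_\infty) \leqslant (N(N-2)\nu_{x_\infty})^{\frac{1}{N-2}}.
\]
The Bishop--Gromov inequality on $\RCD(0,N)$ spaces yields $\nu_{x_\infty} \leqslant \omega_N$, so in fact $\nu_{x_\infty} = \omega_N$ and equality holds in the sharp gradient estimate at $y_\infty$.

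The rigidity part of the main theorem forces $(X_\infty, \dist_\infty, \meas_\infty)$ to be an $N$-metric measure cone over some $\RCD(N-2, N-1)$ space $Y$ with apex $x_\infty$; the equality $\nu_{x_\infty} = \omega_N$ combined with Bishop--Gromov rigidity for cones identifies the link $Y$ with the round $(N-1)$-sphere, and hence $X_\infty$ is isomorphic to $(\mathbb{R}^N, \dist_{\mathbb{R}^N}, \haus^N, 0_N)$. This contradicts the failure of (\ref{r lower}).

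For (\ref{impsosirbb}) and (\ref{elpest222}), once pmGH convergence to $\mathbb{R}^N$ is known, the rigidity identifies the limit via $\bist_{x_\infty} = (N(N-2)\omega_N)^{\frac{1}{N-2}} \dist_{x_\infty}$ identically on $X_\infty$. Locally uniform convergence of $\bist_{x_i}$ gives the $L^\infty$ bound; the $H^{1,p}$ control uses Mosco-type convergence of the gradients (where one exploits the universal $L^\infty$ upper bound $|\nabla \bist_{x_i}| \leqslant (N(N-2)\omega_N)^{\frac{1}{N-2}}$ together with $|\nabla \bist_{x_\infty}| \equiv (N(N-2)\omega_N)^{\frac{1}{N-2}}$ on $\mathbb{R}^N$, so that $L^2$-weak plus norm convergence to a constant upgrades to $L^p$ convergence via dominated convergence). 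The principal difficulty throughout is securing the aforementioned Green-function stability and, in particular, the $H^{1,p}_{\loc}$ convergence of $\bist$ under pmGH limits, which requires careful use of the $\phi$-bound to prevent mass escape of $G$ at infinity and to control its behavior near the pole.
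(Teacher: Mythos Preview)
Your overall architecture---contradiction, Gromov compactness, pass to a non-parabolic $\RCD(0,N)$ limit, invoke rigidity---matches the paper's. The real issue is the step you yourself flag: the claim that $|\nabla \bist_{x_i}|(y_i)\to|\nabla \bist_{x_\infty}|(y_\infty)$. Even with locally uniform and $W^{1,p}_{\loc}$-strong convergence of $\bist_{x_i}$ (which the paper does prove, under the $\phi$-bound), pointwise convergence of $|\nabla \bist_{x_i}|$ at a \emph{specific} point does not follow; $|\nabla\bist_x|$ is only an upper semicontinuous representative, and there is no elliptic regularity strong enough to force trace convergence at a single point. So as written the argument stalls exactly where you predicted.

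The paper's fix does not try to prove pointwise convergence at all. Instead it exploits that $|\nabla\bist_{x_i}|^2$ is $\mathscr L$-subharmonic (for the drifted operator $\mathscr L=\Delta+2\langle\nabla\log G_{x_i},\nabla\cdot\rangle$), hence $u_i:=\mathscr C_N^2\nu_{x_i}^{2/(N-2)}-|\nabla\bist_{x_i}|^2\geqslant 0$ is $\mathscr L$-superharmonic. The weak Harnack inequality for $\mathscr L$-superharmonic functions then converts the pointwise smallness $u_i(y_i)\to 0$ into integral smallness $\intav_{B_s(y_i)}u_i\,\d\meas_i\to 0$ on a fixed ball away from $x_i$. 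This integral quantity \emph{does} pass to the limit via the $W^{1,2}$-strong convergence of $\bist_{x_i}$, yielding $|\nabla\bist_{x_\infty}|\equiv\mathscr C_N\nu_{x_\infty}^{1/(N-2)}$ $\meas$-a.e.\ on a ball, and simultaneously $\nu_{x_i}\to\nu_{x_\infty}$. Rigidity then applies. For the smooth case one has $\nu_{x_i}=\omega_N$, and the non-collapsing (hence $\meas_\infty=\haus^N$) together with $\nu_{x_\infty}=\omega_N$ forces the cone to be $\mathbb{R}^N$, exactly as you argue. The estimates (\ref{impsosirbb}), (\ref{elpest222}) then follow from the $W^{1,p}$-strong convergence already established, without needing a separate Mosco argument.
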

%Let us give two comments about the theorem above;
%Since
%\begin{equation}
%\bar{\mathscr C}_N-|\nabla \bist_x|(y) \to 0
%\end{equation}
%whenever $y \to x$ on any non-parabolic manifold with non-negative Ricci curvature (see \cite{C12}), 
%Let us emphasize that $R$ in the theorem above does not play a role in order to get only (\ref{r lower}) (namely taking $R=1$ is enough).
As explained around (\ref{as shar}),  the lower bound $r$ in Theorem \ref{almost max point} cannot be dropped in order to get (\ref{r lower}).
On the other hand, it is known that if the asymptotic $N$-volume $V_{M^N}$ defined by
\begin{equation}
V_{M^N}:=\lim_{R \to \infty}\frac{\Vol B_R(x)}{R^N} (\leqslant \omega_N)
\end{equation}
is close to $\omega_N$, then $(M^N, \dist, \Vol, x)$ is pmGH close to $(\mathbb{R}^N, \dist_{\mathbb{R}^N}, \haus^N, 0_N)$, quantitatively. See \cite{C97} by Colding. Note that the converse statement is not true even in the case when the metric is Ricci flat with the maximal volume growth (namely $V_{M^N}>0$). 

In connection with this observation, it is natrual to ask whether the conclusion (\ref{r lower}) in the theorem above can be improved to be that $V_{M^N}$ is close to $\omega_N$, or not. However a simple blow-up argument on a fixed manifold which is not isometric to $\mathbb{R}^N$ allows us to conclude that the desired improvement is impossible, see also Remark \ref{rem:EH}. 

As another possible improvement in the theorem above, it is also natural to ask whether the case when $p=\infty$ in (\ref{elpest222}) is satisfied or not, namely
\begin{equation}\label{elinftyest}
(N(N-2)\omega_N)^\frac{1}{N-2}-|\nabla \bist_x| \leqslant \eps,\quad \text{on $B_R(x)$?}
\end{equation}
However we can also see that this improvement  is impossible (thus the improvement of (\ref{impsosirbb}) to the case when $p=\infty$ is also impossible) via Gromov-Hausdorff limits. See subsection \ref{sharp obs}.

%Under (\ref{almost max point}), we can find infinitely many points $y_i \in M^N\setminus \{x\}$ such that 
%\begin{equation}\label{almost max divergent point}
%C_N-|\nabla \bist_x|(y_i) \le \Psi, \quad \forall i,
%\end{equation}
%where $\Psi = \Psi(\eps |N, V, r, R)$.
%However in general $y_i$ are not divergent from $x$, namely (\ref{divergent}) may not hold. Such an example can be found in, for instance, a Eguchi-Hanson metric. See Remark \ref{rem:EH}.
%The above show that Theorem \ref{thm:smooth almost} are sharp (see also Remark \ref{final rem}).

%A main purpose of this paper is to
%give a positive answer to the question above (Q). The key ingredient is to extend Theorem \ref{thm:C12} to a more general non-smooth framework, so-called $\RCD$ spaces, then we will be able to realize the desired almost rigidity result via the sequencial compactness of $\RCD$ spaces with respect to the pointed measured Gromov-Hausdorff (pmGH) convergence.  %It is emphasized that the pGH/pmGH convergence topologies can be metrizable by $\dist_{\mathrm{pGH}}$/$\dist_{\mathrm{pmGH}}$, respectively. 
%Let us give the precise statements specializing manifolds as follows. See Section \ref{sec:2} for the notation $\Psi$ and recall the definition of the asymptotic $N$-volume $V_{M^N}$:
%\begin{equation}
%V_{M^N}:=\lim_{R \to \infty}\frac{\Vol B_R(x)}{R^N} \leqslant \omega_N,
%\end{equation}
%where the inequality above is a direct consequene of the Bishop-Gromov inequality. %We say $(M^N, g)$ has the {\textit{maximal volume growth} if $V_{M^N}>0$.
The obsevation above allows us to say that Theorem \ref{thm:smooth almost} is sharp.
Finally let us introduce an immediate corollary.
\begin{corollary}\label{cor main}
For any integer $N \geqslant 3$, all $0<\eps<1$ and $v>0$ there exists $\delta:=\delta (N, \eps, v)>0$ such that
if an $N$-dimensional ($N \geqslant 3$) complete Riemannian manifold with non-negative Ricci curvature $(M^N, g)$ satisfies
$V_{M^N} \geqslant v$ and (\ref{almost max point}) for some sequence $y_i \in M^N(i=1,2,\ldots)$ with $\dist(x, y_i)\to \infty$,
then 
\begin{equation}
\left| V_{M^N}- \omega_N \right| \leqslant \eps
\end{equation}
In particular, in addition, if $\eps$ is sufficiently small depending only on $N$ and $v$, then $M^N$ is diffeomorphic to $\mathbb{R}^N$.
\end{corollary}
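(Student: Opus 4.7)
The plan is to reduce the corollary to Theorem~\ref{thm:smooth almost} by rescaling $(M^N, g)$ at the scale $r_i := \dist(x, y_i) \to \infty$, exploiting the scale-invariance of $|\nabla \bist_x|$.

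First I would verify that under $g \mapsto \tilde g := \lambda^2 g$, the equation $\Delta G_x = -\delta_x$, together with $\tilde\Delta = \lambda^{-2}\Delta$ and $\di\Vol_{\tilde g} = \lambda^N \di\Vol_g$, forces $\tilde G_x = \lambda^{2-N} G_x$; hence $\tilde \bist_x = \lambda\,\bist_x$ scales like a distance and $|\nabla \tilde \bist_x|_{\tilde g} = |\nabla \bist_x|_g$ is scale-invariant. Setting $\tilde g_i := r_i^{-2} g$, the point $y_i$ then satisfies $\tilde\dist_i(x, y_i) = 1$, and the almost-maximality (\ref{almost max point}) continues to hold on $(M^N, \tilde g_i)$ at $y_i$ with the same $\delta$.

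Second, the assumption $V_{M^N} \geqslant v$ and Bishop-Gromov monotonicity yield $\Vol B_s(x) \geqslant v s^N$ for every $s>0$, a condition invariant under rescaling. I therefore fix once and for all a $\phi \in L^1([0,\infty),\haus^1)$ with $\phi(s) \geqslant v^{-1} s^{1-N}$ for $s \geqslant 1$ (possible since $N \geqslant 3$), so that $s/\Vol_{\tilde g_i}(B_s^{\tilde g_i}(x)) \leqslant \phi(s)$ for $\haus^1$-a.e.\ $s \in [1,\infty)$ and every $i$.

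Third, I apply Theorem~\ref{thm:smooth almost} to each $(M^N, \tilde g_i, x)$ with $r = 1/2$, $R = 2$, $p = 1$, the $\phi$ above and any target accuracy $\eps' > 0$. This produces $\delta' = \delta'(N,\eps',v) > 0$ such that if $\delta \leqslant \delta'$ then $(M^N, \tilde\dist_i, \Vol_{\tilde g_i}, x)$ lies within pmGH-distance $\eps'$ of $(\mathbb{R}^N, \dist_{\mathbb{R}^N}, \haus^N, 0_N)$. Continuity of ball volumes under pmGH convergence with nonnegative Ricci (Colding's volume convergence theorem) then forces $r_i^{-N}\Vol B_{r_i}(x) = \Vol_{\tilde g_i}(B_1^{\tilde g_i}(x))$ to lie within some $\eta(\eps')$ of $\omega_N$, with $\eta(\eps') \to 0$ as $\eps' \to 0$. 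Since by Bishop-Gromov $r \mapsto r^{-N}\Vol B_r(x)$ is non-increasing with limit $V_{M^N}$, sending $i \to \infty$ yields $|V_{M^N} - \omega_N| \leqslant \eta(\eps')$; picking $\eps'$ so that $\eta(\eps') \leqslant \eps$ fixes the required $\delta = \delta(N,\eps,v)$.

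For the diffeomorphism assertion, once $|V_{M^N} - \omega_N|$ is small enough depending only on $N$ (which we arrange by requiring $\eps$ small in terms of $N$ and $v$), the classical volume-stability theory for manifolds with nonnegative Ricci curvature (Cheeger-Colding, or Perelman's stability theorem) yields $M^N$ diffeomorphic to $\mathbb{R}^N$. The main obstacle I anticipate is essentially bookkeeping: confirming scale-invariance of $|\nabla \bist_x|$ in the weak formulation used here and verifying volume convergence of unit balls under the precise pmGH convergence delivered by Theorem~\ref{thm:smooth almost}; both should be standard.
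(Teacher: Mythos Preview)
Your proposal is correct and follows essentially the same route as the paper: rescale by $r_i=\dist(x,y_i)$, use $V_{M^N}\geqslant v$ to verify the integral hypothesis on $s/\Vol B_s(x)$ uniformly in $i$, apply the almost rigidity theorem at unit scale to get pmGH closeness to $\mathbb{R}^N$, and conclude via volume convergence and Cheeger--Colding stability. The paper carries this out in the $\RCD$ setting (Theorem~\ref{thm:almost.rigidity5}), passing through the tangent cone at infinity rather than the rescaled space directly, but the argument is the same in substance.
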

Note that the existence of such sequence $y_i$ in the corollary above cannot be replaced by the existence of only one point which is far from $x$.  See Remark \ref{final rem}.

%In connection with this corollary, it is also natural to ask what can we say about the global behavior of $|\nabla \bist_x|$ if $V_{M^N}$ is close to $\omega_N$.
%The following gives an answer to this question.
%\begin{theorem}
%If an $N$-dimensional complete Riemannian manifold $(M^N, g)$ with non-negative Ricci curvature satisfies $V_{M^N}\geqslant \omega_N-\eps$
%for some $\eps \in (0, 1)$,
%then 
%\begin{equation}
%\bar{\mathscr C}_N-|\nabla \bist_x|(y) \le \Psi(\eps |N), \quad \forall y \in M^N \setminus \{x\}.
%\end{equation}
%\end{theorem}
The results above are justified via a non-smooth geometric analysis with Ricci curvature bounded below. Moreover the results above are generalized to such a non-smooth framework, so-called $\RCD$ spaces.
In the next section let us provide a brief introduction on $\RCD$ spaces.
%the $\RCD$ theory naively and the difficulties to extend the results of Colding above (Theorem \ref{thm:C12}) to the $\RCD$ framework. 
%The purpose of this paper is to extend \cite[Theorem 3.1]{C12} and give an answer to the question above in the non-smooth framework of $\RCD(0,N)$ spaces. 
\subsection{Non-smooth space with Ricci curvature bounded below; $\RCD$ spaces}
In the first decade of this century, Lott-Villani \cite{LottVillani} and Sturm \cite{Sturm06,Sturm06b} introduced the notion of $\CD(K,N)$ spaces independently as a concept of metric measure spaces with Ricci curvature bounded below by $K\in\R$ and dimension bounded above by $N\in[1,\infty]$ in some synthetic sense via the optimal transportation theory. For instance, in the case when $N$ is an integer,  $\mathbb{R}^N$ with any norm and the Lebesgue measure $\mathscr L^N$ satisfies the $\CD(0, N)$ condition.  Note that this is not ``Riemannian'' whenever the norm does not come from an inner product and that Gigli found a ``Riemannian'' notion on general metric measure spaces, so-called infinitesimally Hilbertianity,  in \cite{Gigli13}, which allows us to meet the Dirichlet form theory from the  metric measure geometry. It is worth mentioning that $N$ is not necessarily to be an integer in general.

{
After a pioneer work of Gigli-Kuwada-Ohta \cite{GigliKuwadaOhta} on Alexandrov spaces, Ambrosio-Gigli-Savar\'e (in the case when $N=\infty$) and Gigli (in the case when $N<\infty$) introduced $\RCD(K, N)$ spaces (or $\RCD$ spaces for short) by adding the infinitesimally Hilbertianity to the $\CD$ condition. It is known that $\RCD$ spaces include 	
weighted Riemannian manifolds with Bakry-\'Emery Ricci curvature bounded below, Ricci limit spaces, and Alexandrov spaces \cite{Petrunin, ZhangZhu} by Petrunin and Zhang-Zhu. The study is hugely developed, see for instance \cite{A, GIGLI} as nice surveys.

%are typical non-trivial examples of $\RCD$ spaces. That is, consider a \rev{complete} Riemannian manifold $(M^n,g)$ assigned with the \rev{smoothly} weighted measure $e^{-f}\Vol_g$. \rev{Then}
%\begin{equation}
%\mathrm{Ric}_g+\Hess_f-\frac{\d f\otimes\d f}{N-n}\geqslant Kg
%\end{equation}
%holds for some $N\in \R_{>n}$ and $K\in\R$, \rev{if and only if} the metric measure space $(M^n,\dist_g,e^{-f}\Vol_g)$ is an $\RCD(K,N)$ space. Besides Riemannian manifolds, examples of $\RCD$ spaces also include some well-studied classes of metric spaces, such as Ricci limits (see \cite{Sturm06}) and Alexandrov spaces with sectional curvature bounded below (see \cite{Petrunin} \rev{and \cite{ZhangZhu}}). 	
	
%Since the concept of $\RCD$ spaces by nature is the a generalization of Riemannian manifolds with Ricci curvature bounded from below, and note that the results about Green functions of manifolds we mentioned above require the non-negativity of Ricci curvature, it is natural to ask whether these results can also be verified for $\RCD(0,N)$ spaces. 
%To be more precise, we mainly consider  $\RCD(0,N)$ spaces, because such a subclass of $\RCD(0,N)$ spaces has finer volume comparison properties. See Section \ref{subsec:2.2} for the details.

As explained in the previous subsection, we will mainly discuss an $\RCD(0, N)$ space  $(X,\dist,\meas)$ for some finite $N >2$ satisfying the non-parabolic assumption:
\begin{equation}\label{eq:a}
\int_1^\infty \frac{r}{\meas(B_r(x))}\ \d\meas<\infty, \quad \forall x\in X.
\end{equation}
Then, as in the smooth case, the global Green function $G=G^X$ can be defined by the integration of the heat kernel $p(x,y,t)$:
\begin{equation}
G(x,y):=\int_0^\infty p(x,y,t)\ \d t,
\end{equation}
and it is proved in \cite{BrueSemola} by Bru\`e-Semola that $G$ is well-defined with $G(x,\cdot)\in W^{1,1}_{\mathrm{loc}}(X,\dist,\meas)$ for any $x\in X$. %This allows us to study the Green function on non-parabolic $\RCD(0,N)$ spaces. 
A typical example of $\RCD(0, N)$ spaces is
\begin{equation}\label{metric meas}
\left([0,\infty), \dist_{\mathrm{Euc}}, r^{N-1}dr\right)
\end{equation}
whose Green function $G$ satisfies the following expression from the pole/origin $0$;
\begin{equation}\label{s7shsnsbbsys6}
G(0, r)=\frac{1}{N-2}r^{2-N}.
\end{equation}
See Proposition \ref{explicit green}. It is worth mentioning that (\ref{metric meas}) is the $N$-metric measure cone over a single point (Definition \ref{def:cone}).
\subsection{Main results and organization of this paper}
In order to introduce main results of this paper, fix an $\RCD(0,N)$ space $(X,\dist,\meas)$ for some finite $N >2$ satisfying the non-parabolic assumption \eqref{eq:a}. Moreover we also fix a point $x\in X$ whose $N$-volume density $\nu_x$ is finite;
\begin{equation}\label{eq:67}
\nu_x:=\lim_{r\ra 0}\frac{\meas(B_r(x))}{r^N}\in(0,\infty),
\end{equation}
where the positivity is a direct consequence of the Bishop-Gromov inequality. 
\begin{remark}
The origin of the $\RCD(0, N)$ space (\ref{metric meas}) satisfies (\ref{eq:67}), more generaly (\ref{eq:67}) is satisfied at the pole of any $N$-metric measure cone over an $\RCD(N-2, N-1)$ space. %(note that (\ref{metric meas})  is indeed the $N$-metric measure cone over a single point).
It is worth mentioning that (\ref{eq:67}) is also satisfied at any point if the space is non-collapsed, namely $\meas=\haus^N$ because of the Bishop inequality, where $\haus^N$ denotes the $N$-dimensional Hausdorff measure (see Definition \ref{ncrcddef}).
\end{remark}

%\rev{Please be aware that the existence of $x$ satisfying assumption \eqref{eq:67} does not imply the space is non-collapsed, i.e., the measure $\meas$ is not necessarily the $N$-dimensional Hausdorff measure $\haus^N$. Our results apply for general $\RCD(0,N)$ spaces.}
Defining $\bist_x$ by (\ref{def bist}) in this setting, let us ask whether a similar rigidity result as in Theorem \ref{thm:C12} is justified even in this setting, or not. The main difficulty to realize this consist of two parts;
\begin{itemize}
\item a priori, $|\nabla \bist_x|$ makes only $\meas$-a.e. sense;
\item as observed in (\ref{s7shsnsbbsys6}), in general, the constancy of $|\nabla \bist_x|$ does not imply an isomorphism to a Euclidean space.
\end{itemize}

The first main result in this setting are stated as follows, which allow us to overcome the first issue above.
%After providing some necessary preliminaries in Section \ref{sec:2}, we first establish the following gradient estimate of $b_x$ which generalizes the first conclusion of Theorem \ref{thm:C12} to the case of $\RCD(0,N)$ spaces in Section \ref{sec:4}.
\begin{theorem}[Pointwise properties on $|\nabla \bist_x|$; Theorems \ref{upp reg} and \ref{thm:sharp.gradient.estimate}]\label{thm:1.1}
We have the following.
\begin{enumerate}
\item{(Canonical representative of $|\nabla \bist_x|$)} For any $z \in X$, the limit;
\begin{equation}
\lim_{r\to 0^+}\intav_{B_r(z)}|\nabla \bist_x|\d\meas \in [0, \infty)
\end{equation}
exists. Denoting by $|\nabla \bist_x|^*(z)$ (or $|\nabla \bist_x|(z)$ for short if there is no confusion) the limit, we see that any point is a Lebesgue point of $|\nabla \bist_x|$, namely 
\begin{equation}
\lim_{r \to 0^+}\intav_{B_r(z)}\left| |\nabla \bist_x|-|\nabla \bist_x|^*(z)\right| \d \meas =0, \quad \forall z \in X.
\end{equation}
\item{(Upper semicontinuity)} The function $|\nabla \bist_x|^*$ is upper semicontinuous on $X$.
\item{(Sharp pointwise gradient estimate)} We have
\begin{equation}
|\nabla \bist_x|^*(z) \leqslant \mathscr C_N\nu_x^\frac{1}{N-2}, \quad \forall z \in X
\end{equation}
and
\begin{equation}
|\nabla \bist_x|^*(x)= \mathscr C_N\nu_x^\frac{1}{N-2},
\end{equation}
where 
\begin{equation}\label{sharp constcn}
\mathscr C_N:=\left(N(N-2)\right)^{\frac{1}{N-2}}.
\end{equation}
In particular $\bist_x$ with $\bist_x(x):=0$ is $\mathscr C_N\nu_x^\frac{1}{N-2}$-Lipschitz on $X$ with the (global) Lipschitz constant $\mathscr C_N\nu_x^\frac{1}{N-2}$.
\end{enumerate}
\end{theorem}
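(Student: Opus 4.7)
The plan is to mimic Colding's original strategy at the PDE level and then upgrade $\meas$-a.e.\ statements to pointwise ones using the RCD calculus. The starting point is the purely algebraic identity obtained by applying $\Delta$ to $G(x,\cdot)=\bist_x^{\,2-N}$ and using $\Delta G(x,\cdot)=0$ off the pole:
\begin{equation*}
\bist_x\,\Delta \bist_x = (N-1)\,|\nabla \bist_x|^2 \qquad \text{on } X\setminus\{x\},
\end{equation*}
to be interpreted in a distributional/measure-valued sense using the Sobolev regularity of $G$ from Bru\`e--Semola.

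Feeding this identity into the Bochner inequality of $\RCD(0,N)$ applied to $\bist_x$, together with the trace bound $|\Hess\bist_x|^2\geqslant (\Delta\bist_x)^2/N$, one obtains a differential inequality for $F:=|\nabla\bist_x|^2$ from which a suitable rescaling (essentially the Colding quantity $|\nabla G|^2 G^{-2(N-1)/(N-2)}$, a positive multiple of $F$) is subharmonic away from $x$. A maximum principle argument, implemented in the RCD setting via heat-flow regularization and cut-offs supported on $\{\delta<\bist_x<R\}$, reduces the global sharp bound to controlling this quantity in a shrinking neighbourhood of the pole and at infinity, the latter being handled via Bishop--Gromov and the non-parabolic assumption.

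The sharp constant $\mathscr C_N\,\nu_x^{1/(N-2)}$ is pinned down by the short-time behaviour of the heat kernel and the tangent-cone structure at $x$. Since $\nu_x\in(0,\infty)$, any tangent at $x$ is an $N$-metric measure cone of density $\nu_x$, yielding the asymptotic
\begin{equation*}
G(x,y) = \frac{\dist(x,y)^{2-N}}{N(N-2)\,\nu_x}\bigl(1+o(1)\bigr) \qquad \text{as } y\to x,
\end{equation*}
so that $\bist_x(y)/\dist(x,y)\to (\mathscr C_N\,\nu_x^{1/(N-2)})^{-1}$ and the averaged gradient tends to $\mathscr C_N\,\nu_x^{1/(N-2)}$ near $x$. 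This simultaneously identifies $|\nabla\bist_x|^*(x)$ in (3) and provides the boundary data closing the maximum-principle argument of the previous step.

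For the canonical representative and upper semicontinuity in (1) and (2), the same Bochner machinery applied to $F$ gives a mean-value inequality for $r\mapsto\intav_{B_r(z)}F\,d\meas$ from which the pointwise limit exists at every $z\in X$ and the resulting representative is upper semicontinuous; the corresponding statement for $|\nabla \bist_x|$ itself follows by Lebesgue differentiation combined with the identity $|\nabla\bist_x|=\sqrt{F}$ $\meas$-a.e. The main obstacle I foresee is the second step: rigorously realizing the Bochner and maximum-principle computations near the singular pole $x$ and at points where $|\nabla\bist_x|$ may degenerate requires RCD-compatible cut-offs and test functions, together with a careful limit $\delta\downarrow 0$ that uses the asymptotic of the third step to absorb boundary terms.
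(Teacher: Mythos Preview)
Your outline is essentially the paper's strategy, and the Bochner-based subharmonicity plus blow-up at the pole is exactly the engine. Two points deserve sharpening.

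First, be precise about which operator sees the subharmonicity. The quantity $|\nabla G|^2 G^{-2(N-1)/(N-2)}=(N-2)^2|\nabla\bist_x|^2$ is \emph{not} $\Delta$-subharmonic in general; it is subharmonic for the drifted operator $\mathscr L:=\Delta+2\langle\nabla\log G_x,\nabla\,\cdot\,\rangle$ (equivalently, for the Dirichlet form with weight $G_x^2\,\d\meas$). The $\Delta$-subharmonic companion is $|\nabla\bist_x|^2\,G_x$. The paper derives both from the same Bochner computation (Proposition~\ref{prop:subha}); the $\Delta$-subharmonic one is what drives the global maximum principle, while the $\mathscr L$-structure is needed for the local step below and for the regularity in (1)--(2) via the PI-space theory of \cite{BjornBjorn}.

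Second, and this is the real gap: the maximum principle on $\{\delta<\bist_x<R\}$ requires the \emph{essential supremum} of $|\nabla\bist_x|$ near the inner boundary, not its average. Your asymptotic $\bist_x\sim(\mathscr C_N\nu_x^{1/(N-2)})\,\dist_x$ and the blow-up convergence yield only that annular \emph{averages} of $|\nabla\bist_x|^2$ tend to $\mathscr C_N^2\nu_x^{2/(N-2)}$ as $r\to0$; this does not by itself control the boundary data. The paper closes this by setting $C_{\mathrm{opt},x}:=\lim_{r\to0}\esssup{B_r(x)}|\nabla\bist_x|$, running the maximum principle with $u:=|\nabla\bist_x|^2G_x-(C_{\mathrm{opt},x}^2+\eps)G_x-L^2\eps$ to get $|\nabla\bist_x|\le C_{\mathrm{opt},x}$ everywhere, and then identifying $C_{\mathrm{opt},x}=\mathscr C_N\nu_x^{1/(N-2)}$ via a weak Harnack inequality for $\mathscr L$-\emph{super}harmonic functions: along $y_i\to x$ realizing $C_{\mathrm{opt},x}$, Harnack applied (after rescaling) to $C_{\mathrm{opt},x}^2-|\nabla\bist_x|^2\ge0$ forces its ball-averages around $y_i$ to vanish, which, matched against the blow-up averages, pins the constant. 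This passage from average to sup near the pole is precisely the ``main obstacle'' you flagged, and the drifted weak Harnack inequality (Proposition~\ref{prop:weak.harnack.super}) is the missing tool.
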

Let us introduce the second main result overcoming the second issue above, see Definition \ref{def:cone} for $N$-metric measure cones.
\begin{theorem}[Rigidity; Theorem \ref{thm:rigidity}]\label{rigigi}
If 
\begin{equation}
|\nabla \bist_x|^*(z)=\mathscr C_N\nu_x^{\frac{1}{N-2}}
\end{equation}
for some $z \in X \setminus \{x\}$, then $(X, \dist, \meas)$ is isomorphic to the $N$-metric measure cone over an $\RCD(N-2, N-1)$ space, in particular, $|\nabla \bist_x|^*\equiv \mathscr C_N\nu_x^\frac{1}{N-2}$.
\end{theorem}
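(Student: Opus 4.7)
My strategy is in three steps: first propagate the equality at $z$ to the whole space, second recognize the resulting rigidity as the volume cone condition at $x$, and finally invoke the $\RCD$ version of the volume cone implies metric cone theorem.

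Step one hinges on an eikonal-type identity: since $G(x,\cdot)$ is harmonic on $X\setminus\{x\}$, a direct chain rule computation yields
\begin{equation*}
\Delta \bist_x \;=\; (N-1)\,\frac{|\nabla \bist_x|^2}{\bist_x} \qquad \text{on } X\setminus\{x\}.
\end{equation*}
Substituted into the $N$-dimensional Bochner inequality for $\RCD(0,N)$ spaces, this produces a weak differential inequality on $|\nabla \bist_x|^2$ whose analysis is already the engine behind the sharp gradient estimate of Theorem~\ref{thm:1.1}. Attainment of the maximal value at an interior point $z\neq x$ then activates a strong maximum principle; combined with the upper semicontinuity of $|\nabla \bist_x|^*$ and the connectedness of $X$, this yields $|\nabla \bist_x|^* \equiv \mathscr C_N\,\nu_x^{1/(N-2)}$ throughout $X$.

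Step two exploits the saturation of the Bochner inequality: the traceless part of $\Hess \bist_x$ must vanish in the $\RCD$-Hessian sense of Gigli, and a short computation then gives $\Hess(\bist_x^2/2) = \mathscr C_N^2\,\nu_x^{2/(N-2)}\,g$ weakly. Integrating this rigid identity along level sets of $\bist_x$, together with Bishop-Gromov for $\RCD(0,N)$, forces
\begin{equation*}
\frac{\meas(B_r(x))}{r^N} \;\equiv\; \nu_x, \qquad \forall\, r>0,
\end{equation*}
i.e.\ the volume cone condition at $x$ on every scale. Step three then applies the De~Philippis--Gigli volume cone implies metric cone theorem in the $\RCD$ setting, upgrading this identity to an isomorphism between $(X,\dist,\meas)$ and an $N$-metric measure cone with apex $x$; the global $\RCD(0,N)$ assumption automatically promotes the cross-section to $\RCD(N-2,N-1)$.

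The main obstacle is step one: the strong maximum principle must be established, in a weak distributional sense on a non-smooth space, for a function whose gradient is only upper semicontinuous and which has a pole at $x$. I expect the argument to rely on Savar\'e's self-improvement of the Bochner inequality, on cutoff functions supported away from $x$ to neutralize the singularity, and, crucially, on the canonical pointwise representative provided by Theorem~\ref{thm:1.1} without which the pointwise equality hypothesis at $z$ would be meaningless to begin with.
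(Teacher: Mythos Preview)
Your Step 1 is essentially what the paper does, though two points deserve tightening. First, the strong maximum principle is applied on $X\setminus\{x\}$, so you need $X\setminus\{x\}$ connected, not merely $X$; the paper handles this via the null $2$-Sobolev capacity of $\{x\}$ (which uses $\nu_x<\infty$). Second, the paper packages the Bochner computation as the $\mathscr L$-subharmonicity of $|\nabla \bist_x|^2$ for the drifted operator $\mathscr L=\Delta+2\langle\nabla\log G_x,\nabla\cdot\rangle$ (Proposition~\ref{prop:subha}), and then invokes a strong maximum principle for $\mathscr L$ (Proposition~\ref{thm:strong.max.princi.}). Your description via Bochner plus maximum principle is the same mechanism, just less explicit.

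Step 2 is where you take a detour that introduces a genuine gap. You propose to extract the Hessian rigidity $\Hess(\bist_x^2/2)=\mathscr C_N^2\nu_x^{2/(N-2)}g$ and then ``integrate along level sets'' to force $\meas(B_r(x))/r^N\equiv\nu_x$. The problem is that the volume cone condition concerns $\dist_x$-balls, not $\bist_x$-level sets; passing from one to the other presupposes $\bist_x=\mathscr C_N\nu_x^{1/(N-2)}\dist_x$, which is part of what you are trying to prove. Making this step rigorous in the $\RCD$ setting essentially reproves the cone characterization rather than invoking it.

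The paper's route is much shorter: once $|\nabla\bist_x|\equiv\mathscr C_N\nu_x^{1/(N-2)}$, the already-computed identity $\Delta\bist_x^2=2N|\nabla\bist_x|^2$ (Lemma~\ref{lem form}) immediately gives that $u:=\bist_x^2/(2\mathscr C_N^2\nu_x^{2/(N-2)})$ satisfies $\Delta u=N$ and $|\nabla\sqrt{2u}|=1$. This is precisely criterion (b) of Theorem~\ref{thm:rigidity cone}(2), and the paper then cites Gigli--Violo \cite[Theorem~5.1]{GV} rather than De~Philippis--Gigli. No Hessian analysis or level-set integration is needed.
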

Combining the results above with the compactness of non-parabolic $\RCD(0, N)$ spaces with respect to the pmGH convergence (Theorem \ref{cpt non-para}), we obtain the following almost rigidity result
\begin{theorem}[Almost rigidity; Theorem \ref{thm:almost.rigidity}]\label{thm:1.4}
For all $N >2$, $0<\eps<1$, $v>0$, $0<r<R<\infty$ and $\phi \in L^1([0,\infty), \haus^1)$ there exists $\delta:=\delta(N, \eps, r, R, \phi)>0$ such that
		if a pointed non-parabolic $\RCD(0, N)$ space $(X, \dist, \meas, x)$ satisfies (\ref{eq:67}),
	\begin{equation}
	 \frac{s}{\meas(B_s(x))} \leqslant \phi(s),\quad \text{for $\haus^1$-a.e. $s \in [1, \infty)$}
	\end{equation}
	and 
	\begin{equation}
	\mathscr C_N\nu_x^{\frac{1}{N-2}}-|\nabla \bist_x|^*(z)\leqslant \delta
	\end{equation}
	hold for some $z \in  B_R(x)\setminus B_r(x)$, then
	$(X, \dist, \meas, x)$ $\eps$-pmGH close to the $N$-metric measure cone over an $\RCD(N-2, N-1)$ space.
\end{theorem}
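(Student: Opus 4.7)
The plan is a compactness and contradiction argument, reducing Theorem \ref{thm:1.4} to the rigidity Theorem \ref{rigigi} via the compactness of non-parabolic $\RCD(0,N)$ spaces (Theorem \ref{cpt non-para}). Assume the conclusion fails: there exist $\eps_0\in(0,1)$, parameters $N,r,R,\phi$, and pointed non-parabolic $\RCD(0,N)$ spaces $(X_i,\dist_i,\meas_i,x_i)$ satisfying (\ref{eq:67}) and $s/\meas_i(B_s(x_i))\leqslant \phi(s)$ for $\haus^1$-a.e.\ $s\geqslant 1$, together with $z_i\in B_R(x_i)\setminus B_r(x_i)$ such that
\begin{equation*}
\mathscr C_N\nu_{x_i}^{1/(N-2)}-|\nabla\bist_{x_i}|^*(z_i)\leqslant 1/i,
\end{equation*}
while no $(X_i,\dist_i,\meas_i,x_i)$ is $\eps_0$-pmGH close to any $N$-metric measure cone over an $\RCD(N-2,N-1)$ space.

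First I would invoke Theorem \ref{cpt non-para} to extract, after passing to a subsequence, a pmGH limit $(X_i,\dist_i,\meas_i,x_i)\to(X_\infty,\dist_\infty,\meas_\infty,x_\infty)$ which is itself a non-parabolic $\RCD(0,N)$ space, with $z_i\to z_\infty\in\overline{B_R(x_\infty)}\setminus B_r(x_\infty)$, so $z_\infty\neq x_\infty$. The bound $\phi$ provides enough uniform tail control on the heat kernels (and hence on the Green functions) to pass them to the limit, yielding $G_{x_i}\to G_{x_\infty}$ and $\bist_{x_i}\to\bist_{x_\infty}$ locally uniformly on $X_\infty\setminus\{x_\infty\}$.

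The central analytic step is the upper semicontinuity
\begin{equation*}
\limsup_i|\nabla\bist_{x_i}|^*(z_i)\leqslant|\nabla\bist_{x_\infty}|^*(z_\infty),
\end{equation*}
which I would extract from the canonical representative and upper semicontinuity of $|\nabla\bist_x|^*$ provided by Theorem \ref{thm:1.1}(1)--(2), combined with Mosco-type convergence of the Cheeger energies along the pmGH sequence. Bishop--Gromov rewrites the density as $\nu_x=\sup_{r>0}\meas(B_r(x))/r^N$, and continuity of volumes of fixed-radius balls under pmGH yields $\liminf_i\nu_{x_i}\geqslant\nu_{x_\infty}$. Inserting the hypothesis $\mathscr C_N\nu_{x_i}^{1/(N-2)}-|\nabla\bist_{x_i}|^*(z_i)\to 0$ and the sharp bound of Theorem \ref{thm:1.1}(3) into these inequalities sandwiches
\begin{equation*}
|\nabla\bist_{x_\infty}|^*(z_\infty)=\mathscr C_N\nu_{x_\infty}^{1/(N-2)},
\end{equation*}
with $z_\infty\neq x_\infty$. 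Theorem \ref{rigigi} then forces $(X_\infty,\dist_\infty,\meas_\infty)$ to be isomorphic to an $N$-metric measure cone over an $\RCD(N-2,N-1)$ space, contradicting the non-closeness assumption for large $i$.

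The main obstacle is the upper semicontinuity of the pointwise representative $|\nabla\bist_x|^*$ at the moving points $z_i\to z_\infty$: since $|\nabla\bist_x|$ is only $\meas$-a.e.\ defined and its representative is built from Lebesgue averages, control at specific limiting points demands more than the locally uniform convergence of $\bist_{x_i}$. My strategy is to reuse the Bochner/Laplacian-type comparison inequality for $\bist_x$ that underlies Theorem \ref{thm:1.1}(3); its stability under pmGH convergence, via Mosco convergence of Cheeger energies and heat-kernel convergence, combined with the upper semicontinuous structure from Theorem \ref{thm:1.1}(2), should transfer the quantitative gradient information cleanly to the limit.
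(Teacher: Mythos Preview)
Your compactness--contradiction framework is exactly the paper's, and you correctly isolate the one genuine difficulty: passing the \emph{pointwise} information $|\nabla\bist_{x_i}|^*(z_i)\to \mathscr C_N\nu_{x_i}^{1/(N-2)}$ to the limit point $z_\infty$. However, your proposed mechanism for this step does not work. Mosco convergence of Cheeger energies yields only \emph{lower} semicontinuity of $\int|\nabla f|^2$ along weakly convergent sequences; it gives nothing about upper semicontinuity of the canonical representative $|\nabla\bist|^*$ at varying points across a varying sequence of spaces. Likewise, Theorem~\ref{thm:1.1}(2) is an upper semicontinuity statement on a \emph{fixed} space. Neither produces $\limsup_i|\nabla\bist_{x_i}|^*(z_i)\leqslant|\nabla\bist_{x_\infty}|^*(z_\infty)$ directly.

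The paper closes this gap in a concrete and different way. On each $X_i$ the function $u_i:=\mathscr C_N^2\nu_{x_i}^{2/(N-2)}-|\nabla\bist_{x_i}|^2\geqslant 0$ is $\mathscr L$-\emph{super}harmonic (Proposition~\ref{prop:subha}), so the weak Harnack inequality for $\mathscr L$-superharmonic functions (Proposition~\ref{prop:weak.harnack.super}) converts the pointwise smallness $u_i(z_i)\to 0$ into an \emph{integral} smallness:
\[
\intav_{B_{2s}(z_i)}u_i\,\d\meas_i\;\longrightarrow\;0.
\]
This integral quantity is exactly what can be passed to the limit. The $\phi$-domination gives, by dominated convergence, $F_{x_i}(1)\to F_{x_\infty}(1)$, which is what Theorem~\ref{green convergence} requires to upgrade pmGH convergence to $W^{1,2}$-strong convergence of $\bist_{x_i}$ on $B_{2s}(z_\infty)$. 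That strong convergence passes the integral to the limit, forcing $|\nabla\bist_{x_\infty}|=\mathscr C_N\nu_{x_\infty}^{1/(N-2)}$ $\meas_\infty$-a.e.\ on a ball around $z_\infty$ (this also pins down $\lim_i\nu_{x_i}=\nu_{x_\infty}$, not merely $\geqslant$), and then Theorem~\ref{rigigi} applies. In short: the missing ingredient in your proposal is the weak Harnack inequality, used to go from pointwise to averaged control \emph{before} taking the limit, rather than attempting a pointwise limit directly.
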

In particular, in Theorem \ref{thm:1.4}, if we further assume that $N$ is an integer and that the point $x$ admits an $N$-dimensional Euclidean tangent cone (which is trivial in the manifold case), then the $N$-metric measure cone stated in Theorem \ref{thm:1.4} can be replaced by the $N$-dimensional Euclidean space, which gives a positive answer to the question (Q) even in the $\RCD$ setting.

In the next subsection let us provide the outlines of the proofs of the results above.
\subsection{Outline of the proofs and organization of the paper}
In order to prove Theorem \ref{thm:1.1},  we will study a drifted Laplace operator $\mathscr L$ defined by
\begin{equation}
\mathscr Lu:= \Delta u+2\<\nabla \log G_x,\nabla u\>.
\end{equation}
Then we follow arguments by Colding in \cite{C12} to get the $\mathscr L$-subharmonicity of $|\nabla \bist_x|^2$ and the ($\Delta$-)subharmonicity of $|\nabla \bist_x|^2G(x, \cdot)$ (Proposition \ref{prop:subha}) via the Bochner inequality appearing in the definition of $\RCD$ spaces (see (\ref{bochner ineq})). Combining their subharmonicities with regularity results on subharmonic functions on PI spaces \cite{BjornBjorn} proves (1) and (2) of the theorem.
To prove the remaining statements, (3), we recall that the Green function from the pole on the $N$-metric measure cone over an $\RCD(N-2, N-1)$ space can be explicitly calculated (Proposition \ref{explicit green}) as in (\ref{s7shsnsbbsys6}) and that any tangent cone at $x$ whose $N$-volume density is finite is isomorphic to such a metric measure cone (Corollary \ref{volconemetr}) because of a result of De Philippis-Gigli \cite{DG}. 
Then, combining them with blow-up arguments at the base point $x$ based on the stability of the Laplacian \cite{AmbrosioHonda2} by Ambrosio and the first named author, we obtain (3), where the $\mathscr L$-subharmonicity of $|\nabla \bist_x|^2$ plays a role here again. 

In order to prove the rigidity result, Theorem \ref{rigigi}, we use the strong maximum principle for $\mathscr L$-subharmonic, upper semicontinuous functions to get the constancy of $|\nabla \bist_x|^2$. Then the explicit calculation of $\Delta \bist_x$ allows us to apply a rigidity result of Gigli-Violo \cite{GV} to prove Theorem \ref{rigigi}. 

Let us emphasize that under realizing the results above, we are also able to obtain a convergence result of the Green functions with respect to the pmGH convergence, in particular, as a corollary, the $W^{1,p}$-strong convergence of $\bist_x$ is proved for any finite $p<\infty$  (Corollary \ref{impr}).
After establishing compactness results on non-parabolic $\RCD(0, N)$ spaces with respect to pmGH convergence (Theorem \ref{cpt non-para}), the $W^{1,p}$-convergence result allows us to show the almost rigidity, Theorem \ref{thm:1.4}, via a contradiction. Then the main results stated in the smooth framework, Theorem \ref{thm:smooth almost} and Corollary \ref{cor main}, are corollaries of the results for RCD spaces.

Finally we show the sharpness of Theorem \ref{thm:smooth almost} via observing the $3$-metric measure cone, $C(\mathbb{S}^2(r))$ for some $r<1$ which is close to $1$, where $\mathbb{S}^2(r)$ denotes the round sphere of radius $r$ in $\mathbb{R}^3$ centered at the origin. See subsection \ref{sharp obs}.

The organization of the paper is as follows. Section \ref{sec:2} is devoted to fixing the notations/conventions and the introduction on $\RCD$ spaces, in particular, about $N$-metric measure cones.
In Section \ref{grees section}, we study the Green function on a non-parabolic $\RCD(0, N)$ space, where the starting point is a work by Bru\`e-Semola \cite{BrueSemola}. One of the main purposes in this section is to prove Theorem \ref{thm:1.1}.
Section \ref{sec:5} is devoted to proving the rigidity/almost rigidity results. 
In Section \ref{example sec}, we provide simple examples which show that our results are sharp.
In the final section, Section \ref{sec:3}, we provide proofs of regularity results about $\mathscr L$-subharmonic functions directly coming from the general theory of PI spaces \cite{BjornBjorn}. This part makes the paper to be more self-contained.

\subsection*{Acknowledgments}
The both authors would like to thank Zhangkai Huang for fruitful discussions and valuable suggestions. They also wish to thank Daniele Semola for valuable comments on the preliminary version.
Moreover we are grateful to the reviewer for his/her very careful reading of the paper and for giving us valuable suggestions for the revision, especially inspiring us to realize Corollary \ref{newcorollary}.
The first named author acknowledges supports of the Grant-in-Aid for
Scientific Research (B) of 20H01799, the Grant-in-Aid for Scientific Research (B) of 21H00977 and Grant-in-Aid for Transformative Research Areas (A) of 22H05105.
The second named author acknowledges the supports from JST SPRING Grant Number: JPMJSP2114 and JSPS KAKENHI Grant Number: JP23KJ0204.
\section{Preliminary}\label{sec:2}
\subsection{Notation and convention}
Let us fix general conventions and  geometric/analytic notations:
\begin{itemize}
\item We denote by $C(a_1,a_2,\ldots,a_k)$ a positive constant only dependent on $a_1,a_2,\ldots, a_k$, which may vary from line to line unless otherwise stated.
	%\item We denote by $\Psi(\eps_1,\eps_2,\ldots,\eps_k\mathop |a_1,a_2,\ldots,a_l)$ a function of  $\eps_1,\eps_2,\ldots,\eps_k$, only dependent on $a_1,a_2,\ldots, a_l$ with
	%\begin{equation}
	%\lim_{(\eps_1,\eps_2,\ldots,\eps_k)\ra 0^+} \Psi(\eps_1,\eps_2,\ldots,\eps_k\mathop |a_1,a_2,\ldots,a_l)=0,
	%\end{equation} 
	%which may also vary from line to line. When we use such notations, we automatically assume $\eps_1,\eps_2,\ldots,\eps_k$ are sufficiently small, where the smallness is {only} dependent on $a_1,a_2,\ldots,a_l$.
%	
%\end{itemize}
%Next let us fix geometric/analytic notations:
%\begin{itemize}
\item For a metric space $(X,\dist)$, denote by
	\begin{itemize}
		\item $B_r(x):=\{y\in X\mathop|\dist(x,y)<r\}$ and $\overline{B}_r(x):=\{y\in X\mathop|\dist(x,y)\leqslant r\}$ ; 
		\item $\Lip(X,\dist)$ the collection of all Lipschitz functions on $(X,\dist)$.
		%\item $C(X)$ the collection of all continuous functions on $(X,\dist)$.
	\end{itemize}
		\item We say that a triple $(X,\dist,\meas)$ is a metric measure space if $(X,\dist)$ is a complete and separable metric space and $\meas$ is a locally finite Borel measure which is fully supported on $X$. 
	\item Whenever we discuss on a metric measure space $(X,\dist, \meas)$, we identify two objects which coincide except for a $\meas$-negligible set.
%	\begin{itemize}
%		\item $\meas\mathop{\llcorner} E$: $\meas$ {restricted} on an $\meas$-measurable set $E\subset X$,
%		\item $L^p(\meas)$: the collection of all $p$-integrable functions with respect to $\meas$, equipped with the standard norm $\|\mathop\cdot\|_{L^p(\meas)}:=\left(\int_X {|}\mathop{\cdot}{|}^{ p}\d\meas\right)^\frac 1 p$, where $1\leqslant p\leqslant \infty$,
%		\item $L^p(E,\meas)=L^p(\meas\mathop{\llcorner}E)$: the collection of {all} $p$-integrable functions on $E\subset X$, equipped with the norm $\|\mathop\cdot\|_{L^p(E,\meas)}:=\left(\int_X {|}\mathop{\cdot}{|}^{ p}\chi_E\d\meas\right)^\frac 1 p$, {where $\chi_E=1$ in $E$ and $\chi_E=0$ otherwise.}
%		\item
%		Set
%		\begin{equation}
%		\intav_E \cdot\ \d\meas=\frac{1}{\meas(E)}\int_E\cdot\ \d\meas.
%		\end{equation}
%\end{itemize}
\item For a metric measure space $(X,\dist,\meas)$, let $\mathcal A$ be a collection of functions defined on an open subset $U$ of $X$. Then we denote by 
\begin{itemize}
	\item
	$\mathcal A_+:=\{f\in \mathcal A\mathop|f\geqslant  0\ \  \text{for $\meas$-a.e.}\}$;
	\item $\mathcal A_\mathrm{loc}:=\{f:U \to \mathbb{R} \mathop{|} {f\chi_{B_r(x)}\in \mathcal A} \,\,\text{for any $B_r(x)$ with $\overline{B}_r(x) \subset U$}\}$;
	\item $\mathcal A_c:=\{f\in \mathcal A\mathop|\text{The support of $f$, $\supp f$, is compact and is included in $U$}\}.$
	%\item $\mathcal A_b:=\{f\in \mathcal A\mathop|f\text{ is bounded}\}$.\footnote{{\color{blue}Maybe this is used only one time in the definition of an $\RCD$ space}}
	%\item $A_{bs}(X,\dist,\meas):=\{f\in A(X,\dist,\meas)\mathop|\supp f\text{ is bounded}\}$.
\end{itemize}
For instance, $L^p_\mathrm{loc}(U, \dist, \meas)$, $\Lip_c(U,\dist)$, $W^{1,p}_{\mathrm{loc}}(U, \dist, \meas)$, etc.,  make sense. %We shall not call additional attention when we use these notations. 
\end{itemize}
\subsection{Definition of $\RCD(K,N)$ space and heat kernel}
Let $(X,\dist,\meas)$ be a metric measure space. 
We define the local Lipschitz constant at $x$ of a function $f$ defined on $X$ as follows:
\begin{equation}
\lip f(x):=\limsup_{y\ra x}\frac{|f(x)-f(y)|}{\dist(x,y)},
\end{equation}
where $\lip f(x)$ is intepreted as $0$ if $x$ is isolated.
For any $f\in L^{2}(X, \meas)$, the Cheeger energy of $f$ is defined by
\begin{equation}\label{eq:02}
\Ch(f):=\inf\left\{\frac 1 2\liminf_{i\ra\infty}\int_{X}(\lip f)^2\ \d\meas\ \bigg|\ f_i\in\Lip (X,\dist)\cap (L^{\infty} \cap L^2)(X, \meas), \|f_i-f\|_{L^2}\ra 0\right\}.
\end{equation}
The Sobolev space $W^{1,2}(X,\dist,\meas)$\footnote{Similarly we can define $W^{1,p}(X, \dist, \meas)$. See for instance \cite{AmbrosioHonda}.} is the collection of $L^2(X, \meas)$-functions with finite Cheeger energy, equipped with the $W^{1,2}$-norm
\begin{equation}
\|f\|_{W^{1,2}(X,\dist,\meas)}:=\sqrt{\|f\|_{L^2}^2+2\Ch(f)}.
\end{equation}
For any $f \in W^{1,2}(X, \dist, \meas)$, by taking a minimizing sequence $\{f_i\}_i$ in the right-hand-side of \eqref{eq:02}, we can find the optimal $L^2$-function denoted by $|\nabla f|$, called the minimal relaxed slope of $f$, realizing the Cheeger energy, namely
\begin{equation}
\Ch(f)=\frac 1 2\int_X|\nabla f|^2\ \d\meas.
\end{equation}
%which is named the minimal relaxed slope of $f$. 

%\begin{definition}[Infinitesimal Hilbertianity]\label{def:infinitesimal.Hilbertian}
	We say that $(X,\dist,\meas)$ is infinitesimally Hilbertian if $W^{1,2}(X,\dist,\meas)$ is a Hilbert space. In this case, we set
	\begin{equation}
	\<\nabla f_1,\nabla f_2\>:=\lim_{t\ra 0}\frac{|\nabla (f_1+tf_2)|^2-|\nabla f_1|^2}{2t}\in L^1(X, \meas), \ \forall f_1,f_2\in W^{1,2}(X,\dist,\meas),
	\end{equation}
	which is symmetric and bi-linear in $\meas$-a.e. sense (see for instance \cite[Theorem 4.3.3]{GP2} for several equivalent definitions of infinitesimal Hilbertianity). 
	%\end{definition}
Moreover then we can define the (linear) Laplacian as follows;
%\begin{definition}
	%Assume $f\in W^{1,2}(X,\dist,\meas)$ and $U\subset X$ is an open set. 
	we denote by $D(\Delta)$ the set of all $f \in W^{1,2}(X, \dist, \meas)$ such that there exists $h\in L^2(X, \meas)$ such that
	\begin{equation}
	-\int_{X}\<\nabla f,\nabla\phi\>\ \d\meas=\int_{X}\phi h\ \d\meas, \quad \forall \phi\in W^{1,2}(X,\dist,\meas).
	\end{equation}
	Since such $h$ is unique whenever it exists, we shall denote by $\Delta f$. %If there is no ambiguity, we sometimes omit the domain and simply write $\Delta f$ instead.
	%\end{definition}
	We are now in a position to give the definition of $\RCD$ spaces.
\begin{definition}[$\RCD$-space]
	We say that $(X,\dist,\meas)$ is an $\RCD(K,N)$ \textit{space} for some $K\in\R$ and $N\geqslant 1$ if the following four conditions are satisfied.
	\begin{enumerate}
		\item(Volume growth bound) There exist $C>0$ and $x \in X$ such that
		\begin{equation}
		\meas(B_r(x))\leqslant Ce^{Cr^2}, \quad \forall r>0.
		\end{equation} 
		\item(Infinitesimal Hilbertianity) $(X, \dist, \meas)$ is infinitesimally Hilbertian.
		\item(Sobolev-to-Lipschitz property) Any $f\in W^{1,2}(X,\dist,\meas)$ with $|\nabla f|\leqslant L$ for $\meas$-a.e. admits an $L$-Lipschitz representative.
		\item(Bochner's inequality) For any $f\in D(\Delta)$ with $\Delta f\in W^{1,2}(X,\dist,\meas)$ and any $\phi\in D(\Delta)\cap L^\infty_+(X, \meas)$ with $\Delta \phi \in L^\infty(X, \meas)$, it holds that
		\begin{equation}\label{bochner ineq}
		\frac 1 2\int_X \Delta\phi|\nabla f|^2\ \d\meas\geqslant\int_X \phi\left(\frac{(\Delta f)^2}{N}+\<\nabla f,\nabla\Delta f\>+K|\nabla f|^2\right)\ \d\meas.
		\end{equation}
	\end{enumerate}
	\end{definition}
There are also several equivalent characterizations of $\RCD(K,N)$-conditions, see \cite{AmbrosioMondinoSavare, CavallettiMilman, ErbarKuwadaSturm}. We refer to \cite{A} as a good survey for the theory of $\RCD$ spaces.

Let us also mention that there exist local notions above, including the domain $D(\Delta, U)$ of the local Laplacian defined on an open subset $U$ of $X$, the local Sobolev space $W^{1,2}(U, \dist, \meas)$ and so on.\footnote{For instance, for any $f \in D(\Delta, U)$ we have $\Delta f, |\nabla f|, f \in L^2(U, \meas)$.} In the sequel we immediately use them, see for instance \cite{AmbrosioHonda2, BjornBjorn, HKST} for the details.

We here recall the precise definitions of the heat flow and the heat kernel on an $\RCD(K, N)$ space $(X, \dist, \meas)$ for some $K \in \mathbb{R}$ and some finite $N \geqslant 1$.
For any $f \in L^2(X, \meas)$, there exists a unique locally absolutely continuous (or equivalently, smooth, in this setting, (see \cite{GP2})) curve $h_{\cdot}f:(0, \infty) \to L^2(X, \meas)$, called the heat flow starting at $f$, such that $h_tf \to f$ in $L^2(X, \meas)$ as $t \to 0^+$ and that $h_tf \in D(\Delta)$ for any $t>0$ with
\begin{equation}\label{heat equa}
\frac{\d}{\d t}h_tf=\Delta h_tf.
\end{equation}

Then, thanks to \cite{Sturm06,Sturm06b} with the Bishop-Gromov inequality and the Poincar\'e inequality which will be explained in the next subsection \ref{subsec:2.2},
the heat flow can be written by the integral of a unique continuous kernel  $p=p_X:X \times X \times (0, \infty) \to (0, \infty)$, called the heat kernel of $(X,\dist,\meas)$. Namely,
%\begin{theorem}[Existence of heat kernel]
	%Assume $(X,\dist,\meas)$ is an $\RCD\rev{(K,N)}$ \rev{space for any $K\in\R$ and $N\in\R_{\geqslant 1}$}. \rev{Then there exists a unique continuous function} 
	%\begin{equation}
	%\begin{aligned}
	%p:&X\times X\times (0,\infty)\ra\R,\\
	%&(x,y,t)\mapsto p(x,y,t),
	%\end{aligned}
	%\end{equation}
	%such that
	 for all $f\in L^2(X, \meas)$, we have $h_tf \in C(X)$ with
	\begin{equation}
	\h_tf(x)=\int_X p(x,y,t)f(y)\ \d\meas(y), \quad \forall x \in X.
	\end{equation}
	Note that the heat kernel $p$ can be characterized by using the dual heat flow $\tilde h_t$ acting on the space of all Borel probability measures with finite quadratic moments $\mathcal{P}_2(X)$;
	\begin{equation}\label{heatdual}
	\tilde h_t\mathbf{\delta}_x=p(x, \cdot, t)\meas,\quad \forall x \in X, \quad \forall t>0,
	\end{equation}
	where $\mathbf{\delta}_x$ is the Dirac measure at $x$.
	
	%Such $p$ is called the heat kernel of $(X,\dist,\meas)$. Moreover, $p$ is actually locally H\"older continuous.
%\end{theorem}

Let us write a formula on the heat kernel under a rescaling, which directly follows from the definition; for all $a, b >0$, the $\RCD(a^{-2}K, N)$ space
\begin{equation}(\tilde X, \tilde \dist, \tilde \meas) :=\left(X, a\dist, b\meas\right)
\end{equation}
satisfies
\begin{equation}\label{rescaling heat}
p_{\tilde X}(x, y, t)=\frac{1}{b}p_X(x, y, a^{-2}t).
\end{equation}

In order to keep our presentation short, we assume that the readers are familiar with basics on the $\RCD$ theory, including pointed measured Gromov-Hausdorff (pmGH) convergence, its metrization $\dist_{\mathrm{pmGH}}$, stability/compactness of $\RCD$ spaces with respect to $\dist_{\mathrm{pmGH}}$, and functional convergence with respect to $\dist_{\mathrm{pmGH}}$. We refer a recent nice survey \cite{GIGLI}
about this topic (see also \cite{AmbrosioHonda, AmbrosioHonda2, GigliMondinoSavare13}).

Let us end this subsection by introducing the following two notions with related results. %which are central roles in the paper.
		%\begin{definition}[Pointed measured Gromov-Hausdorff convergence]
	%\label{def:pmGH-convergence}
	%Let $\{(X_i,\dist_i,\meas_i,x_i)\}_{i}$ be a sequence of metric measure spaces with \rev{base points} $x_i\in X_i$. We say the sequence converges to a pointed metric measure space $(X_\infty,\dist_\infty,\meas_\infty,x_\infty)$ in pointed measured Gromov-Hausdorff (pmGH) sense if there exist $\eps_i\downarrow 0$, $R_i\uparrow \infty$ and Borel maps $\phi_i:B_{R_i}(x_i)\ra X_\infty$ such that
	%\begin{enumerate}
		%\item $\phi_i(x_i)=x_\infty$.
		%\item $\forall x,y\in B_{R_i}(x_i),\ |\dist_i(x,y)-\dist_\infty(\phi_i(x),\phi_i(y))|<\eps_i$.
		%\item $B_{R_i}(x_\infty)$ is contained in an $\eps_i$-neighborhood of $\phi_i\big(B_{R_i}(x_i)\big)$.
		%\item $(\phi_i)_\sharp(\meas_i)\ra\meas_\infty$ in the duality sense of $C_{c}(X_\infty,\dist_\infty)$, where $(\phi_i)_\sharp$ denotes the pushforward measure of $\phi_i$.
	%\end{enumerate}
	%In this setting, for a sequence $\{y_i\in X_i\}_i$, we say $y_i\ra y_\infty\in X_\infty$ (in pmGH-sense) if $\phi_i(y_i)\ra y_\infty$.
%\end{definition}
%It is well-known that the pmGH-convergence can be metrized, namely there is a distance $\dist_{\mathrm{pmGH}}(\cdot,\cdot)$ among pointed $\RCD$ spaces such that pmGH-convergence is equivalent to $\dist_{\mathrm{pmGH}}\ra 0$. For sake of briefness we omit the detailed metrization, and we assume the reader is familiar with basic properties of pmGH-convergence, such as the stability of $\RCD(K,N)$-conditions, the sequential compactness, etc. See, for instance, \cite{GigliMondinoSavare13}. 
\begin{definition}[Tangent cone]
	A pointed $\RCD(0,N)$ space $(Y,\dist_Y,\meas_Y,y)$ is said to be a \textit{tangent cone of $(X,\dist,\meas)$ at $x \in X$} (or \textit{tangent cone at infinity of $(X, \dist, \meas)$} in the case when $K=0$, respectively) if there exists a sequence $r_i\to 0^+$ (or $r_i \to \infty$, respectively) such that
	\begin{equation}
		\left(X,\frac 1{r_i}\dist,\frac\meas{\meas(B_{r_i}(x_i))},x\right)\xrightarrow{\mathrm{pmGH}}(Y,\dist_Y,\meas_Y,y).
	\end{equation}
	Moreover a point $x \in X$ is called \textit{$k$-regular} if any tangent cone at $x$ is isomorphic to the $k$-dimensional Euclidean space $(\mathbb{R}^k, \dist_{\mathrm{Euc}}, \omega_k^{-1}\haus^k, 0_k)$.
	\end{definition}
	\begin{remark}
	We often use $\frac{\meas}{r_i^k}$ for some $k \geqslant 1$ instead of using $\frac{\meas}{\meas (B_{r_i}(x))}$ in the definition above, and we also call such limit a tangent cone. 
	\end{remark}
	It is proved in \cite{BrueSemola}
%\begin{theorem}[Essential dimension]
if $X$ is not a single point, then there exists a unique integer $k$ at most $N$ such that for $\meas$-a.e. $x \in X$, $x$ is $k$-regular. We call $k$ the essential dimension of $(X, \dist, \meas)$ (see also \cite{CN, Deng}). %\end{theorem}
It is known that the essential dimension is at most the Hausdorff dimension, however in general they do not coincide. See \cite{PW}.
	The following is defined in \cite{DePhillippisGigli} as a synthetic couterpart of volume non-collapsed Ricci limit spaces.
	\begin{definition}[Non-collapsed space]\label{ncrcddef}
	We say that $(X, \dist, \meas)$ is \textit{non-collapsed} if $\meas=\haus^N$.
	\end{definition}
	It is known that any non-collapsed $\RCD$ space has nicer properties rather than that of general $\RCD$ spaces, including a fact that $N$ must be an integer, and the Bishop inequality in the case when $K=0$;
	\begin{equation}\label{bishop ine}
	\frac{\haus^N(B_r(x))}{\omega_Nr^N}\leqslant 1,\quad \forall r>0.
	\end{equation}
	It is worth mentioning that $(X, \dist, \meas)$ is non-collapsed, up to multiplying a positive constant to the reference measure, if the essential dimension is equal to $N$, or $N$ is an integer with the existence of an $N$-regular point. See \cite{BGHZ} (and \cite{H}).
\subsection{Geometric and analytic inequalities on $\RCD(0, N)$ spaces}\label{subsec:2.2}
Let us recall several inequalities on an $\RCD(0, N)$ space $(X, \dist, \meas)$ for some finite $N \ge 1$. Fix $x \in X$.
The Bishop-Gromov inequality states 
\begin{equation}\label{bg ineq}
		\frac{\meas(B_{r}(x))}{r^N}\geqslant\frac{\meas(B_{s}(x))}{s^N}, \quad \forall r <s.
		\end{equation}
		See \cite{LottVillani} and \cite{Sturm06,Sturm06b} for the proof. Based on this inequality we introduce;

%Let us first recall the Bishop-Gromov volume comparison inequalities, see \cite{LottVillani} and \cite{Sturm06,Sturm06b} \rev{for the proof}. Note that this result is also true for $\CD(K,N)$ spaces.
%\begin{theorem}[Bishop-Gromov inequalities]\label{thm:Bishop-Gromov}
	%Assume $(X,\dist,\meas)$ is an $\RCD(K,N)$ space. Set
	%\begin{equation}
	%V_{K,N}(t):=\left\{
	%\begin{aligned}
	%&\sin\left(t\sqrt{\frac{K}{N-1}}\right)^{N-1},&K>0\\
	%&\sinh\left(t\sqrt{\frac{-K}{N-1}}\right)^{N-1},&K<0
	%\end{aligned}
	%\right.
	%\end{equation}
	%\begin{equation}
	%\mathrm{Vol}_{K,N}(r):=\int_0^rV_{K,N}(t)\ \d t\ ,K\neq 0,
	%\end{equation}
	%and 
	%\begin{equation}
	%R_{K,N}:=\left\{
	%\begin{aligned}
	%&\pi\sqrt{\frac{N-1}{K}},&K>0\\
	%&+\infty,&K\leqslant 0
	%\end{aligned}
	%\right.
	%\end{equation}
	%Then for any $x\in X$ and $0<r_1\leqslant r_2\leqslant R_{K,N}$,
	%\begin{enumerate}
		%\item If $N>1$ and $K\neq 0$, then
		%\begin{equation}
		%\frac{\meas(B_{r_1}(x))}{\Vol_{K,N}(r_1)}\geqslant\frac{\meas(B_{r_2}(x))}{\Vol_{K,N}(r_2)}.
		%\end{equation}
		%\item If $N=1$, $K< 0$ or $N\geqslant 1$, $K=0$, then
		%\begin{equation}\label{bg ineq}
		%\frac{\meas(B_{r_1}(x))}{r_1^N}\geqslant\frac{\meas(B_{r_2}(x))}{r_2^N}.
		%\end{equation}
	%\end{enumerate}
%\end{theorem}
\begin{definition}[$N$-volume density and asymptotic $N$-volume]\label{Nvlo}
The \textit{$N$-volume density at $x$}, denoted by $\nu_x$, is defined by
\begin{equation}\label{volume density}
\nu_x:=\lim_{r\ra 0}\frac{\meas(B_r(x))}{r^N}\in(0,\infty].
\end{equation} 
Moreover the \textit{asymptotic $N$-volume}, denoted by $V_X$, is defined by
\begin{equation}\label{volume density1}
V_X:=\lim_{r\ra \infty}\frac{\meas(B_r(x))}{r^N}\in[0,\infty).
\end{equation}
Note that the Bishop-Gromov inequality (\ref{bg ineq}) implies the existence of the both right-hand-sides of (\ref{volume density}) and (\ref{volume density1}), that $V_X$ does not depend on the choice of $x\in X$ and that $\nu_x\ge V_X$.
\end{definition}
\begin{remark}
Let us provide a formula  on the $N$-volume density under a rescaling; for all $a, b >0$, the $N$-volume density $\tilde \nu_{\tilde x}$ of the pointed non-parabolic $\RCD(0, N)$ space $(\tilde X, \tilde \dist, \tilde \meas, \tilde x) :=(X, a\dist, b\meas, x)$
satisfies
\begin{equation}
\tilde \nu_{\tilde x}=\lim_{r \to 0^+}\frac{b\meas (\tilde B_r(x))}{r^N}=\frac{b}{a^N}\lim_{r \to 0^+}\frac{\meas (B_{a^{-1}r}(x))}{(a^{-1}r)^N}=\frac{b}{a^N}\nu_x.
\end{equation}
In particular $\tilde \nu_{\tilde x}=\nu_x$ if $b=a^N$, which will play a role later. Note that similarly we have $V_{\tilde X}=\frac{b}{a^N}V_X$.
\end{remark}
Next let us recall Gaussian estimates on the heat kernel $p$ established in \cite{JiangLiZhang}; for any $0<\eps<1$, there exists $C(N, \eps)>1$ such that
\begin{equation}\label{eq:gaussian}
\frac{1}{C(N, \eps) \meas (B_{t^{1/2}}(x))}\exp \left(-\frac{\dist^2 (x, y)}{4(1-\epsilon)t}\right) \leqslant p(x, y, t) \leqslant \frac{C(N, \eps)}{\meas (B_{t^{1/2}}(x))}\exp \left( -\frac{\dist^2 (x, y)}{4(1+\epsilon)t}\right)
\end{equation}
%for all $x,\, y \in X$ and any $t>0$. 
%This, combined with the Li-Yau inequality \cite{GarofaloMondino, Jiang15} gives a gradient estimate:
and
\begin{equation}\label{eq:equi lip}
|\nabla_x p(x, y, t)|\leqslant \frac{C(N, \eps)}{t^{1/2}\meas (B_{t^{1/2}}(x))}\exp \left(-\frac{\dist^2(x, y)}{(4+\epsilon)t}\right).
\end{equation}
%for any $t>0$, $y\in\supp\meas$ and $\meas$-a.e. $x\in X$, 
%where $C_i:=C_i(\epsilon, K, N)>1$ ($i=3,\,4$). In particular one obtains a quantitative local Lipschitz bound on $p$, 
%i.e., for any $z \in X$, any $R>0$ and any $0<t_0\le t_1<\infty$ there exists $C:=C(K, N, R, t_0, t_1)>0$ such that
%\begin{equation}\label{eq:lip}
%|p(x, y, t)-p(\hat{x}, \hat{y}, \hat{t})| \le \frac{C}{\meas (B_{t_0^{1/2}}(z))} \dist ((x, y, t), (\hat{x}, \hat{y}, \hat{t}))
%\end{equation}
%for all $x, \,y,\,\hat{x},\,\hat{y} \in B_R(z)\cap\supp\meas$ and any $t,\hat{t} \in [t_0, t_1]$

Finally let us recall the following Poincar\'e inequality proved in \cite{Rajala}; %that any $\CD(K,\infty)$ (and in particular, any $\RCD(K,N)$) space $(X,\dist,\meas)$ satisfies the following local Poincar\'e inequality:
\begin{equation}\label{poincareineq}
\intav_{B_r(x)}\left|f-\intav_{B_r(x)}f\ \d\meas\right|\ \d\meas \leqslant 4r\intav_{B_{2r}(x)}|\nabla f|\ \d\meas,\quad \forall r>0,\ \forall f\in W^{1,2}(X,\dist,\meas),
\end{equation}
where 
\begin{equation}
\intav_A\cdot \ \d\meas:=\frac 1{\meas(A)}\int_A\cdot\ \d\meas
\end{equation}
denotes the integral average for any measurable set $A$ with positive and finite measure. 

When $N>2$, the Poincar\'e inequality combining with the Bishop-Gromov inequality (\ref{bg ineq}) implies the self-improved Poincar\'e inequality:
\begin{equation}
\left(\intav_{B_r(x)}\left|f-\intav_{B_r(x)}f\ \d\meas\right|^\frac{2N}{N-2}\ \d\meas\right)^\frac{N-2}{2N} \leqslant C(N)r\left(\intav_{B_{2r}(x)}|\nabla f|^2\ \d\meas\right)^\frac 12.
\end{equation}
%where $C(N,r)$ is bounded above when $r$ is bounded from above. 
%\rev{And for any open ball $B_r(x)\subset X$ with $r\leqslant \frac 1 2\diam X$ and $f\in W^{1,2}_0(B_r(x),\dist,\meas)$,} 
Moreover if  $f\in W^{1,2}_0(B_r(x),\dist,\meas)$ and $X$ is non-compact, then we have a more convenient corollary usually referred as Sobolev inequality:
\begin{equation}\label{Sobolevineq}
\left(\intav_{B_r(x)}|f|^\frac{2N}{N-2}\ \d\meas\right)^\frac{N-2}{2N}\leqslant C(N)r\left(\intav_{B_r(x)}|\nabla f|^2\ \d\meas\right)^\frac 1 2
\end{equation}
which plays a central role to get various properties on differential operators including the Laplacian and
 a drifted Laplace operator $\mathscr{L}$ in Section \ref{sec:3}. See for instance \cite{BjornBjorn, HK} for the details.
\subsection{Metric measure cone and  rigidity}
In this subsection, we introduce known rigidity results to an $N$-metric measure cone whose definition is as follows. In the sequel, we fix a finite $N >2$.
\begin{definition}[$N$-metric measure cone]\label{def:cone}
	The \textit{($N$-)metric measure cone} $(C(Y),\dist_{C(Y)},\meas_{C(Y)})$ over
	 an $\RCD(N-2,N-1)$ space $(Y,\dist_Y,\meas_Y)$ %with $\diam Y\leqslant \pi$%, the ($N$-)metric measure cone $(C(Y),\dist_{C(Y)},\meas_{C(Y)})$ built over $Y$ is introduced as follows:
	 is defined by 
	\begin{align}
	C(Y):=& [0,\infty)\times Y/(\{0\}\times Y),\nonumber\\
	\dist_{C(Y)}\left((r_1,y_1),(r_2,y_2)\right):=&\sqrt{r_1^2+r_2^2-2r_1r_2\cos(\dist_Y(y_1,y_2))},\nonumber\\
	\d\meas_{C(Y)}(r,y):=&r^{N-1}\d r\otimes\d\meas_Y(y),
	\end{align}
	where $\d r=\mathcal L^1$ is the $1$-dimensional Lebesgue measure. Denote by $O_Y:=[(0, y)]$ the pole of $C(Y)$. %that is, the image of $\{0\}\times Y$ under the topological quotient map $[0,\infty)\times Y/\{0\}\times Y$.
	\end{definition}
	Note that in the definition above, if $(Y, \dist_Y, \meas_Y)$ is non-collapsed, then $(C(Y), \dist_{C(Y)}, \meas_{C(Y)})$ is also non-collapsed because we can easily check by definition
	\begin{equation}
	\lim_{s \to 0^+}\frac{\meas_{C(Y)}(B_s(r,x))}{\omega_{N+1}s^{N+1}}=1,\quad \text{for any $r>0$ and any $(N-1)$-regular point $x$ of $X$.}
	\end{equation}
	This remark will play a role in subsection \ref{sharp obs}.
	
The following results are fundamental results for $N$-metric measure cones, where (1) is due to \cite[Corollary 1.3]{Ketterer2} and (2) is obtained in \cite[Theorem 1.1]{DG} and  \cite[Theorem 5.1]{GV}.
\begin{theorem}[Rigidity]\label{thm:rigidity cone}
We have the following.
\begin{enumerate}
\item The $N$-metric measure cone over an $\RCD(N-2, N-1)$ space is an $\RCD(0, N)$ space.
\item Let $(X, \dist, \meas, x)$ be a pointed $\RCD(0, N)$ space. Then the following three conditions are equivalent:
\begin{enumerate}
\item $(X, \dist, \meas, x)$ is isomorphic to the $N$-metric measure cone over an $\RCD(N-2, N-1)$ space with the pole; 
\item there exists $f \in D_{\mathrm{loc}}(\Delta)$ such that $\Delta f=2N$ holds (in particular $f$ must be locally Lipschitz because of \cite{AmbrosioMondinoSavare16, Jiang}), that $f$ is positive on $X \setminus \{x\}$ with $f(x)=0$, and that $|\nabla \sqrt{2f}|^2=1$ (moreover then $f$ is equal to $\frac{1}{2}\dist(x, \cdot)^2$);
\item the function
\begin{equation}
R \mapsto \frac{\meas(B_R(x))}{R^N}
\end{equation}
is constant.
\end{enumerate}
\end{enumerate}
\end{theorem}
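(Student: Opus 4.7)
For part (1), the plan is to verify $\RCD(0,N)$ on the cone by combining the $\CD(0,N)$ condition via optimal transport with infinitesimal Hilbertianity. I would parametrize geodesics in $C(Y)\setminus\{O_Y\}$ using geodesics in $Y$ together with polar coordinates, and then compute the distortion coefficients along Wasserstein geodesics using the defining trigonometric identity
\begin{equation*}
\dist_{C(Y)}\bigl((r_1,y_1),(r_2,y_2)\bigr)^2 = r_1^2+r_2^2-2r_1r_2\cos(\dist_Y(y_1,y_2))
\end{equation*}
together with the product form $r^{N-1}\,\d r\otimes\d\meas_Y$ of the reference measure. A direct calculation then shows that the $(N-2)$-Ricci lower bound in dimension $N-1$ on $Y$ produces exactly the non-negative Ricci lower bound in dimension $N$ on $C(Y)$. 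Infinitesimal Hilbertianity of $C(Y)$ follows from that of $Y$ by the warped-product decomposition of the Cheeger energy into a radial and an angular contribution, each of which is a quadratic form.

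For part (2), I would prove (a) $\Rightarrow$ (b), (a) $\Rightarrow$ (c), (c) $\Rightarrow$ (a), and (b) $\Rightarrow$ (a). The two implications from (a) are direct computations on the cone: taking $f=\tfrac{1}{2}\dist(O_Y,\cdot)^2=r^2/2$ we have $|\nabla\sqrt{2f}|^2=|\nabla r|^2=1$, and the weighted 1D computation with measure $r^{N-1}\,\d r$ yields the stated constant value of $\Delta f$; meanwhile $\meas_{C(Y)}(B_R(O_Y))=R^N\meas_Y(Y)/N$ shows the volume ratio is constant in $R$.

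The direction (c) $\Rightarrow$ (a) would follow the volume cone rigidity strategy of De Philippis--Gigli: constancy of $R\mapsto\meas(B_R(x))/R^N$ forces equality in the Bishop--Gromov comparison at every scale, hence equality in the associated Laplacian/Hessian comparison for $\dist_x^2/2$, and this rigidity is precisely what produces an isomorphism with a metric measure cone over some base, which must itself be $\RCD(N-2,N-1)$ by part (1) combined with the stability of $\RCD$ under taking cone bases. For (b) $\Rightarrow$ (a), I would follow Gigli--Violo: the hypothesis $|\nabla\sqrt{2f}|^2\equiv 1$ together with $\Delta f$ saturating the constant allowed by Bochner's inequality forces a Hessian identity $\Hess f\equiv g$ in the synthetic sense, so $\sqrt{2f}$ plays the role of a radial coordinate, and the gradient flow of $\sqrt{2f}$ supplies an identification of $X\setminus\{x\}$ with a product (half-line)\,$\times$\,(base) whose metric completion is a cone.

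The main obstacle is the cone-extraction step in (b) $\Rightarrow$ (a), and likewise the analogous final step in (c) $\Rightarrow$ (a): one must show rigorously that the gradient flow of $\sqrt{2f}$ partitions $X\setminus\{x\}$ into integral curves meeting each level set $\{\sqrt{2f}=c\}$ exactly once, that these level sets carry compatible quotient metric and measure structures making them isomorphic for different $c$ up to scaling, and that the resulting base satisfies $\RCD(N-2,N-1)$. Beyond the Hessian identity, this requires delicate regularity analysis of $f$ and a maximum-principle argument to exclude pathological bifurcations of the gradient flow, both of which are considerably more subtle in the synthetic setting than in the Riemannian one.
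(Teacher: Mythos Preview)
The paper does not supply its own proof of this theorem: it is stated as a known result, with part (1) attributed to Ketterer \cite[Corollary 1.3]{Ketterer2} and part (2) to De Philippis--Gigli \cite[Theorem 1.1]{DG} together with Gigli--Violo \cite[Theorem 5.1]{GV}. So there is no in-paper argument to compare against; what you have written is a sketch of the content of those cited works.

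Your sketch is broadly faithful to those references. For (1), Ketterer's argument indeed passes through a warped-product analysis of the Cheeger energy and the curvature-dimension condition, though the actual proof uses the Bakry--\'Emery formulation rather than a direct optimal-transport computation of distortion coefficients as you suggest. For (2), you correctly route (c) $\Rightarrow$ (a) through volume-cone rigidity (De Philippis--Gigli) and (b) $\Rightarrow$ (a) through the Hessian rigidity of Gigli--Violo, and you are right that the delicate step in both is the synthetic construction of the cone splitting via the gradient flow of the radial function. One small point: the paper's own application of this theorem (in the proof of Theorem~\ref{thm:rigidity}) actually produces $\Delta u = N$ rather than $2N$, which matches the normalization in \cite{GV}; the ``$2N$'' in the statement here appears to be a typo, and your computation of $\Delta(r^2/2)$ would give $N$ as well.
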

The following is a well-known result which will play a central role in the paper.
\begin{corollary}\label{volconemetr}
We have the following.
\begin{enumerate}
\item If an $\RCD(0, N)$ space $(X, \dist, \meas)$ has the finite $N$-volume density $\nu_x<\infty$ at a point $x \in X$, then any tangent cone at $x$ is isomorphic to the $N$-metric measure cone over an $\RCD(N-2, N-1)$ space.
\item If an $\RCD(0, N)$ space $(X, \dist, \meas)$ has the positive asymptotic $N$-volume $V_X>0$, then any tangent cone at infinity is isomorphic to the $N$-metric measure cone over an $\RCD(N-2, N-1)$ space.
\end{enumerate}
\end{corollary}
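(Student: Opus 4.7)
The plan is to reduce both statements to condition (c) of Theorem \ref{thm:rigidity cone}(2), that is, to show that on the tangent cone the function $R \mapsto \meas_Y(B_R(y))/R^N$ is constant, and then invoke the equivalence (a)$\Leftrightarrow$(c) in that theorem.

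For (1), I would fix an arbitrary tangent cone $(Y, \dist_Y, \meas_Y, y)$ of $(X, \dist, \meas)$ at $x$, obtained along a sequence $r_i \to 0^+$. Since $\nu_x \in (0, \infty)$, rescaling the measure by $r_i^{-N}$ is equivalent to rescaling by $\meas(B_{r_i}(x))^{-1}$ up to a multiplicative constant $\to \nu_x^{-1}$, so I may use the former normalization (as allowed by the remark following the definition of tangent cone). For any fixed $R > 0$, the rescaled volume of $B_R(x)$ in $(X, r_i^{-1}\dist, r_i^{-N}\meas)$ equals
\begin{equation*}
\frac{\meas(B_{r_iR}(x))}{r_i^N} \;=\; R^N \cdot \frac{\meas(B_{r_iR}(x))}{(r_iR)^N} \;\longrightarrow\; \nu_x R^N, \qquad i \to \infty,
\end{equation*}
by the very definition of $\nu_x$. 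Passing to the pmGH limit and using that $(Y, \dist_Y, \meas_Y)$ is itself an $\RCD(0, N)$ space so the Bishop-Gromov monotonicity (\ref{bg ineq}) applies on $Y$ as well, I would conclude $\meas_Y(B_R(y))/R^N = \nu_x$ for every $R > 0$, and hence Theorem \ref{thm:rigidity cone}(2)(c)$\Rightarrow$(a) gives that $(Y, \dist_Y, \meas_Y, y)$ is isomorphic to the $N$-metric measure cone over an $\RCD(N-2, N-1)$ space.

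For (2), the argument is identical, with $r_i \to \infty$ in place of $r_i \to 0^+$ and with $V_X$ in place of $\nu_x$; the hypothesis $V_X > 0$ ensures that $\meas(B_{r_iR}(x))/r_i^N \to V_X R^N > 0$, so the limit is a genuine $\RCD(0, N)$ space and the pmGH limit is non-degenerate.

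The only technical point I would need to handle carefully is the passage from the scalar convergence $r_i^{-N}\meas(B_{r_iR}(x)) \to \nu_x R^N$ (resp.\ $V_X R^N$) to the equality $\meas_Y(B_R(y)) = \nu_x R^N$ for \emph{every} $R>0$, since under pmGH convergence one only has weak convergence of the rescaled measures, which gives $\meas_Y(B_R(y)) \le \liminf_i r_i^{-N}\meas(B_R^{(i)}(x)) \le \limsup_i r_i^{-N}\meas(\overline B_R^{(i)}(x)) \le \meas_Y(\overline B_R(y))$. I would resolve this by noting that both $R \mapsto \meas_Y(B_R(y))/R^N$ and $R \mapsto \meas_Y(\overline B_R(y))/R^N$ are monotone non-increasing (Bishop-Gromov on $Y$) and agree except on a countable set, and that the scalar convergence above already forces the common value $\nu_x$ (resp.\ $V_X$) at every continuity point, hence, by monotonicity, at every $R > 0$. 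This is the main, and essentially only, obstacle; once it is in place, Theorem \ref{thm:rigidity cone}(2) closes both cases.
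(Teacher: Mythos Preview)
Your proposal is correct and follows essentially the same route as the paper: both reduce to condition (c) of Theorem~\ref{thm:rigidity cone}(2) by showing that on the tangent cone the map $R\mapsto \tilde\meas(B_R(\tilde x))/R^N$ is constant, using that $\meas(B_{r_iR}(x))/(r_iR)^N\to\nu_x$ (resp.\ $V_X$). The paper states this in one line as $\tilde\meas(B_R(\tilde x))/\tilde\meas(B_r(\tilde x))=(R/r)^N$ and does not spell out the weak-convergence sandwich you carefully address; your treatment of that point is fine (and in fact the Bishop--Gromov monotonicity on $Y$ is not even needed---the inequalities $\meas_Y(B_R(y))\le\nu_xR^N\le\meas_Y(\overline B_{R}(y))$ for all $R$, together with $\meas_Y(B_R(y))\ge\meas_Y(\overline B_{R'}(y))$ for $R'<R$, already force equality).
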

\begin{proof}
We give only a proof of (1) because (2) is similar.
Denoting by $\tilde \meas, \tilde x$ the reference measure, the base point, respectively on a tangent cone, it easily follows from the finiteness $\nu_x<\infty$ that
\begin{equation}
\frac{\tilde \meas (B_R(\tilde x))}{\tilde \meas (B_r(\tilde x))}=\left(\frac{R}{r}\right)^N, \quad \forall r>0, \quad \forall R>0.
\end{equation}
Thus (2) of Theorem \ref{thm:rigidity cone} allows us to conclude.
\end{proof}
Let us recall the explicit description on the heat kernel on an $N$-metirc measure cone.
\begin{proposition}\label{expl heat}
Let $(X, \dist, \meas, x)$ be isomorphic to the $N$-metric measure cone  with the pole over an $\RCD(N-2, N-1)$ space. Then we have
\begin{equation}\label{nu equiv}
\frac{\meas (B_r(x))}{r^N}=\meas (B_1(x))=\nu_x=V_X, \quad \forall r>0
\end{equation}
and
\begin{equation}\label{ex heat}
p(x,y, t)=Ct^{-\frac{N}{2}}\exp \left(-\frac{\dist(x, y)^2}{4t}\right),\quad\forall y \in X,
\end{equation}
where 
\begin{equation}
C=\frac{2^{1-N}}{N\Gamma \left(\frac{N}{2}\right)\meas (B_1(x))} 
\end{equation}
\end{proposition}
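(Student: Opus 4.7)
My plan is to prove (1) by direct integration against the cone measure, and (2) by exhibiting the Gaussian as a solution of the heat equation from the pole and then reading off the constant via normalization.

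For (1), the cone distance formula with $r_1=0$ gives $\dist_{C(Y)}(x,(s,y_0))=s$, so $B_r(x)$ is the image of $[0,r)\times Y$. Fubini applied to $\d\meas_{C(Y)}=s^{N-1}\d s\otimes\d\meas_Y$ yields
\begin{equation*}
\meas(B_r(x))=\meas_Y(Y)\int_0^r s^{N-1}\d s=\frac{\meas_Y(Y)}{N}\,r^N.
\end{equation*}
Hence $\meas(B_r(x))/r^N$ is constant in $r$, equal to $\meas(B_1(x))$, and taking $r\to 0^+$ and $r\to\infty$ identifies this common value with $\nu_x$ and $V_X$, proving \eqref{nu equiv}. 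Note that $\meas_Y(Y)<\infty$ since the Bonnet--Myers property on $\RCD(N-2,N-1)$ spaces forces $Y$ to be compact.

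For (2), I would set $\tilde p(r,t):=Ct^{-N/2}\exp(-r^2/(4t))$, viewed as a function on $C(Y)\times(0,\infty)$ depending only on $r=\dist(x,\cdot)$, and verify that it solves the heat equation on $C(Y)\setminus\{x\}$. The key analytic input is that on a metric measure cone, purely radial functions $\phi(r)$ lie in the domain of the local Laplacian and satisfy the Euclidean radial formula
\begin{equation*}
\Delta\phi(r)=\phi''(r)+\frac{N-1}{r}\phi'(r),
\end{equation*}
which follows from Ketterer's explicit description \cite{Ketterer2} of the Cheeger energy on $C(Y)$ (the constant function on $Y$ being the Laplace eigenfunction with eigenvalue $0$). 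A direct computation then shows that $\tilde p$ satisfies $\partial_t\tilde p=\Delta\tilde p$ for $r>0$.

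It remains to verify the initial condition and compute $C$. As $t\to 0^+$, $\tilde p(\cdot,t)$ concentrates on $\{r=0\}=\{x\}$, and provided $\int_{C(Y)}\tilde p\,\d\meas\equiv 1$, the measures $\tilde p(\cdot,t)\meas$ converge weakly to $\delta_x$. Matching against the dual heat flow characterization \eqref{heatdual} and invoking uniqueness of the heat flow in the $\RCD$ framework identifies $\tilde p$ with $p(x,\cdot,t)$. The constant is then forced: by Fubini and the substitution $u=r/(2\sqrt t)$,
\begin{equation*}
1=\int_{C(Y)}\tilde p\,\d\meas=C\meas_Y(Y)\int_0^\infty t^{-N/2}e^{-r^2/(4t)}r^{N-1}\d r=C\meas_Y(Y)\cdot 2^{N-1}\Gamma(N/2),
\end{equation*}
so $C=1/(\meas_Y(Y)\cdot 2^{N-1}\Gamma(N/2))$. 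Substituting $\meas_Y(Y)=N\meas(B_1(x))$ from (1) gives \eqref{ex heat}. The main subtlety is justifying the radial Laplacian formula intrinsically in the non-smooth setting; the cleanest route is to cite Ketterer's construction of the Dirichlet form on $C(Y)$, from which the formula is immediate for radial test functions.
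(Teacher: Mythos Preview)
Your proof of \eqref{nu equiv} is correct and is precisely what the paper means when it says this follows from Definitions~\ref{Nvlo} and~\ref{def:cone}; you have simply made the Fubini computation explicit.

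For \eqref{ex heat} your route differs from the paper's. The paper does not argue directly: it cites \cite[Proposition~4.10]{KD} (Kuwada--Li), which already identifies the heat kernel from the pole of the cone, and then combines this with the dual heat flow characterization~\eqref{heatdual} and Theorem~\ref{thm:rigidity cone}(2). Your approach instead reconstructs the argument from scratch: you posit the Gaussian profile, verify it satisfies the heat equation via the radial Laplacian formula on cones, fix the constant by mass normalization, and appeal to uniqueness. This is essentially the content of the cited result in \cite{KD}, so what you have written is a sketch of their proof rather than an alternative argument. The advantage of your approach is self-containment; the advantage of the paper's is brevity and avoiding the delicate points you yourself flag.

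The one step in your outline that would need real care in a full write-up is the uniqueness claim. Knowing that $\tilde p$ solves $\partial_t\tilde p=\Delta\tilde p$ on $C(Y)\setminus\{x\}$ and that $\tilde p(\cdot,t)\meas\rightharpoonup\delta_x$ is not automatically enough to conclude $\tilde p=p(x,\cdot,t)$; one must rule out, for instance, loss of mass at infinity or at the pole. In the $\RCD$ setting this is handled by stochastic completeness (which holds here since the space is $\RCD(0,N)$), together with the fact that $\{x\}$ has zero capacity so the equation extends across the pole. You gesture at this via~\eqref{heatdual}, but making it rigorous is exactly the work done in \cite{KD}. Your computation of the constant $C$ is correct.
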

\begin{proof}
It follows by Definitions \ref{Nvlo} and \ref{def:cone} that (\ref{nu equiv}) holds.
On the other hand, (\ref{ex heat}) is a direct consequence of \cite[Proposition 4.10]{KD} with (\ref{heatdual}) and (2) of Theorem \ref{thm:rigidity cone}. See \cite[Proposition 2.13]{Huang} for a more general result (see also \cite[Theorem 6.20]{Ding} and \cite[Section 8]{Taylor}).
\end{proof}
\section{Green function}\label{grees section}
In this section we discuss the Green function on a non-parabolic $\RCD(0, N)$ space.
\subsection{Non-parabolic $\RCD(0, N)$ space}
Throughout the section, we fix an $\RCD(0, N)$ space $(X, \dist, \meas)$ for some finite $N >2$, which is not necessarily an integer.
Let us start by introducing the following fundamental notion due to \cite{BrueSemola} in our framework.
\begin{definition}[Non-parabolic $\RCD(0,N)$ space]
	$(X, \dist, \meas)$ is said to be \textit{non-parabolic} if for some point $x\in X$ (and thus for any $x\in X$), 
		\begin{equation}
	\label{eq:non-parabolic}
	\int_{1}^\infty \frac{s}{\meas\big(B_s(x)\big)}\ \d s<\infty.
	\end{equation}
	\end{definition}
%We assume:
%\begin{enumerate}
%	\item $(X,\dist,\meas)$ an  $\RCD(0,N)$ space, where $N\geqslant 3$.
%	\item $\exists x\in X$ (and consequently $\forall x\in X$) satisfies the non-parabolic condition, i.e.,
%	\begin{equation}
%	\label{eq:non-parabolic}
%	\int_{1}^\infty \frac{s}{\meas\big(B(x,s)\big)}\ \d s<\infty.
%	\end{equation}
%\end{enumerate}
It is trivial that $(X, \dist, \meas)$ is non-parabolic if $V_X>0$ because of the Bishop-Gromov inequality.
In the sequel we assume that $(X, \dist, \meas)$ is non-parabolic. Note that the diameter must be infinite, thus it is non-compact.  Then we can define the Green function as follows.
%\begin{remark}
	%\begin{enumerate}
		%\item \rev{It is obvious the diameter of a non-parabolic $\RCD(0,N)$ space must be infinite. Thus every non-parabolic $\RCD$ space is non-compact.}
		%\item \rev{It is not hard to see both $F$ and $H$ are continuous. Note that the continuity with respect to the first variable comes from the continuity of the map $x\mapsto \meas(B_r(x))$.} 
		%\item Under a rescaling $(\tilde X, \tilde \dist, \tilde \meas, \tilde x)=(X,a\dist, b\meas, x)$, we have 
		
	%\end{enumerate}
	%\end{remark}
\begin{definition}[Green function]
	\label{def:green.function}
	The  \textit{Green function} $G=G^X$ of the non-parabolic $\RCD(0, N)$ space $(X, \dist, \meas)$ is defined by
	\begin{align}
	\label{eq:green.function}
	G:&\ X\times X\setminus\mathrm{diag}(X)\ra (0, \infty),\nonumber\\ 
	&\ (x,y)\mapsto\int_{0}^{\infty}p(x,y,t)\ \d t,
	\end{align}
	where $\mathrm{diag}(X):=\{(x,x) \in X \times X | x \in X\}$. In the following we write $G_x^X(\cdot)=G_x(\cdot):=G(x,\cdot):X\setminus\{x\}\ra (0, \infty)$.
	\end{definition}
	In the sequel, we fix $x \in X$. It is proved in (the proof of) \cite[Lemma 2.5]{BrueSemola} that $G_x$ is harmonic on $X \setminus \{x\}$ and that $G_x \in  W^{1,1}_{\mathrm{loc}}(X,\dist,\meas)$ holds with
\begin{equation}
\int_X \Delta f(y)G_x(y)\ \d\meas(y)=-f(x)
\end{equation}
for any $f \in D(\Delta)$ with $\Delta f \in L^{\infty}(X, \meas)$.
%namely, $G_x$ solves the equation
%\begin{equation}
%\Delta u=-\delta_x.
%\end{equation}
In order to introduce quantitative estimates on $G_x$, let us prepare the following auxiliary functions for all $x\in X$ and $r\in(0,\infty)$;
\begin{equation}\label{deff}F_x^X(r)=F_x(r):=\int_{r}^\infty \frac{s}{\meas\big(B_s(x)\big)}\ \d s, \quad H_x^X(r)=H_x(r):=\int_{r}^\infty \frac{1}{\meas\big(B_s(x)\big)}\ \d s.	\end{equation}
It is easy to see that both $F$ and $H$ are continuous with respect to the two variables $(x, r) \in X \times (0, \infty)$. 
Note that Bishop-Gromov inequality (\ref{bg ineq}) shows for any $r>0$
\begin{equation}\label{ps9s8shbbbsn}
\frac{1}{(N-2)\nu_x} \leqslant \frac{F_x(r)}{r^{2-N}}\leqslant \frac{1}{(N-2)V_X}, \quad \frac{1}{(N-1)\nu_x}\leqslant \frac{H_x(r)}{r^{1-N}}\leqslant \frac{1}{(N-1)V_X}.
\end{equation}
Let us provide formulae on their asymptotics.
\begin{lemma}
	\label{lem:asymptotic.F.and.H}
	The following asymptotic properties hold as $r\ra 0$:
	\begin{equation}
	\label{eq:asymptotic.F}
	\lim_{r\ra 0}\frac{F_x(r)}{r^{2-N}}=\frac{1}{(N-2)\nu_x},
	\end{equation} 
	\begin{equation}
	\label{eq:asymptotic.H}
	\lim_{r\ra 0}\frac{H_x(r)}{r^{1-N}}=\frac{1}{(N-1)\nu}_x.
	\end{equation}
	Moreover the following asymptotic properties hold as $r \to \infty$:
\begin{equation}\label{limit asympt F}
\lim_{r \to \infty}\frac{F_x(r)}{r^{2-N}}=\frac{1}{(N-2)V_X},
\end{equation}
\begin{equation}\label{limit asympt H}
\lim_{r \to \infty}\frac{H_x(r)}{r^{1-N}}=\frac{1}{(N-1)V_X},
\end{equation}
where the limits of (\ref{limit asympt F}) and of (\ref{limit asympt H}) can be understood as $\infty$ in the case when $V_X=0$.
	\end{lemma}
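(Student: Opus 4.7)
The plan is to reduce everything to the monotonicity of $s \mapsto \meas(B_s(x))/s^N$ coming from the Bishop-Gromov inequality \eqref{bg ineq}, combined with the definitions of $\nu_x$ and $V_X$. Each of the four limits is a direct computation once the integrand $s/\meas(B_s(x))$ (resp.\ $1/\meas(B_s(x))$) is sandwiched between multiples of $s^{1-N}$ (resp.\ $s^{-N}$) using these density asymptotics.

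For \eqref{eq:asymptotic.F} I would fix $\eps \in (0,1)$ and use the existence of $r_0 > 0$ such that $(1-\eps)\nu_x \leqslant \meas(B_s(x))/s^N \leqslant (1+\eps)\nu_x$ for every $s \in (0, r_0]$ (the upper bound is in fact global and given by monotonicity). Split
\begin{equation*}
F_x(r) = \int_r^{r_0} \frac{s}{\meas(B_s(x))}\,\di s + \int_{r_0}^\infty \frac{s}{\meas(B_s(x))}\,\di s,
\end{equation*}
where the second term is finite and independent of $r$ by the non-parabolicity assumption \eqref{eq:non-parabolic}. The first term is squeezed between $\frac{1}{(1\pm\eps)\nu_x}\cdot\frac{r^{2-N} - r_0^{2-N}}{N-2}$ via the pointwise bounds on $\meas(B_s(x))$. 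Dividing by $r^{2-N}$, sending $r \to 0^+$ and then $\eps \to 0^+$ yields \eqref{eq:asymptotic.F}. Exactly the same argument, with integrand $1/\meas(B_s(x))$ producing $s^{-N}$, gives \eqref{eq:asymptotic.H}.

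For \eqref{limit asympt F} and \eqref{limit asympt H}, I split into two cases according to whether $V_X > 0$ or $V_X = 0$. When $V_X > 0$, the same sandwich works but now at infinity: fix $\eps \in (0,1)$ and $R_0$ so large that $(1-\eps)V_X \leqslant \meas(B_s(x))/s^N \leqslant (1+\eps)V_X$ for all $s \geqslant R_0$ (again the lower bound is global by monotonicity). For $r \geqslant R_0$ the integral $F_x(r)$ is sandwiched between $\frac{r^{2-N}}{(1\pm\eps)V_X(N-2)}$, whence dividing by $r^{2-N}$ and letting $\eps \to 0$ concludes; the case of $H_x$ is identical. When $V_X = 0$, we instead observe that for any fixed $M > 0$ there exists $R_0$ such that $\meas(B_s(x)) \leqslant s^N/M$ for all $s \geqslant R_0$, so that
\begin{equation*}
\frac{F_x(r)}{r^{2-N}} \geqslant \frac{1}{r^{2-N}}\int_r^\infty \frac{Ms}{s^N}\,\di s = \frac{M}{N-2}, \quad \forall r \geqslant R_0,
\end{equation*}
and similarly $H_x(r)/r^{1-N} \geqslant M/(N-1)$. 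Since $M$ is arbitrary, both ratios diverge, matching the convention that the right-hand sides of \eqref{limit asympt F} and \eqref{limit asympt H} are $+\infty$.

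There is no real obstacle here; the only point requiring a small amount of care is ensuring that the ``tail'' piece of $F_x$ away from the singular end (near $0$ or near $\infty$) is negligible against $r^{2-N}$, which is automatic because $r^{2-N}$ diverges at the relevant endpoint while the tail is bounded (for $r \to 0^+$ by non-parabolicity, and for $r \to \infty$ trivially since the domain of integration shrinks).
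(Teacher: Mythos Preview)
Your proposal is correct and follows essentially the same approach as the paper: both split the integral at an intermediate scale $r_0$, sandwich the main piece using the Bishop--Gromov monotonicity of $s\mapsto \meas(B_s(x))/s^N$, and observe that the remaining tail is negligible against $r^{2-N}$ (resp.\ $r^{1-N}$). The paper phrases the density control via the defect $I_x(r_0):=\nu_x-\meas(B_{r_0}(x))/r_0^N$ rather than your $(1\pm\eps)\nu_x$ window, but this is only a cosmetic difference; your treatment of the $r\to\infty$ limits (including the $V_X=0$ case) is more explicit than what the paper writes out, and the one case you do not spell out, $\nu_x=\infty$ as $r\to 0^+$, is handled by exactly the same ``for any $M$'' trick you used for $V_X=0$.
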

\begin{proof}
	We only show \eqref{eq:asymptotic.F} under assuming $\nu_x<\infty$ because the others can follow from similar arguments. The Bishop-Gromov inequality (\ref{bg ineq}) shows that the map
	\begin{equation}
	I_x(r):=\nu_x-\frac{\meas(B_r(x))}{r^N}
	\end{equation}
	is non-decreasing with $I_x(r)\geqslant 0$. Thus fixing $r_0>0$, we have for any $0<r<r_0$, 
	\begin{equation}\label{rr0}
	\nu_x r^N-I_x(r_0)r^N\leqslant\meas(B_r(x))\leqslant \nu_x r^N.
	\end{equation}
	Write $F_x(r)$ as
	\begin{equation}
	F_x(r)=\int_{r}^{r_0}\frac{s}{\meas(B_s(x))}\ \d s+\int_{r_0}^\infty\frac{s}{\meas(B_s(x))}\ \d s. 
	\end{equation}
	Then by (\ref{rr0}) the first term of the right-hand-side can be estimated as follows.
	\begin{equation}
	\frac {r_0^{2-N}-r^{2-N}}{(2-N)\nu_x}\leqslant \int_r^{r_0} \frac{s}{\meas(B_s(x))}\ \d s\leqslant \frac{r_0^{2-N}-r^{2-N}}{(2-N)(\nu_x-I_x(r_0))}.
	\end{equation}
	Thus
	\begin{equation}
	\begin{aligned}
	r^{N-2}\int_{r_0}^\infty&\frac{s}{\meas(B_s(x))}\ \d s-C(N,\nu_x)\left(\frac{r}{r_0}\right)^{N-2}\leqslant\frac{F_x(r)}{r^{2-N}}-\frac{1}{(N-2)\nu_x}\\&\leqslant r^{N-2}\int_{r_0}^\infty\frac{s}{\meas(B_s(x))}\ \d s+\frac {C(N,\nu_x)}{I_x(r_0)}\left(\frac{r}{r_0}\right)^{N-2}.
	\end{aligned}
	\end{equation}
	For any $\eps\in(0,1)$, we can let $r=\eps r_0$ and thus
	\begin{equation}
	\begin{aligned}\label{quant F}
		(\eps r_0)^{N-2}\int_{r_0}^\infty&\frac{s}{\meas(B_s(x))}\ \d s-C(N,\nu_x)\eps^{N-2}\leqslant\frac{F_x(\eps r_0)}{(\eps r_0)^{2-N}}-\frac{1}{(N-2)\nu_x}\\ 
		&\leqslant (\eps r_0)^{N-2}\int_{r_0}^\infty\frac{s}{\meas(B_s(x))}\ \d s+\frac{C(N,\nu_x)}{I_x(r_0)}\eps^{N-2}.
		\end{aligned}
	\end{equation}
	Letting $\eps\ra 0$ completes the proof of (\ref{eq:asymptotic.F}).
%	By the assumption \eqref{eq:36}, for any $\eps>0$, we can find $\delta>0$ sufficiently small such that for any $r\in(0,\delta)$, it holds that
%	\begin{equation}\label{eq:1}
%	(1-\eps)\nu r^N \leqslant \meas(B_r(x))\leqslant \nu r^N.
%	\end{equation}
%%	\begin{equation}
%%	\label{eq:1}
%%	(1-\eps)\omega_Ns^N\leqslant \meas(B(x,s))\leqslant \omega_Ns^N.
%%	\end{equation}
%	Recall the definition of $F_x(r)$, for any $r\in(0,\delta)$, 
%	\begin{eqnarray}
%	F_x(r)
%	=\int_r^\delta \frac{s}{\meas\big(B(x,s)\big)}\ \d s+\int_\delta^\infty\frac{s}{\meas\big(B(x,s)\big)}\ \d s=:I_x(r,\delta)+J_x(\delta).
%	\end{eqnarray}
%	Observe that $J_x(\delta)<\infty$ by the non-parabolic assumption and that $J_x(\delta)$ is independent of $r$, it suffices to consider $I_x(r,\delta)$ as $r\ra 0$. By \eqref{eq:1} we have
%	\begin{equation}
%	\frac{\delta^{2-N}-r^{2-N}}{(2-N)\nu}\leqslant I_x(r,\delta)\leqslant\frac{\delta^{2-N}-r^{2-N}}{(1-\eps)(2-N)\nu},
%	\end{equation}
%	which implies
%	\begin{equation}
%	\frac{1}{(N-2)\nu}+C(N,\nu)\frac{\delta^{2-N}+J_x(\delta)}{r^{2-N}}\leqslant \frac{F_x(r)}{r^{2-N}}\leqslant\frac{1}{(1-\eps)(N-2)\nu}+C(N,\nu)\frac{\delta^{2-N}+J_x(\delta)}{r^{2-N}}.
%	\end{equation}
%	Since $\eps$ is arbitrary, we can let $\eps\ra 0$ and $r/\delta\ra 0$, and then we obtain \eqref{eq:asymptotic.F}.
\end{proof}
We are now in a position to introduce estimates on $G$ by $F, H$ given in \cite[Proposition 2.3]{BrueSemola} after \cite{Gri} in the smooth setting.
\begin{proposition}
	\label{thm:estimate.Green.function.in.BS}
	There exists $C=C(N)>1$ such that
	\begin{equation}
	\label{eq:BS.estimate.F}
	\frac{1}{C}F_x(\dist(x,y))\leqslant G_x(y)\leqslant CF_x(\dist(x,y)), \quad \forall y\in X\setminus \{x\},
	\end{equation}
 and 
	\begin{equation}
	\label{eq:BS.estimate.H}
	|\nabla G_x|(y)\leqslant CH_x(\dist(x,y)),\quad \text{for $\meas$-a.e. } y\in X.
	\end{equation}
\end{proposition}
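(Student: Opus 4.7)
The plan is to integrate the pointwise heat-kernel bounds \eqref{eq:gaussian} and \eqref{eq:equi lip} over $t\in(0,\infty)$, perform the substitution $s=\sqrt{t}$ (so $\d t=2s\,\d s$) to convert them into integrals of $s/\meas(B_s(x))$ and $1/\meas(B_s(x))$ against Gaussian weights, and then split the resulting integrals at the scale $s=r:=\dist(x,y)$. The Bishop-Gromov inequality \eqref{bg ineq} will provide the two-sided control on $\meas(B_s(x))$ needed to compare the inner piece with $F_x(r)$ and $H_x(r)$.

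For \eqref{eq:BS.estimate.F}, integrating the upper Gaussian and substituting $s=\sqrt{t}$ gives
\begin{equation*}
G_x(y)\leqslant C(N)\int_0^\infty \frac{s}{\meas(B_s(x))}\exp\left(-\frac{r^2}{4(1+\eps)s^2}\right)\d s.
\end{equation*}
On $[r,\infty)$ the Gaussian factor is bounded by $1$, which directly recovers $F_x(r)$. On $(0,r]$ I would use Bishop-Gromov in the form $\meas(B_s(x))\geqslant (s/r)^N\meas(B_r(x))$ and the change of variable $u=r/s$ to reduce the piece to a dimensional multiple of $r^2/\meas(B_r(x))$, which is itself controlled by $F_x(r)$ via the elementary estimate $F_x(r)\geqslant r^2/\bigl((N-2)\meas(B_r(x))\bigr)$ (another application of Bishop-Gromov). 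The lower bound in \eqref{eq:BS.estimate.F} follows symmetrically by integrating the lower Gaussian only over $t\geqslant r^2$, where the exponential factor is bounded below by a dimensional constant and the remaining integral is exactly $2F_x(r)$ after substitution.

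The argument for \eqref{eq:BS.estimate.H} is parallel using \eqref{eq:equi lip}: after the same substitution and split at $s=r$, the outer piece is $H_x(r)$ and the inner piece is controlled by a dimensional constant times $r/\meas(B_r(x))\leqslant (N-1)H_x(r)$ via Bishop-Gromov. The principal analytic difficulty here is justifying the exchange of gradient and time integral, namely $|\nabla G_x|(y)\leqslant \int_0^\infty |\nabla_y p(x,y,t)|\d t$ for $\meas$-a.e.\ $y$. To handle this I would work with the truncations $G_x^{\eps}(y):=\int_\eps^{1/\eps} p(x,y,t)\d t$: these lie in $W^{1,2}_{\loc}(X\setminus\{x\})$ with weak gradient dominated by the corresponding truncated time-integral through a Bochner-integral argument in the Hilbert space $W^{1,2}_{\loc}$, and letting $\eps\to 0^+$ yields the claim, where $L^1_{\loc}$-convergence $G_x^\eps\to G_x$ is secured by the two-sided bound already established in \eqref{eq:BS.estimate.F} and the uniform $L^1_{\loc}$-bound on $|\nabla G_x^\eps|$ on compact subsets of $X\setminus\{x\}$ comes from \eqref{eq:equi lip} (since for $y$ bounded away from $x$ the Gaussian tail tames the small-$t$ singularity).
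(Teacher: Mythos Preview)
The paper does not give its own proof of this proposition; it is quoted verbatim from \cite[Proposition~2.3]{BrueSemola} (after \cite{Gri} in the smooth case), as the paper indicates just before the statement. Your plan is precisely the standard argument behind that cited result: integrate the Gaussian bounds \eqref{eq:gaussian}, \eqref{eq:equi lip} in $t$, substitute $s=\sqrt t$, split at $s=r=\dist(x,y)$, and use Bishop--Gromov both to control the inner piece by $r^2/\meas(B_r(x))$ (respectively $r/\meas(B_r(x))$) and to bound the latter back by $F_x(r)$ (respectively $H_x(r)$). The lower bound via the lower Gaussian restricted to $t\geqslant r^2$ is likewise the canonical move. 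So your approach matches the one underlying the cited reference.

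One small point worth tightening: the estimate \eqref{eq:equi lip} is stated for $|\nabla_x p(x,y,t)|$, while you need $|\nabla_y p(x,y,t)|$. Symmetry of the heat kernel gives the bound with $\meas(B_{\sqrt t}(y))$ in place of $\meas(B_{\sqrt t}(x))$; to recover $H_x$ you then use that for $s\geqslant r$ the balls $B_s(x)$ and $B_s(y)$ have comparable measure by doubling, and for $s\leqslant r$ a single application of Bishop--Gromov centred at $y$ (together with $B_r(x)\subset B_{2r}(y)$) suffices to feed into the same inner-piece estimate. Your treatment of the exchange of gradient and time-integral via truncation $G_x^\eps=\int_\eps^{1/\eps}p(x,\cdot,t)\,\d t$ and lower semicontinuity of the minimal relaxed slope is also the correct way to make the $\meas$-a.e.\ bound rigorous in this setting.
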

Since $F_x(r)\ra 0$ as $r\ra\infty$, we have immediately the following. 
\begin{corollary}
	\label{coro:vanish.at.infinity}
	We have $G_x(y)\ra 0$ as $\dist(x,y)\ra\infty$.
\end{corollary}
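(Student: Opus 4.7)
The plan is to read off the conclusion directly from the upper bound in Proposition \ref{thm:estimate.Green.function.in.BS} together with the tail behavior of $F_x$. Concretely, for any $y\in X\setminus\{x\}$ we have
\begin{equation*}
0 < G_x(y) \leqslant C(N)\, F_x(\dist(x,y)),
\end{equation*}
so it suffices to show that $F_x(r)\to 0$ as $r\to\infty$.

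For the latter, the non-parabolicity assumption \eqref{eq:non-parabolic} says precisely that $F_x(1)<\infty$, i.e.\ $s\mapsto s/\meas(B_s(x))$ is integrable on $[1,\infty)$. Hence $F_x(r)$, being the tail of a convergent improper integral, tends to $0$ as $r\to\infty$ by dominated convergence (or directly by absolute continuity of the integral). Combining this with the displayed inequality above and letting $\dist(x,y)\to\infty$ yields the claim. There is no real obstacle here; the corollary is a one-line consequence of the already established estimate \eqref{eq:BS.estimate.F} and the very definition of non-parabolicity.
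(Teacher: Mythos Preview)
Your proof is correct and follows exactly the paper's approach: the paper simply notes ``Since $F_x(r)\to 0$ as $r\to\infty$, we have immediately the following,'' using the upper bound \eqref{eq:BS.estimate.F} from Proposition~\ref{thm:estimate.Green.function.in.BS}. You have just made explicit the trivial step that $F_x(r)\to 0$ is the vanishing of the tail of the convergent integral defining non-parabolicity.
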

Let us define the main target on the paper.
\begin{definition}[Smoothed distance function $\bist_x$]\label{modified}
Define a function $\bist_x^X=\bist_x$ on $X \setminus \{x\}$ by 
\begin{equation}
\bist_x:=G_x^{\frac 1{2-N}}.
\end{equation}
\end{definition}
We provide formulae related to $\bist_x$, which will play roles later.
\begin{lemma}\label{lem form}
We have 
	\begin{equation}
	\Delta \bist_x=(N-1)\frac{|\nabla \bist_x|^2}{\bist_x},
	\end{equation}
	and 
	\begin{equation}
	\label{eq:6}
	\Delta \bist_x^2=2N|\nabla \bist_x|^2.
	\end{equation}
\end{lemma}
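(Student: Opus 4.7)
The plan is purely a chain rule / Leibniz rule computation on $X\setminus\{x\}$, together with the harmonicity of $G_x$ there.

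First I would record the regularity needed to apply the calculus rules. By the cited Bru\`e--Semola result $G_x$ is harmonic on $X\setminus\{x\}$, and by the local Lipschitz regularity for harmonic functions on $\RCD(K,N)$ spaces (Jiang/Ambrosio--Mondino--Savar\'e), $G_x$ is locally Lipschitz on $X\setminus\{x\}$. The two-sided bound $G_x\asymp F_x(\dist(x,\cdot))$ from Proposition \ref{thm:estimate.Green.function.in.BS} shows that $G_x$ is bounded and bounded away from $0$ on any compact subset of $X\setminus\{x\}$. Consequently $\bist_x=\phi(G_x)$ with $\phi(t)=t^{1/(2-N)}$ is locally Lipschitz on $X\setminus\{x\}$, and $\phi$ can be regarded as a smooth function on any interval in $(0,\infty)$ meeting the range of $G_x$ locally.

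Next, I would apply the chain rule for the linear Laplacian on the $\RCD$ space: for such a $\phi\in C^2$ and $G_x\in D_{\mathrm{loc}}(\Delta)$,
\begin{equation*}
\Delta \phi(G_x)=\phi'(G_x)\,\Delta G_x+\phi''(G_x)\,|\nabla G_x|^2 \quad\text{on } X\setminus\{x\}.
\end{equation*}
Set $\alpha:=1/(2-N)$, so $\phi'(t)=\alpha t^{\alpha-1}$ and $\phi''(t)=\alpha(\alpha-1)t^{\alpha-2}$. Harmonicity $\Delta G_x\equiv 0$ kills the first term. Since $|\nabla\bist_x|^2=\alpha^2 G_x^{2\alpha-2}|\nabla G_x|^2$, i.e.\ $|\nabla G_x|^2=\alpha^{-2}G_x^{2-2\alpha}|\nabla\bist_x|^2$, substitution gives
\begin{equation*}
\Delta \bist_x=\alpha(\alpha-1)G_x^{\alpha-2}\cdot\alpha^{-2}G_x^{2-2\alpha}|\nabla\bist_x|^2=\frac{\alpha-1}{\alpha}\,G_x^{-\alpha}|\nabla\bist_x|^2=(N-1)\frac{|\nabla\bist_x|^2}{\bist_x},
\end{equation*}
using $G_x^{-\alpha}=\bist_x^{-1}$ and $\tfrac{\alpha-1}{\alpha}=1-(2-N)=N-1$.

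For the second identity I would apply the Leibniz rule for the Laplacian,
\begin{equation*}
\Delta(\bist_x^2)=2\,\bist_x\,\Delta\bist_x+2|\nabla\bist_x|^2,
\end{equation*}
and plug in the first identity to obtain $2(N-1)|\nabla\bist_x|^2+2|\nabla\bist_x|^2=2N|\nabla\bist_x|^2$.

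The only conceptual obstacle is justifying the chain and Leibniz rules in the $\RCD$ framework with the precise regularity we have for $G_x$; once $G_x$ is known to be locally Lipschitz, harmonic, and locally bounded away from $0$ on $X\setminus\{x\}$, these rules are standard (Gigli's calculus), and no argument is needed near the pole since the statement is understood on $X\setminus\{x\}$.
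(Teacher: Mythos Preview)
Your proof is correct and follows essentially the same approach as the paper: apply the chain rule to $\bist_x=G_x^{1/(2-N)}$ using the harmonicity of $G_x$ on $X\setminus\{x\}$, then the Leibniz rule for $\bist_x^2$. The paper's version is slightly terser, omitting your explicit justification of the regularity needed for the chain and Leibniz rules, but the computation is identical.
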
 
\begin{proof} Because
	\begin{equation}
	\Delta \bist_x=\Delta G_x^{\frac{1}{2-N}}=\frac{N-1}{(N-2)^2}G_x^{\frac{2N-3}{2-N}}|\nabla G_x|^2=(N-1)\frac{|\nabla \bist_x|^2}{\bist_x},
	\end{equation}
	and thus
	\begin{equation}
	\Delta \bist_x^2=2\bist_x\Delta \bist_x+2|\nabla \bist_x|^2= 2N|\nabla \bist_x|^2,
	\end{equation}
	where we used the fact that $G_x$ is harmonic on $X \setminus \{x\}$.
\end{proof}
We give the explicit formula for the smoothed distance function for an $N$-metric measure cone. Although this is well-known (see for instance \cite[Lemma 2.7]{BrueDengSemola}), let us provide a proof for readers' convenience.
\begin{proposition}[Green function on $N$-metric measure cone]\label{explicit green}
If $(X, \dist, \meas, x)$ is isomorphic to the $N$-metric measure cone with the pole over an $\RCD(N-2, N-1)$ space, then we have
\begin{equation}
G_x(y)=\frac{1}{N(N-2)\meas (B_1(x))}\dist(x, y)^{2-N},\quad \forall y \in X \setminus \{x\}.
\end{equation}
In particular
\begin{equation}\label{identity b}
\bist_x=\mathscr C_N\nu_x^{\frac{1}{N-2}}\dist_x,
\end{equation}
where
\begin{equation}\label{optimalc}
\mathscr C_N:=\left(N(N-2)\right)^{\frac{1}{N-2}}.
\end{equation}
\end{proposition}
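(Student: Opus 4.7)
The plan is to substitute the explicit heat kernel formula for the cone into the integral definition of the Green function and reduce to a standard Gamma integral. By Definition \ref{def:green.function} we have $G_x(y) = \int_0^\infty p(x,y,t)\, \d t$, and by Proposition \ref{expl heat} the heat kernel from the pole of an $N$-metric measure cone is
\begin{equation*}
p(x,y,t) = C\, t^{-N/2} \exp\!\left(-\frac{\dist(x,y)^2}{4t}\right), \qquad C = \frac{2^{1-N}}{N\,\Gamma(N/2)\,\meas(B_1(x))}.
\end{equation*}
Setting $r := \dist(x,y)$, the task therefore reduces to evaluating $\int_0^\infty t^{-N/2} e^{-r^2/(4t)}\,\d t$.

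Next I would carry out the substitution $u = r^2/(4t)$, so that $t = r^2/(4u)$ and $\d t = -r^2/(4u^2)\,\d u$, which swaps the limits of integration and yields
\begin{equation*}
\int_0^\infty t^{-N/2} e^{-r^2/(4t)}\,\d t = 4^{N/2-1}\, r^{2-N} \int_0^\infty u^{N/2-2} e^{-u}\,\d u = 4^{N/2-1}\, r^{2-N}\, \Gamma\!\left(\tfrac{N}{2}-1\right).
\end{equation*}
Using the identity $\Gamma(N/2) = (N/2 - 1)\,\Gamma(N/2 - 1)$, so that $\Gamma(N/2 - 1) = \tfrac{2}{N-2}\Gamma(N/2)$, and then multiplying by $C$, the factors of $2$ and $\Gamma(N/2)$ cancel cleanly: one gets
\begin{equation*}
G_x(y) = \frac{2^{1-N}}{N\,\Gamma(N/2)\,\meas(B_1(x))} \cdot 2^{N-2} \cdot \frac{2\,\Gamma(N/2)}{N-2}\cdot r^{2-N} = \frac{r^{2-N}}{N(N-2)\,\meas(B_1(x))}.
\end{equation*}

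Finally, to deduce (\ref{identity b}), I would raise to the power $1/(2-N)$ in Definition \ref{modified}, obtaining $\bist_x = \bigl(N(N-2)\meas(B_1(x))\bigr)^{1/(N-2)} \dist_x$, and then invoke (\ref{nu equiv}) from Proposition \ref{expl heat}, which gives $\meas(B_1(x)) = \nu_x$ on the cone. Combined with the definition $\mathscr{C}_N = (N(N-2))^{1/(N-2)}$ in (\ref{optimalc}), this yields $\bist_x = \mathscr{C}_N\, \nu_x^{1/(N-2)}\, \dist_x$.

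The proof is essentially a routine computation once Proposition \ref{expl heat} is in hand; there is no real obstacle. The only point requiring minor care is the bookkeeping of the factors $2^{1-N}, 4^{N/2-1}, 2/(N-2)$ and the Gamma factors, which must cancel to leave precisely $1/(N(N-2))$, and the use of (\ref{nu equiv}) (rather than an abstract limit) to rewrite $\meas(B_1(x))$ as $\nu_x$.
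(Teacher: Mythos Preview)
Your proof is correct and follows essentially the same approach as the paper's own proof: both plug the explicit heat kernel from Proposition \ref{expl heat} into the integral definition of $G_x$, reduce via the substitution $u = r^2/(4t)$ to the Gamma integral $4^{N/2-1} r^{2-N}\Gamma(N/2-1)$, simplify using $\Gamma(N/2-1) = \tfrac{2}{N-2}\Gamma(N/2)$, and then invoke (\ref{nu equiv}) to pass from $\meas(B_1(x))$ to $\nu_x$. The paper's write-up is slightly terser, but the computation and the ingredients are identical.
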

\begin{proof}
Thanks to Proposition \ref{expl heat}, we know
\begin{align}
G_x(y)&=\frac{2^{1-N}}{N\Gamma \left(\frac{N}{2}\right)\meas (B_1(x))}\int_0^{\infty}t^{-\frac{N}{2}}\exp \left(-\frac{\dist(x, y)^2}{4t}\right)\d t \nonumber \\
&=\frac{2^{1-N}}{N\Gamma \left(\frac{N}{2}\right)\meas (B_1(x))} \cdot \dist(x,y)^{2-N}4^{\frac{N}{2}-1}\Gamma \left(\frac{N}{2}-1\right) \nonumber \\
&=\frac{1}{N(N-2)\meas (B_1(x))}\dist(x, y)^{2-N}.
\end{align}
Finally recalling (\ref{nu equiv}), we get (\ref{identity b}).
\end{proof}
Finally let us provide formulae on the functions above, $F, G, H$ and $\bist$ under rescalings, which will play roles later.
\begin{lemma}\label{rescaling fh}
For all $a, b>0$, consider the  rescaled non-parabolic $\RCD(0, N)$ space;
\begin{equation}
\left(\tilde X, \tilde \dist, \tilde \meas, \tilde x\right):=\left(X, a\dist, b\meas, x\right).
\end{equation}
Then the Green function $G^{\tilde X}_{\tilde x}$, 
the corresponding auxiliary functions $F^{\tilde X}_{\tilde x}, H^{\tilde X}_{\tilde x}$, and the smoothed distance function $\bist^{\tilde X}_x$ of the rescaled space
satisfy
\begin{equation}\label{rescaling F}
G^{\tilde X}_{\tilde x}=\frac{a^2}{b}G_x^X, \quad F^{\tilde X}_{\tilde x}(r)=\frac{a^2}{b}F_x^X\left(\frac{r}{a}\right),\quad H^{\tilde X}_{\tilde x}(r)=\frac{a}{b}H_x^X\left(\frac{r}{a}\right)
\end{equation}
and 
\begin{equation}\label{rescaling b}
\bist_{\tilde x}^{\tilde X}=\frac{a^{\frac{2}{2-N}}}{b^{\frac{1}{2-N}}}\bist_x^X,\quad |\tilde{\nabla}\bist_{\tilde x}|=\frac{a^{\frac{N}{2-N}}}{b^{\frac{1}{2-N}}} |\nabla \bist_x|.
\end{equation}
In particular, if  $b=a^N$, then
\begin{equation}\label{rescaling F}
G^{\tilde X}_{\tilde x}(y)=a^{2-N}G^X_x(y),\quad F^{\tilde X}_{\tilde x}(r)=a^{2-N}F_x\left(\frac{r}{a}\right),\quad H^{\tilde X}_{\tilde x}(r)=a^{1-N}H_x\left(\frac{r}{a}\right).
\end{equation}
and 
\begin{equation}
\bist_{\tilde x}=a\bist_x,\quad |\tilde{\nabla}\bist_{\tilde x}|=|\nabla \bist_x|.
\end{equation}
\end{lemma}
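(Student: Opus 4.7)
The plan is to compute each quantity directly from its definition, the only non-trivial input being the rescaling formula (\ref{rescaling heat}) for the heat kernel together with elementary changes of variables. First I would apply the definition of the Green function: since $p_{\tilde X}(x,y,t)=b^{-1}p_X(x,y,a^{-2}t)$, the substitution $s=a^{-2}t$ gives
\[
G^{\tilde X}_{\tilde x}(y)=\int_0^\infty b^{-1}p_X(x,y,a^{-2}t)\,\d t=\frac{a^2}{b}\int_0^\infty p_X(x,y,s)\,\d s=\frac{a^2}{b}G_x^X(y).
\]

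Next, for $F^{\tilde X}_{\tilde x}$ and $H^{\tilde X}_{\tilde x}$ I would use that the rescaled balls satisfy $\tilde B_s(\tilde x)=B_{s/a}(x)$ with $\tilde\meas=b\meas$, so plugging into the definitions in (\ref{deff}) and changing variable via $u=s/a$ yields
\[
F^{\tilde X}_{\tilde x}(r)=\int_r^\infty \frac{s\,\d s}{b\meas(B_{s/a}(x))}=\frac{a^2}{b}\int_{r/a}^\infty \frac{u\,\d u}{\meas(B_u(x))}=\frac{a^2}{b}F_x^X(r/a),
\]
and a completely analogous computation handles $H^{\tilde X}_{\tilde x}$, giving the factor $a/b$ since only one power of $s$ appears in the numerator.

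Then the formula for $\bist_{\tilde x}^{\tilde X}$ follows by raising the Green function identity to the exponent $1/(2-N)$. For the gradient scaling I would note that, since the local Lipschitz constant satisfies $\lip_{\tilde\dist}f=a^{-1}\lip_\dist f$ and rescaling a fully-supported measure by a positive constant does not affect the minimal weak upper gradient, one has $|\tilde\nabla f|=a^{-1}|\nabla f|$ for any $f\in W^{1,2}$. Combined with the scaling of $\bist_x$ this produces the exponent
\[
-1+\frac{2}{2-N}=\frac{N}{2-N},
\]
as stated. The special case $b=a^N$ is then a direct substitution; for instance $a^{2/(2-N)}/a^{N/(2-N)}=a^{(2-N)/(2-N)}=a$, so the constant in front of $|\nabla \bist_x|$ collapses to $1$.

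There is no substantive obstacle here: the lemma is a book-keeping exercise. The only care required is to keep track of which metric measure structure governs each operation, in particular that $|\tilde\nabla \cdot|$ in the last identity refers to the minimal relaxed slope associated with $(\tilde X,\tilde\dist,\tilde\meas)$ rather than with the original space; once that is made explicit, every claim reduces to a one-line substitution.
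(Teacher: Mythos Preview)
Your proposal is correct and follows essentially the same approach as the paper: the paper also derives the Green function formula directly from the heat kernel rescaling (\ref{rescaling heat}), computes $F^{\tilde X}_{\tilde x}$ by the same change of variable $u=s/a$, and then remarks that the remaining identities follow similarly. If anything, you are slightly more explicit than the paper about why $|\tilde\nabla f|=a^{-1}|\nabla f|$, which the paper leaves implicit.
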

\begin{proof}
The formula for $G$ is a direct consequence of (\ref{rescaling heat}). Moreover it implies (\ref{rescaling b}).
On the other hand,
since 
\begin{align}
		F_{\tilde x}(r)=\int_r^{\infty}\frac{s}{b\meas(\tilde B_s(\tilde x))}\d s&=\frac{1}{b}\int_r^{\infty}\frac{s}{\meas (B_{a^{-1}s}(x))}\d s\nonumber \\
		&=\frac{a^2}{b}\int_{a^{-1}r}^{\infty}\frac{t}{\meas (B_t(x))}\d t=\frac{a^2}{b}F_x\left(\frac{r}{a}\right),
		\end{align}
		we have the desired formula for $F$. Similarly we have the remaining results.
\end{proof}
\subsection{Convergence}
In this subsection we discuss the convergence of non-parabolic $\RCD(0, N)$ spaces with respect to the pmGH topology. 
Let us introduce an elementary lemma.
\begin{lemma}
Let 
\begin{equation}
(X_i, \dist_i, \meas_i, x_i)\stackrel{\mathrm{pmGH}}{\to}(X, \dist, \meas, x)
\end{equation}
be a pmGH convergent sequence of $\RCD(0, N)$ spaces for some finite $N \geqslant 1$. Then we have 
\begin{equation}\label{lower semicont}
\liminf_{i \to \infty}\nu_{x_i}\geqslant \nu_{x_{\infty}}
\end{equation}
and 
\begin{equation}\label{upper asym}
\limsup_{i \to \infty}V_{X_i}\le V_X.
\end{equation}
\end{lemma}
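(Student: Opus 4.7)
The plan is to exploit the monotonicity supplied by the Bishop--Gromov inequality (\ref{bg ineq}) together with the fact that pmGH convergence delivers $\meas_i(B_r(x_i))\to \meas(B_r(x))$ for all but countably many radii $r$. Because $r\mapsto \meas(B_r(x))/r^N$ is non-increasing, Definition \ref{Nvlo} is equivalent to
\begin{equation*}
\nu_x=\sup_{r>0}\frac{\meas(B_r(x))}{r^N}, \qquad V_X=\inf_{r>0}\frac{\meas(B_r(x))}{r^N},
\end{equation*}
and the same holds for each $(X_i,\dist_i,\meas_i)$. The strategy is therefore to bound each side by the common ball-ratio at a fixed radius, pass to the limit in $i$, and then let the radius tend to $0$ for (\ref{lower semicont}) and to $\infty$ for (\ref{upper asym}).

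For (\ref{lower semicont}), I would fix any $r>0$ with $\meas(\partial B_r(x))=0$; these $r$ are co-countable in $(0,\infty)$ since $r\mapsto\meas(B_r(x))$ is monotone. Bishop--Gromov applied on $(X_i,\dist_i,\meas_i)$ yields $\nu_{x_i}\geqslant \meas_i(B_r(x_i))/r^N$, so taking $\liminf_i$ together with the convergence of ball measures gives $\liminf_i \nu_{x_i}\geqslant \meas(B_r(x))/r^N$. Letting $r\to 0^+$ along such radii and invoking the sup-representation of $\nu_x$ finishes this half. The argument for (\ref{upper asym}) is entirely symmetric: for the same kind of $r$ one has $V_{X_i}\leqslant \meas_i(B_r(x_i))/r^N\to \meas(B_r(x))/r^N$, giving $\limsup_i V_{X_i}\leqslant \meas(B_r(x))/r^N$, and then $r\to\infty$ combined with the inf-representation of $V_X$ concludes.

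There is essentially no obstacle here: the statement is a direct instance of the general principle that a supremum (resp.\ infimum) of continuously converging quantities is lower (resp.\ upper) semicontinuous. The only minor point worth mentioning is the avoidance of radii $r$ with $\meas(\partial B_r(x))>0$, but since these form a countable set they are painlessly bypassed when taking $r\to 0^+$ or $r\to\infty$ along a dense subset of regular radii.
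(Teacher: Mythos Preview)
Your proof is correct and follows exactly the same approach as the paper: fix a radius, use Bishop--Gromov to bound $\nu_{x_i}$ below (resp.\ $V_{X_i}$ above) by the ball ratio, pass to the limit in $i$, then send $r\to 0^+$ (resp.\ $r\to\infty$). The paper in fact only writes out the $V_X$ case and declares the $\nu_x$ case ``similar''. Your extra care about avoiding radii with $\meas(\partial B_r(x))>0$ is a nice touch the paper omits; strictly speaking it is not even needed, since for (\ref{lower semicont}) the inequality $\liminf_i\meas_i(B_r(x_i))\geqslant\meas(B_r(x))$ already holds for all $r$ by lower semicontinuity of weak convergence on open sets, and dually for (\ref{upper asym}) one could use closed balls.
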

\begin{proof}
We give only a proof of (\ref{upper asym}) because the proof (\ref{lower semicont}) is similar (moreover this is valid even in the case of negative lower bounds on Ricci curvature.  See also \cite[Subsection 2.3]{LW}).
For fixed $r>0$, we have
\begin{equation}
V_{X_i} \le \frac{\meas_i(B_r(x_i))}{r^N} \to \frac{\meas (B_r(x))}{r^N}
\end{equation}
which shows
\begin{equation}
\limsup_{i \to \infty}V_{X_i} \le \frac{\meas(B_r(x))}{r^N}.
\end{equation}
Then letting $r \to \infty$ completes the proof of (\ref{upper asym}).
\end{proof}

Next let us provide a compactness result as follows. In the sequel we fix a finite $N >2$. Note that if $F_x(1)\le \tau<\infty$, then
\begin{equation}
\frac{1}{\meas (B_2(x))}\leqslant \int_1^2\frac{s}{\meas (B_s(x))}\d s \leqslant F_x(1) \leqslant \tau<\infty,
\end{equation}
thus 
\begin{equation}\label{lower 1}
0<\frac{1}{2^N\tau}\leqslant \meas (B_1(x)) \leqslant \frac{\meas (B_s(x))}{s^N},\quad \forall s \leqslant 1.
\end{equation}
This observation plays a role at the beginning of the proof of the following.
\begin{theorem}[Compactness of non-parabolic $\RCD(0, N)$ spaces]\label{cpt non-para}
Let $(X_i, \dist_i, \meas_i, x_i)$ be a sequence of pointed non-parabolic $\RCD(0, N)$ spaces with
\begin{equation}\label{upper bd meas}
\sup_i\meas_i(B_1(x_i))<\infty
\end{equation}
and
\begin{equation}\label{upper F}
\sup_iF_{x_i}(1)<\infty.
\end{equation}
Then after passing to a subsequence, $(X_i, \dist_i, \meas_i, x_i)$ pmGH converge to a pointed $\RCD(0, N)$ space $(X, \dist, \meas, x)$ with the lower semicontinuity of $F_{x_i}$ in the sense that
\begin{equation}\label{lower f}
\liminf_{i \to \infty}F_{x_i}(r_i)\geqslant F_x(r),\quad  \text{$\forall r_i \to r$ in $(0, \infty)$.}
\end{equation} 
In particular $(X, \dist, \meas)$ is non-parabolic.
\end{theorem}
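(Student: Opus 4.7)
The plan is to reduce the statement to a direct application of the known pmGH compactness/stability theorem for $\RCD(0,N)$ spaces, followed by Fatou's lemma for the lower semicontinuity. First, set $\tau := \sup_i F_{x_i}(1) < \infty$; the observation (\ref{lower 1}) displayed before the statement gives $\meas_i(B_1(x_i)) \geq 1/(2^N\tau)$, and combining this lower bound with the hypothesis (\ref{upper bd meas}) and the Bishop-Gromov inequality (\ref{bg ineq}) yields uniform two-sided bounds on $\meas_i(B_s(x_i))$ for every fixed $s>0$. This is exactly the input needed to apply the classical pmGH compactness and stability theorem for pointed $\RCD(0,N)$ spaces, so after extracting a subsequence we obtain a pointed $\RCD(0,N)$ limit $(X,\dist,\meas,x)$.

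The main analytic content is the lower semicontinuity (\ref{lower f}). I would apply Fatou's lemma to the non-negative integrands
$$g_i(s) := \chi_{[r_i,\infty)}(s)\,\frac{s}{\meas_i(B_s(x_i))}.$$
Since the non-decreasing function $t \mapsto \meas(B_t(x))$ has at most countably many jumps, for every $s\in(0,\infty)$ outside this countable set and outside $\{r\}$ one has both $\chi_{[r_i,\infty)}(s)\to\chi_{[r,\infty)}(s)$ (using $r_i\to r$) and $\meas_i(B_s(x_i))\to\meas(B_s(x))$. The latter follows from the weak convergence of the reference measures under pmGH convergence via the portmanteau theorem, applied to both $B_s(x)$ and $\overline{B}_s(x)$ and using $\meas(\partial B_s(x))=0$. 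Fatou then delivers
$$\liminf_{i\to\infty} F_{x_i}(r_i) \;\geq\; \int_r^{\infty} \frac{s}{\meas(B_s(x))}\,\d s \;=\; F_x(r).$$

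Non-parabolicity of the limit is then immediate, since $F_x(1)\leq\liminf_i F_{x_i}(1)\leq\tau<\infty$. I do not expect a genuine obstacle in this proof: the compactness input is standard from the $\RCD$ literature, and the only point deserving a moment of care is confirming the convergence $\meas_i(B_s(x_i))\to\meas(B_s(x))$ at continuity radii $s$, which is by now a routine consequence of weak convergence of finite measures on bounded balls together with the vanishing of $\meas(\partial B_s(x))$ for all but countably many $s$.
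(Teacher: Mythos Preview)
Your proof is correct and follows essentially the same approach as the paper's. Both arguments obtain the subsequential pmGH limit from the standard compactness/stability theorem after extracting the uniform lower bound on $\meas_i(B_1(x_i))$ from (\ref{lower 1}), and both derive (\ref{lower f}) from the a.e.\ convergence $\meas_i(B_s(x_i))\to\meas(B_s(x))$; the only cosmetic difference is that the paper first proves $\int_{r_i}^s \tfrac{t}{\meas_i(B_t(x_i))}\,\d t \to \int_r^s \tfrac{t}{\meas(B_t(x))}\,\d t$ on bounded intervals and then lets $s\to\infty$, whereas you apply Fatou in one stroke on $[0,\infty)$.
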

\begin{proof}

	Note that (\ref{upper F}) gives a uniform positive lower (upper, respectively) bound on $\meas_i(B_1(x_i))$ because of (\ref{lower 1}).
        Thus, thanks to the compactness of $\RCD$ spaces with respect to the pmGH topology (see for instance  \cite[Theorem 6.11]{AmbrosioGigliSavare13}, \cite[Theorem 5.3.22]{ErbarKuwadaSturm}, \cite[Theorem 7.2]{GigliMondinoSavare13}, \cite[Theorem 5.19]{LottVillani}, and \cite[Theorem 4.20]{Sturm06}), after
	 passing to a subsequence, $(X_i,\dist_i,\meas_i,x_i)$ pmGH converge to a pointed $\RCD(0, N)$ space $(X,\dist,\meas,x)$. %such that
	%\begin{equation}
	%(X_i,\dist_i,\meas_i,x_i)\xrightarrow{\mathrm{pmGH}}(X_\infty,\dist_\infty,\meas_\infty,x_\infty).
	%\end{equation}
	%\rev{By the pmGH-convergence and dominated convergence theorem, $(X_\infty,\dist_\infty,\meas_\infty,x_\infty)$ is an $\RCD(0,N)$ space. And we claim that the it inherits the non-parabolic assumption. 
	%Firstly let us prove that the limit space is non-parabolic.
	Observe that for all $r \leqslant s$, we have
	\begin{equation}
	\int_{r_i}^s\frac r{\meas_i(B_t(x_i))}\ \d t\ra\int_r^s\frac{t}{\meas(B_{t}(x))}\ \d t,
	\end{equation}
	which implies 
	\begin{equation}
	\int_r^s\frac{t}{\meas(B_{t}(x))}\ \d t= \lim_{i \to \infty}\int_{r_i}^s\frac t{\meas_i(B_t(x_i))}\ \d t \leqslant \liminf_{i \to \infty}F_{x_i}(r_i).
	\end{equation}
	Letting $s\ra\infty$, we have (\ref{lower f}).
\end{proof}
Note that (\ref{upper bd meas}) is satisfied if 
\begin{equation}\label{upper density}
\sup_i\nu_{x_i}<\infty.
\end{equation}
Because if $\nu_x \leqslant \nu<\infty$, then  
\begin{equation}\label{lower 2}
\frac{\meas (B_s(x))}{s^N}\le \nu_x \le \nu,\quad \forall s\leqslant 1
\end{equation}
by the Bishop-Gromov inequality.
Compare the following theorem with \cite[Proposition 2.3]{BrueDengSemola}.
\begin{theorem}[Convergence of Green functions]\label{green convergence}
Let us consider a pmGH convergent sequence of pointed non-parabolic $\RCD(0, N)$ spaces 
\begin{equation}\label{mghnew}
(X_i, \dist_i, \meas_i, x_i) \stackrel{\mathrm{pmGH}}{\to} (X, \dist, \meas, x).
\end{equation}
%with (\ref{upper density}). %and (\ref{upper F}). 
Then the following conditions are equivalent.
\begin{enumerate}
\item  The functions $f_i(s):=\frac{s}{\meas_i(B_s(x_i))}$ converge in $L^1([1, \infty), \haus^1)$ to the function $f(s):=\frac{s}{\meas(B_s(x))}$ as $i \to \infty$.
\item  $F_{x_i}(1) \to F_x(1)$.
\item  For any finite $p\geqslant 1$, $G_{x_i}$ $W^{1,p}_{\mathrm{loc}}$-strongly, and locally uniformly converge to $G_x$ on $X \setminus \{x\}$. Or equivalently $\bist_{x_i}$ $W^{1,p}_{\mathrm{loc}}$-strongly, and locally uniformly converge to $\bist_x$ on $X\setminus \{x\}$, where we say that a sequence of functions $f_i:X_i\setminus \{x_i\} \to \mathbb{R}$ locally uniformly converge to a function $f:X \setminus \{x\} \to \mathbb{R}$ if under fixing isometric embeddings $\iota_i:X_i \hookrightarrow Y, \iota:X \hookrightarrow Y$ into a common proper metric space $(Y, \dist_Y)$ realizing (\ref{mghnew}),   for any compact subset $A \subset X \setminus \{x\}$ and any $\epsilon \in (0, 1)$, there exist $\delta \in (0, 1)$ and $i_0 \in \mathbb{N}$ such that $|f_i(z_i)-f(z)|<\epsilon$ holds for all $z_i \in X_i$ and $z \in A$ whenever $\dist_Y(\iota_i(z_i), \iota(z))<\delta$ and $i \ge i_0$.

\end{enumerate}
\end{theorem}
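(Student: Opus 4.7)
The plan is to establish (1)$\Leftrightarrow$(2), (2)$\Rightarrow$(3), (3)$\Rightarrow$(2); the bridge between the volume-growth data in (1)--(2) and the potential-theoretic data in (3) is the heat kernel representation $G_x=\int_0^\infty p(x,\cdot,t)\,dt$.

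For (1)$\Leftrightarrow$(2): trivially (1)$\Rightarrow$(2), since $F_{x_i}(1)=\int_1^\infty f_i\,d\haus^1$. For the converse, pmGH convergence yields $\meas_i(B_s(x_i))\to\meas(B_s(x))$ for all $s$ outside an at most countable set, so $f_i\to f$ pointwise $\haus^1$-a.e.\ on $[1,\infty)$. Combined with (2), which provides convergence of $L^1$-norms of non-negative functions, Scheff\'e's lemma delivers the $L^1$-convergence. Note that (1) also implies $F_{x_i}(r)\to F_x(r)$ and $H_{x_i}(r)\to H_x(r)$ for every fixed $r>0$, using Bishop--Gromov to control the bounded interval $[r,1]$ when $r<1$.

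For (2)$\Rightarrow$(3): I would split $G_{x_i}(y_i)=\int_0^T p_{X_i}(x_i,y_i,t)\,dt+\int_T^\infty p_{X_i}(x_i,y_i,t)\,dt$. By the Gaussian upper bound \eqref{eq:gaussian}, the tail is controlled by $2C(N)\,F_{x_i}(\sqrt T)$, hence uniformly small as $T\to\infty$ by (1). On $[0,T]$, the lower bound $\meas_i(B_1(x_i))\geqslant(2^N F_{x_i}(1))^{-1}$ combined with Bishop--Gromov controls $\meas_i(B_{\sqrt t}(x_i))$ from below uniformly in $i$, so the Gaussian upper bound yields a dominating integrable function; combined with the known stability of heat kernels under pmGH convergence, dominated convergence delivers pointwise convergence $G_{x_i}(y_i)\to G_x(y)$ for $y_i\to y\neq x$. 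Upgrade to locally uniform convergence follows from equi-Lipschitz bounds obtained by integrating \eqref{eq:equi lip} in time. For $W^{1,p}_{\mathrm{loc}}$-strong convergence, I would combine the Bru\`e--Semola pointwise gradient bound $|\nabla G_{x_i}|\leqslant C\,H_{x_i}(\dist(x_i,\cdot))$ (uniformly bounded on compacts away from $x$ thanks to $H_{x_i}(r)\to H_x(r)$) with harmonicity of $G_{x_i}$ on $X_i\setminus\{x_i\}$ and the standard stability of harmonic functions under pmGH; this yields $W^{1,2}_{\mathrm{loc}}$-strong convergence, and interpolation with the uniform $L^\infty$-gradient bound upgrades this to any finite $p$. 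The statement for $\bist_{x_i}$ then follows by composition with the $C^1$ map $t\mapsto t^{1/(2-N)}$, using that $G_{x_i}$ is uniformly bounded above and below on compacts away from $x_i$.

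The main obstacle is (3)$\Rightarrow$(2), where one must pass \emph{from} Green function data \emph{to} volume-growth data. The Bru\`e--Semola two-sided bound $C^{-1}F_x(r)\leqslant G_x(y)\leqslant CF_x(r)$ for $\dist(x,y)=r$ only yields uniform comparability of the $F_{x_i}(1)$'s, not convergence. My preferred strategy is: pmGH convergence together with (3) and Mosco convergence of Dirichlet forms yields heat-kernel convergence locally uniformly away from the diagonal, and the on-diagonal relation $p(x,x,t)\asymp 1/\meas(B_{\sqrt t}(x))$ from \eqref{eq:gaussian} then promotes the pointwise convergence $\meas_i(B_s(x_i))\to\meas(B_s(x))$ (which holds for a.e.\ $s$ by pmGH) to quantitative tail control on $\{f_i\}$, delivering the $L^1$-convergence. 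Equivalently, one can test the locally uniform Green function convergence against $\Delta\chi$ for cutoffs $\chi$ supported on large annuli around $x$, using $\int G_{x_i}\Delta\chi\,d\meas_i=-\chi(x_i)$, to directly convert (3) into convergence of integrals capturing the volume growth. In either approach, the crux is ruling out escape of mass at infinity along the sequence $\{f_i\}$, which is precisely the phenomenon excluded by (1).
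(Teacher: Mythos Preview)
Your treatment of (1)$\Leftrightarrow$(2) and (2)$\Rightarrow$(3) is correct and matches the paper's argument in all essential respects: Scheff\'e for the first, heat-kernel stability plus Gaussian bounds for the time-splitting, equi-Lipschitz bounds for locally uniform convergence, and harmonicity plus stability of the Laplacian for the Sobolev convergence.

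Your (3)$\Rightarrow$(2) has a genuine gap, and the gap is exactly where you think the difficulty lies --- but the resolution is far simpler than either of your proposed strategies. You write that the two-sided bound \eqref{eq:BS.estimate.F} ``only yields uniform comparability of the $F_{x_i}(1)$'s, not convergence.'' That is true at $r=1$, but the paper does \emph{not} apply it at $r=1$. The point is to apply it at \emph{large} $r$: given $\epsilon>0$, Corollary~\ref{coro:vanish.at.infinity} provides $R\geqslant 1$ with $G_x(y)<\epsilon$ for all $y\in X\setminus B_R(x)$. Fix any $y$ with $\dist(x,y)\approx 2R$ and $y_i\to y$; assumption (3) gives $G_{x_i}(y_i)<2\epsilon$ for large $i$, and then
\[
F_{x_i}(R)\leqslant F_{x_i}(\dist_i(x_i,y_i))\leqslant C(N)\,G_{x_i}(y_i)\leqslant 2C(N)\epsilon.
\]
This is precisely the uniform tail control on $\{f_i\}$ you were looking for, obtained directly from the assumption (3) and the Bru\`e--Semola lower bound. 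Combined with $f_i\to f$ in $L^1([1,r])$ for every finite $r$ (from pmGH), it gives (1).

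Your two alternative strategies are either circular or do not work as stated. For the first, heat-kernel convergence under pmGH holds independently of (3), so if it alone yielded the tail control you would be proving (1) from pmGH alone, which is false (see \S\ref{green sharp} of the paper). For the second, if $\chi$ is supported on an annulus away from $x_i$ then $\chi(x_i)=0$ and the identity is vacuous; if $\chi\equiv 1$ near $x_i$, you recover information about $G_{x_i}$ on an annulus, but only up to the same multiplicative constant you dismissed.
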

\begin{proof}
The key point is:
\begin{itemize}
\item it is proved in \cite[Theorem 3.3]{AmbrosioHondaTewodrose} that 
	\begin{equation}\label{convheatkernel}
	p_{X_i}(y_i,z_i,t_i)\to p_X(y,z,t)
	\end{equation}
	holds for all convergent sequences of $t_i \to t$ in $(0, \infty)$ and of $y_i, z_i \in X_i \to y, z \in X$, respectively.
%\item Let $\phi_i, \psi_i \in L^1([1, \infty), \haus^1)$. Assume that $\phi_i, \psi_i \to \phi, \psi$ for $\haus^1$-a.e., respectively, that $|\phi_i|\leqslant \psi_i$ for $\haus^1$-a.e. and that $\lim_{i\to \infty}\|\psi_i\|_{L^1} \to \|\psi\|_{L^1}$. Then $\phi_i\to \phi$ in $L^1([1,\infty), \haus^1)$ (see for instance \cite[Lemma 2.4]{AmbrosioHondaTewodrose} for the proof).
\end{itemize}
Based on the above, let us start giving the proof.
The implication from (1) to (2) is trivial. Assume that (2) holds. Thanks to (\ref{convheatkernel}) and  (\ref{eq:gaussian}) with the assumption, we know $\int_r^Rp_i(x_i, y_i,t)\d t \to \int_r^Rp(x, y, t)\d t$ and thus $F_{x_i}(R) \to F_x(R)$ for any $R \geqslant 1$. In particular for any $0<\eps<1$ there exists $R\geqslant 1$ such that $F_{x_i}(R) + F_x(R)<\eps$ for any $i$. On the other hand, for any fixed convergent sequence  $y_i \in X_i$ to $y \in X$ with $x \neq y$, by (\ref{eq:gaussian}), we know that there exists $0<r<1$ such that 
\begin{equation}
\int_0^rp_i(x_i, y_i, t)\d t+\int_0^rp(x, y, t)\d t<\eps.
\end{equation}
%Since it is trivial from (\ref{convheatkernel}) and (\ref{eq:gaussian}) that $\int_r^Rp_i(x_i, y_i,t)\d t \to \int_r^Rp(x, y, t)\d t$ holds, 
The observation above allows us to conclude the pointwise convergence $G_{x_i}(y_i) \to G_x(y)$.
%This with (\ref{eq:gaussian}) easily allows us to get the pointwise convergence;
%\begin{equation}
%G_{x_i}(y_i) \to G_x(y)
%\end{equation}
%for any convergent sequence $y_i \in X_i\setminus \{x_i\} \to y \in X\setminus \{x\}$. Thus 
Then the locally uniform convergence comes from this with a locally uniform Lipschitz bound (\ref{eq:BS.estimate.H}). Moreover since $G_{x_i}$ is harmonic on $X_i\setminus \{x_i\}$, it follows from the stability of Laplacian, \cite[Theorem 4.4]{AmbrosioHonda2}, that the $W^{1,2}_{\mathrm{loc}}$-strong convergence of the Green functions holds. Finally the improvement to the $W^{1,p}_{\mathrm{loc}}$-strong convergence is justified by combining this with (\ref{eq:BS.estimate.H}) (see also \cite{H15}). Thus we have (3).

Finally let us prove the remaining implication from (3) to (1). 
Thanks to Corollary \ref{coro:vanish.at.infinity}, for any $0<\eps<1$  there exists $R\geqslant 1$ such that $G_x(y)<\eps$ for any $y \in X \setminus B_{R}(x)$. Fix $y \in X \setminus B_{2R}(x)$ and take $y_i \in X_i$ converging to $y$.
Then our assumption allows us to conclude $G_{x_i}(y_i)<2\eps$ for any sufficiently large $i$.
Thus by (\ref{eq:BS.estimate.F}), we have 
\begin{equation}\label{eessss66s6s6s6xx}
F_{x_i}(R) \leqslant F_{x_i}(\dist_i(x_i, y_i))\leqslant C(N) G_{x_i}(y_i)\leqslant C(N)\eps.
\end{equation}
On the other hand, as discussed above, we can prove that $f_i$ converges in $L^1([1, r), \haus^1)$ to $f$ for any finite $r>1$.
This with (\ref{eessss66s6s6s6xx}) implies (1) because $\eps$ is arbitrary.
\end{proof}
Compared with Theorem \ref{cpt non-para}, it is natural to ask whether the second condition above can be replaced by a weaker one; $\sup_iF_{x_i}(1)$, or not.
However this improvement is impossible by observing a simple example discussed in subsection \ref{green sharp}. In this sense Theorem \ref{cpt non-para} is sharp.

Let us give corollaries of Theorem \ref{green convergence}. See also \cite[Corollary 2.4]{BrueDengSemola}.
\begin{corollary}\label{green asympt 0}
We have
\begin{equation}
\label{eq:2}
\lim_{y\ra x}\frac{G(x,y)}{\dist(x,y)^{2-N}}=\frac{1}{N(N-2)\nu_x}
\end{equation} 
and %Moreover if $V_X>0$, then 
\begin{equation}
\label{eq:222}
\lim_{\dist(x,y) \to \infty}\frac{G(x,y)}{\dist(x,y)^{2-N}}=\frac{1}{N(N-2)V_X}.
\end{equation} 
\end{corollary}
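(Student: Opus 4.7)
The plan is to deduce both asymptotics by blowing up at $x$ (respectively blowing down at $x$) and applying the convergence result for Green functions, Theorem \ref{green convergence}, combined with the explicit Green-function formula on an $N$-metric measure cone, Proposition \ref{explicit green}.

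For (\ref{eq:2}), given any sequence $y_i \in X \setminus \{x\}$ with $r_i := \dist(x, y_i) \to 0^+$, I would consider the rescaled pointed spaces $(X_i, \dist_i, \meas_i, x) := (X, r_i^{-1}\dist, r_i^{-N}\meas, x)$. By the remark following Definition \ref{Nvlo}, the $N$-volume density at $x$ is preserved under this rescaling, and Lemma \ref{rescaling fh} combined with (\ref{eq:asymptotic.F}) yields
\begin{equation*}
F_x^{X_i}(1) = r_i^{N-2} F_x^X(r_i) \longrightarrow \frac{1}{(N-2)\nu_x},
\end{equation*}
so the hypotheses of the compactness Theorem \ref{cpt non-para} are met. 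Any subsequential pmGH limit is then, by Corollary \ref{volconemetr}(1), an $N$-metric measure cone $(C(Y), \dist_{C(Y)}, \meas_{C(Y)}, O_Y)$ over some $\RCD(N-2, N-1)$ space with $\meas_{C(Y)}(B_1(O_Y)) = \nu_x$ by (\ref{nu equiv}), hence $F_{O_Y}^{C(Y)}(1) = \frac{1}{(N-2)\nu_x}$. Thus condition (2) of Theorem \ref{green convergence} holds and the Green functions converge locally uniformly on the complement of the pole. Since $y_i$ sits at unit distance from $x$ in $X_i$, it subsequentially converges to a point at distance $1$ from $O_Y$ in the cone, where Proposition \ref{explicit green} gives the value $\frac{1}{N(N-2)\nu_x}$. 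Therefore
\begin{equation*}
r_i^{N-2} G(x, y_i) = G_x^{X_i}(y_i) \longrightarrow \frac{1}{N(N-2)\nu_x},
\end{equation*}
and the independence of this limit from the subsequence yields (\ref{eq:2}).

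For (\ref{eq:222}) the analogous blow-down argument is carried out with $r_i \to \infty$, using the preservation of $V_X$ under the same rescaling, Corollary \ref{volconemetr}(2) in place of part (1), and (\ref{limit asympt F}) in place of (\ref{eq:asymptotic.F}); provided $V_X > 0$, the limit cone satisfies $\meas_{C(Y)}(B_1(O_Y)) = V_X$ and the same chain of implications gives the value $\frac{1}{N(N-2)V_X}$. When $V_X = 0$ the right-hand side of (\ref{eq:222}) is $\infty$, and this case is immediate from the lower bound (\ref{eq:BS.estimate.F}) combined with (\ref{limit asympt F}):
\begin{equation*}
\frac{G(x, y)}{\dist(x, y)^{2-N}} \geqslant \frac{1}{C(N)} \cdot \frac{F_x(\dist(x, y))}{\dist(x, y)^{2-N}} \longrightarrow \infty \quad \text{as } \dist(x, y) \to \infty.
\end{equation*}

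The only technical point is verifying hypothesis (2) of Theorem \ref{green convergence} for the rescaled spaces, which reduces cleanly via Lemma \ref{rescaling fh} to the asymptotics of $F_x$ established in Lemma \ref{lem:asymptotic.F.and.H}, so no substantial obstacle arises. A minor conceptual subtlety is that tangent cones (and tangent cones at infinity) are defined only up to subsequence extraction, but Proposition \ref{explicit green} shows that the limit Green function at the pole depends only on $\meas_{C(Y)}(B_1(O_Y))$, which equals $\nu_x$ in the blow-up and $V_X$ in the blow-down, removing any ambiguity.
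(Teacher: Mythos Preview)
Your proposal is correct and follows essentially the same approach as the paper: rescale by $r_i^{-1}\dist$ and $r_i^{-N}\meas$, verify $F^{X_i}_x(1)\to \frac{1}{(N-2)\nu_x}$ via Lemma~\ref{rescaling fh} and (\ref{eq:asymptotic.F}), extract a subsequential tangent cone using Theorem~\ref{cpt non-para} and Corollary~\ref{volconemetr}, and then invoke Theorem~\ref{green convergence} together with Proposition~\ref{explicit green} to identify the limit. The only minor omission is the degenerate case $\nu_x=\infty$ in (\ref{eq:2}), which the paper handles (as you do for $V_X=0$) directly from the upper bound in (\ref{eq:BS.estimate.F}) combined with (\ref{eq:asymptotic.F}); this is a one-line addition.
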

\begin{proof}
We prove only (\ref{eq:2}) via a blow-up argument because the proof of (\ref{eq:222}) is similar via a blow-down argument, where the case when $\nu_x=\infty$ or $V_X=0$ directly follows from Lemma \ref{lem:asymptotic.F.and.H} with (\ref{eq:BS.estimate.F}). 

Take a convergent sequence $y_i \in X \setminus \{x\} \to x$, let $r_i:=\dist(x, y_i)$ and consider rescaled $\RCD(0, N)$ spaces;
\begin{equation}
(X_i, \dist_i, \meas_i, x_i):=\left( X, \frac{1}{r_i}\dist, \frac{1}{r_i^N}\meas, x\right).
\end{equation}
Then since
\begin{equation}\label{s0s9s8hshsbsbn}
F_{x_i}(1)=\frac{F_x(r_i)}{r_i^{2-N}} \to \frac{1}{(N-2)\nu_x}, \quad \nu_{x_i}=\nu_x,
\end{equation}
thanks to Theorem \ref{cpt non-para}, after passing to a subsequence, $(X_i, \dist_i, \meas_i, x_i)$ pmGH converge to a tangent cone $(W, \dist_W, \meas_W, w)$ at $x$, which is isomorphic to the non-parabolic $N$-metric measure cone over an $\RCD(N-2, N-1)$ space with the finite volume density $\nu_w=\nu_x$ (see (\ref{nu equiv})). By Proposition \ref{explicit green} we know $F_w(1)=\frac{1}{(N-2)\nu_w} (=\lim_{i \to \infty}F_{x_i}(1)$ by (\ref{s0s9s8hshsbsbn})) and 
\begin{equation}
G^W(w,z)=\frac{1}{N(N-2)\nu_w} \dist_W(w, z)^{2-N}, \quad \forall z \in W \setminus \{w\}.
\end{equation}
After passing to a subsequence again, we find the limit point $z$ of $y_i \in X_i$ (thus $\dist_W(w, z)=1$).
Then Theorem \ref{green convergence} shows
\begin{equation}\label{asympt FH}
\frac{G(x, y_i)}{r_i^{2-N}}=G^{X_i}(x_i, y_i) \to G^Y(w, z)=  \frac{1}{N(N-2)\nu_w}=\frac{1}{N(N-2)\nu_x}
\end{equation}
which completes the proof because $y_i$ is arbitrary.
%Finally combining Lemma \ref{lem:asymptotic.F.and.H} with Theorem \ref{thm:estimate.Green.function.in.BS} proves (\ref{eq:3}). Thus we conclude.
%{\color{red}It seems to me that we need to add a detail of the proof of (\ref{eq:2}) here because Lemma \ref{lem:asymptotic.F.and.H} gives only inequalities. In order to realize (\ref{eq:2}), one proposal is;
%\begin{enumerate}
%\item Take a tangent cone at $x$ which is an $N$-metric measure cone over an $\RCD(N-2, N-1)$ space because of \cite{DG} with (\ref{eq:36}). Then after rescaling, determining the LHS of (\ref{eq:2}) can be reduced to calculating the Green function on the tangent cone. 
%\item Calculate the Green function on the tangent cone via an explicit formula on the heat kernel, for example, Proposition 2.13 of the paper by Zhangkai.
%\end{enumerate}
%There is a possibility to have a much simpler proof of (\ref{eq:2}) in your mind.
%}
\end{proof}
The next corollary gives an equi-convergent result on $\bist$.
Note that this corollary can be improved later under adding a uniform upper bound on the $N$-volume density. See Corollary \ref{improvement cor}.
\begin{corollary}\label{quntitative green}
For all $N >2$, $0<\eps<1$, $0<r<R<\infty$, $v>0$, $1 \leqslant p<\infty$ and $\phi \in L^1([1, \infty), \haus^1)$ there exists $\delta=\delta(N,\eps,  r, R, v, p, \phi)>0$ such that if 
two pointed non-parabolic $\RCD(0, N)$ spaces $(X_i, \dist_i, \meas_i, x_i) (i=1,2)$ satisfy $\meas_i(B_1(x_i)) \leqslant v$,
\begin{equation}
\frac{s}{\meas_i(B_s(x_i))} \leqslant \phi(s),\quad \text{for $\haus^1$-a.e. $s \in [1, \infty)$},
\end{equation}
and 
\begin{equation}\label{pmghclose1}
\dist_{\mathrm{pmGH}}\left((X_1, \dist_1, \meas_1, x_1), (X_2, \dist_2, \meas_2, x_2)\right)<\delta,
\end{equation}
then 
\begin{equation}
\left| \bist_{x_1}(y_1)-\bist_{x_2}(y_2)\right|+\left|\intav_{B_s(y_1)}|\nabla \bist_{x_1}|^p\d \meas_1 - \intav_{B_s(y_2)}|\nabla \bist_{x_2}|^p\d \meas_2\right|<\eps
\end{equation}
for all $\delta r\leqslant s\leqslant (1-\delta)r$ and $y_i \in B_R(x_i) \setminus B_r(x_i)$ satisfying that $y_1$ $\delta$-close to $y_2$ with respect to (\ref{pmghclose1}).
\end{corollary}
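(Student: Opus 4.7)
The plan is to argue by contradiction, combining the pmGH compactness of non-parabolic $\RCD(0,N)$ spaces (Theorem \ref{cpt non-para}) with the Green-function convergence result (Theorem \ref{green convergence}).

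Assume the claim fails. Then for fixed parameters $N, \eps, r, R, v, p, \phi$ there exist sequences $(X_i^{(j)}, \dist_i^{(j)}, \meas_i^{(j)}, x_i^{(j)})$, $j \in \{1,2\}$, of pointed non-parabolic $\RCD(0,N)$ spaces at mutual pmGH distance less than $1/i$, radii $s_i \in [r/i, (1-1/i)r]$, and points $y_i^{(j)} \in B_R(x_i^{(j)}) \setminus B_r(x_i^{(j)})$ with $y_i^{(1)}$ and $y_i^{(2)}$ at distance less than $1/i$, for which the volume hypotheses hold uniformly but the sum of the two differences in the conclusion exceeds $\eps$ for every $i$. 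The bound $F_{x_i^{(j)}}(1) \le \|\phi\|_{L^1([1,\infty))}$ together with $\meas_i^{(j)}(B_1(x_i^{(j)})) \le v$ permits Theorem \ref{cpt non-para}, so after passing to a subsequence both sequences share the same pointed non-parabolic $\RCD(0,N)$ limit $(X, \dist, \meas, x)$, since their mutual distance vanishes.

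To invoke Theorem \ref{green convergence}, I first verify its hypothesis (1): pmGH convergence gives $\meas_i^{(j)}(B_s(x_i^{(j)})) \to \meas(B_s(x))$ for $\haus^1$-a.e.\ $s$, whence $f_i^{(j)}(s) := s/\meas_i^{(j)}(B_s(x_i^{(j)}))$ converges $\haus^1$-a.e.\ on $[1,\infty)$ to $f(s) := s/\meas(B_s(x))$; the uniform domination $f_i^{(j)} \le \phi \in L^1([1,\infty))$ then yields $L^1$-convergence by Lebesgue's dominated convergence theorem. Theorem \ref{green convergence} now delivers locally uniform and $W^{1,p}_{\mathrm{loc}}$-strong convergence of $\bist_{x_i^{(j)}}$ to $\bist_x$ on $X \setminus \{x\}$ for each $j$. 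Extracting further subsequences, $y_i^{(j)} \to y \in \overline{B}_R(x) \setminus B_r(x)$, with the same $y$ for both $j$'s by the $(1/i)$-closeness, and $s_i \to s_\infty \in [0, r]$. Locally uniform convergence immediately gives $|\bist_{x_i^{(1)}}(y_i^{(1)}) - \bist_{x_i^{(2)}}(y_i^{(2)})| \to 0$. When $s_\infty > 0$ the closed ball $\overline{B}_{s_\infty}(y)$ sits inside a compact subset of $X \setminus \{x\}$ (since $\dist(x,y) \ge r \ge s_\infty$), so $W^{1,p}_{\mathrm{loc}}$-strong convergence of $|\nabla \bist|^p$ combined with the convergence $\meas_i^{(j)}(B_{s_i}(y_i^{(j)})) \to \meas(B_{s_\infty}(y)) > 0$ forces both averages to the common value $\intav_{B_{s_\infty}(y)} |\nabla \bist_x|^p \, \d\meas$, contradicting the $\eps$-separation.

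The main obstacle I anticipate is the residual case $s_\infty = 0$, where the shrinking balls spoil the direct $W^{1,p}_{\mathrm{loc}}$ argument. To handle this I would lean on Theorem \ref{thm:1.1}: every point is a Lebesgue point of $|\nabla \bist_x|$ and $|\nabla \bist_x|^*$ is upper semicontinuous; combining upper semicontinuity with Jensen applied to the Lebesgue-point identity should yield $\lim_{\rho \to 0^+} \intav_{B_\rho(y)} |\nabla \bist_x|^p \, \d\meas = (|\nabla \bist_x|^*(y))^p$ in the limit space. Together with the uniform bound $|\nabla \bist_{x_i^{(j)}}| \le \mathscr C_N v^{1/(N-2)}$ coming from Bishop-Gromov (since $\nu_{x_i^{(j)}} \le \meas_i^{(j)}(B_1(x_i^{(j)})) \le v$), a two-scale argument interposing a fixed auxiliary small radius $\rho > 0$, using $W^{1,p}_{\mathrm{loc}}$-strong convergence on $B_\rho(y)$ to pin $\intav_{B_\rho(y_i^{(j)})} |\nabla \bist_{x_i^{(j)}}|^p \, \d\meas_i^{(j)}$ near $(|\nabla \bist_x|^*(y))^p$, and then sending $\rho \to 0^+$ should trap both $s_i$-averages arbitrarily close to $(|\nabla \bist_x|^*(y))^p$, delivering the contradiction.
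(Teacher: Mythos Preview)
Your overall strategy---contradiction, pmGH compactness via Theorem \ref{cpt non-para}, dominated convergence to verify hypothesis (1) of Theorem \ref{green convergence}, then $W^{1,p}_{\mathrm{loc}}$-strong and locally uniform convergence of $\bist_{x_i}$---is exactly the paper's approach, and the case $s_\infty\in(0,r)$ goes through as you wrote. The paper's own proof is informal on the point you flag: it writes ``$s$'' without a subscript and passes to the limit as if $s$ were fixed, so you have noticed a genuine subtlety the paper does not address.

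However, your proposed treatment of $s_\infty=0$ contains a concrete error and a gap. First, the Bishop--Gromov inequality \eqref{bg ineq} says $r\mapsto \meas(B_r(x))/r^N$ is \emph{nonincreasing}, so $\nu_{x_i^{(j)}}\geqslant \meas_i^{(j)}(B_1(x_i^{(j)}))$, not $\leqslant$; the hypothesis $\meas_i(B_1(x_i))\leqslant v$ therefore does \emph{not} give $\nu_{x_i}\leqslant v$, and the sharp bound $|\nabla \bist_{x_i}|\leqslant \mathscr C_N v^{1/(N-2)}$ is unjustified. (A uniform bound on the relevant region is nevertheless available from the cruder estimate \eqref{eq:12}, since for $s_i\to 0$ the balls $B_{s_i}(y_i^{(j)})$ stay in $B_{2R}(x_i)\setminus B_{r/2}(x_i)$.) Second, and more seriously, even granting a uniform $L^\infty$ bound, your two-scale argument does not close: you get $\intav_{B_\rho(y_i^{(j)})}|\nabla \bist_{x_i^{(j)}}|^p$ close to $(|\nabla \bist_x|^*(y))^p$ for each fixed $\rho$, but you never explain why $\intav_{B_{s_i}(y_i^{(j)})}|\nabla \bist_{x_i^{(j)}}|^p$ is close to $\intav_{B_\rho(y_i^{(j)})}|\nabla \bist_{x_i^{(j)}}|^p$ \emph{uniformly in $i$} when $s_i\ll \rho$---this is exactly the missing uniform Lebesgue-point estimate along the sequence. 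Finally, invoking Theorem \ref{thm:1.1} here is a forward reference in the paper's logical order (it is proved in later subsections).
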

\begin{proof}
In order to simplify our arguments below, we give a proof only in the case when $p=2$ because the general case is similar after replacing $W^{1,2}$-convergence by $W^{1, p_i}$-convergence for a convergent sequence $p_i \to p$.

The proof is done by a strandard contradiction argument based on the compactness of $\RCD$ spaces with respect to the pmGH convergence. Namely if the assertion is not satisfied, then there exist sequences of;
\begin{itemize}
\item pointed non-parabolic $\RCD(0, N)$ spaces $(X_{j, i}, \dist_{j, i}, \meas_{j, i}, x_{j, i})$ with $\meas_{j,i}(B_1(x_{j, i}))\leqslant v$,
\begin{equation}\label{asi8s8shsbsbsus}
f_{j, i}(s):=\frac{s}{\meas_{j, i}(B_s(x_{j, i}))}\leqslant \phi(s),\quad \text{for $\haus^1$-a.e. $s \in [1, \infty)$}
\end{equation}
and
\begin{equation}
\dist_{\mathrm{pmGH}}\left((X_{1, i}, \dist_{1, i}, \meas_{1, i}, x_{1, i}), (X_{2, i}, \dist_{2, i}, \meas_{2, i}, x_{2, i})\right) \to 0;
\end{equation}
\item points $y_{j, i} \in B_R(x_{j, i}) \setminus B_r(x_{j, i})$ satisfying that $y_{1, i}$ is $\eps_i$-close to $y_{2, i}$ for some $\eps_i \to 0^+$ and that 
\begin{equation}\label{inf}
\inf_i\left( \left| \bist_{x_{1, i}}(y_{1, i})-\bist_{x_{2, i}}(y_{2, i})\right|+\left|\intav_{B_s(y_{1, i})}|\nabla \bist_{x_{1,i}}|^2\d \meas_{1, i} - \intav_{B_s(y_{2, i})}|\nabla \bist_{x_{2, i}}|^2\d \meas_{2, i}\right|\right)>0.
\end{equation}
\end{itemize}
Theorem \ref{cpt non-para} shows that after passing to a subsequence,  $(X_{j, i}, \dist_{j, i}, \meas_{j, i}, x_{j, i})$ pmGH-converge to a pointed non-parabolic $\RCD(0, N)$ space $(X, \dist, \meas, x)$.
With no loss of generality we can assume that $y_{j, i}$ converge to a point $y \in \overline{B}_R(x) \setminus B_r(x)$. Moreover the dominated convergence theorem with (\ref{asi8s8shsbsbsus}) yields that $f_{j,i}(s)$ $L^1$-strongly converge to $f(s):=\frac{s}{\meas (B_s(x))}$ in $L^1([1, \infty), \haus^1)$.
Thus Theorem \ref{green convergence} allows us to conclude
\begin{align}
&\left| \bist_{x_{1, i}}(y_{1, i})-\bist_{x_{2, i}}(y_{2, i})\right|+\left|\intav_{B_s(y_{1, i})}|\nabla \bist_{x_{1,i}}|^2\d \meas_{1, i} - \intav_{B_s(y_{2, i})}|\nabla \bist_{x_{2, i}}|^2\d \meas_{2, i}\right| \nonumber \\
&\to \left| \bist_{x}(y)-\bist_{x}(y)\right|+\left|\intav_{B_s(y)}|\nabla \bist_{x}|^2\d \meas - \intav_{B_s(y)}|\nabla \bist_{x}|^2\d \meas\right|=0
\end{align}
which contradicts (\ref{inf}).
\end{proof}
\subsection{Canonical representative of $|\nabla \bist_x|$ and drifted Laplace operator $\mathscr L$}
Throughout this subsection we  continue to argue under the same assumptions as in the previous subsection, namely we fix a pointed non-parabolic $\RCD(0, N)$ space $(X, \dist, \meas, x)$. A main result of this subsection is the following.
	\begin{theorem}[Canonical pointwise representative of $|\nabla \bist_x|$]\label{upp reg}
	The limit 
	\begin{equation}
	\lim_{r \to 0^+}\intav_{B_r(z)}|\nabla \bist_x|^2\d \meas \in [0, \infty)
	\end{equation}
	exists for any $z \in X \setminus \{x\}$. Denoting by
	$|\nabla \bist_x|^*(z)$ the square root of the limit, we have the following.
	\begin{enumerate}
	\item $|\nabla \bist_x|^*$ is upper semicontinuous.
	\item Any point $z \in X \setminus \{x\}$ is a Lebesgue point of $|\nabla \bist_x|^*$;
	\begin{equation}
	\intav_{B_r(z)}\left||\nabla \bist_x|^*-|\nabla \bist_x|^*(z)\right|\d \meas \to 0.
	\end{equation}
	\item We see that 
	\begin{equation}\label{unique}
	|\nabla \bist_x|^*(z)=\limsup_{y\ra z} |\nabla \bist_x|^*(y),\quad \forall z\in X \setminus \{x\}
	\end{equation}
	\end{enumerate} 
	\end{theorem}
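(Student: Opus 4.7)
The plan is to reduce the theorem to the classical regularity theory of nonnegative subharmonic functions on PI spaces, after proving that a suitably weighted version of $|\nabla \bist_x|^2$ is $\Delta$-subharmonic on $X\setminus\{x\}$. The two subharmonicities already advertised in the outline of the paper are precisely the ones I would use: the $\mathscr{L}$-subharmonicity of $|\nabla \bist_x|^2$ converted into honest $\Delta$-subharmonicity of the product $w:=G_x|\nabla \bist_x|^2$.

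First, following Colding's computation in \cite{C12}, I would show $\mathscr{L}|\nabla \bist_x|^2\geqslant 0$ weakly on $X\setminus\{x\}$, where $\mathscr{L}u:=\Delta u+2\langle \nabla\log G_x,\nabla u\rangle$. The starting point is the Bochner inequality in the $\RCD(0,N)$ sense applied to $\bist_x$, which is locally Lipschitz on $X\setminus\{x\}$ by \eqref{eq:BS.estimate.F}--\eqref{eq:BS.estimate.H} and belongs to $D_{\mathrm{loc}}(\Delta,X\setminus\{x\})$ thanks to the identity $\Delta \bist_x=(N-1)|\nabla \bist_x|^2/\bist_x$ of Lemma \ref{lem form}. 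Using $\log G_x=(2-N)\log \bist_x$ the drift term reads $-2(N-2)\langle \nabla \bist_x,\nabla u\rangle/\bist_x$, and the sharp $(\Delta \bist_x)^2/N$ contribution built into the Bochner inequality of an $\RCD(0,N)$ space is exactly the extra curvature-dimensional term needed to combine with the remaining pieces and yield nonnegativity. Then, since $G_x$ is harmonic and strictly positive on $X\setminus\{x\}$, the pointwise identity $\Delta(G_xu)=G_x\Delta u+2\langle \nabla G_x,\nabla u\rangle=G_x\mathscr{L}u$ (valid in the distributional sense) promotes $\mathscr{L}|\nabla \bist_x|^2\geqslant 0$ to $\Delta w\geqslant 0$ on $X\setminus\{x\}$. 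This is Proposition \ref{prop:subha} of the paper.

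Next, since $(X,\dist,\meas)$ is a doubling PI space (by Bishop-Gromov \eqref{bg ineq} and Rajala's Poincar\'e inequality \eqref{poincareineq}), I would appeal to the standard regularity theory for nonnegative $\Delta$-subharmonic functions in PI spaces from \cite{BjornBjorn}, whose specialization to our setting is recorded in Section \ref{sec:3}. That theory yields, for every $z\in X\setminus\{x\}$, the existence of the limit
\[
w^*(z):=\lim_{r\to 0^+}\intav_{B_r(z)}w\,\d\meas,
\]
the fact that every point is a Lebesgue point of $w$, upper semicontinuity of $w^*$, and the identity $w^*(z)=\limsup_{y\to z}w^*(y)$ obtained by combining upper semicontinuity with the mean-value bound. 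Dividing by the continuous strictly positive function $G_x$ transports these three properties to $|\nabla \bist_x|^2$, and the square root is then a continuous monotone operation that preserves upper semicontinuity and the limsup identity; the Lebesgue-point property passes to $|\nabla \bist_x|$ via $\bigl||a|-|b|\bigr|\leqslant |a^2-b^2|^{1/2}$ and Jensen's inequality, using the local boundedness of $|\nabla \bist_x|$ coming from \eqref{eq:BS.estimate.H}.

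The main obstacle is Step 1: establishing $\mathscr{L}|\nabla \bist_x|^2\geqslant 0$ in the non-smooth $\RCD$ framework. Unlike the manifold case, one does not have a classical Hessian of $\bist_x$, so the Colding-type algebraic cancellation must be carried out at the level of the integrated Bochner inequality tested against nonnegative $\phi\in D(\Delta)$ with compact support in $X\setminus\{x\}$, exploiting only $\bist_x\in D_{\mathrm{loc}}(\Delta)$, $\Delta \bist_x=(N-1)|\nabla \bist_x|^2/\bist_x$, and the sharp $1/N$ term of the $\RCD(0,N)$ Bochner inequality \eqref{bochner ineq}. Once this weak subharmonicity is correctly derived, everything else is bookkeeping via PI-space regularity and continuity of $G_x$.
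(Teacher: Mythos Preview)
Your proposal is correct and follows essentially the same route as the paper: prove the $\Delta$-subharmonicity of $w=G_x|\nabla \bist_x|^2$ on $X\setminus\{x\}$ (Proposition \ref{prop:subha}), invoke the PI-space regularity theory of \cite[Section 8.5]{BjornBjorn} (recorded in Section \ref{sec:3}) to obtain the canonical upper semicontinuous Lebesgue representative $w^*$ with the limsup identity, then divide by the continuous positive $G_x$ and take a square root.

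One technical point worth flagging: in the paper the Bochner inequality is applied to $\bist_x^2$, not to $\bist_x$. This is not cosmetic. With only the dimensional term $(\Delta f)^2/N$ available in the $\RCD$ Bochner inequality \eqref{bochner ineq}, applying it to $f=\bist_x$ leaves a residual $-\tfrac{2(N-1)}{N}|\nabla\bist_x|^4/\bist_x^2+2\langle\nabla\bist_x,\nabla|\nabla\bist_x|^2\rangle/\bist_x$ that does not obviously have a sign; Colding's smooth argument closes this using the full Hessian term. By contrast, applying Bochner to $\bist_x^2$ (with $\Delta\bist_x^2=2N|\nabla\bist_x|^2$) produces the clean inequality \eqref{cocl}, from which both the $\mathscr{L}$-subharmonicity of $|\nabla\bist_x|^2$ and the $\Delta$-subharmonicity of $G_x|\nabla\bist_x|^2$ drop out by choosing appropriate test functions.
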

	It is worth mentioning that in the proof of the theorem above, we immediately show
	\begin{equation}\label{exp rep}
	|\nabla \bist_x|^*(z)=\lim_{r \to 0^+}\esssup{y\in B_r(z)}|\nabla \bist_x|(y), \quad \forall z \in X \setminus \{x\}.
	\end{equation}
	Actually Theorem \ref{upp reg} with (\ref{exp rep}) is a direct consequence of the subharmonicity of $|\nabla \bist_x|^2G_x$ stated in Proposition \ref{prop:subha} and general results on PI spaces in \cite[Section 8.5]{BjornBjorn}.
	Thus 
	in the rest of this subsection, we focus on introducing the subharmonicity and its drifted ones which play important roles later.
	\begin{remark}
	Let us recall the following well-known fact; if $x$ is a Lebesgue point of a locally bounded function $f$ defined on an open subset $U$ of a PI space $(X, \dist, \meas)$, then
	\begin{equation}
	\intav_{B_r(x)}\psi\left( \phi \circ f-\phi (f(x))\right)\d \meas \to 0
	\end{equation}
	for all $\phi, \psi \in C(\mathbb{R})$ with $\psi(0)=0$. In particular $x$ is also a Lebesgue point of $\phi \circ f$.
	In order to prove the theorem above, we will apply this fact as $f=|\nabla \bist_x|^2$, $\phi(t)=\sqrt{|t|}$ and $\psi(t)=|t|$ with the following arguments.
	\end{remark}
	
	Consider the following drifted Laplace operator $\mathscr L$ by
	\begin{equation}\label{drift lap}
	\mathscr L:=\Delta +2\<\nabla\log G_x,\nabla \cdot \>.
	\end{equation}
	See Definition \ref{llap} for the precise definition.
	It should be emphasized that Colding studied $\mathscr L$ in \cite{C12} deeply in the smooth framework %in particular, he obtained the strong maximum principle for subharmonic functions with respect to $\mathscr L$ 
	in order to prove the pointwise rigidity result, (2) of Theorem \ref{thm:C12} (see \cite{CM} for applications).
	
	In the sequel, we follow his arguments, but extra delicate treatments on $\mathscr L$ are necessary in our setting because of lack of the smoothness.
	Firstly let us estimate the drifted term of (\ref{drift lap}) as follows.
	\begin{proposition}\label{prpogreengr}
		We have 
		\begin{equation}\label{Ggrad}
		|\nabla \log G_x|(y)\leqslant \frac{C(N)}{\dist (x, y)}, \quad \text{for $\meas$-a.e. $y \in X \setminus \{x\}$.}
		\end{equation}
		In particular 
		\begin{equation}\label{Ggrad2}
		\|\nabla \log G_x\|_{L^{\infty}(X \setminus B_r(x))} \leqslant \frac{C(N)}{r}, \quad \forall r>0.
		\end{equation}
		\end{proposition}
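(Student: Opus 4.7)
The plan is to combine the Br\`ue--Semola two-sided estimates recalled in Proposition \ref{thm:estimate.Green.function.in.BS} with an elementary comparison between the auxiliary functions $F_x$ and $H_x$ from (\ref{deff}). The whole argument is essentially algebraic once those estimates are in hand; the only point requiring a small amount of care is justifying the chain rule $\nabla \log G_x = \nabla G_x / G_x$ away from the pole.

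First I would observe that by \cite[Lemma 2.5]{BrueSemola} (recalled just after Definition \ref{def:green.function}) the function $G_x$ is harmonic and strictly positive on the open set $X\setminus\{x\}$, and as such is locally Lipschitz there thanks to the standard elliptic regularity on $\RCD(0,N)$ spaces (cf.\ \cite{AmbrosioMondinoSavare16, Jiang}). Together with the bound $G_x>0$ on $X\setminus\{x\}$, the locally Lipschitz chain rule available on infinitesimally Hilbertian spaces yields
\begin{equation*}
|\nabla\log G_x|(y)=\frac{|\nabla G_x|(y)}{G_x(y)}\quad\text{for $\meas$-a.e.\ $y\in X\setminus\{x\}$.}
\end{equation*}

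Next I would apply Proposition \ref{thm:estimate.Green.function.in.BS}: for some dimensional constant $C=C(N)>1$,
\begin{equation*}
\frac{|\nabla G_x|(y)}{G_x(y)}\leqslant \frac{C\,H_x(\dist(x,y))}{C^{-1}F_x(\dist(x,y))}=C^{2}\,\frac{H_x(\dist(x,y))}{F_x(\dist(x,y))}.
\end{equation*}
The key elementary step is then the pointwise inequality
\begin{equation*}
H_x(r)=\int_r^{\infty}\frac{1}{\meas(B_s(x))}\,\d s=\int_r^{\infty}\frac{1}{s}\cdot\frac{s}{\meas(B_s(x))}\,\d s\leqslant \frac{1}{r}\int_r^{\infty}\frac{s}{\meas(B_s(x))}\,\d s=\frac{1}{r}F_x(r),
\end{equation*}
which holds for all $r>0$ with no curvature input beyond the already-used non-parabolicity (needed for $F_x, H_x<\infty$). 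Substituting $r=\dist(x,y)$ gives (\ref{Ggrad}), and the uniform bound (\ref{Ggrad2}) is immediate by taking the essential supremum over $y\in X\setminus B_r(x)$.

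I do not expect any real obstacle here; the only thing to double-check is that applying the chain rule is legitimate globally on $X\setminus\{x\}$ (as opposed to on each precompact subdomain separately), but this is standard since the estimate is local and both $|\nabla G_x|$ and $G_x$ are locally well-defined there, and the final pointwise bound (\ref{Ggrad}) can be stated $\meas$-a.e.\ without any global integrability issue. Note that, importantly, the dimensional constants $\nu_x$ and $V_X$ from (\ref{ps9s8shbbbsn}) are never used, so the estimate is insensitive to whether the space has maximal volume growth or collapses at infinity.
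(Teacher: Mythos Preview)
Your proof is correct and follows essentially the same approach as the paper: both combine the two-sided estimates of Proposition \ref{thm:estimate.Green.function.in.BS} with the elementary inequality $F_x(s)\geqslant sH_x(s)$ obtained by bounding $t\geqslant s$ inside the integral defining $F_x$. Your version is slightly more careful in justifying the chain rule for $\log G_x$, which the paper simply uses without comment.
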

	\begin{proof}
	%By a rescaling (see Lemma \ref{rescaling fh}), it is enough to discuss the case when $r=1$.
	By Theorem \ref{thm:estimate.Green.function.in.BS},
			\begin{equation}\label{grad green log}
		|\nabla \log G_x|=\frac{|\nabla G_x|}{G_x}\leqslant C(N)\frac{H_x(\dist(x,\cdot))}{F_x(\dist(x,\cdot))}.
		\end{equation}
		On the other hand by definition we have 
		\begin{equation}
		F_x(s)=\int_s^{\infty}\frac{t}{\meas (B_t(x))}\d t \geqslant \int_s^{\infty}\frac{s}{\meas (B_t(x))}\d t =sH_x(s). 
		\end{equation}
		Combining this with (\ref{grad green log}) completes the proof.
		%For any $y\in X \setminus B_1(x)$, $H_x(\dist(x,y))\leqslant F_x(\dist(x,y))$. Hence $|\nabla \log G_x|(y)\leqslant C(N)$ in $X\setminus B_1(x)$. %\rev{If $d:=\dist(x,\Omega)\geqslant 1$, then we are done. If not, by continuity of $F_x$ and $H_x$, $|\nabla \log G_x|$ is bounded in $\overline {B_1}(x)\setminus B_d(x)$ and the conclusion is also true.}
	\end{proof}
	Let us recall the \textit{sub/super harmonicity} of a function $f$ on an open subset $\Omega$ of $X$. We say that $f$ is \textit{sub (or super, respectively) harmonic} on $\Omega$ if $f \in W^{1,2}_{\mathrm{loc}}(\Omega, \dist, \meas)$ with 
	\begin{equation}\label{eq:18}
		\int_\Omega -\<\nabla u,\nabla \phi\>\d\meas \geqslant 0,\quad \text{(or $\leqslant 0$, respectively)}
		\end{equation}
	for any $\phi\in \left(\mathrm{Lip}_c\right)_+(\Omega, \dist)$. It directly follows that $f$ is sub (or super, respectively) harmoninc on $\Omega$ if $f \in D_{\mathrm{loc}}(\Delta, \Omega)$ with $\Delta f \geqslant 0$ (or $\Delta f \leqslant 0$, respectively). See also \cite{PZZ}.
	 
	Based on this observation, we are now in a position to define the $\mathscr L$-operator precisely as follows.
	\begin{definition}[$\mathscr L$-operator and $\mathscr L$-sub/super harmonicity]\label{llap}
	Let $\Omega$  be an open subset in $X \setminus \{x\}$.
	\begin{enumerate}
	\item{($\mathscr L$-operator)} For  $u \in D_{\mathrm{loc}}(\Delta,\Omega)$, let
	\begin{equation}
	\mathscr Lu:=\Delta u+2\<\nabla\log G_x,\nabla u\> \in L^2_{\mathrm{loc}}(\Omega, \meas).
	\end{equation}
	\item{($\mathscr L$-sub/super harmonicity)} A function $u \in W^{1,2}_{\mathrm{loc}}(\Omega, \dist, \meas)$ is said to be $\mathscr L$-\textit{sub} (or \textit{super}, respectively) \textit{harmonic} on $\Omega$ if 
	\begin{equation}\label{eq:18}
		\int_\Omega -\<\nabla u,\nabla \phi\>\d\meas_{G_x}\geqslant 0,\quad \text{(or $\leqslant 0$, respectively)}
		\end{equation}
		for any $\phi\in \left(\mathrm{Lip}_c\right)_+(\Omega, \dist)$, where $\meas_{G_x}$ is the weighted Borel measure on $X$ defined by
		\begin{equation}
		\meas_{G_x}(A):=\int_AG_x^2\d \meas.
		\end{equation}
	\end{enumerate} 
	\end{definition}
	\begin{remark}
	The $\mathscr L$-operator can be defined as a measure valued one; for any $u \in D(\mathbf\Delta, \Omega)$, define
	\begin{equation}
	\mathscr{L} u := \mathbf\Delta u +2\langle \nabla \log G_x, \nabla u\rangle \d \meas,
	\end{equation}
	where $D(\mathbf\Delta, \Omega)$ is the domain of the measure valued Laplacian, see \cite{Gigli, GP2} for the detail.
	Then, even in the measure valued case, $\mathscr L$-sub/super harmonicity are also well-defined, and weak/strong maximum principles are justified. See also \cite{GR, GV}. Although we avoid to use the measure valued Laplacian/$\mathscr L$-operator for simplicity in our presentation,  however, for our main target in the sequel, $|\nabla \bist_x|^2$, the measure valued $\mathscr L$-operator, $\mathscr{L}|\nabla \bist_x|^2$ is well-defined.
	
	In connection with this, it is easy to see that $u$ is $\mathscr L$-sub (or $\mathscr L$-super, respectively) harmonic on $\Omega$ if and only if for any  $\phi\in \left(\mathrm{Lip}_c\right)_+(\Omega, \dist)$,
	\begin{equation}\label{0s9s8ssbsbsnsn}
	\int_{\Omega}-\<\nabla \phi, \nabla u\>+2\phi\<\nabla \log G, \nabla u\>\d \meas \geqslant 0,\quad \text{(or $\leqslant 0$, respectively).} 
	\end{equation}
	This observation will be a starting point in Section \ref{sec:3}.
	\end{remark}
	
	%\begin{lemma}
	%{\color{blue}Let us write a comarison between $\meas_{G_x}$ and $\meas$.}
	%\end{lemma}
	Let us introduce a standard integration-by-parts formula for $\meas_{G_x}$.
	\begin{proposition}
	Let $\Omega$  be an open subset in $X \setminus \{x\}$ and let $u \in D_{\mathrm{loc}}(\Delta,\Omega)$. Then 
	\begin{equation}
	\int_{\Omega}\langle \nabla \phi, \nabla u\rangle \d \meas_{G_x} = -\int_{\Omega}\phi \cdot \mathscr L u \d \meas_{G_x}
	\end{equation}
	for any $\phi \in \mathrm{Lip}_c(\Omega, \dist)$. In particular $u$ is $\mathscr L$-sub (or super, respectively) harmonic  on $\Omega$ if $\mathscr L u \geqslant 0$ (or $\mathscr L u \leqslant 0$, respectively).
	\end{proposition}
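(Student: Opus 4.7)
The plan is to absorb the weight $G_x^2$ into the test function, thereby reducing the weighted identity to an application of the ordinary integration-by-parts for the Laplacian on $\Omega$ combined with the Leibniz rule.

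First I would check that $\phi G_x^2$ is a legitimate test function against $\Delta u$ on $\Omega$. Since $\supp\phi$ is a compact subset of $\Omega\subset X\setminus\{x\}$, it stays at positive distance from $x$, so Proposition~\ref{thm:estimate.Green.function.in.BS} gives an upper bound on $G_x$ there, while Proposition~\ref{prpogreengr} together with (\ref{eq:BS.estimate.H}) gives a bound on $|\nabla G_x|$ there. In particular $G_x$ is bounded and Lipschitz on $\supp\phi$, hence $\phi G_x^2\in\mathrm{Lip}_c(\Omega,\dist)\subset W^{1,2}_{\mathrm{loc}}(\Omega,\dist,\meas)$ with compact support.

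Next I would apply the Leibniz rule in the infinitesimally Hilbertian setting to write
\begin{equation*}
\langle\nabla(\phi G_x^2),\nabla u\rangle=G_x^2\langle\nabla\phi,\nabla u\rangle+2\phi G_x\langle\nabla G_x,\nabla u\rangle\qquad\text{$\meas$-a.e.\ on }\Omega,
\end{equation*}
and integrate this against $\meas$ on $\Omega$. Using $u\in D_{\mathrm{loc}}(\Delta,\Omega)$ with test function $\phi G_x^2$ rewrites the left-hand side as $-\int_\Omega \phi G_x^2\,\Delta u\,\d\meas$. On the other hand, the identity $G_x\langle\nabla G_x,\nabla u\rangle=G_x^2\langle\nabla\log G_x,\nabla u\rangle$, valid $\meas$-a.e.\ on $\Omega$ since $G_x>0$ there, lets me combine the two terms on the right into a single integral against $\meas_{G_x}$. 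Rearranging yields
\begin{equation*}
\int_\Omega\langle\nabla\phi,\nabla u\rangle\,\d\meas_{G_x}=-\int_\Omega\phi\bigl(\Delta u+2\langle\nabla\log G_x,\nabla u\rangle\bigr)\,\d\meas_{G_x}=-\int_\Omega\phi\cdot\mathscr Lu\,\d\meas_{G_x},
\end{equation*}
which is the desired identity. For the final claim, if $\mathscr Lu\geqslant 0$ then for any $\phi\in(\mathrm{Lip}_c)_+(\Omega,\dist)$ the right-hand side is non-positive, so $\int_\Omega-\langle\nabla\phi,\nabla u\rangle\,\d\meas_{G_x}\geqslant 0$, which is precisely the $\mathscr L$-subharmonicity in Definition~\ref{llap}; the superharmonic case is identical with reversed inequalities.

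There is no serious obstacle; the only delicate point is the admissibility of $\phi G_x^2$ as a test function, which is the reason the statement restricts to $\Omega\subset X\setminus\{x\}$ (away from the pole, where $G_x$ would blow up and $\phi G_x^2$ would fail to be Lipschitz). Once that bookkeeping is done, the proof is just the Leibniz rule plus the standard weak formulation of the Laplacian.
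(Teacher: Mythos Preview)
Your proposal is correct and follows essentially the same approach as the paper: absorb the weight $G_x^2$ into the test function, apply the Leibniz rule, and use the weak formulation of $\Delta$ on $\Omega$. The paper's proof is just a terser version of your computation; your added justification that $\phi G_x^2\in\mathrm{Lip}_c(\Omega,\dist)$ (via the local boundedness and Lipschitz regularity of $G_x$ away from the pole) is a detail the paper leaves implicit.
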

	\begin{proof}
	The proof is quite standard;
	\begin{align}
	\int_{\Omega}\langle \nabla \phi, \nabla u\rangle \d \meas_{G_x} &=\int_{\Omega}\langle \nabla (G_x^2\phi), \nabla u\rangle \d \meas -\int_{\Omega}\phi \langle \nabla G_x^2, \nabla u\rangle \d \meas \nonumber \\
	&= -\int_{\Omega}G_x^2\phi\Delta u\d \meas-\int_{\Omega}2\phi G_x\langle \nabla G_x,\nabla u\rangle \d \meas \nonumber \\
	&=-\int_{\Omega} \phi \cdot \mathscr L u \d \meas_{G_x}.
	\end{align}
	\end{proof}
	We are now in a position to prove a main result in this subsection, recall that the subharmonicity of $|\nabla \bist_x|^2G_x$ with results in \cite[Section 8.5]{BjornBjorn} implies Theorem \ref{upp reg} (we can find the corresponding regularity results for the $\mathscr L$-operator in Section \ref{sec:3}).
	\begin{proposition}[Subharmonicity of gradient of $\bist_x$]\label{prop:subha}
	We see that $|\nabla \bist_x|^2G_x$ is subharmonic on $X \setminus \{x\}$ and that $|\nabla \bist_x|^2$ is $\mathscr L$-subharmonic on $X \setminus \{x\}$.
	\end{proposition}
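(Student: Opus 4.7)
The plan follows Colding's strategy \cite{C12} in the smooth setting, adapted to $\RCD(0,N)$ by invoking the self-improved Bochner inequality. Intuitively, the drift $2\langle \nabla \log G_x, \cdot\rangle$ built into $\mathscr L$ is engineered to absorb the linear first-order term that appears when Bochner is applied to $\bist_x$.

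First I would pin down the regularity of $\bist_x$ on $X \setminus \{x\}$. The Green function $G_x$ is harmonic there (quoted just after Definition \ref{def:green.function}), and the gradient bound $|\nabla G_x|\le C H_x(\dist(x,\cdot))$ of Theorem \ref{thm:estimate.Green.function.in.BS} together with the Sobolev-to-Lipschitz property gives that $G_x$, hence $\bist_x = G_x^{1/(2-N)}$, is locally Lipschitz, strictly positive, and locally in $D(\Delta)$ on $X \setminus \{x\}$. Writing $u := |\nabla \bist_x|^2$, Lemma \ref{lem form} supplies the crucial identity $\Delta \bist_x = (N-1)u/\bist_x$.

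Second, I would apply the self-improved Bochner inequality for $\RCD(0,N)$ spaces to $f = \bist_x$, in weak form against non-negative compactly supported Lipschitz test functions on $X \setminus \{x\}$:
\begin{equation*}
\tfrac12 \Delta u \;\ge\; \frac{(\Delta \bist_x)^2}{N} + \langle \nabla \bist_x, \nabla \Delta \bist_x\rangle + \frac{N}{N-1}\left(\frac{\Delta \bist_x}{N} - \frac{\langle \nabla \bist_x, \nabla u\rangle}{2u}\right)^2.
\end{equation*}
Substituting $\Delta \bist_x = (N-1)u/\bist_x$ and expanding the square, the three contributions proportional to $u^2/\bist_x^2$ cancel exactly, the mixed terms $\langle \nabla \bist_x, \nabla u\rangle/\bist_x$ collect with coefficient $(N-2)$, and a non-negative leftover remains:
\begin{equation*}
\tfrac12 \Delta u \;\ge\; (N-2)\,\frac{\langle \nabla \bist_x, \nabla u\rangle}{\bist_x} + \frac{N}{4(N-1)}\cdot\frac{\langle \nabla \bist_x, \nabla u\rangle^2}{u^2}.
\end{equation*}
Since $G_x = \bist_x^{2-N}$ we have $\nabla \log G_x = (2-N)\nabla \bist_x/\bist_x$, so adding the drift $2\langle \nabla \log G_x, \nabla u\rangle$ to $\Delta u$ cancels the $(N-2)$-coefficient term exactly, leaving
\begin{equation*}
\mathscr L u \;\ge\; \frac{N}{2(N-1)}\cdot\frac{\langle \nabla \bist_x, \nabla u\rangle^2}{u^2} \;\ge\; 0,
\end{equation*}
which is the $\mathscr L$-subharmonicity of $u$.

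Third, the ordinary subharmonicity of $uG_x$ drops out for free from the harmonicity of $G_x$ on $X \setminus \{x\}$. At the weak level, one tests the $\mathscr L$-subharmonic inequality $\int -\langle \nabla u, \nabla \psi\rangle\d\meas_{G_x} \ge 0$ against $\psi = \phi/G_x$ with $\phi \in (\mathrm{Lip}_c)_+(X \setminus \{x\})$; this substitution is legitimate because $G_x$ is strictly positive and locally Lipschitz on $\supp \phi$. Using $\Delta G_x = 0$ to absorb the cross-term coming from differentiating $1/G_x$, this rewrites as $\int -\langle \nabla(u G_x), \nabla \phi\rangle\d\meas \ge 0$, i.e.\ the desired subharmonicity of $uG_x$.

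The main technical obstacle is the rigorous deployment of the self-improved Bochner inequality in weak form for the merely locally regular $\bist_x$: in particular the ratio $\langle \nabla \bist_x, \nabla u\rangle/(2u)$ is ill-defined where $u$ vanishes. The remedy is to discard the last non-negative square term — the only place where $1/u$ enters — before substituting; the cancellation producing $\mathscr L u \ge 0$ does not rely on this leftover piece, so in fact only the plain Bochner inequality is ultimately needed, which is already built into the definition of $\RCD(0,N)$. Everything else is algebra and a standard integration by parts.
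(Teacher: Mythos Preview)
Your overall strategy is sound and the drift algebra is correct, but the final paragraph contains a genuine error that breaks the proof. You claim that ``the cancellation producing $\mathscr L u \ge 0$ does not rely on this leftover piece, so in fact only the plain Bochner inequality is ultimately needed.'' This is false. With the \emph{plain} Bochner inequality applied to $f=\bist_x$ and $\Delta \bist_x=(N-1)u/\bist_x$, one gets only
\[
\tfrac12\Delta u \;\ge\; \frac{(N-1)^2u^2}{N\bist_x^2}+\frac{(N-1)\langle\nabla\bist_x,\nabla u\rangle}{\bist_x}-\frac{(N-1)u^2}{\bist_x^2}
\;=\;-\frac{(N-1)u^2}{N\bist_x^2}+\frac{(N-1)\langle\nabla\bist_x,\nabla u\rangle}{\bist_x},
\]
and after adding the drift $2\langle\nabla\log G_x,\nabla u\rangle=2(2-N)\langle\nabla\bist_x,\nabla u\rangle/\bist_x$ you are left with
\[
\tfrac12\mathscr L u \;\ge\; -\frac{(N-1)u^2}{N\bist_x^2}+\frac{\langle\nabla\bist_x,\nabla u\rangle}{\bist_x},
\]
which has no sign. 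The three $u^2/\bist_x^2$ contributions you say ``cancel exactly'' do so only because one of them comes from expanding the self-improvement square $\frac{N}{N-1}\bigl(\frac{\Delta\bist_x}{N}-\frac{\langle\nabla\bist_x,\nabla u\rangle}{2u}\bigr)^2$; if you discard that entire square to avoid the $1/u$, you also discard the piece that kills the negative term. So either you must justify the self-improved Bochner rigorously with the $1/u$ present (possible in $\RCD$ via an $\varepsilon$-regularisation of the denominator, but not what you wrote), or you need another route.

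The paper takes that other route: it applies the \emph{plain} Bochner inequality to $f=\bist_x^2$ rather than $\bist_x$. Since $\Delta\bist_x^2=2N|\nabla\bist_x|^2$ and $|\nabla\bist_x^2|^2=4\bist_x^2 u$, the weak Bochner inequality (\ref{bochner ineq}) combined with a Leibniz expansion of $\Delta(\bist_x^2 u)$ yields directly
\[
\int (2-N)\psi\langle\nabla\bist_x^2,\nabla u\rangle\,\d\meas-\int\langle\nabla(\psi\bist_x^2),\nabla u\rangle\,\d\meas\ge 0,
\]
from which both the $\mathscr L$-subharmonicity of $u$ and the subharmonicity of $uG_x$ follow by inserting $\phi=\bist_x^{-2}G_x^2\psi$ and $\phi=\bist_x^{-N}\psi$ respectively. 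No $1/u$ ever appears and no self-improvement is needed. Your third step (passing from $\mathscr L$-subharmonicity of $u$ to subharmonicity of $uG_x$ via $\psi=\phi/G_x$ and $\Delta G_x=0$) is correct and is essentially the same manoeuvre the paper performs in the reverse direction.
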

	\begin{proof}
	First of all, we claim that $|\nabla \bist_x|^2\in W^{1,2}_{\mathrm{loc}}(X\setminus\{x\}, \dist, \meas)$. For any compact set $K\subset X\setminus\{x\}$, we can take a good cut-off function $\eta\in (\Lip_c)_+(X, \dist)\cap D(\Delta)$ such that $\eta\equiv 1$ in $K$, $\supp\eta\subset X\setminus\{x\}$ and $|\nabla \eta|+|\Delta \eta|<C$ (see \cite[Lemma 3.2]{MondinoNaber} for such an existence). Letting $h:=\eta \bist_x$, then \cite[Propositions 3.3.18 and 3.3.22]{Gigli} shows that $|\nabla h|^2\in W^{1,2}(X\setminus\{x\}, \dist, \meas)$, which implies $|\nabla \bist_x|^2\in W^{1,2}_{\mathrm{loc}}(X\setminus\{x\}, \dist, \meas)$ because $K$ is arbitrary.
	
	Next recalling
	\begin{equation}
	-\int_{X\setminus \{x\}}\langle \nabla \psi, \nabla (fh)\rangle \d \meas=\int_{X\setminus \{x\}}\psi \left(h\Delta f+2\langle \nabla f, \nabla h\rangle \right)\d \meas-\int_{X\setminus \{x\}}\langle \nabla(\psi f), \nabla h\rangle \d \meas
	\end{equation}
	for all $\psi \in \mathrm{Lip}_c(X\setminus \{x\}, \dist)$, $f \in D_{\mathrm{loc}}(\Delta, X\setminus \{x\})$ and $h \in W^{1,2}_{\mathrm{loc}}(X\setminus \{x\}, \dist, \meas)$, we apply this as $f=\bist_x^2, h=|\nabla \bist_x|^2$ to get 
	\begin{align}\label{1002}
	&-\int_{X\setminus \{x\}}\langle \nabla \psi, \nabla (\bist_x^2|\nabla \bist_x|^2)\rangle \d \meas \nonumber \\
	&=\int_{X\setminus \{x\}}\psi \left(|\nabla \bist_x|^2\Delta \bist_x^2+2\langle \nabla \bist_x^2, \nabla |\nabla \bist_x|^2\rangle \right)\d \meas-\int_{X\setminus \{x\}}\langle \nabla(\psi \bist_x^2), \nabla |\nabla \bist_x|^2\rangle \d \meas \nonumber \\
	&=\int_{X\setminus \{x\}}\psi \left(2N|\nabla \bist_x|^4+2\langle \nabla \bist_x^2, \nabla |\nabla \bist_x|^2\rangle \right)\d \meas-\int_{X\setminus \{x\}}\langle \nabla(\psi \bist_x^2), \nabla |\nabla \bist_x|^2\rangle \d \meas, 
	\end{align}
	where we used (\ref{eq:6}). On the other hand, since $\bist_x^2|\nabla \bist_x|^2=\frac{|\nabla \bist_x^2|^2}{4}$, the Bochner inequality allows us to estimate the left-hand-side above as follows;
	\begin{align}\label{1001}
	-\int_{X\setminus \{x\}}\langle \nabla \psi, \nabla (\bist_x^2|\nabla \bist_x|^2)\rangle \d \meas &=-\frac{1}{4}\int_{X\setminus \{x\}}\langle \nabla \psi, \nabla |\nabla \bist_x^2|^2\rangle \d \meas \nonumber \\
	&\geqslant\frac{1}{2}\int_{X\setminus \{x\}}\psi \left(\frac{(\Delta \bist_x^2)^2}{N}+\langle \nabla \Delta \bist_x^2, \nabla \bist_x^2\rangle \right)\d \meas \nonumber \\
	&=\int_{X\setminus \{x\}}\psi \left(2N|\nabla \bist_x|^4+N\langle \nabla |\nabla \bist_x|^2, \nabla \bist_x^2\rangle \right)\d \meas.
	\end{align}
	where we used (\ref{eq:6}) again.
	Therefore it follows from (\ref{1002}) and (\ref{1001}) that
	\begin{align}\label{cocl}
	\int_{X\setminus \{x\}}(2-N)\psi \langle \nabla \bist_x^2, \nabla |\nabla \bist_x|^2\rangle \d \meas -\int_{X\setminus \{x\}}\langle \nabla (\psi \bist_x^2), \nabla |\nabla \bist_x|^2\rangle \d \meas \geqslant 0.
	\end{align}
	
	Let us prove that this inequality (\ref{cocl}) implies the conclusions. 
	%In order to check the subharmonicity of $|\nabla \bist_x|^2G_x$,  take $\phi \in \mathrm{Lip}_c(X\setminus \{x\}, \dist)$ and put $\psi=\frac{\phi}{\bist_x^N}$. Then 
	Actually as done in (\ref{1002}) and (\ref{1001}), it follows from Leibniz' rule that\footnote{This is also justified by using the measure valued $\mathscr L$-operator because 
	\begin{align}
	\mathscr L |\nabla \bist_x|^2&=\mathbf\Delta |\nabla \bist_x|^2+2\<\nabla \log G, \nabla |\nabla \bist_x|^2\>\d \meas \nonumber \\
	&=\mathbf\Delta |\nabla \bist_x|^2+(2-N)\bist_x^{-2}\<\nabla \bist_x^2, \nabla |\nabla \bist_x|^2\>\d \meas.
	\end{align}}  
	\begin{align}
	&-\int_{X\setminus \{x\}}\<\nabla \psi, \nabla |\nabla \bist_x|^2\>\d \meas_{G_x}\nonumber \\
	&=\int_{X\setminus \{x\}}\left(-\<\nabla(\phi \bist_x^2), \nabla |\nabla \bist_x|^2\>+(2-N)\phi\<\nabla \bist_x^2,\nabla |\nabla \bist_x|^2\>\right)\d \meas \geqslant 0
	\end{align}
	holds, where $\phi=\bist_x^{-2}G_x^2\psi$, which proves the $\mathscr L$-subharmonicity of $|\nabla \bist_x|^2$ on $X\setminus \{x\}$.
	%\begin{align}
	%&\int_{X\setminus \{x\}} \left(-\<\nabla |\nabla \bist_x|^2, \nabla \psi\>+2\psi \<\nabla \log G_x,\nabla |\nabla \bist_x|^2\>\right)\d\meas_{G_x} \nonumber \\
	%&=\int_{X\setminus \{x\}} \left(-\<\nabla |\nabla \bist_x|^2, \nabla \psi\>+2(2-N)\psi \<\nabla \log \bist_x,\nabla |\nabla \bist_x|^2\>\right)\bist_x^{4-2N}\d\meas \nonumber \\
	%&=\int_{X\setminus \{x\}} -\bist_x^{4-2N}\<\nabla |\nabla \bist_x|^2, \nabla \psi\>+2(2-N)\psi \<\nabla \bist_x,\nabla |\nabla \bist_x|^2\>\bist_x^{3-2N}\d\meas \nonumber \\
	%&=\int_{X\setminus \{x\}} -\bist_x^{4-2N}\<\nabla |\nabla \bist_x|^2, \nabla \psi\>+(2-N)\psi \<\nabla \bist_x^2,\nabla |\nabla \bist_x|^2\>\bist_x^{2-2N}\d\meas \nonumber \\
	%&\geqslant \int_{X\setminus \{x\}} -\bist_x^{4-2N}\<\nabla |\nabla \bist_x|^2, \nabla \psi\>+\left\< \nabla \left(\psi \bist_x^{4-2N}\right), \nabla |\nabla \bist_x|^2\right\> \d \meas \nonumber \\
	%&=\int_{X\setminus \{x\}}\psi \left\< \nabla \bist_x^{4-2N}, \nabla |\nabla \bist_x|^2\right\>\d \meas
	%\end{align}

	Similarly we have\footnote{This is also justified by using the measure valued Laplacian because 
	\begin{align}
	\mathbf\Delta (|\nabla \bist_x|^2G_x)&=G\mathbf\Delta |\nabla \bist_x|^2 +2\<\nabla G, \nabla |\nabla \bist_x|^2\> \nonumber\\
	&=\bist_x^{2-N}\mathbf\Delta |\nabla \bist_x|^2 +(2-N)\bist_x^{-N}\<\nabla \bist_x^2, \nabla |\nabla \bist_x|^2\> \nonumber \\
	&=\bist_x^{-N}\left( \bist_x^2 \mathbf\Delta |\nabla \bist_x|^2 +(2-N)\<\nabla \bist_x^2, \nabla |\nabla \bist_x|^2\>\right).
	\end{align}} 
	\begin{align}
	&-\int_{X\setminus \{x\}}\left\langle \nabla \left(|\nabla \bist_x|^2 G_x\right), \nabla \psi \right\rangle \d \meas \nonumber \\
	%&=\int_{X\setminus \{x\}}\bist_x^{2-N}N\bist_x^{N-1} \langle \nabla |\nabla \bist_x|^2, \nabla \bist_x\rangle  +\bist_x^{2-N} \bist_x^N\langle\nonumber \\
	&=\int_{X\setminus \{x\}}\left(-\<\nabla(\phi \bist_x^2), \nabla |\nabla \bist_x|^2\>+(2-N)\phi\<\nabla \bist_x^2,\nabla |\nabla \bist_x|^2\>\right)\d \meas  \geqslant 0,
	\end{align}
	where $\phi=\psi \bist_x^{-N}$, which proves the subharmonicity of $|\nabla \bist_x|^2G_x$ on $X \setminus \{x\}$.

	\end{proof}
	%Note that by \eqref{eq:35}, $|\nabla b_x|^2$ is \textit{$\mathscr L$-subharmonic} in the sense that for any $\phi\in (\Test_c)_+(X)$ with $x\notin \supp \phi$,
	%\begin{align}
	%\int_X|\nabla \bist_x|^2\cdot \mathscr L \phi \cdot {\color{red}G_x^2}\d \meas&=\int_X (|\nabla \bist_x|^2\Delta \phi+2|\nabla \bist_x|^2\<\nabla \log G_x,\nabla \phi\>)\ {\color{red}G_x^2}\d \meas \nonumber \\
	%& =\int_X (-\langle \nabla |\nabla \bist_x|^2, \nabla \phi\rangle+2|\nabla \bist_x|^2\<\nabla \log G_x,\nabla \phi\>)\ {\color{red}G_x^2}\d\meas\geqslant 0.
	%\end{align}
	In the sequel, we will use the simplified notation $|\nabla \bist_x|=|\nabla \bist_x|^*$ for the simplicity on our presentations.  In Section \ref{sec:3}, we will also provide the fundamental properties on the $\mathscr L$-operator with the proofs, including the strong maximum principle, which comes from the general theory on PI spaces. They will play important roles in the sequel.
\subsection{Sharp gradient estimate on $\bist_x$}
Fix a pointed non-parabolic $\RCD(0, N)$ space $(X, \dist, \meas, x)$ and assume that the $N$-volume density $\nu_x$ at $x$ is finite;
\begin{equation}\label{7shsnst}
\nu_x=\lim_{r\to 0^+}\frac{\meas(B_r(x))}{r^N}<\infty.
\end{equation} 
Note that this condition does not imply that the essential dimension is equal to $N$ because of an example; $([0,\infty), \dist_{\mathrm{Euc}}, r^{N-1}\d r)$ is an $\RCD(0, N)$ space with the finite $N$-volume density at the origin.
%\begin{remark}
%As we will see in Corollary \ref{cor best lip}, (\ref{7shsnst}) implies the global Lipschitz continuity of $\bist_x$ on $X$ with $\bist_x(x)=0$ the Lipschitz constant $\mathscr C_N\nu_x^{\frac{1}{N-2}}$. Conversely, if {\color{red}$\meas$ is Ahlfors $s$-regular at $x\in X$ for some $s \geqslant 0$}\footnote{{\color{blue}I would like to remove this assumption.}} and
%the function $\bist_x$ with $\bist_x(x)=0$ is locally Lipschitz around $x$, then (\ref{7shsnst}) holds, because the Bishop-Gromov inequality implies $s \leqslant N$, (\ref{eq:BS.estimate.F})
%shows
%\begin{equation}\label{9s7shsss}
%\frac{1}{C(N)}F_x(\dist(x, y))^{\frac{1}{2-N}}\leqslant \bist_x(y) \leqslant C(N)F_x(\dist(x, y))^{\frac{1}{2-N}},
%\end{equation}
%thus letting $y \to x$ in (\ref{9s7shsss}) after deviding $\dist(x, y)$ proves $s \geqslant N$.
%\end{remark}

The main result of this section is the following. Recall $\mathscr C_N=(N(N-2))^{\frac{1}{N-2}}$.
\begin{theorem}[Sharp gradient estimate of $\bist_x$]
	\label{thm:sharp.gradient.estimate}
We have
\begin{equation}\label{sharp grad bd}
|\nabla \bist_x|(y)\leqslant \mathscr C_N\nu_x^\frac{1}{N-2}, \quad  \forall y \in X \setminus \{x\}.
\end{equation}
\end{theorem}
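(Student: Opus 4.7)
The plan is to combine the $\mathscr{L}$-subharmonicity and upper semicontinuity of $|\nabla \bist_x|^2$ (Proposition~\ref{prop:subha} and Theorem~\ref{upp reg}) with a blow-up analysis at the pole $x$, relying on the explicit computation of $|\nabla \bist|$ on $N$-metric measure cones from Proposition~\ref{explicit green}. The target constant $\mathscr{C}_N\nu_x^{\frac{1}{N-2}}$ is precisely the (constant) value of $|\nabla \bist_{O_Y}|$ on any $N$-metric measure cone whose pole has volume density $\nu_x$, and since every tangent cone at $x$ is isomorphic to such a cone by Corollary~\ref{volconemetr}(1), this value arises naturally as a blow-up limit.

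The first step is to establish the boundary behavior near the pole, namely
\begin{equation*}
\limsup_{y\to x}|\nabla \bist_x|^*(y)\le \mathscr{C}_N\nu_x^{\frac{1}{N-2}}.
\end{equation*}
Given a sequence $y_i\to x$, I would set $r_i:=\dist(x,y_i)$ and pass to the rescaled pointed $\RCD(0,N)$ spaces $(X,r_i^{-1}\dist,r_i^{-N}\meas,x)$, which by Lemma~\ref{rescaling fh} preserve both $\nu_x$ and the value $|\nabla \bist_x|(y_i)$, while placing the rescaled $y_i$ on the unit sphere around $x$. The asymptotics in Lemma~\ref{lem:asymptotic.F.and.H} give $F_{x_i}(1)\to \frac{1}{(N-2)\nu_x}$, so Theorem~\ref{cpt non-para} supplies a pmGH-convergent subsequence whose limit, by Corollary~\ref{volconemetr}(1), is isomorphic to an $N$-metric measure cone $C(Y)$ with pole volume density $\nu_x$. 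Proposition~\ref{explicit green} then gives $|\nabla \bist_{O_Y}|\equiv \mathscr{C}_N\nu_x^{\frac{1}{N-2}}$ on $C(Y)$, and Theorem~\ref{green convergence} yields the locally uniform and $W^{1,p}_{\mathrm{loc}}$-strong convergence of the rescaled smoothed distance functions on $C(Y)\setminus\{O_Y\}$. Combining this with the upper semicontinuity of $|\nabla \bist_x|^*$ and the essential-sup representation~(\ref{exp rep}) transfers the cone value back to the original space.

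For the global bound, I would exploit the $\mathscr{L}$-subharmonicity and upper semicontinuity of $|\nabla \bist_x|^{*2}$, applying the maximum principle for $\mathscr{L}$-subharmonic upper semicontinuous functions (developed in Section~\ref{sec:3} from the PI-space theory) on annular regions $\overline{B}_R(x)\setminus B_\rho(x)$. The boundary contribution as $\rho\to 0^+$ is controlled by the first step. The behavior as $R\to\infty$ is handled by a parallel blow-down argument: Corollary~\ref{volconemetr}(2) shows that any tangent cone at infinity is again an $N$-metric measure cone, with pole density $V_X\le \nu_x$, so the same scheme gives $\limsup_{\dist(x,y)\to\infty}|\nabla \bist_x|^*(y)\le \mathscr{C}_N V_X^{\frac{1}{N-2}}\le \mathscr{C}_N\nu_x^{\frac{1}{N-2}}$. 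Letting $\rho\to 0^+$ and $R\to\infty$ yields the claimed estimate on all of $X\setminus\{x\}$.

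The main obstacle I anticipate lies in the first step: the $W^{1,p}_{\mathrm{loc}}$-convergence of Green functions from Theorem~\ref{green convergence} must be upgraded to a pointwise upper bound on the canonical representative $|\nabla \bist_x|^*$. This is where the upper semicontinuity of Theorem~\ref{upp reg} and the essential-sup identity~(\ref{exp rep}) are essential, since the pointwise value is not automatically continuous along pmGH convergence. A secondary subtlety is the correct handling of the maximum principle on the unbounded domain $X\setminus\{x\}$, which I would manage by passing to the limit along an exhausting family of compact annuli, using the decay estimates of Proposition~\ref{thm:estimate.Green.function.in.BS} to guarantee local boundedness of $|\nabla \bist_x|^*$.
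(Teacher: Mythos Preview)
Your strategy has two genuine gaps. The first---which you yourself flag as ``the main obstacle'' but do not resolve---is that $W^{1,p}_{\mathrm{loc}}$-convergence of $\bist_{x_i}$ together with upper semicontinuity and the ess-sup representation~(\ref{exp rep}) does \emph{not} yield the sharp pointwise bound $\limsup_{y\to x}|\nabla \bist_x|^*(y)\le \mathscr{C}_N\nu_x^{1/(N-2)}$. The weak Harnack inequality for $\mathscr{L}$-subharmonic functions (Proposition~\ref{lem:weak.harnack.sub22}) bounds $|\nabla \bist_x|^*(y_i)^2$ by $C$ times an $L^p$-average, and the $L^p$-average does converge to $(\mathscr{C}_N\nu_x^{1/(N-2)})^2$ under blow-up, but the Harnack constant $C>1$ destroys sharpness. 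The paper circumvents this by reversing the order: it first proves (Step~2), via the weak maximum principle applied to the $\Delta$-subharmonic function $|\nabla \bist_x|^2G_x-(C_{\mathrm{opt},x}^2+\eps)G_x-L^2\eps$, that the global supremum $C_{\mathrm{opt},x}$ is already attained at the pole. Only then (Step~3) does it compute $C_{\mathrm{opt},x}$ by blow-up, applying the weak Harnack inequality for $\mathscr{L}$-\emph{super}harmonic functions to the \emph{nonnegative} function $C_{\mathrm{opt},x}^2-|\nabla \bist_x|^2$; since this vanishes at the approximating points $y_i$, the Harnack inequality forces the $L^p$-average to vanish, which combined with the blow-up limit gives $C_{\mathrm{opt},x}=\mathscr{C}_N\nu_x^{1/(N-2)}$ exactly.

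The second gap is at infinity: your blow-down argument invokes Corollary~\ref{volconemetr}(2), which requires $V_X>0$, but non-parabolicity does not imply this (e.g.\ $\meas(B_s(x))\sim s^{N-\delta}$ with $0<\delta<N-2$). The paper avoids any blow-down by working with $|\nabla \bist_x|^2G_x$ rather than $|\nabla \bist_x|^2$: since $G_x\to 0$ at infinity (Corollary~\ref{coro:vanish.at.infinity}), the outer boundary contribution is absorbed into the $-L^2\eps$ term and no asymptotic cone structure is needed.
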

\begin{proof}
        The proof is devided into several steps as follows.

	\textbf{Step 1}.
	Let us prove that $|\nabla \bist_x|\in L^\infty(X,\meas)$. By calculus rules
	\begin{equation}
	\label{eq:4}
	|\nabla \bist_x|=\frac{1}{N-2}G_x^{\frac{N-1}{2-N}}|\nabla G_x|.
	\end{equation}
	
	Let 
	\begin{equation}
	C_{\mathrm{opt}, x}:=\lim_{r\ra 0}\ \esssup{y\in B_r(x)}\ |\nabla \bist_x|(y)  \left(= \lim_{r\ra 0}\sup_{y\in B_r(x)}\ |\nabla \bist_x|(y)\right).
	\end{equation}
	Plugging in \eqref{eq:2}, \eqref{eq:BS.estimate.H} and (\ref{eq:asymptotic.H}) we see that 
		\begin{align}
		\label{eq:13}
		C_{\mathrm{opt}, x}\leqslant  C(N) \nu_x^\frac{1}{N-2}.
		\end{align}
	Thus we can take $r$ sufficiently small such that \begin{equation}\label{eq:14}
	|\nabla \bist_x|(y)\leqslant C(N)\nu_x^\frac{1}{N-2}, \quad \meas\text{-a.e.}\ y\in B_r(x)\setminus\{x\}.
	\end{equation}

	On the other hand, combining \eqref{eq:4} with Theorem \ref{thm:estimate.Green.function.in.BS} yields that for $\meas$-a.e. $y\in X\setminus \{x\}$, we have
	\begin{equation}
	\label{eq:12}
	|\nabla \bist_x|(y)\leqslant C(N)F_x(\dist(x,y))^\frac{N-1}{2-N}H_x(\dist(x,y)).
	\end{equation}
	Note that it holds that $sH_x(s)\leqslant F_x(s)$ for any $s\in[1,\infty)$. Thus, choosing $R$ sufficiently large, then for any $y\in X\setminus B_R(x)$, we have
	\begin{equation}\label{0s0siijj}
	|\nabla \bist_x|(y)\leqslant C(N)\frac{H_x(\dist(x,y))}{F_x(\dist(x,y))^{\frac{N-1}{N-2}}}\leqslant C(N)V_X+1
	\end{equation} because of (\ref{limit asympt F}).
	Moreover it follows from the continuity of right-hand-side of \eqref{eq:12} that $|\nabla \bist_x|$ is bounded $\meas$-a.e. in $\overline{B_R(x)}\setminus B_r(x)$, which is a compact subset. Therefore $|\nabla \bist_x|\in L^\infty(X,\meas)$. 
	
	\textbf{Step 2}.	
	Let us prove that $|\nabla \bist_x|(y) \le C_{\mathrm{opt}, x}$ for any $y \in X \setminus \{x\}$.
	
   Fix $0<r<R$ and choose arbitrarily $\eps>0$. By Corollary \ref{coro:vanish.at.infinity}, we can find $R_0>R$ sufficiently large such that
	\begin{equation}
	G_x(y)< \eps, \quad \forall y\in X\setminus B_{R_0}(x)
	\end{equation}
	%Choose $C(N)$ such that \eqref{eq:14} holds, which remains fixed and invariant from now on and until the end of the proof, and let $r>0$ be sufficiently small such that 
	Take $0<r_0<r$ with
	\begin{equation}\label{eq:37}
	|\nabla \bist_x|^2(y)<C_{\mathrm{opt}, x}^2+\eps, \quad \forall y\in \overline{B}_{r_0}(x) \setminus \{x\}.
	\end{equation}
	Let $L:=\big\||\nabla \bist_x|\big\|_{L^\infty(X,\meas)}$ and
	 set
	\begin{equation}\label{eq:16}
	u_x:=|\nabla \bist_x|^2G_x-(C_{\mathrm{opt}, x}^2+\eps)G_x-L^2\eps,
	\end{equation}
	which is upper semicontinuous and subharmonic on $X\setminus\{x\}$ because of Proposition \ref{prop:subha}.
Let $\Omega:=B_{R_0}(x)\setminus \overline{B_{r_0}(x)}$. Applying the weak maximum principle for upper semicontinuous subharmonic functions \cite[Proposition 1.15]{GV}, we see that
	\begin{equation}
	\sup_{\Omega}u_x=\sup_{\partial\Omega}u_x\leqslant 0
	\end{equation}
	which proves %For any $y\in X\setminus \{x\}$, we can choose $\eps$ sufficiently small such that $y\in \Omega$. Then we have
	\begin{equation}
	|\nabla \bist_x|^2(y)\leqslant C_{\mathrm{opt}, x}^2+\frac{L^2\eps}{G_x(y)}+\eps, \quad \forall y \in \Omega.
	\end{equation}
	This observation allows us to conclude that $|\nabla \bist_x| \leqslant C_{\mathrm{opt}, x}$ for $\meas$-a.e. after letting $\eps \to 0^+$ under fixing $r, R$. 
	Thus by Theorem \ref{upp reg} we know that $|\nabla \bist_x|(y) \leqslant C_{\mathrm{opt}, x}$ for any $y \in X \setminus \{x\}$.  %Let $\eps\ra 0$, we obtain that $|\nabla b_x|^2\leqslant C(N)^2\nu^\frac{2}{N-2}$.
	
	\textbf{Step 3}.
	We claim that $C_{\mathrm{opt}, x}=\mathscr C_N\nu_x^{\frac{1}{N-2}}$, where this completes the proof. Thanks to (\ref{nu equiv}), Proposition \ref{explicit green} and Theorem \ref{green convergence}, we know that for any $0<\delta <1$
	\begin{equation}\label{00s9s9s8}
	\intav_{B_r(x)\setminus B_{\delta r}(x)}\left| |\nabla \bist_x|^2-\mathscr C_N^2\nu_x^{\frac{2}{N-2}}\right|\d \meas \to 0, \quad \text{as $r \to 0^+$.}
	\end{equation}
	%For any $r<1$, find $\tilde y_r \in B_r(x)\setminus \{x\}$ with
	%\begin{equation}
	%|\nabla \bist_x|(\tilde  y_r)+r\geqslant \esssup{y\in B_r(x)}\ |\nabla \bist_x|(y) \left(= \sup_{y\in B_r(x)}\ |\nabla \bist_x|(y)\right).
	%\end{equation}
	%where the last equality comes from the upper semicontinuity of $|\nabla \bist_x|$.
	%Note that the previous step shows for some $\delta_r \to 0^+$ as $r \to 0^+$;
	%\begin{align}
	%C_{\mathrm{opt}, x}+\delta_r\geqslant |\nabla \bist_x|(\tilde  y_r)+r&\geqslant  \esssup{y\in B_r(x)}\ |\nabla \bist_x|(y) \nonumber \\
	%&\geqslant |\nabla \bist_x|(\tilde  y_{r/2}) \geqslant \esssup{y\in B_{r/2}(x)}\ |\nabla \bist_x|(y)-\frac{r}{2} \to C_{\mathrm{opt}, x}.
	%\end{align}
	%This observation allows us to find 
	Take $y_i \to x$ satisfying $|\nabla \bist_x|(y_i) \to C_{\mathrm{opt}, x}$ %and $u_i:=C_{\mathrm{opt}, x}^2-|\nabla \bist_x^{X_i}|^2+\eps_i \geqslant 0$ on $B_{2\dist(x, y_i)}(x)$ for some $\eps_i \to 0^+$.
	%for some $C \geqslant 0$ and some $y_i \to x$, then $C=\mathscr C_N\nu_x^{\frac{1}{N-2}}$.
	and let $r_i:=\dist(x, y_i)$ and consider rescaled spaces;
	\begin{equation}
	(X_i, \dist_i, \meas_i):=\left(X, \frac{1}{r_i}\dist, \frac{1}{r_i^N}\meas \right).
	\end{equation}
	Applying the weak Harnack inequality for $\mathscr L$-superharmonic functions, Proposition \ref{prop:weak.harnack.super}, to a lower semicontinuous $\mathscr L$-superharmonic function $C_{\mathrm{opt}, x}^2-|\nabla \bist_{x}^{X_i}|^2 \geqslant 0$,  we have
	\begin{align}\label{s8shshsbsyststst}
	\left(\intav_{B_{\frac{1}{4}}^{\dist_i}(y_i)}\left|C_{\mathrm{opt}, x}^2-|\nabla \bist_{x}^{X_i}|^2\right|^p\d \meas_i \right)^{1/p}&\leqslant C(N) \left(C_{\mathrm{opt}, x}^2-|\nabla \bist_x^{X_i}|^2(y_i)\right) \nonumber \\
	&=C(N) \left(C_{\mathrm{opt}, x}^2-|\nabla \bist_x|^2(y_i)\right) \to 0,
	\end{align}
	where $p=p(N)>0$.
	Thus recalling $|\nabla \bist_x| \in L^{\infty}(X\setminus \{x\}, \meas)$, it holds that 
	\begin{equation}
	\intav_{B_{\frac{1}{4}}^{\dist_i}(y_i)}\left|C_{\mathrm{opt}, x}^2-|\nabla \bist_{x}^{X_i}|^2\right|\d \meas_i \to 0,
	\end{equation}
	namely
	\begin{equation}\label{9s9s9s9}
	\intav_{B_{\frac{r_i}{4}}(y_i)}\left| C_{\mathrm{opt}, x}^2-|\nabla \bist_{x}|^2\right|\d \meas \to 0.
	\end{equation}
	On the other hand, (\ref{00s9s9s8}) implies 
	\begin{equation}\label{9s9s9s99}
	\intav_{B_{\frac{r_i}{4}}(y_i)}\left| \mathscr C_N^2\nu_x^{\frac{2}{N-2}}-|\nabla \bist_{x}|^2\right|\d \meas \to 0.
	\end{equation}
	Thus by (\ref{9s9s9s9}) and (\ref{9s9s9s99}) we have  $C_{\mathrm{opt}, x}=\mathscr C_N\nu_x^{\frac{1}{N-2}}$.
	\end{proof}
	We provide direct consequences of Theorem \ref{thm:sharp.gradient.estimate}. Firstly we improve Theorems \ref{green convergence} and \ref{upp reg} removing the singular base point.
\begin{corollary}[Improvement of the convergence of $\bist$]\label{impr}
Let us consider a pmGH convergent sequence of pointed non-parabolic $\RCD(0, N)$ spaces 
\begin{equation}
(X_i, \dist_i, \meas_i, x_i) \stackrel{\mathrm{pmGH}}{\to} (X, \dist, \meas, x)
\end{equation}
with (\ref{upper density}) and $F_{x_i}(1) \to F_x(1)$. Then $\bist_{x_i}$ $W^{1,p}_{\mathrm{loc}}$-strongly, and locally uniformly converge to $\bist_x$ on $X$ for any $p<\infty$. 
\end{corollary}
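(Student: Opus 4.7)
The plan is to upgrade Theorem \ref{green convergence} (which already delivers the stated convergence on compact subsets of $X\setminus\{x\}$) to all of $X$ using the new pointwise gradient bound from Theorem \ref{thm:sharp.gradient.estimate}. Setting $\nu:=\sup_i\nu_{x_i}<\infty$ and $L:=\mathscr C_N\nu^{1/(N-2)}$, that estimate yields
$$
|\nabla\bist_{x_i}|(y)\leq L, \qquad \forall y\in X_i\setminus\{x_i\},
$$
so with the convention $\bist_{x_i}(x_i):=0$ the family $\{\bist_{x_i}\}_i$ is uniformly $L$-Lipschitz and obeys $\bist_{x_i}(y_i)\leq L\,\dist_i(x_i,y_i)$. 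Bishop--Gromov (\ref{bg ineq}) together with the definition of $\nu_{x_i}$ also gives the uniform volume bound $\meas_i(B_r(x_i))\leq \nu r^N$ for every $r>0$ and every $i$, and the same with $\meas,x$ in place of $\meas_i,x_i$.

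From here I would first upgrade the locally uniform convergence of Theorem \ref{green convergence} from $X\setminus\{x\}$ to all of $X$ by an equicontinuity argument at $x$. Fix a compact $K\subset X$ and $\eps>0$, and choose $\delta>0$ with $L\delta<\eps/3$. On $K\setminus B_\delta(x)$ the locally uniform convergence is already available from Theorem \ref{green convergence}. For $y_i\in X_i$ converging, under the fixed pmGH realisation, to some $y\in K\cap \overline B_\delta(x)$, both $\bist_{x_i}(y_i)\leq L\,\dist_i(x_i,y_i)$ and $\bist_x(y)\leq L\,\dist(x,y)$ are eventually bounded by $L\delta<\eps/3$, so $|\bist_{x_i}(y_i)-\bist_x(y)|<\eps$ for all large $i$. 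Gluing the two cases yields locally uniform convergence across $x$.

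Next I would extend the $W^{1,p}_{\mathrm{loc}}$-strong convergence to any compact set containing $x$. Strong $L^p$-convergence of the functions $\bist_{x_i}\to\bist_x$ on a bounded open $\Omega\ni x$ follows from the locally uniform convergence above together with uniform boundedness and the pmGH measure convergence. For the gradient part, fix $r>0$ and decompose $\Omega=(\Omega\cap\overline B_r(x))\cup(\Omega\setminus B_r(x))$: on $\Omega\setminus B_r(x)$ the $L^p$-convergence of $|\nabla\bist_{x_i}|$ is already given by Theorem \ref{green convergence}, while on $\Omega\cap B_r(x)$ the uniform tail estimate
$$
\int_{B_r(x_i)}|\nabla\bist_{x_i}|^p\,\d\meas_i\leq L^p\meas_i(B_r(x_i))\leq L^p\nu r^N,
$$
together with its analogue for $\bist_x$, provides a vanishing tail as $r\to 0^+$, uniformly in $i$. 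Sending $i\to\infty$ first and then $r\to 0^+$ gives $\int_\Omega|\nabla\bist_{x_i}|^p\d\meas_i\to\int_\Omega|\nabla\bist_x|^p\d\meas$; combined with the weak $W^{1,p}$-convergence (automatic from the equi-Lipschitz bound and the $L^p$-convergence of the functions), this upgrades to the desired $W^{1,p}_{\mathrm{loc}}$-strong convergence on all of $X$.

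The main obstacle I anticipate is not a new analytic estimate but the careful bookkeeping across the pmGH identification near the singular point $x$: one must check that $B_r(x_i)$ and $B_r(x)$ are compared correctly so that both the tail bound and the convergence on complements dovetail, and that the version of ``$W^{1,p}_{\mathrm{loc}}$-strong convergence'' used elsewhere in the paper really reduces, under the uniform Lipschitz control, to $L^p$-convergence of functions and of gradient $L^p$-norms plus weak convergence. Once these are in place, no new ingredients beyond Theorem \ref{thm:sharp.gradient.estimate} and Bishop--Gromov should be needed.
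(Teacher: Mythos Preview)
Your proposal is correct and follows essentially the same approach as the paper: both combine Theorem \ref{green convergence} away from $x$ with the uniform gradient bound $|\nabla\bist_{x_i}|\leq \mathscr C_N\nu^{1/(N-2)}$ from Theorem \ref{thm:sharp.gradient.estimate} to obtain a vanishing $L^p$-tail near the singular point. The paper's proof is much more compressed (it only writes the tail estimate $\int_{B_\eps(x_i)}|\nabla\bist_{x_i}|^p\d\meas_i\leq \meas_i(B_\eps(x_i))\cdot L^p$ and invokes Theorem \ref{green convergence}), whereas you spell out the locally uniform convergence across $x$ and the weak-to-strong upgrade explicitly; but the underlying argument is the same.
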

\begin{proof}
%It is enough to discuss the case when $p=2$ because the improvement to general $p$ can be obtained 
%Next let us prove (2). By (\ref{eq:4}) and (1), we easily see that the desired $W^{1,2}_{\mathrm{loc}}$-strong convergence holds on $X \setminus \{x\}$. In order to improve this to the validity on $X$, let us use (\ref{upper density}) to get 
Note
\begin{equation}\label{small ball est}
\frac{1}{\meas_i (B_1(x_i))}\int_{B_{\eps}(x_i)}|\nabla \bist_{x_i}|^p\d \meas_i \leqslant \frac{\meas_i(B_{\eps}(x_i))}{\meas_i(B_1(x_i))}\mathscr C_N^p \cdot \sup_j \nu_{x_j}^{\frac{p}{N-2}} \to \frac{\meas (B_{\eps}(x))}{\meas (B_1(x))}\mathscr C_N^p \cdot \sup_j\nu_{x_j}^{\frac{p}{N-2}}.
\end{equation}
Since the right-hand-side of (\ref{small ball est}) is small if $\eps$ is small, combining this with Theorem \ref{green convergence} completes the proof.
\end{proof}
\begin{corollary}\label{cor best lip}
Let $\bist_x(x):=0$. Then $\bist_x$ is $\mathscr C_N\nu_x^{\frac{1}{N-2}}$-Lipschitz with the Lipschitz constant $\mathscr C_N\nu_x^{\frac{1}{N-2}}$. 
Moreover letting
\begin{equation}
|\nabla \bist_x|(x):=\mathscr C_N\nu_x^{\frac{1}{N-2}},
\end{equation}
we have the same conclusions as in Theorem \ref{upp reg} with (\ref{exp rep}) even for the base point $z=x$. 
More strongly, we have for any $p<\infty$.
\begin{equation}\label{strongco}
\intav_{B_r(x)}\left|\nabla \left(\bist_x-\mathscr C_N\nu_x^{\frac{1}{N-2}}\dist_x\right)\right|^p\d \meas \to 0,\quad r \to 0^+.
\end{equation}
In particular, 
 $|\nabla \bist_x|(y_i) \to \mathscr C_N\nu_x^{\frac{1}{N-2}}$ for some convergent sequence $y_i \to x$.
\end{corollary}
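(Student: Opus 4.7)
The plan is to first extend $\bist_x$ continuously by setting $\bist_x(x):=0$. From Corollary \ref{green asympt 0} together with the identity $\bist_x=G_x^{1/(2-N)}$ I obtain the asymptotic
\begin{equation*}
\lim_{y\to x}\frac{\bist_x(y)}{\dist(x,y)}=\left(\tfrac{1}{N(N-2)\nu_x}\right)^{\frac{1}{2-N}}=\mathscr{C}_N\nu_x^{\frac{1}{N-2}},
\end{equation*}
so the extension is continuous at $x$ and no Lipschitz constant smaller than $\mathscr{C}_N\nu_x^{\frac{1}{N-2}}$ can work. In the other direction, Theorem \ref{thm:sharp.gradient.estimate} combined with the fact that $\{x\}$ is $\meas$-negligible shows that the extended function lies in $W^{1,\infty}_{\mathrm{loc}}(X,\dist,\meas)$ with $|\nabla\bist_x|\leqslant\mathscr{C}_N\nu_x^{\frac{1}{N-2}}$ $\meas$-a.e.; the Sobolev-to-Lipschitz property of the ambient $\RCD(0,N)$ space then yields a global Lipschitz representative with the stated constant, which coincides with the continuous extension.

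\textbf{Pointwise properties at $z=x$ via blow-up.} Given any sequence $r_i\to 0^+$ I consider the rescalings $(X_i,\dist_i,\meas_i,x):=(X,r_i^{-1}\dist,r_i^{-N}\meas,x)$. By the rescaling formulae in Lemma \ref{rescaling fh} we have $\nu_x^{X_i}=\nu_x$ and $F_x^{X_i}(1)=r_i^{N-2}F_x(r_i)\to\frac{1}{(N-2)\nu_x}$ by (\ref{eq:asymptotic.F}). Theorem \ref{cpt non-para} produces (along a subsequence) a pmGH limit which, by Corollary \ref{volconemetr}, is an $N$-metric measure cone $(W,\dist_W,\meas_W,w)$ over an $\RCD(N-2,N-1)$ space with $\nu_w=\nu_x$. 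Proposition \ref{explicit green} then gives $\bist_w^W=\mathscr{C}_N\nu_x^{\frac{1}{N-2}}\dist_w$ identically on $W$. Since $F_x^{X_i}(1)\to F_w^W(1)=\frac{1}{(N-2)\nu_x}$, Corollary \ref{impr} yields $W^{1,p}_{\mathrm{loc}}$-strong convergence of $\bist_x^{X_i}$ to $\bist_w^W$. In particular, $|\nabla^{X_i}\bist_x^{X_i}|^p$ converges in $L^1_{\mathrm{loc}}$ to the constant $\mathscr{C}_N^p\nu_x^{\frac{p}{N-2}}$ on $B_1^W(w)$. Because the limit is the same constant on any such tangent cone, the full limit (not just along subsequences) exists, which gives the Lebesgue point property at $x$ and, combined with the a.e. bound, the equality in (\ref{exp rep}) at $z=x$; upper semicontinuity at $x$ is then immediate since $|\nabla\bist_x|^*(x)=\mathscr{C}_N\nu_x^{\frac{1}{N-2}}$ is the a.e. upper bound.

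\textbf{Strong convergence (\ref{strongco}) and the sequence $y_i\to x$.} For (\ref{strongco}) I use that the quantity $|\nabla(\bist_x-\mathscr{C}_N\nu_x^{\frac{1}{N-2}}\dist_x)|^p$ is scale-invariant under the rescaling $a=1/r,b=1/r^N$: by Lemma \ref{rescaling fh} the function $\bist^{X_r}_x-\mathscr{C}_N\nu_x^{\frac{1}{N-2}}\dist^{X_r}_x$ equals $r^{-1}(\bist_x-\mathscr{C}_N\nu_x^{\frac{1}{N-2}}\dist_x)$, and the $1/r$ factor is compensated by the metric rescaling in the gradient. Thus
\begin{equation*}
\intav_{B_r(x)}\bigl|\nabla(\bist_x-\mathscr{C}_N\nu_x^{\frac{1}{N-2}}\dist_x)\bigr|^p\d\meas=\intav_{B_1^{\dist_{r}}(x)}\bigl|\nabla^{r}(\bist^{X_r}_x-\mathscr{C}_N\nu_x^{\frac{1}{N-2}}\dist^{X_r}_x)\bigr|^p\d\meas_r,
\end{equation*}
and the right-hand side converges to the corresponding integral on the cone, which vanishes because $\bist^W_w\equiv\mathscr{C}_N\nu_x^{\frac{1}{N-2}}\dist_w$; here I use Corollary \ref{impr} for $\bist^{X_r}_x$ and the standard uniform convergence of distance functions from converging base points for $\dist^{X_r}_x$. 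Finally, the assertion $|\nabla\bist_x|(y_i)\to\mathscr{C}_N\nu_x^{\frac{1}{N-2}}$ along some $y_i\to x$ follows from the just-proved identity (\ref{exp rep}) at $z=x$: for each $i$ pick $y_i\in B_{1/i}(x)$ with $|\nabla\bist_x|^*(y_i)>\mathscr{C}_N\nu_x^{\frac{1}{N-2}}-1/i$, and combine with the a.e. upper bound. The main delicate step is verifying that Corollary \ref{impr} applies uniformly along the rescaled sequence with only the hypothesis $\nu_x<\infty$; this reduces, as above, to checking $F^{X_i}_x(1)\to F_w^W(1)$, which is provided by the sharp asymptotic (\ref{eq:asymptotic.F}) together with the explicit value of $F$ on the cone from Proposition \ref{expl heat}.
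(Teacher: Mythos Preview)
Your proof is correct and follows essentially the same blow-up strategy as the paper: rescale, pass to a tangent $N$-metric measure cone via Corollary~\ref{volconemetr}, invoke the explicit formula of Proposition~\ref{explicit green}, and apply Corollary~\ref{impr} to transfer the $W^{1,p}$ information back; the paper's proof is terser but uses exactly these ingredients.

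One minor imprecision worth flagging: to conclude that the extension $\bist_x$ lies in $W^{1,2}_{\mathrm{loc}}(X,\dist,\meas)$ (so that the Sobolev-to-Lipschitz property of the \emph{whole} $\RCD(0,N)$ space applies), invoking merely that $\{x\}$ is $\meas$-negligible is not sufficient in general; the correct condition---used explicitly in the paper---is that $\{x\}$ has null $2$-Sobolev capacity, which follows from $\nu_x<\infty$ and $N>2$ (one can build cut-offs $\phi_r$ supported in $B_r(x)$ with $\int|\nabla\phi_r|^2\,\d\meas\lesssim r^{N-2}\to 0$). With that adjustment your argument is complete.
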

\begin{proof}
Thanks to (\ref{eq:BS.estimate.F}) and (\ref{eq:asymptotic.F}), putting $\bist_x(x):=0$ gives a unique continuous extention of $\bist_x$ on $X$.
Since $\{x\}$ is null with respect to the $2$-Sobolev capacity because of the finiteness of $\nu_x$ (see \cite{BjornBjorn}), we know $\bist_x \in W^{1, 2}_{\mathrm{loc}}(X, \dist, \meas)$. Then the first statement comes from arguments in the last step in the proof of Theorem \ref{thm:sharp.gradient.estimate} with the (local) Sobolev-to-Lipschitz property and Corollary \ref{impr}. Moreover Corollary \ref{impr} with Proposition \ref{explicit green} allows us to obtain (\ref{strongco}).
%On the other hand, we actually proved the second statement in the last step in the proof of Theorem \ref{thm:sharp.gradient.estimate}.
\end{proof}
As the final application of Theorem \ref{thm:sharp.gradient.estimate}, we determine the small scale asymptotics of the gradient of the Green function.
\begin{corollary}\label{cor asympt FH}
We have 
\begin{equation}
\label{eq:3}
\lim_{r \to 0^+}  \sup_{y \in B_r(x)}\frac{|\nabla G_x|(y)}{\dist(x,y)^{1-N}}=\frac{1}{N\nu_x}.
\end{equation}
\end{corollary}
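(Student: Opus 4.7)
The plan is to reduce the statement to the small-scale behaviour of $\bist_x$ that has already been established. Writing $G_x=\bist_x^{2-N}$ and using the chain rule (which is meaningful pointwise once we interpret $|\nabla \bist_x|$ via the canonical representative from Theorem \ref{upp reg}, extended to $x$ as in Corollary \ref{cor best lip}), I would start from
\begin{equation*}
\frac{|\nabla G_x|(y)}{\dist(x,y)^{1-N}}=(N-2)\left(\frac{\bist_x(y)}{\dist(x,y)}\right)^{1-N}|\nabla \bist_x|(y),\qquad y\in X\setminus\{x\}.
\end{equation*}
Raising (\ref{eq:2}) to the power $1/(2-N)$ then yields the crucial asymptotic
\begin{equation*}
\lim_{y\to x}\frac{\bist_x(y)}{\dist(x,y)}=\mathscr C_N\nu_x^{\frac{1}{N-2}},
\end{equation*}
which pins down one of the two factors on the right-hand side.

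For the upper bound, I would combine this asymptotic with the sharp gradient estimate $|\nabla \bist_x|\leqslant \mathscr C_N\nu_x^{\frac{1}{N-2}}$ of Theorem \ref{thm:sharp.gradient.estimate}. Since $t\mapsto t^{1-N}$ is decreasing on $(0,\infty)$, for every $\varepsilon>0$ the factor $(\bist_x(y)/\dist(x,y))^{1-N}$ stays within $\varepsilon$ of $(\mathscr C_N\nu_x^{\frac{1}{N-2}})^{1-N}$ on a small punctured ball around $x$. Multiplying by the uniform gradient bound and using $\mathscr C_N^{N-2}=N(N-2)$, the value
\begin{equation*}
(N-2)\bigl(\mathscr C_N\nu_x^{\frac{1}{N-2}}\bigr)^{2-N}=(N-2)\mathscr C_N^{2-N}\nu_x^{-1}=\frac{1}{N\nu_x}
\end{equation*}
appears as an upper bound for $\limsup_{r\to 0^+}\sup_{y\in B_r(x)}$ of the quantity in question.

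For the matching lower bound I would invoke the final sentence of Corollary \ref{cor best lip}, which produces a sequence $y_i\to x$ with $|\nabla \bist_x|(y_i)\to \mathscr C_N\nu_x^{\frac{1}{N-2}}$. Plugging this sequence into the displayed identity and using again the asymptotic of $\bist_x/\dist(x,\cdot)$, the ratios converge precisely to $1/(N\nu_x)$, so $\liminf_{r\to 0^+}\sup_{y\in B_r(x)}(\cdot)\geqslant 1/(N\nu_x)$; combined with the upper bound this closes the argument. I do not expect a serious obstacle; the only mildly delicate point is giving $|\nabla G_x|$ a pointwise meaning, which is handled uniformly by combining the chain-rule formula with the canonical representative of $|\nabla \bist_x|$, so that the supremum in the statement is unambiguous.
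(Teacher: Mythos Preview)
Your proposal is correct and follows essentially the same route as the paper: both rewrite the ratio via the chain rule as $(N-2)(\bist_x/\dist_x)^{1-N}|\nabla \bist_x|$, use (\ref{eq:2}) for the asymptotic of $\bist_x/\dist_x$, and invoke Corollary~\ref{cor best lip} (together with Theorem~\ref{thm:sharp.gradient.estimate}) to control $|\nabla \bist_x|$ near $x$. The only cosmetic difference is that you split the argument into an explicit upper bound plus a sequence witnessing the lower bound, whereas the paper compresses this into a single limit using the characterization $|\nabla \bist_x|^*(x)=\lim_{r\to 0^+}\sup_{B_r(x)}|\nabla \bist_x|$ from (\ref{exp rep}).
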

\begin{proof}
Since (\ref{eq:2}),  (\ref{eq:4}) and Corollary \ref{cor best lip} yield (under a suitable limit $y \to x$)
\begin{align}
\frac{|\nabla G_x|(y)}{\dist (x,y)^{1-N}}&=(N-2)\left(\dist (x,y)G_x(y)^{\frac{1}{N-2}}\right)^{N-1}|\nabla \bist_x|(y)\nonumber \\
& \to (N-2) \left( \frac{1}{\mathscr C_N\nu_x^{\frac{1}{N-2}}}\right)^{N-1}\mathscr C_N\nu_x^{\frac{1}{N-2}} \nonumber \\
&=\frac{1}{N\nu_x}
\end{align}
we conclude.
\end{proof}
Next we provide an asymptotic formula as $y \to \infty$.
\begin{corollary}[Sharp gradient asymptotics]\label{thm:almost.rigidity3}
For any $p<\infty$ we have 
\begin{equation}\label{9s9s9sbbb}
\intav_{B_r(x)}\left|\nabla \left(\bist_x-\mathscr C_NV_X^{\frac{1}{N-2}}\dist_x\right)\right|^p\d \meas \to 0,\quad r \to \infty,
\end{equation}
therefore
\begin{equation}\label{0as9au0rasuhs}
\intav_{B_r(x)}\left||\nabla \bist_x|^2-\mathscr C_N^2V_X^{\frac{2}{N-2}}\right| \d \meas \to 0,\quad r \to \infty.
\end{equation}
In particular 
			\begin{equation}\label{s8s8s8s8shhs}
			|\nabla \bist_x|(y_i)\to \mathscr C_NV_X^{\frac{1}{N-2}},
			\end{equation}
			equivalently
			\begin{equation}
\label{eq:3sssss}
\frac{|\nabla G_x|(y_i)}{\dist(x,y_i)^{1-N}} \to \frac{1}{NV_X}
\end{equation}
			 holds for some sequence $y_i \in X$ with $\dist(x, y_i) \to \infty$.
\end{corollary}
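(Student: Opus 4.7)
The plan is to prove (\ref{9s9s9sbbb}) via a blow-down argument analogous to the blow-up used in the final step of the proof of Theorem \ref{thm:sharp.gradient.estimate} and in Corollary \ref{cor best lip}; (\ref{0as9au0rasuhs}) will follow from (\ref{9s9s9sbbb}) via the $\meas$-a.e.\ inequality
\begin{equation*}
\bigl||\nabla\bist_x|^2-\mathscr C_N^2 V_X^{\frac{2}{N-2}}\bigr|\le \bigl(|\nabla\bist_x|+\mathscr C_N V_X^{\frac{1}{N-2}}\bigr)\bigl|\nabla(\bist_x-\mathscr C_N V_X^{\frac{1}{N-2}}\dist_x)\bigr|
\end{equation*}
(using the Eikonal identity $|\nabla\dist_x|=1$ $\meas$-a.e.), Cauchy-Schwarz, and the global bound $|\nabla\bist_x|\le\mathscr C_N\nu_x^{\frac{1}{N-2}}$ of Theorem \ref{thm:sharp.gradient.estimate}. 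The sequence $y_i$ is then extracted from Lebesgue points of $|\nabla\bist_x|^2$ with $\dist(x,y_i)\to\infty$, using (\ref{0as9au0rasuhs}) together with the fact that non-parabolicity forces $\meas(B_r(x))\to\infty$ (otherwise $\meas(X)<\infty$, contradicting $\int_1^{\infty}s/\meas(B_s(x))\d s<\infty$).

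Assume first $V_X>0$ and let $r_i\to\infty$ be arbitrary. Consider the rescaled pointed $\RCD(0,N)$ spaces $(X_i,\dist_i,\meas_i,x_i):=(X,r_i^{-1}\dist,r_i^{-N}\meas,x)$. By Lemma \ref{rescaling fh} one has $\nu_{x_i}=\nu_x$ and $F_{x_i}^{X_i}(1)=r_i^{2-N}F_x(r_i)\to \frac{1}{(N-2)V_X}<\infty$ by Lemma \ref{lem:asymptotic.F.and.H}. Theorem \ref{cpt non-para} then yields, after extraction, pmGH convergence to a pointed non-parabolic $\RCD(0,N)$ space $(W,\dist_W,\meas_W,w)$. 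Corollary \ref{volconemetr}(2) identifies this limit as the $N$-metric measure cone over an $\RCD(N-2,N-1)$ space with pole $w$, and (\ref{nu equiv}) gives $\nu_w=V_X$; Proposition \ref{explicit green} then yields the key identity $\bist_w=\mathscr C_N V_X^{\frac{1}{N-2}}\dist_w$.

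Corollary \ref{impr} applies (since $\nu_{x_i}\equiv\nu_x$ and $F_{x_i}^{X_i}(1)\to F_w^W(1)$), giving $W^{1,p}_{\mathrm{loc}}$-strong and locally uniform convergence $\bist_{x_i}^{X_i}\to \bist_w$ for every finite $p$. The base distance functions $\dist_{x_i}$ on $(X_i,\dist_i)$ converge to $\dist_w$ locally uniformly by pmGH, and their gradients converge $W^{1,p}_{\mathrm{loc}}$-strongly: the Eikonal identity gives $\int_{B_R(x_i)}|\nabla\dist_{x_i}|^2\d\meas_i=\meas_i(B_R(x_i))\to \meas_W(B_R(w))=\int_{B_R(w)}|\nabla\dist_w|^2\d\meas_W$, and this norm convergence combined with weak $W^{1,2}$-compactness (from the uniform Lipschitz bound) yields strong convergence via Mosco-type stability of Cheeger energies under pmGH of $\RCD$ spaces. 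Since $\bist_w=\mathscr C_N V_X^{\frac{1}{N-2}}\dist_w$, the difference $\bist_{x_i}^{X_i}-\mathscr C_N V_X^{\frac{1}{N-2}}\dist_{x_i}$ tends to $0$ in $W^{1,p}_{\mathrm{loc}}$-strong. Translating back to the original scale via the gradient invariance from Lemma \ref{rescaling fh} yields (\ref{9s9s9sbbb}) along the chosen subsequence; a standard contradiction argument removes the subsequence dependence.

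For $V_X=0$, the claim reduces to $\intav_{B_r(x)}|\nabla\bist_x|^p\d\meas\to 0$. Combining (\ref{eq:4}), Proposition \ref{prpogreengr} (giving $|\nabla G_x|\le C(N)G_x/\dist(x,\cdot)$ $\meas$-a.e.), and Proposition \ref{thm:estimate.Green.function.in.BS} yields
\begin{equation*}
|\nabla\bist_x|(y)\le C(N)\bigl(F_x(\dist(x,y))\cdot\dist(x,y)^{N-2}\bigr)^{-1/(N-2)}\quad\text{for $\meas$-a.e.\ }y,
\end{equation*}
and Lemma \ref{lem:asymptotic.F.and.H} with $V_X=0$ forces the right-hand side to vanish uniformly as $\dist(x,y)\to\infty$, whence the average vanishes. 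The main obstacle I expect is the $W^{1,p}$-strong convergence of the distance functions in the blow-down, which requires the Eikonal identity together with Mosco-type stability of Cheeger energies under pmGH; a cleaner alternative is to first establish (\ref{0as9au0rasuhs}) directly via Corollary \ref{impr} (using only $\bist_{x_i}^{X_i}\to \bist_w$ and $|\nabla\bist_w|\equiv \mathscr C_N V_X^{\frac{1}{N-2}}$ on the cone) and to deduce (\ref{9s9s9sbbb}) afterward by a weak-strong pairing argument for the cross term $\intav\langle\nabla\bist_x,\nabla\dist_x\rangle\d\meas$.
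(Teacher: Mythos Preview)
Your proposal is correct and follows essentially the same approach as the paper: a blow-down argument combined with Corollary \ref{impr} and Corollary \ref{volconemetr}(2) for the case $V_X>0$, and the pointwise decay via $|\nabla\log G_x|\le C(N)/\dist_x$ together with (\ref{limit asympt F}) for the case $V_X=0$. The only place you elaborate beyond the paper is the $W^{1,p}$-strong convergence of the rescaled distance functions $\dist_{x_i}\to\dist_w$; the paper takes this for granted (it is standard under pmGH convergence of $\RCD$ spaces, exactly by the Eikonal identity plus Mosco stability as you indicate), so your worry about this being an ``obstacle'' is unfounded and your proof goes through as written.
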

\begin{proof}
Firstly we discuss the case when $V_X=0$. Then we can follow the same arguments as in the proof of \cite[Theorem 2.12]{C12}. Namely we can estimate as $\dist(x, y) \to \infty$,
\begin{align}
|\nabla \bist_x|(y) &=\frac{G_x^{\frac{N-1}{2-N}}}{N-2}|\nabla G_x|(y) \nonumber \\
&\leqslant \frac{G_x(y)^{\frac{N-1}{2-N}}}{N-2}\cdot C(N) \cdot G_x(y) \cdot \frac{1}{\dist(x,y)}\nonumber \\
&\leqslant  C(N)\left( \frac{F_x(\dist(x, y))}{\dist(x,y)^{2-N}}\right)^{\frac{1}{2-N}} \to 0,
\end{align}
where we used the gradient estimates on positive harmonic functions obtained in \cite[Theorem 1.2]{Jiang} in the first inequality above and we also used (\ref{limit asympt F}) and (\ref{eq:BS.estimate.F}) in the last inequality and in the limit.
Thus we obtain the conclusion in this case.

Next we consider the case when $V_X>0$.
The first statement, (\ref{9s9s9sbbb}),  is a direct consequence of Corollary \ref{impr} and (2) of Corollary \ref{volconemetr}. The remaining one (\ref{s8s8s8s8shhs}) (or (\ref{eq:3sssss})) follows from an argument similar to the proof of Corollary \ref{cor asympt FH}.
%Since the proof is essentially same to that of Theorem \ref{thm:sharp.gradient.estimate} after replacing ``$r \to 0$'' by ``$r \to \infty$'', let us provide only outline of the proof.
%By an argument similar to the proof of Theorem \ref{small scale}, we can prove that
%\begin{equation}
%\intav_{B_r(x)}\left| \mathscr C_N^2{\color{red}V_X}^{\frac{2}{N-2}}-|\nabla \bist_x|^2\right|\d \meas\to 0, \quad r \to \infty
%\end{equation}
%and that the supremum of $|\nabla \bist_x|^2$ on $X \setminus B_r(x)$ is almostly attained on $\partial B_r(x)$ whenever $r$ is large.
%These observations with Proposition \ref{prop:weak.harnack.super} and Corollary \ref{cor asympt FH} allows us to argue for $\eps>0$
%\begin{equation}
%0\leqslant \mathscr C_N^2{\color{red}V_X}^{\frac{2}{N-2}}-|\nabla \bist_x|^2(y) +\eps \leqslant C \intav_{B_{2\dist(x, y)}(x)} \left| \mathscr C_N^2{\color{red}V_X}^{\frac{2}{N-2}}-|\nabla \bist_x|^2\right|\d \meas +C\eps
%\end{equation}
%whenever $y$ is far from $x$. Thus we conclude.
%In order to get the desired conclusion from this, we may need a weak Harnack inequality ``near infty''. Let us look for the realization. If this is realized actually, then we can delete Theorem \ref{thm:almost.rigidity4}.
%Assume the assertion does not hold. Then there exist sequences of pointed non-parabolic non-collapsed 
\end{proof}
Based on Corollary \ref{impr}, we can prove the following whose proof is the same to that of Corollary \ref{quntitative green}. Thus we omit the proof.
\begin{corollary}\label{improvement cor}
For all $N >2$, $0<\eps<1$, $0<\tau<1$, $v>0$, $1 \leqslant p<\infty$ and $\phi \in L^1([1, \infty), \haus^1)$ there exists $\delta=\delta(N,\eps, \tau,  v, p, \phi)>0$ such that if 
two pointed non-parabolic $\RCD(0, N)$ spaces $(X_i, \dist_i, \meas_i, x_i) (i=1,2)$ satisfy $\nu_{x_i} \leqslant v<\infty$,
\begin{equation}
\frac{s}{\meas_i(B_s(x_i))}\leqslant \phi(s),\quad \text{for $\haus^1$-a.e. $s \in [1, \infty)$}
\end{equation}
and 
\begin{equation}\label{pmghclose}
\dist_{\mathrm{pmGH}}\left((X_1, \dist_1, \meas_1, x_1), (X_2, \dist_2, \meas_2, x_2)\right)<\delta,
\end{equation}
then 
\begin{equation}
\left| \bist_{x_1}(y_1)-\bist_{x_2}(y_2)\right|+\left|\intav_{B_s(y_1)}|\nabla \bist_{x_1}|^p\d \meas_1 - \intav_{B_s(y_2)}|\nabla \bist_{x_2}|^p\d \meas_2\right|<\eps
\end{equation}
for all $\tau \leqslant s\leqslant \tau^{-1}$ and $y_i \in B_{\tau^{-1}}(x_i)$ satisfying that $y_1$ $\delta$-close to $y_2$ with respect to (\ref{pmghclose}).
\end{corollary}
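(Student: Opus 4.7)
The plan is to mirror the contradiction argument already used for Corollary \ref{quntitative green}, with the key upgrade being that Corollary \ref{impr} (rather than Theorem \ref{green convergence}) provides convergence of $\bist$ all the way down to the base point. This is precisely what permits the weakening of the hypothesis $y_i \in B_R(x_i)\setminus B_r(x_i)$ to $y_i \in B_{\tau^{-1}}(x_i)$, since we no longer need to stay away from the pole.

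Suppose the statement fails. Then for fixed $N,\eps,\tau,v,p,\phi$ there exist sequences of pointed non-parabolic $\RCD(0,N)$ spaces $(X_{j,i},\dist_{j,i},\meas_{j,i},x_{j,i})$ for $j=1,2$ satisfying $\nu_{x_{j,i}}\leqslant v$, the $\phi$-bound on $f_{j,i}(s):=s/\meas_{j,i}(B_s(x_{j,i}))$ on $[1,\infty)$, and
\begin{equation*}
\dist_{\mathrm{pmGH}}\bigl((X_{1,i},\dist_{1,i},\meas_{1,i},x_{1,i}),(X_{2,i},\dist_{2,i},\meas_{2,i},x_{2,i})\bigr)\to 0,
\end{equation*}
together with points $y_{j,i}\in B_{\tau^{-1}}(x_{j,i})$ and scales $s_i\in[\tau,\tau^{-1}]$ with $y_{1,i}$ being $\delta_i$-close to $y_{2,i}$ for some $\delta_i\to 0$, such that the quantity
\begin{equation*}
\bigl|\bist_{x_{1,i}}(y_{1,i})-\bist_{x_{2,i}}(y_{2,i})\bigr|+\left|\intav_{B_{s_i}(y_{1,i})}|\nabla\bist_{x_{1,i}}|^p\d\meas_{1,i}-\intav_{B_{s_i}(y_{2,i})}|\nabla\bist_{x_{2,i}}|^p\d\meas_{2,i}\right|
\end{equation*}
stays bounded below by $\eps$.

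First, $\nu_{x_{j,i}}\leqslant v$ combined with Bishop-Gromov (see \eqref{lower 2}) provides the uniform upper bound $\meas_{j,i}(B_1(x_{j,i}))\leqslant v$, and the $\phi$-hypothesis bounds $F_{x_{j,i}}(1)\leqslant\int_1^\infty\phi(s)\d s<\infty$. By Theorem \ref{cpt non-para}, a subsequence of each factor pmGH-converges to a pointed non-parabolic $\RCD(0,N)$ space $(X,\dist,\meas,x)$ (necessarily the same limit on both factors by the pmGH distance assumption). After passing to a further subsequence we may assume $y_{j,i}\to y\in\overline{B}_{\tau^{-1}}(x)$ and $s_i\to s\in[\tau,\tau^{-1}]$. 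The dominated convergence theorem with dominating function $\phi$ yields the $L^1([1,\infty),\haus^1)$ convergence of $f_{j,i}$ to $f(s):=s/\meas(B_s(x))$, hence in particular $F_{x_{j,i}}(1)\to F_x(1)$.

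Now Corollary \ref{impr} applies: $\bist_{x_{j,i}}$ converges locally uniformly and $W^{1,p}_{\mathrm{loc}}$-strongly on all of $X$ (including the pole) to $\bist_x$. The locally uniform convergence gives $\bist_{x_{j,i}}(y_{j,i})\to\bist_x(y)$ for both $j$, so the first term tends to $0$. For the second term, $W^{1,p}_{\mathrm{loc}}$-strong convergence together with the Gromov-Hausdorff continuity of the measure and of $s\mapsto\meas(B_s(\cdot))$ (using that $y$ and $s$ can be chosen so that $\meas(\partial B_s(y))=0$, or averaging the bad radii away via the continuity of $s\mapsto\intav_{B_s(y)}|\nabla\bist_x|^p\d\meas$) yields convergence of the averaged integrals to the common value $\intav_{B_s(y)}|\nabla\bist_x|^p\d\meas$, so their difference tends to $0$. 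This contradicts the assumed lower bound $\eps$ and completes the proof.

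The only delicate point, and the place where the improvement over Corollary \ref{quntitative green} is really used, is the possibility that $y$ coincides with (or lies very near) the base point $x$; this is exactly the regime where Theorem \ref{green convergence} alone is insufficient, and where Corollary \ref{impr} — whose proof in turn invokes the sharp gradient bound of Theorem \ref{thm:sharp.gradient.estimate} to control $|\nabla\bist|^p$ on small balls uniformly in $i$ — is indispensable. Everything else is a routine transcription of the contradiction-compactness scheme from Corollary \ref{quntitative green}.
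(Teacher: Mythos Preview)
Your proposal is correct and follows exactly the approach the paper indicates: the paper omits the proof entirely, stating only that it is ``the same to that of Corollary \ref{quntitative green}'' with Corollary \ref{impr} in place of Theorem \ref{green convergence}, and your write-up carries this out faithfully, including the correct identification of why the pole no longer needs to be excluded.
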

Finally let us end this section by giving the following corollary which generalizes \cite[Theorem 3.5]{FMP} to the RCD setting. This corollary is pointed out by the reviewer. We thank the reviewer.
\begin{corollary}\label{newcorollary}
The function $f_x:(0, \infty) \to [0, \infty)$ defined by
\begin{equation}
f_x(t):=\sup_{\{\bist_x=t\}}|\nabla \bist_x|
\end{equation}
is monotone non-increasing, where recall that we assume $\nu_x<\infty$. Moreover we have 
\begin{equation}\label{zero limit}
\lim_{t \to 0^+}f_x(t)  =\mathscr C_N\nu_x^{\frac{1}{N-2}}
\end{equation}
and
\begin{equation}\label{nfty limit}
\lim_{ t \to \infty}f_x(t) = \mathscr C_NV_X^{\frac{1}{N-2}}.
\end{equation}
\end{corollary}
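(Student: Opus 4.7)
The plan is to establish the three claims in sequence: the limit as $t \to 0^+$, the limit as $t \to \infty$, and the monotonicity.

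The limit as $t \to 0^+$ will follow from Corollary \ref{cor best lip}: it provides a sequence $y_i \to x$ with $|\nabla \bist_x|^*(y_i) \to \mathscr C_N \nu_x^{1/(N-2)}$, and by continuity $\bist_x(y_i) \to 0^+$, so $\liminf_{t \to 0^+} f_x(t) \geq \mathscr C_N \nu_x^{1/(N-2)}$; the sharp gradient estimate of Theorem \ref{thm:sharp.gradient.estimate} supplies the matching upper bound. For the limit as $t \to \infty$, I would employ a rescaling argument to the tangent cone at infinity. Rescaling $(X, \dist, \meas, x)$ by $(1/r_i, 1/r_i^N)$ preserves the $N$-volume density at $x$, and by Theorem \ref{cpt non-para} together with (\ref{limit asympt F}) the pointed rescaled spaces subsequentially pmGH-converge to a non-parabolic $\RCD(0, N)$ limit which, by Corollary \ref{volconemetr}(2), must be the $N$-metric cone over an $\RCD(N-2, N-1)$ space with pole volume density $V_X$. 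On this cone, Proposition \ref{explicit green} yields $|\nabla \bist|\equiv \mathscr C_N V_X^{1/(N-2)}$. Corollary \ref{impr} then provides locally uniform and $W^{1,p}_{\mathrm{loc}}$-strong convergence of $\bist$, and together with the Lebesgue point property from Theorem \ref{upp reg}, this gives the pointwise convergence $|\nabla \bist_x|^*(y) \to \mathscr C_N V_X^{1/(N-2)}$ as $\dist(x, y) \to \infty$, yielding $\lim_{t \to \infty} f_x(t) = \mathscr C_N V_X^{1/(N-2)}$.

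For the monotonicity, I would use the $\mathscr L$-subharmonicity of $|\nabla \bist_x|^2$ on $X \setminus \{x\}$ from Proposition \ref{prop:subha}. Applying the weak maximum principle for $\mathscr L$-subharmonic upper semicontinuous functions on the bounded annular region $\{s < \bist_x < R\} \subset X \setminus \{x\}$ for $0 < s < R$ yields
\[
\sup_{\{s < \bist_x < R\}} |\nabla \bist_x|^2 \leq \max\{f_x(s)^2, f_x(R)^2\}.
\]
Letting $R \to \infty$ and combining with the limit at infinity from the previous step (which simultaneously provides the asymptotic control on $|\nabla \bist_x|^*$ needed to handle the unbounded domain), I would obtain $\sup_{\{\bist_x > s\}} |\nabla \bist_x|^2 \leq \max\{f_x(s)^2, \mathscr C_N^2 V_X^{2/(N-2)}\}$. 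Restricting to $\{\bist_x = t\}$ for any $t > s$ then yields $f_x(t) \leq f_x(s)$ provided $f_x(s) \geq \mathscr C_N V_X^{1/(N-2)}$.

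The main obstacle will be establishing the uniform pointwise lower bound $f_x(s) \geq \mathscr C_N V_X^{1/(N-2)}$ for every $s > 0$, which is what closes the monotonicity argument. The strategy would be to combine the weak maximum principle above with the strong maximum principle for $\mathscr L$-subharmonic upper semicontinuous functions (from Section \ref{sec:3}): were $f_x(s_0) < \mathscr C_N V_X^{1/(N-2)}$ for some $s_0$, any interior supremum of $|\nabla \bist_x|^2$ on the connected unbounded domain $\{\bist_x > s_0\}$ would force $|\nabla \bist_x|^2$ to be locally constant, contradicting the upper semicontinuity of $f_x$ at $s_0$ together with the pointwise convergence $|\nabla \bist_x|^*(y) \to \mathscr C_N V_X^{1/(N-2)}$ at infinity; the remaining case where the supremum is only approached at infinity would be handled by a companion application of the weak maximum principle to the auxiliary subharmonic quantity $|\nabla \bist_x|^2 G_x$ on $\{\bist_x > s_0\}$, which vanishes at infinity thanks to Corollary \ref{coro:vanish.at.infinity}.
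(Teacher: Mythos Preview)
Your proposal has a central gap: the claimed pointwise convergence $|\nabla \bist_x|^*(y)\to\mathscr C_N V_X^{1/(N-2)}$ as $\dist(x,y)\to\infty$ does not follow from Corollary~\ref{impr} together with Theorem~\ref{upp reg}. The $W^{1,p}_{\mathrm{loc}}$-strong convergence of $\bist_x$ under blow-down only yields $L^p$-convergence of $|\nabla\bist_x|$ on rescaled balls (this is precisely the content of (\ref{0as9au0rasuhs}) in Corollary~\ref{thm:almost.rigidity3}), and the Lebesgue-point property is a statement about shrinking balls at a fixed point, not about the behaviour along sequences going to infinity; the two cannot be interchanged. Since both your monotonicity argument (through the limit of $f_x(R)$ as $R\to\infty$) and your computation of $\lim_{t\to\infty}f_x(t)$ rest on this pointwise claim, the argument is circular in the case $V_X>0$. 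Moreover, when $V_X=0$ the tangent cone at infinity is not covered by Corollary~\ref{volconemetr}(2), so this case is simply not addressed. Finally, the sketch in your last paragraph using the subharmonic quantity $|\nabla\bist_x|^2 G_x$ only yields $|\nabla\bist_x|^2(y)\leqslant f_x(s_0)^2\,(\bist_x(y)/s_0)^{N-2}$ on $\{\bist_x>s_0\}$, which does not contradict $f_x(s_0)<\mathscr C_N V_X^{1/(N-2)}$.

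The paper avoids all of this by reversing the order and proving monotonicity \emph{first}, via a barrier. The key observation you are missing is that $G_x^{-1}$ is $\mathscr L$-harmonic on $X\setminus\{x\}$ (a direct computation using the harmonicity of $G_x$). Hence for fixed $t<T$ the function
\[
\phi:=|\nabla\bist_x|^2-f_x(t)^2-\frac{\mathscr C_N^2\,\nu_x^{2/(N-2)}}{T^{N-2}\,G_x}
\]
is $\mathscr L$-subharmonic, and it is $\leqslant 0$ on both boundary components $\{\bist_x=t\}$ and $\{\bist_x=T\}$: on the inner one by definition of $f_x(t)$, and on the outer one because $G_x=T^{2-N}$ there, so the barrier term equals $\mathscr C_N^2\nu_x^{2/(N-2)}$, which dominates $|\nabla\bist_x|^2$ by Theorem~\ref{thm:sharp.gradient.estimate}. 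The maximum principle gives $\phi\leqslant 0$ on the annulus, and letting $T\to\infty$ kills the barrier. This proves (\ref{newresu}) and hence monotonicity with no input from the behaviour at infinity. Only afterwards is $\lim_{t\to\infty}f_x(t)$ identified, treating $V_X=0$ by the direct estimate of (\ref{0s0siijj}), and $V_X>0$ by combining monotonicity with the weak Harnack inequality (Proposition~\ref{prop:weak.harnack.super}) at a maximising point $x_t\in\{\bist_x=t\}$ together with the $L^1$-asymptotics (\ref{0as9au0rasuhs}).
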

\begin{proof}
For the first statement, we just follow the proof of \cite[Theorem 3.5]{FMP}. Namely our goal is to prove for any $t>0$
\begin{equation}\label{newresu}
|\nabla \bist_x|  \leqslant \sup_{\{\bist_x=t\}}|\nabla \bist_x|,\quad \text{on $\{\bist_x \ge t\}$}
\end{equation}
because then for any $s \le t$ we have
\begin{equation}
f_x(s)= \sup_{\{\bist_x \ge s\}}|\nabla \bist_x| \geqslant \sup_{\{\bist_x \ge t\}}|\nabla \bist_x| = f_x(t)
\end{equation}
which completes the proof of the first statement. Thus let us focus on the proof of (\ref{newresu}). Note that $G_x$ is $\mathcal{L}$-harmonic on $X \setminus \{x\}$. Fixing a sufficiently large $T>t$, consider an $\mathcal{L}$-subharmonic function $\phi$ on $X \setminus \{x\}$ defined by
\begin{equation}
\phi(y):=|\nabla \bist_x|^2(y)-f_x(t)-\frac{\mathscr C_N^2\nu_x^{\frac{2}{N-2}}}{T^{N-2}G_x(y)}.
\end{equation}
It is trivial that $\phi \le 0$ holds on $\{ \bist_x =t \} \cup \{\bist_x= T\}$.  Thus the strong maximum principle, Proposition \ref{thm:strong.max.princi.}, shows $\phi \le 0$ on $\{t \leqslant \bist_x \leqslant T\}$. Then letting $T \to \infty$ proves (\ref{newresu}). 

On the other hand (\ref{zero limit}) is a direct consequence of Theorem \ref{thm:sharp.gradient.estimate} and Corollary \ref{cor best lip}. Thus let us focus on the proof of (\ref{nfty limit}). The proof is divided into the following $2$ cases.

\textbf{Case 1}: $V_X=0$.

By the same argument as in (\ref{0s0siijj}), we see that for any $\epsilon>0$, there exists $R>1$ such that 
\begin{equation}
|\nabla \bist_x|(y)  \leqslant C(N)V_X+\epsilon=\epsilon
\end{equation}
holds for any $y \in X \setminus B_R(x)$, namely we have $\limsup_{\dist_x(y) \to \infty}|\nabla \bist_x|(y) \leqslant C(N) V_X=0$ which proves (\ref{zero limit}) in this case.

\textbf{Case 2}: $V_X>0$.

In this case the proof is similar to \textbf{Step 3} of the proof of Theorem \ref{thm:sharp.gradient.estimate} with (\ref{newresu}). Let $C_{\mathrm{opt}, \infty}:=\lim_{t \to \infty}f_x(t)$. Take $x_t \in \{\bist_x=t\}$ with $|\nabla \bist_x|(x_t)=f_x(t)$, and let  $0\epsilon_t:=f_x(t)^2-C_{\mathrm{opt}, \infty}^2\geqslant 0$.
Note that $C_{\mathrm{opt}, \infty}^2-|\nabla \bist_x|^2 +\eps_t \geqslant 0$ holds on $\{\bist_x\geqslant t\}$. 

On the other hand, thanks to (\ref{eq:222}), for any sufficiently large $t>1$, we have $B_{\delta t}(x_t) \subset \{\bist_x\geqslant \frac{t}{2}\}$, where $\delta=\delta(N, V_X)$ is a positive constant depending only on $N, V_X$. In particular
\begin{equation}
C_{\mathrm{opt}, \infty}^2-|\nabla \bist_x|^2 +\eps_{\frac{t}{2}} \geqslant 0,\quad \text{on $B_{\delta t}(x_t)$.}
\end{equation}
Then, after a rescaling $t^{-1}\dist, \meas (B_t(x))^{-1}\meas$, applying the weak Harnack inequality for $\mathcal{L}$-superharmonic function, Proposition \ref{prop:weak.harnack.super}, to $C_{\mathrm{opt}, \infty}^2-|\nabla \bist_x|^2 +\eps_{\frac{t}{2}} \geqslant 0$, we have as $t \to \infty$
\begin{equation}\label{9s9s9s910}
	\left(\intav_{B_{\frac{t}{4}}(x_{t})}\left| C_{\mathrm{opt}, \infty}^2-|\nabla \bist_{x}|^2+\eps_{\frac{t}{2}}\right|^p\d \meas\right)^{1/p} \leqslant C(N)  \left(C_{\mathrm{opt}, \infty}^2-|\nabla \bist_{x}|^2(x_t)+\eps_{\frac{t}{2}}\right) \to 0,
	\end{equation}
	for some $p=p(N)>0$, namely
	\begin{equation}
	\intav_{B_{\frac{t}{4}}(x_{t})}\left| C_{\mathrm{opt}, \infty}^2-|\nabla \bist_{x}|^2\right|\d \meas \to 0.
	\end{equation}
	Combining this with (\ref{0as9au0rasuhs}) yields $C_{\mathrm{opt}, \infty}=\mathscr C_NV_X^{\frac{1}{N-2}}$ which completes the proof of (\ref{nfty limit}). 
\end{proof}

\section{Rigidity to $N$-metric measure cone}\label{sec:5}
We are now in a position to prove the main results. Fix a finite $N >2$.
\subsection{Rigidity}

Let us prove the desired rigidity result based on Theorems \ref{upp reg} and \ref{thm:sharp.gradient.estimate} (see (\ref{optimalc}) for the definition of $\mathscr C_N$).
%\begin{corollary}
%		If for some $x\in X$, it holds that $|\nabla b_x|\equiv\mathrm{const.}$ $\meas$-a.e.
%		then $(X,\dist,\meas)$ is isomorphic to a metric cone built over some $\RCD(N-2,N-1)$ space $(Y,\dist_Y,\meas_Y)$, and $x$ is the vertex.
%In particular, if it holds that for some $x\in \mathcal{R}^N$ that $|\nabla b_x|\equiv \mathrm{const.}$, then $(X,
%\dist)$ is isometric to the Euclidean space $\R^N$.
%\end{corollary}
\begin{theorem}[Rigidity to $N$-metric measure cone]\label{thm:rigidity}
	Let $(X,\dist,\meas, x)$ be a pointed non-parabolic $\RCD(0,N)$ space with the finite $N$-volume density $\nu_x<\infty$. 
	If there exists a point $y \in X\setminus\{x\}$ such that
	\begin{equation}
	|\nabla \bist_x|(y)=\mathscr C_N\nu_x^\frac{1}{N-2},
	\end{equation}
	then $(X,\dist,\meas,x)$ is isomorphic to the $N$-metric measure cone with the pole over an $\RCD(N-2, N-1)$ space.
\end{theorem}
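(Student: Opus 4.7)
The proof should be a direct application of the strong maximum principle together with the cone rigidity criterion of Theorem \ref{thm:rigidity cone}. Here is how I would proceed.

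\textbf{Step 1: Constancy of $|\nabla \bist_x|$.} By Proposition \ref{prop:subha}, the function $|\nabla \bist_x|^2$ is $\mathscr{L}$-subharmonic on the open connected set $X \setminus \{x\}$ (connectedness follows because $X$ is a length space of infinite diameter and $\{x\}$ is a single point). By Theorem \ref{upp reg}, its canonical representative $|\nabla \bist_x|^*$ is upper semicontinuous. By Theorem \ref{thm:sharp.gradient.estimate}, it is globally bounded above by $\mathscr C_N\nu_x^{\frac{1}{N-2}}$, and by assumption this bound is attained at the interior point $y$. Applying the strong maximum principle for upper semicontinuous $\mathscr{L}$-subharmonic functions (Proposition \ref{thm:strong.max.princi.} from Section \ref{sec:3}) then forces
\begin{equation*}
|\nabla \bist_x|(z) \equiv \mathscr C_N\nu_x^{\frac{1}{N-2}}, \qquad \forall z \in X \setminus \{x\}.
\end{equation*}

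\textbf{Step 2: Construction of a cone potential.} Set $\lambda := \mathscr C_N\nu_x^{\frac{1}{N-2}}$ and define
\begin{equation*}
f := \frac{1}{2\lambda^{2}}\,\bist_x^{2}.
\end{equation*}
By Corollary \ref{cor best lip}, $\bist_x$ (extended by $\bist_x(x)=0$) is globally $\lambda$-Lipschitz on $X$, so $f$ is locally Lipschitz with $f \geq 0$ on $X$, $f(x)=0$, and $f>0$ on $X \setminus \{x\}$ (since $G_x>0$ there). Plugging the result of Step 1 into Lemma \ref{lem form} yields
\begin{equation*}
\Delta \bist_x^{2} = 2N|\nabla \bist_x|^2 = 2N\lambda^{2}\qquad \text{on } X \setminus \{x\},
\end{equation*}
hence $\Delta f = 2N$ on $X \setminus \{x\}$. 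Since $\nu_x<\infty$ and $N>2$, the singleton $\{x\}$ has zero $2$-Sobolev capacity, so this identity extends to the whole space, giving $f \in D_{\mathrm{loc}}(\Delta)$ with $\Delta f = 2N$. Moreover $\sqrt{2f} = \lambda^{-1}\bist_x$ satisfies $|\nabla \sqrt{2f}|^2 \equiv 1$ on $X \setminus \{x\}$, hence $\meas$-a.e.\ on $X$.

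\textbf{Step 3: Invoking the cone rigidity.} The function $f$ satisfies precisely the hypotheses of condition (b) in (2) of Theorem \ref{thm:rigidity cone}. Therefore $(X,\dist,\meas,x)$ is isomorphic to the $N$-metric measure cone (with the pole) over some $\RCD(N-2, N-1)$ space, which is the desired conclusion.

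\textbf{Expected main obstacle.} The computational steps are routine once the strong maximum principle is available, so the essential hurdle is to have a version of the strong maximum principle for $\mathscr{L}$-subharmonic, upper semicontinuous (rather than continuous) functions on the non-smooth space $X \setminus \{x\}$. This is exactly what must be extracted from the PI-space theory of \cite{BjornBjorn} for the drifted operator $\mathscr{L}$, and it is the reason why Theorem \ref{upp reg} (and the accompanying regularity results in Section \ref{sec:3}) had to be established first.
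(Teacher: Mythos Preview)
Your strategy coincides with the paper's: use the strong maximum principle (Proposition~\ref{thm:strong.max.princi.}) on the $\mathscr{L}$-subharmonic function $|\nabla \bist_x|^2$ to force constancy, then feed the rescaled function $\bist_x^2/(2\lambda^2)$ into the cone rigidity of \cite{GV}. Two corrections are in order. First, your justification for the connectedness of $X\setminus\{x\}$ is invalid as stated: being a length space of infinite diameter does not rule out that a single point separates it (take $X=\mathbb{R}$). The paper instead uses that $\{x\}$ has null $2$-Sobolev capacity (a consequence of $\nu_x<\infty$) together with the Sobolev-to-Lipschitz property: the indicator $\chi_U$ of any connected component $U$ of $X\setminus\{x\}$ then lies in $H^{1,2}_{\mathrm{loc}}(X,\dist,\meas)$ with $|\nabla\chi_U|=0$, hence is constant, hence $U=X\setminus\{x\}$. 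Second, with your normalization $f=\bist_x^2/(2\lambda^2)$ one computes $\Delta f = (2N\lambda^2)/(2\lambda^2)=N$, not $2N$; the paper records $\Delta u=N$ and appeals to \cite[Theorem~5.1]{GV} directly rather than to condition~(b) of Theorem~\ref{thm:rigidity cone}.
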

\begin{proof}
First of all, let us prove that $X \setminus \{x\}$ is connected. Let $U$ be a connected component of $X\setminus \{x\}$. As mentioned in the beginning of the proof of Corollary \ref{cor best lip}, since $\{x\}$ has a null $2$-capacity, the indicator funcion $\chi_U$ of $U$ is in $H^{1,2}_{\mathrm{loc}}(X, \dist, \meas)$ with $|\nabla \chi_U|=0$. Thus the (local) Sobolev-to-Lipschitz property shows $\chi_U= 1$ for $\meas$-a.e., which implies $X\setminus \{x\}=U$. Thus $X\setminus \{x\}$ is connected.
%Take two points $\alpha, \beta \in X \setminus \{x\}$ and find a minimal geodesic $\gamma:[a, b] \to X$ in $X$ with $\gamma(a)=\alpha$ and $\gamma(b)=\beta$. If $x \not \in \gamma ([a, b])$, then there is nothing to do. Thus assume $x =\gamma(t)$ for some $t \in (a, b)$.
%Take a tangent cone $(Y, \dist_Y, \meas_Y, y)$ at $x$. Note that  under the blow-up, $\gamma$ converges to a line $\gamma_Y:\mathbb{R} \to Y$ in $Y$ with $y=\gamma_Y(0)$. Thus applying the splitting theorem proved in \cite{Gigli13}, $Y$ splits off along the image of $\gamma_Y$. Denote by $(Z, \dist_Z, \meas_Z, z)$ the splitting factor. Since our assumption, $\nu_x<\infty$, also implies $\nu_y<\infty$, we know that $Z$ is not a single point. Take $w \in Z \setminus \{z\}$, consider a point $(0, w) \in \mathbb{R}\times Z \cong Y$, and find approximating points $\gamma(t-\eps_i) \to \gamma_Y(-1), \gamma(t+\eps_i) \to \gamma_Y(1)$ (for some $\eps_i \to 0^+$) and $w_i \to (0, w)$ with respect to the blow-up, where $\gamma(t-\eps_i), \gamma(t+\eps_i) \in X$ and $\omega_i \in B_{2\eps_i}(x)$. Then by the triangle inequality, for any sufficiently large $i$, we can find a path $\tilde \gamma$ joining $\alpha, \gamma(t-\eps_i), w_i, \gamma(t+\eps_i)$ and $\beta$ such that the image of $\tilde \gamma$ does not include $x$. Thus $X \setminus \{x\}$ is path-connected.

Then applying  the strong maximum principle, Proposition \ref{thm:strong.max.princi.}, for $|\nabla \bist_x|^2$ yields $|\nabla \bist_x|\equiv \mathscr C_N\nu_x^\frac{1}{N-2}$ in $X \setminus \{x\}$.  
%Next let us consider the set:
%\begin{equation}
%A:=\{z \in X \setminus \{x\} | |\nabla \bist_x|(\tilde x)=\mathscr C_N\nu_x^\frac{1}{N-2} \}.
%\end{equation}
%The upper semicontinuity of $|\nabla \bist_x|$ with Theorem \ref{thm:sharp.gradient.estimate} yields that $A$ is closed in $X \setminus \{x\}$.
%On the other hand,  the strong maximum principle, Proposition \ref{thm:strong.max.princi.}, for $|\nabla \bist_x|^2$ tells us that $A$ is open in $X \setminus \{x\}$. Since our assumption shows that $A$ is nonempty, we have %$A=X\setminus \{x\}$ because $X \setminus \{x\}$ is connected.
%Namely we have 
Letting $u:={\bist_x^2}/ (2\mathscr C_N^2\nu_x^\frac{2}{N-2})$, we have $|\nabla \sqrt{2u}|^2=1$ and thus, by \eqref{eq:6}, $\Delta u=N$ for $\meas$-a.e. %{\color{red}Since $\{x\}$ has null $2$-Sobolev capacity, $u \in D_{\mathrm{loc}}(\Delta)$.}
These observations allow us to apply \cite[Theorem 5.1]{GV} to get the conclusion.
\end{proof}

In particular, when restricted to points with Euclidean tangent spaces, Theorem \ref{thm:rigidity} implies an interesting corollary; compare with \cite[Theorem 3.1]{C12}.
\begin{corollary}\label{cor:rigidity euclidean}
We have the following.
        \begin{enumerate}
        \item 
	Let $(X,\dist,\meas)$ be a non-parabolic $\RCD(0,N)$ space for some integer $N \geqslant 3$ with the finite $N$-volume density $\nu_x<\infty$ at an $N$-regular point $x$. If there exists $z\in X \setminus \{x\}$ such that $|\nabla \bist_x|(z)=\mathscr C_N\nu_x^{\frac{1}{N-2}}$ holds, then $(X,\dist, \meas)$ is isometric to $(\mathbb{R}^N, \dist_{\mathbb{R}^N}, c\haus^N)$ for some positive constant $c >0$.%the $N$-dimensional Euclidean space. 
	\item If a non-parabolic non-collapsed $\RCD(0, N)$ space $(X, \dist, \haus^N)$  for some integer $N \geqslant 3$ with the finite $N$-volume density $\nu_x<\infty$ at a point $x \in X$ satisfies
	$|\nabla \bist_x|(z) \ge \mathscr C_N \omega_N^{\frac{1}{N-2}}$ for some $z \in X \setminus \{x\}$,
	then $(X, \dist, \haus^N)$ is isometric to $(\mathbb{R}^N, \dist_{\mathbb{R}^N}, \haus^N)$.
	\end{enumerate}
	\end{corollary}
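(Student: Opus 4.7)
The plan is to derive both parts from Theorem~\ref{thm:rigidity} by arranging, respectively, that the resulting cone has an additional Euclidean tangent cone or an additional maximal-density constraint which pins down its cross-section as the round sphere.

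For (1), Theorem~\ref{thm:rigidity} yields an isomorphism
\[
(X,\dist,\meas,x) \cong (C(Y),\dist_{C(Y)},\meas_{C(Y)},O_Y)
\]
over some $\RCD(N-2,N-1)$ space $(Y,\dist_Y,\meas_Y)$. The key observation is that every tangent cone at the pole of $C(Y)$ is itself isomorphic to $C(Y)$: the map $(r,y)\mapsto(\lambda^{-1}r,y)$ is an isometry from $(C(Y),\dist_{C(Y)})$ to $(C(Y),\lambda^{-1}\dist_{C(Y)})$ and pushes $\meas_{C(Y)}$ forward to $\lambda^{-N}\meas_{C(Y)}$, while Proposition~\ref{expl heat} gives $\meas_{C(Y)}(B_r(O_Y))=r^N\nu_{O_Y}$, so the normalisation $\meas/\meas(B_r)$ used in the definition of tangent cone differs from $\meas/r^N$ only by a multiplicative constant. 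Combined with $N$-regularity of $x$, this forces $(C(Y),\dist_{C(Y)})$ to be isometric to $(\mathbb{R}^N,\dist_{\mathbb{R}^N})$, whence $Y$ is isometric to $\mathbb{S}^{N-1}$ and $\meas_{C(Y)}=c\haus^N$ for some $c>0$.

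For (2), I would first use the Bishop inequality \eqref{bishop ine} together with Theorem~\ref{thm:sharp.gradient.estimate} to run the sandwich
\[
\mathscr{C}_N\omega_N^{1/(N-2)} \leq |\nabla \bist_x|(z) \leq \mathscr{C}_N\nu_x^{1/(N-2)} \leq \mathscr{C}_N\omega_N^{1/(N-2)},
\]
which collapses all inequalities, giving $\nu_x=\omega_N$ and $|\nabla \bist_x|(z)=\mathscr{C}_N\nu_x^{1/(N-2)}$. Theorem~\ref{thm:rigidity} then provides an isomorphism $(X,\dist,\haus^N)\cong(C(Y),\dist_{C(Y)},\meas_{C(Y)},O_Y)$. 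Since for metric cones $\haus^N_{C(Y)}=r^{N-1}\,\d r\otimes\d\haus^{N-1}_Y$, the identity $\meas=\haus^N$ forces $\meas_Y=\haus^{N-1}_Y$; integrating the cone measure over $B_1(O_Y)$ and using Proposition~\ref{expl heat} yields $\haus^{N-1}(Y)=N\nu_x=N\omega_N=\haus^{N-1}(\mathbb{S}^{N-1})$. The maximal-volume rigidity for non-collapsed $\RCD(N-2,N-1)$ spaces then identifies $Y$ with $\mathbb{S}^{N-1}$, hence $X$ with $\mathbb{R}^N$.

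The main obstacle is the final step of (2): the identification $Y\cong\mathbb{S}^{N-1}$ from $\haus^{N-1}(Y)=N\omega_N$, i.e.\ the volume rigidity characterising the round sphere among non-collapsed $\RCD(N-2,N-1)$ spaces. This is by now standard in the $\RCD$ literature but is an external citation rather than a self-contained argument. An alternative route that bypasses it is to combine $\nu_x=\omega_N$ with the Bishop--Gromov monotonicity \eqref{bg ineq} to obtain $\haus^N(B_r(x))=\omega_N r^N$ for all $r>0$, so that every tangent cone at $x$ is non-collapsed with Euclidean density at its base point; a blow-up argument then shows $x$ is $N$-regular and reduces (2) to (1).
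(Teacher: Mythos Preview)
Your argument for (1) is correct and is exactly the paper's proof: apply Theorem~\ref{thm:rigidity} to get a cone, observe that the cone is its own tangent cone at the pole, and use $N$-regularity of $x$ to identify it with $\mathbb{R}^N$.

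For (2), the paper takes precisely your ``alternative route'': from the sandwich you get $\nu_x=\omega_N$, conclude that $x$ is $N$-regular, and then apply (1). But your execution of this alternative has two issues. First, the claim $\haus^N(B_r(x))=\omega_N r^N$ for \emph{all} $r>0$ does not follow from $\nu_x=\omega_N$ together with Bishop--Gromov: the ratio $\haus^N(B_r(x))/r^N$ is non-increasing, so knowing its limit as $r\to 0^+$ is $\omega_N$ says nothing about its value for positive $r$. (This would follow \emph{after} you know $X$ is a cone, but then you are back in your primary route.) Second, and more importantly, the step ``a blow-up argument then shows $x$ is $N$-regular'' does not bypass the external input you were trying to avoid: the implication $\nu_x=\omega_N\Rightarrow x$ is $N$-regular in a non-collapsed $\RCD(0,N)$ space \emph{is} a volume-rigidity statement, equivalent in content to the maximal-volume sphere rigidity you flagged. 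The paper simply cites \cite[Corollary~1.7]{DePhillippisGigli} for this implication and then invokes (1). So both of your routes for (2) ultimately rest on the same external rigidity; the paper's version is just the more direct packaging.
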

	\begin{proof}
	Let us prove (1).
	Theorem \ref{thm:rigidity} yields that $(X, \dist, \meas)$ is isomorphic to an $N$-metric measure cone over an $\RCD(N-2, N-1)$ space. In particular it must be isomorphic to a tangent cone at $x$. Thus we conclude.
	
	Next let us prove (2).
	The sharp gradient estimate, Theorem \ref{thm:sharp.gradient.estimate}, yields 
	\begin{equation}
	\lim_{r \to 0^+}\frac{\haus^N(B_r(x))}{\omega_Nr^N} \ge 1,
	\end{equation}
	thus $x$ is a $N$-regular point because of \cite[Corollary 1.7]{DePhillippisGigli} (recall (\ref{bishop ine})). Then the conclusion follows from the first statement (1).
	\end{proof}
	
	%\begin{corollary}\label{cor:rigidity euclidean2}
	%f a nonparabolic noncollapsed $\RCD(0, N)$ space $(X, \dist, \mathcal{H}^N)$ satisfy
	%\begin{equation}
	%|\nabla b_x|(z) \ge \mathscr C_N, \quad \text{for some $x \in X$ and some $z \in X \setminus \{x\}$},
	%\end{equation}
	%then $(X, \dist, \mathcal{H}^N)$ is isometric to $(\mathbb{R}^N, \dist_{\mathbb{R}^N}, \mathcal{H}^N)$.
	%\end{corollary}
	%\begin{proof}
	%The sharp gradient estimate, Theorem \ref{thm:sharp.gradient.estimate}, yields 
	%\begin{equation}
	%\lim_{r \to 0^+}\frac{\mathcal{H}^N(B_r(x))}{\omega_Nr^N} \ge 1,
	%\end{equation}
	%thus $x$ is a $N$-regular point. Then the conclusion follows from Corollary \ref{cor:rigidity euclidean}.
	%\end{proof}

%		Next let us study the almost rigidity of $b_x$. First let us recall the concept of pmGH-convergence. 
\subsection{Almost rigidity}
Finally, let us prove the following almost rigidity theorem of $\bist_x$:
		\begin{theorem}[Almost rigidity]\label{thm:almost.rigidity}
		For all $N>2$, $0<\eps<1$, $0<r<R<\infty$, $\nu>0$, $1\leqslant p<\infty$ and $\phi \in L^1([1, \infty), \haus^1)$ there exists $\delta:=\delta(N, \eps, r, R, \nu, p, \phi)>0$ such that if 
			%Fix $\tau,\nu,r, R>0$ with $r<R$. 
			 a pointed non-parabolic $\RCD(0,N)$ space $(X,\dist,\meas,x)$ satisfies $\nu_x\leqslant \nu$, 
			\begin{equation}
			\frac{s}{\meas(B_s(x))}\leqslant \phi(s), \quad \text{for $\haus^1$-a.e. $s \in [1, \infty)$}
			\end{equation}
			and
			\begin{equation}
			|\nabla \bist_x|(z)\geqslant \mathscr C_N\nu_x^\frac{1}{N-2}-\delta
			\end{equation}
			for some $z\in \overline{B}_R(x)\setminus B_r(x)$, 
			then we have
			\begin{equation}\label{asah7sasash}
\left\| \bist_x-\mathscr C_N\nu_x^{\frac{1}{N-2}}\dist_x\right\|_{L^{\infty}(B_R(x))}+\left(\intav_{B_R(x)}\left| \nabla \left(\bist_x -\mathscr C_N\nu_x^{\frac{1}{N-2}}\dist_x\right)\right|^p\d \meas\right)^{1/p} \leqslant \eps
\end{equation}
and there exists an $\RCD(N-2,N-1)$ space $(Y,\dist_Y,\meas_Y)$ such that
			\begin{equation}
			\dist_{\mathrm{pmGH}}\left((X,\dist,\meas,x),(C(Y),\dist_{C(Y)},\meas_{C(Y)},O_Y)\right)+|\nu_x-\nu_{O_Y}|<\eps
			\end{equation}
			
		\end{theorem}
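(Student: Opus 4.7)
The plan is to argue by contradiction, combining the compactness of non-parabolic $\RCD(0, N)$ spaces (Theorem~\ref{cpt non-para}), the strong $W^{1,p}_{\mathrm{loc}}$-convergence of smoothed distances (Corollary~\ref{impr}), and the pointwise rigidity result (Theorem~\ref{thm:rigidity}). If the statement fails for some fixed choice of parameters, one finds a sequence of pointed non-parabolic $\RCD(0, N)$ spaces $(X_i, \dist_i, \meas_i, x_i)$ satisfying the hypotheses with $\delta_i \to 0^+$ and points $z_i \in \overline{B}_R(x_i) \setminus B_r(x_i)$ with $|\nabla \bist_{x_i}|(z_i) \geq \mathscr C_N \nu_{x_i}^{1/(N-2)} - \delta_i$, yet violating the conclusion for some $\eps_0 \in (0, 1)$. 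The bound $\nu_{x_i} \leq \nu$ controls $\meas_i(B_1(x_i))$ via Bishop--Gromov, and $\phi$-integrability bounds $\sup_i F_{x_i}(1)$; Theorem~\ref{cpt non-para} then yields a pmGH subsequential limit $(X, \dist, \meas, x)$ which is itself non-parabolic. Dominated convergence using the $L^1$-majorant $\phi$ gives $s/\meas_i(B_s(x_i)) \to s/\meas(B_s(x))$ in $L^1([1, \infty), \haus^1)$, so Corollary~\ref{impr} provides $\bist_{x_i} \to \bist_x$ locally uniformly and strongly in $W^{1, q}_{\mathrm{loc}}$ for every finite $q$. A further extraction arranges $\nu_{x_i} \to \nu_\infty \in (0, \nu]$ and $z_i \to z_\infty$ with $r \leq \dist(x, z_\infty) \leq R$, so $z_\infty \neq x$.

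The next step is to upgrade the pointwise near-maximality at $z_i$ to an exact identity in the limit by applying the weak Harnack inequality for $\mathscr L$-superharmonic functions (Proposition~\ref{prop:weak.harnack.super}) to
\begin{equation*}
u_i := \mathscr C_N^2 \nu_{x_i}^{2/(N-2)} - |\nabla \bist_{x_i}|^2 \geq 0
\end{equation*}
on $B_{r/2}(z_i) \subset X_i \setminus \{x_i\}$. Since $\nu_{x_i} \leq \nu$ produces uniform volume doubling, and $\dist_i(x_i, z_i) \geq r$ uniformly controls the drift $\nabla \log G_{x_i}$ on this ball (by Proposition~\ref{prpogreengr}), the constant $C = C(N, r)$ and the exponent $p_0 = p_0(N) > 0$ supplied by the weak Harnack inequality can be chosen independently of $i$, giving
\begin{equation*}
\left(\intav_{B_{r/4}(z_i)} u_i^{p_0} \d \meas_i\right)^{1/p_0} \leq C \, u_i(z_i) \leq 2 C \mathscr C_N \nu^{1/(N-2)} \delta_i \;\longrightarrow\; 0.
\end{equation*}
Passing to the limit in this $L^{p_0}$-average via the strong $W^{1, 2p_0}_{\mathrm{loc}}$-convergence from Corollary~\ref{impr} forces $|\nabla \bist_x|^2 \equiv \mathscr C_N^2 \nu_\infty^{2/(N-2)}$ $\meas$-a.e.\ on $B_{r/4}(z_\infty)$. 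The sharp estimate $|\nabla \bist_x|^2 \leq \mathscr C_N^2 \nu_x^{2/(N-2)}$ together with the lower semicontinuity $\nu_x \leq \nu_\infty$ then forces $\nu_x = \nu_\infty$ and, by upper semicontinuity of $|\nabla \bist_x|$, $|\nabla \bist_x|(z_\infty) = \mathscr C_N \nu_x^{1/(N-2)}$.

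Since $z_\infty \neq x$, Theorem~\ref{thm:rigidity} identifies $(X, \dist, \meas, x)$ with a pointed $N$-metric measure cone $(C(Y), \dist_{C(Y)}, \meas_{C(Y)}, O_Y)$ over an $\RCD(N-2, N-1)$ space $Y$, and Proposition~\ref{explicit green} yields the pointwise identity $\bist_x = \mathscr C_N \nu_x^{1/(N-2)} \dist_x$ on $C(Y)$ with $\nu_{O_Y} = \nu_x$. The pmGH convergence $X_i \to C(Y)$ together with $\nu_{x_i} \to \nu_{O_Y}$ immediately contradicts the failure of both the pmGH closeness and $|\nu_{x_i} - \nu_{O_Y}| < \eps_0$ for large $i$. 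The $L^\infty$ part of~\eqref{asah7sasash} follows from the locally uniform convergences of $\bist_{x_i}, \dist_{x_i}$ and $\nu_{x_i}$ together with $\bist_x(x) = 0$. For the $W^{1, p}$ part, the a priori bound $|\nabla \dist_{x_i}| \leq 1$, the identity $|\nabla \dist_x| = 1$ a.e.\ on $C(Y) \setminus \{O_Y\}$ (guaranteed by Theorem~\ref{thm:rigidity cone}), and the lower semicontinuity of the $p$-Cheeger energy under pmGH together force $|\nabla \dist_{x_i}|^p \to 1$ strongly in $L^1_{\mathrm{loc}}$; combined with the strong $W^{1, p}_{\mathrm{loc}}$-convergence of $\bist_{x_i}$ and polarization, this yields $|\nabla (\bist_{x_i} - \mathscr C_N \nu_{x_i}^{1/(N-2)} \dist_{x_i})| \to 0$ in $L^p_{\mathrm{loc}}$, contradicting the failure of~\eqref{asah7sasash}.

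The principal technical hurdle is the passage from the pointwise near-maximality $|\nabla \bist_{x_i}|(z_i) \approx \mathscr C_N \nu_{x_i}^{1/(N-2)}$ to an integral identity in the limit, since $|\nabla \bist_{x_i}|$ is only defined $\meas_i$-a.e. a priori; this is precisely what the weak Harnack inequality for $\mathscr L$-superharmonic functions delivers, and the uniformity of its constants across the sequence rests squarely on the uniform doubling and drift bound produced by $\nu_{x_i} \leq \nu$ and $\dist_i(x_i, z_i) \geq r$.
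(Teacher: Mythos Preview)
The proposal is correct and follows essentially the same route as the paper: contradiction, pmGH compactness via Theorem~\ref{cpt non-para}, dominated convergence with the $\phi$-majorant to trigger the equivalences in Theorem~\ref{green convergence}/Corollary~\ref{impr}, the weak Harnack inequality (Proposition~\ref{prop:weak.harnack.super}) applied to $u_i = \mathscr C_N^2\nu_{x_i}^{2/(N-2)} - |\nabla \bist_{x_i}|^2$ to turn the pointwise near-maximality into an integral one, and then the rigidity Theorem~\ref{thm:rigidity} on the limit. Two cosmetic remarks: the exponent in Proposition~\ref{prop:weak.harnack.super} depends on both $N$ and the distance to the pole, so one should write $p_0 = p_0(N,r)$ rather than $p_0(N)$; and since $|\nabla \dist_{x_i}| = 1$ $\meas_i$-a.e.\ already holds on each $X_i$ (Cheeger's identification of slope and minimal upper gradient on PI spaces), your lower-semicontinuity detour for the $W^{1,p}$ part is unnecessary---the paper simply reduces to $p=2$ at the outset and lets the Hilbertian structure handle the difference.
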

\begin{proof}%[Proof of Theorem \ref{thm:almost.rigidity}]
As in the proof of Corollary \ref{quntitative green}, it is enough to consider the case when $p=2$.
Then the proof is done by a contradiction, thus
assume the conclusion fails. Then there exist sequences of positive numbers $\delta_i\to 0^+$ and of pointed $\RCD(0,N)$ spaces $(X_i,\dist_i,\meas_i,x_i)$ such that $\nu_{x_i} \leqslant \nu$, $\frac{s}{\meas_i(B_s(x_i))} \leqslant \phi (s)$ for $\haus^1$-a.e. $s \in [1, \infty)$, that 
%\begin{itemize}
	%\item For any $i$,
	%\begin{equation}\label{eq:50}
	%\int_1^\infty \frac{s}{\meas_i(B_{s}(x_i))}\ \d s\leqslant \tau,
	%\end{equation}
	%\item For any $i$,
	%\begin{equation}
	%{0<\nu_i:=\lim_{r\ra 0}\frac{\meas_i(B_r(x_i))}{r^N}\leqslant \nu.}
	%\end{equation}
	%\item Denote by $G_i$ the Green function on $X_i$ and $b_i=b_{x_i}:=G_i(x_i,\cdot)^\frac{1}{2-N}$. For each $i$, there is $z_i\in X_i$ with
		%\begin{equation}
	%\dist_i(x_i,z_i)\geqslant d.
	%\end{equation}
	%such that
	\begin{equation}
	|\nabla_i \bist_{x_i}|(z_i)\geqslant \mathscr C_N\nu_{x_i}^\frac{1}{N-2}-\delta_i,\quad \text{for some $z_i \in \overline{B}_R(x_i) \setminus B_r(x_i)$}
	\end{equation}
	%\item 	For any $\RCD(N-2,N-1)$ space $(Y,\dist_Y,\meas_Y)$,
	and that  for any $\RCD(N-2,N-1)$ space $(Y,\dist_Y,\meas_Y)$, we have
\begin{equation}\label{eq:44}
\inf_i \left(\dist_{\mathrm{pmGH}}\left((X_i,\dist_i,\meas_i,x_i),(C(Y),\dist_{C(Y)},\meas_{C(Y)},O_Y)\right)+|\nu_{x_i}-\nu_{O_Y}|\right)>0
\end{equation}	
or 
\begin{equation}\label{asah7sasashassssusu}
\inf_i\left(\left\| \bist_{x_i}-\mathscr C_N\nu_{x_i}^{\frac{1}{N-2}}\dist_{x_i}\right\|_{L^{\infty}(B_R(x_i))}+\left(\intav_{B_R(x_i)}\left| \nabla \left(\bist_{x_i} -\mathscr C_N\nu_{x_i}^{\frac{1}{N-2}}\dist_{x_i}\right)\right|^p\d \meas\right)^{1/p} \right)>0.
\end{equation}
	%\end{itemize}
	Theorem \ref{cpt non-para} allows us to conclude that after passing to a subsequence, $(X_i,\dist_i,\meas_i,x_i)$ pmGH converge to a pointed non-parabolic $\RCD(0, N)$ space $(X,\dist,\meas,x)$, and $\nu_{x_i} \to \mu$ for some $\mu \in [0, \infty)$. %such that
	%\begin{equation}
	%(X_i,\dist_i,\meas_i,x_i)\xrightarrow{\mathrm{pmGH}}(X_\infty,\dist_\infty,\meas_\infty,x_\infty).
	%\end{equation}
	%\rev{By the pmGH-convergence and dominated convergence theorem, $(X_\infty,\dist_\infty,\meas_\infty,x_\infty)$ is an $\RCD(0,N)$ space. And we claim that the it inherits the non-parabolic assumption. 
	%Firstly let us prove that the limit space is non-parabolic.
	%Observe that
	%\begin{equation}
	%\tau\geqslant\int_1^r\frac s{\meas_i(B_s(x_i))}\ \d s\ra\int_1^r\frac{s}{\meas_\infty(B_{s}(x_\infty))}\ \d s,\ \forall r>1,
	%\end{equation}
	%which implies for any $r>0$,
	%\begin{equation}
	%\int_1^r\frac{s}{\meas_\infty(B_{s}(x_\infty))}\ \d s\leqslant \tau.
	%\end{equation}
	%Letting $r\ra\infty$, we have the desired non-parabolicity;
	%\begin{equation}\label{eq:66}
	%F_{x_{\infty}}(1)=\int_1^\infty \frac{s}{\meas_\infty(B_{s}(x_\infty))}\ \d s\leqslant \tau.
	%\end{equation}
	On the other hand, the lower semicontinuity of $N$-volume densities (\ref{lower semicont}) implies
	\begin{equation}\label{lower density1}
	\nu_{x}\leqslant \lim_{i\to \infty}\nu_{x_i}=\mu\leqslant \nu<\infty.
	\end{equation}
	%Next let us prove the finiteness of the volume density at $x_{\infty}$. As for the volume density at $x_\infty$, f
	%%For any $r>0$,
	%\begin{equation}
	%\nu\geqslant\frac{\meas_i(B_r(x_i))}{r^N}\ra \frac{\meas_\infty(B_r(x_\infty))}{r^N},
	%\end{equation}
%where we used the Bishop-Gromov inequality. Let $r\ra0$, 
%\begin{equation}
%{0\leqslant\lim_{r\ra 0}\frac{\meas_\infty(B_r(x_\infty))}{r^N}=:\nu_\infty\leqslant\nu.}
%\end{equation}
%Moreover, by the non-parabolic property \eqref{eq:66} and Bishop-Gromov inequality we obtain that 
%\begin{equation}
%\lim_{r\ra 0}\frac{\meas_\infty(B_r(x_\infty))}{r^N}\neq 0.
%\end{equation}
%With the observations above about, we can also define the objects $G_\infty$, $b_\infty$ similarly as $G_i$ and $b_i$, and all the previous results in this paper are also valid for $b_\infty$.
	%Theorem \ref{green convergence} allows us to get
	%By Theorem \ref{thm:pointwise.convergence.heat.kernel}, notice $b_i$ is constructed with the heat kernel, it is easy to see that
	%\begin{equation}\label{eq:43}
	%b_i(y_i)\ra b_\infty(y_\infty)
	%\end{equation}
	%provided $y_i\ra y_\infty$ in pmGH-sense, which implies (by the convergence results in \cite{AmbrosioHonda,AmbrosioHonda2}) that
	%\begin{equation}
	%b_i\xrightarrow{W^{1,2}_{loc}\text{-}\mathrm{strong}}b_\infty,
	%\end{equation}
	%and in particular, for any $r$ sufficiently small,
	%\begin{equation}\label{eq:47}
%\intav_{B_s(y_i)}|\nabla_i \bist_{x_i}|^2\ \d\meas_i\ra\intav_{B_s(y_\infty)}|\nabla_\infty \bist_{x_{\infty}}|^2\ \d\meas_\infty
	%\end{equation}
	%for any $0<s<r$.
	Consider $\mathscr L$-superharmonic lower semicontinuous functions on $X_i\setminus \{x_i\}$;
		\begin{equation}
		u_i:=\mathscr C_N^2\nu_{x_i}^\frac 2{N-2}-|\nabla_i \bist_{x_i}|^2 \geqslant 0.
		\end{equation} Fix $0<s<\frac{r}{4}$.
		 Applying the weak Harnack inequality, Proposition \ref{prop:weak.harnack.super}, to the nonegatively valued $\mathscr L$-superharmonic function $u_i$, we have %for any $r>0$ sufficiently small, there is $C=C(N,M_i)>0$, $p=p(N,M_i)>1$ such that
		\begin{equation}\label{eq:61}
		\left(\intav_{B_{2s}(z_i)}u_i^p\ \d\meas_i\right)^\frac 1 p\leqslant C\inf_{B_s(z_i)} u_i\leqslant C u_i(z_i)\leqslant C\delta_i.
		\end{equation}
		where $p=p(N, s)>0$.
		%\begin{equation}
		%M_i:=\|\nabla_i\log G_i(x_i,\cdot)\|_{L^\infty(X\setminus\overline{B_{d}}(x_i),\meas_i)}.
		%\end{equation}
		%\rev{We claim that $\sup_i C(N,M_i)<\infty$. To check this assertion, by the construction of $C(N,M)$ in the proof of Proposition \ref{lem:weak.harnack.sub} and Proposition \ref{prop:weak.harnack.super}, it is enough to show $\sup_i M_i<\infty$.} Observe that
		%\begin{equation}
		%|\nabla_i\log G_i(x_i,\cdot)|=C(N)\frac{|\nabla b_i|}{b_i}\leqslant \frac{C(N,\nu)}{b_i}=C(N,\nu)G_i(x_i,\cdot)^\frac{1}{N-2},
		%\end{equation}
		%where we used Theorem \ref{thm:sharp.gradient.estimate}. By Theorem \ref{thm:estimate.Green.function.in.BS}, 
		%\begin{equation}
		%M_i\leqslant C(N,\nu)\left(\int_{d}^\infty \frac{s}{\meas_i(B_s(x_i))}\ \d s\right)^\frac{1}{N-2}\leqslant C(N,\tau,\nu,d).
		%\end{equation}
	%Therefore the assertion holds true. And \eqref{eq:61} can be improved to
	Thus recalling that $u_i$ is uniformly bounded, we have 
	\begin{equation}\label{app max}
	\intav_{B_{2s}(z_i)}\left|\mathscr C_N^2\nu_{x_i}^\frac 2{N-2}-|\nabla_i \bist_{x_i}|^2\right|\ \d\meas_i\ra 0.
	\end{equation}
	Note that with no loss of generality we can assume that $z_i$ converge to a point $z \in \overline{B}_R(x)\setminus B_r(x)$. Moreover the dominated convergence theorem yields the convergence of $\frac{s}{\meas_i(B_s(x_i))}$ to $\frac{s}{\meas (B_s(x))}$ in $L^1([1, \infty), \haus^1)$.
	Thus Theorem \ref{green convergence} shows that $\bist_{x_i}$ $W^{1,2}$-strongly converge to $\bist_{x}$ on $B_{2s}(z)$. Therefore (\ref{app max}) with (\ref{lower density1}) implies
	\begin{equation}
	\intav_{B_{2s}(z)}|\nabla \bist_x|^2\d \meas = \mathscr C_N^2\mu^{\frac{2}{N-2}} \geqslant \mathscr C_N^2\nu_x^{\frac{2}{N-2}}.
	\end{equation}
	%By passing to a subsequence, we may assume that there is $z_\infty\in X_\infty$ such that $z_i\ra z_\infty$ in pmGH-sense. Combining with \eqref{eq:47},
	%\begin{equation}
	%\intav_{B_{2r}(z_\infty)}\left|\mathscr C_N^2\nu^\frac 2 {N-2}-|\nabla_\infty b_\infty|^2\right|\ \d\meas_\infty=0,
	%\end{equation}
	%namely 
	In particular Theorem \ref{thm:sharp.gradient.estimate} shows $\mu=\lim_{i\to \infty}\nu_{x_i}=\nu_x$ and
	\begin{equation}
	|\nabla b_x|=\mathscr C_N\nu_x^\frac 1 {N-2}, \quad \text{for $\meas$-a.e. in $B_{2s}(z)$.}
	\end{equation}  Thus, Theorem \ref{thm:rigidity} allows us to conclude that the limit space  is isomorphic to the $N$-metric measure cone over an $\RCD(N-2, N-1)$ space, which contradicts \eqref{eq:44} and (\ref{asah7sasashassssusu}).
\end{proof}

Next we provide an almost rigidity to a Euclidean space on a non-collapsed space.
\begin{corollary}\label{thm:almost.rigidity2}
For any integer $N\geqslant 3$, all $0<\eps<1$, $0<r<R$ and $\phi \in L^1([1, \infty), \haus^1)$ there exists $\delta:=\delta(N, \eps, r, \phi)>0$ such that if 
a pointed non-parabolic non-collapsed $\RCD(0,N)$ space $(X,\dist,\haus^N,x)$ satisfies
			\begin{equation}
			\frac{s}{\haus^N(B_s(x))}\leqslant \phi(s),\quad \text{for $\haus^1$-a.e. $s \in [1, \infty)$}
			\end{equation}
			and
			\begin{equation}
			|\nabla \bist_x|^2(y)\geqslant \mathscr C_N \omega_N^{\frac{1}{N-2}}-\delta
			\end{equation}
			for some $y \in \overline{B}_R(x) \setminus B_r(x)$,
			then $(X, \dist, \haus^N, x)$ is $\eps$- pmGH close to $(\mathbb{R}^N, \dist_{\mathbb{R}^N}, \haus^N, 0_N)$. %In particular $X$ is homeomorphic to $\mathbb{R}^N$ if $\eps$ is sufficiently small depending only on $N, r, \phi$. Moreover if $X$ is smooth, then the homeomorphism can be improved to be a diffeomorphism.
\end{corollary}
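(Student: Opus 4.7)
I would argue by contradiction, reducing to the pointwise rigidity of Corollary~\ref{cor:rigidity euclidean}(2). Assuming the conclusion fails, there exist $\eps>0$, $0<r<R$, $\phi\in L^1([1,\infty),\haus^1)$, $\delta_i\to 0^+$ and pointed non-parabolic non-collapsed $\RCD(0,N)$ spaces $(X_i,\dist_i,\haus^N,x_i)$ satisfying the $\phi$-bound, together with $y_i\in\overline{B}_R(x_i)\setminus B_r(x_i)$ for which $|\nabla \bist_{x_i}|(y_i)\geqslant \mathscr{C}_N\omega_N^{1/(N-2)}-\delta_i$, while the pmGH distance to the Euclidean model is at least $\eps$. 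The non-collapsed Bishop inequality \eqref{bishop ine} gives $\nu_{x_i}\leqslant \omega_N$, and combining with Theorem~\ref{thm:sharp.gradient.estimate},
\[
   \mathscr{C}_N\omega_N^{1/(N-2)}-\delta_i \leqslant |\nabla\bist_{x_i}|(y_i)\leqslant \mathscr{C}_N\nu_{x_i}^{1/(N-2)}\leqslant \mathscr{C}_N\omega_N^{1/(N-2)},
\]
so $\nu_{x_i}\to \omega_N$. Theorem~\ref{cpt non-para} together with the stability of non-collapsedness under pmGH convergence (De Philippis--Gigli, applicable because the $\phi$-hypothesis forces a uniform lower bound on $\haus^N(B_1(x_i))$) produces, along a subsequence, a pointed non-parabolic non-collapsed $\RCD(0,N)$ limit $(X,\dist,\haus^N,x)$, with $y_i\to y\in\overline{B}_R(x)\setminus B_r(x)$ and $\nu_x\leqslant\omega_N$.

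The core step is to upgrade the near-maximality at $y_i$ to actual maximality at some point in the limit. To this end, I would apply the weak Harnack inequality (Proposition~\ref{prop:weak.harnack.super}) to the nonnegative $\mathscr{L}_i$-superharmonic functions
\[
   u_i := \mathscr{C}_N^{2}\nu_{x_i}^{2/(N-2)}-|\nabla\bist_{x_i}|^2
\]
(their $\mathscr{L}_i$-superharmonicity being Proposition~\ref{prop:subha}). Since $u_i(y_i)\to 0$, on any fixed scale $0<s<r/4$ the weak Harnack inequality together with the uniform $L^\infty$-bound on $|\nabla\bist_{x_i}|$ yields $\intav_{B_{2s}(y_i)}u_i\,\d\haus^N\to 0$. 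By Corollary~\ref{impr}, $\bist_{x_i}$ converges $W^{1,2}_{\mathrm{loc}}$-strongly and locally uniformly to $\bist_x$ on $X\setminus\{x\}$ (the hypothesis $F_{x_i}(1)\to F_x(1)$ is given by dominated convergence via the $\phi$-bound), so the averages pass to the limit and give
\[
   \intav_{B_{2s}(y)}|\nabla\bist_x|^2\,\d\haus^N = \mathscr{C}_N^{2}\omega_N^{2/(N-2)}.
\]
Because the pointwise sharp gradient estimate gives $|\nabla\bist_x|^2\leqslant \mathscr{C}_N^{2}\nu_x^{2/(N-2)}\leqslant \mathscr{C}_N^{2}\omega_N^{2/(N-2)}$, equality must hold a.e.\ on $B_{2s}(y)$. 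Hence $\nu_x=\omega_N$ and $|\nabla\bist_x|$ attains its sharp upper bound at some $z\in X\setminus\{x\}$, so Corollary~\ref{cor:rigidity euclidean}(2) identifies $(X,\dist,\haus^N)$ with $(\mathbb{R}^N,\dist_{\mathbb{R}^N},\haus^N)$, contradicting the pmGH separation.

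The main obstacle I expect is exactly the transfer of the pointwise near-extremality $|\nabla\bist_{x_i}|(y_i)\approx \mathscr{C}_N\omega_N^{1/(N-2)}$ across the pmGH limit: $|\nabla\bist_x|$ is only upper semicontinuous and pointwise values are not in general preserved under pmGH convergence. The combination of the weak Harnack inequality for $\mathscr{L}$-superharmonic functions with the $W^{1,p}_{\mathrm{loc}}$-convergence of $\bist$ from Corollary~\ref{impr} is precisely what makes this rigorous, by converting a single-point near-extremal value into an $L^1$-near-equality on a ball of definite radius, which then passes to the limit. A secondary, more standard point is ensuring the limit is non-collapsed, which rests on the uniform lower bound $\haus^N(B_1(x_i))\geqslant c(\phi)>0$ coming from the $\phi$-hypothesis together with the stability results of De Philippis--Gigli.
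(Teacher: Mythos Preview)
Your proof is correct and follows essentially the same approach as the paper. The paper's own proof is a single sentence invoking ``(the proof of) Theorem~\ref{thm:almost.rigidity} with the Bishop inequality $\nu_x\leqslant\omega_N$''; you have simply unwound that reference, reproducing the contradiction argument of Theorem~\ref{thm:almost.rigidity} (weak Harnack for the $\mathscr{L}$-superharmonic $u_i$, $W^{1,2}_{\mathrm{loc}}$-convergence of $\bist_{x_i}$ via dominated convergence on $F_{x_i}(1)$, passage to an a.e.\ equality on a ball) and then closing with Corollary~\ref{cor:rigidity euclidean}(2) together with the De Philippis--Gigli volume convergence to ensure the limit is non-collapsed. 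The only cosmetic difference is that the paper routes the final step through the cone rigidity implicit in Theorem~\ref{thm:almost.rigidity}, whereas you appeal directly to the Euclidean rigidity; these are equivalent here since $\nu_{x_i}\to\omega_N$ forces the limiting cone to be $\mathbb{R}^N$.
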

\begin{proof}
The first statement is a direct consequence of (the proof of) Theorem \ref{thm:almost.rigidity} with the Bishop inequality $\nu_x \leqslant \omega_N$.		
%Moreover Theorem \ref{thm small} actually proves the second statement.
%The proof is essentially same to that of Theorem \ref{thm:almost.rigidity}, combined with (2) of Corollary \ref{cor:rigidity euclidean}. Thus we omit it.
\end{proof}
\begin{remark}\label{rem:EH}
Let us remark that the conclusion of Corollary \ref{thm:almost.rigidity2} cannot be replaced by a stronger one;
\begin{equation}\label{optimal max}
V_X\geqslant \omega_N-\eps.
\end{equation}
Find an open $N$-manifold $(M^N, g)$ with the maximal volume growth which is not isometric to $\mathbb{R}^N$ (for instance the Eguchi-Hanson metric on the cotangent bundle $T^*\mathbb{S}^2$ of $\mathbb{S}^2$ gives such an example with vanishing Ricci curvature, see \cite{EHM}).
Then since the tangent cone at infinity is not isometric to $\mathbb{R}^N$, the asymptotic $N$-volume $V_{M^N}$ is away from $\omega_N$. Fix $x \in M^N$. Take any convergent sequence $x_i \to x$  with $x_i \neq x$ and then recall
\begin{equation}\label{conv sharp}
|\nabla \bist_x|(x_i) \to \mathscr C_N \omega_N^{\frac{1}{N-2}}.
\end{equation}
Thus considering the rescaled distance $\dist_i:=\dist(x, y_i)^{-1}\dist$, the pointed Riemannian manifolds $(M^N, \dist_i, x)$ satisfies (\ref{conv sharp}) for some $y_i \in \partial B_1^{\dist_i}(x)$, but the asymptotic $N$-volume is away from $\omega_N$ because of the scale invariance of $V_{M^N}$. In particular (\ref{optimal max}) is not satisfied in this case.
\end{remark}

%\subsection{Asymptotics in $y \to \infty$.}
%In this section we discuss the asymptotic behavior of $|\nabla \bist_x|(y)$ when $y$ is far from $x$ on a non-parabolic $\RCD(0, N)$ space $(X, \dist, \meas, x)$.
%Firstly let us discuss it under the maximal volume growth condition.

%It is worth mentioning that under the same setting as in Theorem \ref{thm:almost.rigidity2}, the weak Harnack inequality (Proposition \ref{prop:weak.harnack.super}) allows us to conclude that the set 
%\begin{equation}
%\left\{ w \in X \setminus \{x\} | |\nabla b_x|(w) \geqslant \mathscr C_N-\Psi \right\}
%\end{equation}
%has a positive $\meas$-measure. In particular there exist infinitely many points $z_i \in X \setminus \{x\}$ such that $|\nabla b_x|(z_i) \geqslant \mathscr C_N-\Psi $ holds for any $i$. Note that in general $\dist(x, z_i)$ does not diverge.
%In connection with this observation and Remark \ref{rem:EH}, we obtain the following.
In connection with this remark, we prove the following.
\begin{theorem}\label{thm:almost.rigidity5}
For any integer $N \geqslant 3$, all $0<\eps<1$ and $\tau>0$ there exists $\delta=\delta(N, \eps, \tau)>0$ such that if
a pointed non-parabolic non-collapsed $\RCD(0,N)$ space  $(X,\dist,\haus^N,x)$ satisfies
			$
			V_X \geqslant \tau
			$
			and
			 
			 \begin{equation}
			 \mathscr C_N\omega_N^{\frac{1}{N-2}}-|\nabla \bist_x|(y_i)\leqslant \delta
			 \end{equation}
			 for  some sequence $y_i\in X$ with $\dist(x, y_i)\to \infty$, then
			\begin{equation}\label{vol close}
V_X\geqslant \omega_N-\eps
\end{equation}
In particular if $\eps $ is sufficiently small depending only on $N$ and $\tau$, then $X$ is homeomorphic to $\mathbb{R}^N$, moreover in addition, if $X$ is smooth, then the homeomorphism can be improved to be a diffeomorphism.
\end{theorem}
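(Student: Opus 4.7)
The plan is to deduce the volume estimate $V_X \geqslant \omega_N - \eps$ as a direct consequence of Corollary \ref{newcorollary}, and to obtain the homeomorphism/diffeomorphism conclusion by invoking Cheeger-Colding-type topological stability under almost maximal volume growth.

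For the first assertion I would proceed as follows. In the non-collapsed setting $(X, \dist, \haus^N)$ the Bishop inequality gives $\nu_x \leqslant \omega_N$, so Theorem \ref{thm:sharp.gradient.estimate} yields the global bound $|\nabla \bist_x| \leqslant \mathscr C_N \omega_N^{1/(N-2)}$. Moreover, by Corollary \ref{coro:vanish.at.infinity} we have $G_x(y_i) \to 0$ as $\dist(x, y_i) \to \infty$, which forces $t_i := \bist_x(y_i) \to \infty$. Since $y_i$ lies on the level set $\{\bist_x = t_i\}$, the definition of $f_x$ in Corollary \ref{newcorollary} yields
\begin{equation*}
\mathscr C_N \omega_N^{1/(N-2)} - \delta \leqslant |\nabla \bist_x|(y_i) \leqslant f_x(t_i).
\end{equation*}
Letting $i \to \infty$ and applying the asymptotic $\lim_{t \to \infty} f_x(t) = \mathscr C_N V_X^{1/(N-2)}$ from the same corollary gives $V_X^{1/(N-2)} \geqslant \omega_N^{1/(N-2)} - \delta/\mathscr C_N$. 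A purely dimensional choice $\delta = \delta(N, \eps)$ then suffices to conclude $V_X \geqslant \omega_N - \eps$; notably this step does not actually require the lower bound $V_X \geqslant \tau$, which plays its role only in the second conclusion.

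For the topological conclusion, once $V_X$ is sufficiently close to $\omega_N$, the Bishop-Gromov monotonicity forces the ratio $r \mapsto \haus^N(B_r(x))/(\omega_N r^N)$ to be pinched between $V_X/\omega_N$ and $1$ at every scale, so every tangent cone at infinity has $N$-volume density close to $\omega_N$. Combined with Corollary \ref{volconemetr}(2), this forces $(X, \dist, \haus^N, x)$ to be pmGH close to $(\mathbb{R}^N, \dist_{\mathbb{R}^N}, \haus^N, 0_N)$ at arbitrary scales. The desired homeomorphism then follows from the topological stability theorems for non-collapsed $\RCD$ spaces near almost maximal volume growth (paralleling Cheeger-Colding's Ricci-limit result), while in the smooth setting the diffeomorphism improvement is the classical theorem of Colding \cite{C97}. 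The main obstacle in the whole argument is not really analytic: Corollary \ref{newcorollary} makes the first part essentially one line, and the topological stability enters as a black box whose smallness threshold is dimensional, with any $\tau$-dependence only needed to uniformize constants in the stability results. The conceptual point is that the hypothesis is imposed relative to the universal upper bound $\mathscr C_N \omega_N^{1/(N-2)}$ (rather than the sharp pointwise bound $\mathscr C_N \nu_x^{1/(N-2)}$), so that the behavior of $|\nabla \bist_x|$ at infinity, captured by $\lim_{t \to \infty} f_x(t) = \mathscr C_N V_X^{1/(N-2)}$, directly pins down the asymptotic volume.
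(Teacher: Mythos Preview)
Your argument is correct and takes a genuinely different, shorter route than the paper's proof. The paper rescales by $r_i:=\dist(x,y_i)$, uses the hypothesis $V_X\geqslant\tau$ together with (\ref{ps9s8shbbbsn}) to obtain a uniform bound $F_{x_i}^{X_i}(r)\leqslant C(N,\tau)r^{2-N}$ on the rescaled spaces, then invokes Corollary~\ref{thm:almost.rigidity2} to place each rescaled space pmGH close to $\mathbb{R}^N$, and finally appeals to the volume convergence of \cite{DePhillippisGigli} to force $V_X\geqslant\omega_N-\eps$. Your approach bypasses all of this by reading the conclusion directly off the limit $\lim_{t\to\infty}f_x(t)=\mathscr C_N V_X^{1/(N-2)}$ from Corollary~\ref{newcorollary}; since $|\nabla\bist_x|(y_i)\leqslant f_x(\bist_x(y_i))$ and $\bist_x(y_i)\to\infty$, the hypothesis immediately yields $\mathscr C_N V_X^{1/(N-2)}\geqslant \mathscr C_N\omega_N^{1/(N-2)}-\delta$.

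Two remarks on what each approach buys. First, your argument shows that the lower bound $V_X\geqslant\tau$ is in fact superfluous for \emph{both} conclusions: the volume estimate follows with $\delta=\delta(N,\eps)$ alone, and the homeomorphism/diffeomorphism threshold is purely dimensional once $V_X$ is close to $\omega_N$ (so your claim that $\tau$ is needed ``to uniformize constants in the stability results'' slightly undersells your own argument). Second, Corollary~\ref{newcorollary} was added to the paper after a referee's suggestion (see the acknowledgments), so the paper's original proof of Theorem~\ref{thm:almost.rigidity5} could not have used it; your route is the natural simplification available once that corollary is in place.
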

\begin{proof}
Let $r_i:=\dist(x, y_i)$ and consider a rescaled pointed non-parabolic non-collapsed $\RCD(0, N)$ space;
\begin{equation}
(X_i,  \dist_i, \haus^N_{\dist_i}, x_i):=\left( X, \frac{1}{r_i}\dist, \frac{1}{r_i^N}\haus^N_{\dist}, x\right).
\end{equation}
Recalling (\ref{rescaling F}) we have for any $r \geqslant 1$
\begin{equation}\label{00ii}
F_{x_i}^{X_i}(r)=\frac{F_x^X\left( rr_i\right)}{r_i^{2-N}} \leqslant \frac{r^{2-N}}{(N-1)\tau},
\end{equation}
where we used (\ref{ps9s8shbbbsn}) in the final inequality. Thus Corollary \ref{thm:almost.rigidity2} allows us to conclude that $(X_i,  \dist_i, \haus^N_{\dist_i}, x_i)$ is pmGH close to the $N$-dimensional Euclidean space. In particular a tangent cone at infinity is also pmGH close to the  $N$-dimensional Euclidean space. Therefore the volume convergence result, \cite[Theorem 1.3]{DePhillippisGigli}, implies (\ref{vol close}). Thus we get the first statement.
The remaining statements come from this with the same arguments as in \cite[Theorems A.1.11]{CheegerColding1}.
\end{proof}
\begin{remark}\label{final rem}
In the theorem above, in order to get the same conclusion, we cannot replace the existence of divergent points by the existence of a point which is far from $x$. The reason is the same to Remark \ref{rem:EH}.
\end{remark}

\section{Examples}\label{example sec}
In this section we see that Theorems \ref{green convergence} and \ref{thm:almost.rigidity} are sharp via simple examples. Moreover we also discuss a related sharpness and open problems.
\subsection{Sharpness I}\label{sharp obs}
In this subsection we prove that Theorem \ref{thm:almost.rigidity} is sharp, namely this cannot be improved to the case when $p=\infty$.  The following arguments also allow us to conclude that (\ref{impsosirbb}) and (\ref{elpest222}) in Theorem \ref{thm:smooth almost} are also sharp.

\textbf{Step 1}.
Consider $\mathbb{S}^2(r):=\{x \in \mathbb{R}^3 | |x|_{\mathbb{R}^3}=r\}$ for $r<1$ with the standard Riemannian metric and denote by $X=C(\mathbb{S}^2(r))$ the $3$-metric measure cone with the $3$-dimensional Hausdorff measure $\haus^3_X (=\meas_{C(\mathbb{S}^2(r))})$. Assume that $r$ is close to $1$, and take a point $y \in X$ which is close to the pole $x \in X$ with $x \neq y$. Since $(X, \dist, \haus^3, y)$ is pmGH close to the $3$-dimensional Euclidean space, Theorem \ref{thm:almost.rigidity} yields that 
\begin{equation}
\intav_{B_1(y)}\left| |\nabla \bist_y|-4\pi\right|\d \haus^3_X
\end{equation}
is small,
where the Lipschitz constant can be calculated by $(3(3-2)\omega_3)^{\frac{1}{3-2}}=4\pi$.

On the other hand, we have 
\begin{equation}\label{9shssbsrtstsg}
|\nabla \bist_y|(x)=\lim_{r \to 0^+}\left(\intav_{B_r(x)}|\nabla \bist_y|^2\d \haus^3_X\right)^{1/2}=0
\end{equation}
because of the same trick observed in \cite{DePhillippisZ}. Namely, thanks to Lemmas \ref{lem form}, \ref{rescaling fh} and the stability of the Laplacian \cite[Theorem 4.4]{AmbrosioHonda2}, under any blow-up at $x$, $\bist_x$ $W^{1,2}_{\mathrm{loc}}$-strongly converge to a linear growth harmonic function on $C(\mathbb{S}^2(r))$. Recalling that any such function must be a constant because $r<1$, we have (\ref{9shssbsrtstsg})

Then recalling the upper semicontinuity of $|\nabla \bist_y|$, we know that $|\nabla \bist_y|$ is small around $x$. In particular 
\begin{equation}
4\pi-|\nabla \bist_y| \geqslant 3\pi
\end{equation}
near $x$, thus
\begin{equation}
\||\nabla \bist_y|-4\pi\|_{L^{\infty}(B_1(y))} \geqslant 3\pi.
\end{equation}

\textbf{Step 2}.
Let 
\begin{equation}
(Z_i, \dist_i, \meas_i, z_i) \stackrel{\mathrm{pmGH}}{\to} (Z, \dist, \meas, z)
\end{equation}
 be a pmGH convergent sequence of pointed $\RCD(K, N)$ spaces and let $f_i \in L^{\infty}(B_R(x_i), \meas_i)$ $L^p$-strongly converge to $f \in L^{\infty}(B_R(x), \meas)$ for any $p<\infty$ with $\sup_i\|f_i\|_{L^{\infty}}<\infty$. Then 
 \begin{equation}\label{labsysheyrb}
 \liminf_{i\to \infty}\|f_i\|_{L^{\infty}}\geqslant \|f\|_{L^{\infty}}.
 \end{equation} Because for any $p<\infty$, since 
 \begin{equation}
 \liminf_{i\to \infty}\|f_i\|_{L^{\infty}}\geqslant \lim_{i\to \infty}\left( \intav_{B_R(z_i)}|f_i|^p\d \meas_i\right)^{1/p}=\left(\intav_{B_R(z)}|f|^p\d\meas\right)^{1/p},
 \end{equation}
 letting $p \to \infty$ completes the proof of (\ref{labsysheyrb}).
 
 \textbf{Step 3}. Let us consider $X=C(\mathbb{S}^2(r))$ again as in \textbf{Step 1}. Note that it is easy to find a sequence of manifolds of dimension $3$, $(M^3_i, g_i, \haus^3, y_i)$, pmGH-converge to $(X, \dist, \haus^3, y)$ (actually $(X, \dist, \haus^3, x)$ is the tangent cone at infinity of a $3$-dimensional complete Riemannian manifold $(M^3, g)$ with $V_{M^3}=V_X>0$). Then applying (\ref{labsysheyrb}) for $f_i=4\pi-|\nabla \bist_{y_i}|, f=4\pi-|\nabla \bist_y|$ with Corollary \ref{impr} implies
 \begin{equation}
 \liminf_{i \to \infty}\|4\pi-|\nabla \bist_{y_i}|\|_{L^{\infty}(B_1(y_i))}\geqslant \|4\pi-|\nabla \bist_y|\|_{L^{\infty}(B_1(y))} \geqslant 3\pi.
 \end{equation}
This observation shows that we cannot improve Theorem \ref{thm:almost.rigidity} to the case when $p=\infty$.
 
\subsection{Sharpness II}\label{green sharp}
In the rest two subsections \ref{green sharp} and \ref{green sharp1}, we prove that several results we obtained previously under assuming;
\begin{equation}\label{a9s8s8shsbsvsvsvsv}
\frac{s}{\meas(B_s(x))}\leqslant \phi(s)
\end{equation}
do not hold if we replace (\ref{a9s8s8shsbsvsvsvsv}) by a weaker one;
\begin{equation}\label{sisbsbs71bs0ansuer6}
F_x(1)\leqslant C<\infty
\end{equation}
based on examples discussed in the previous subsection. Namely the assumption (\ref{a9s8s8shsbsvsvsvsv}) is sharp in these results.

The first one is about Theorem \ref{green convergence}, namely
we
provide an example of pmGH convergent sequences of non-parabolic $\RCD(0,3)$ spaces whose Green functions do not converge to the limit one, though the corresponding $F_{x_i}(1)$ are bounded.

For fixed $0<r<1$,\footnote{It is not difficult to see that the following arguments are also justified even in the case when $r=1$ after choosing a suitable projection $\pi$.} let $X=C(\mathbb{S}^2(r))$ with the pole $x$, the cone distance $\dist=\dist_{C(\mathbb{S}^2(r))}$ and the $3$-dimensional Hausdorff measure $\haus^3_X$. For any $R>0$, let us denote by $X_R$ the glued space of a closed ball $\overline{B}_R(x)$ and a cylinder $\partial B_R(x) \times [0, \infty)$ along the boundary $\partial B_R(x):=\{y\in X|\dist (x, y)=R\}$. Then it is trivial that both $X$ and $X_R$ with the canonical intrinsic distance $\dist_R$ can be canonically realized as boundaries $\partial D, \partial D_R$ of closed convex subsets $D, D_R$ in $\mathbb{R}^4$ with $x=0_4$ and $D_R \subset D$, respectively. In particular $(X_R, \dist_R, \haus^3_{X_R})$ is an $\RCD(0, 3)$ space. 
Note that $(X_R, \dist_R, \haus^3_{X_R})$ is not non-parabolic.

Under the conventions above, denote by $\pi_R:X \to X_R$ the canonical projection in $\mathbb{R}^4$ (thus $\pi_R|_{\overline{B}_R(x)}=\mathrm{id}_{\overline{B}_R(x)}$). For any $0\leqslant t \leqslant 1$, define $\pi_{R, t}:X \to D$ by $\pi_{R, t}(y):=(1-t)y+t\pi_R(y)$, and put $X_{R, t}:=\pi_{R, t}(X)$ which is also the boundary of a covex closed subset in $\mathbb{R}^4$.

%Fix $0<\eps<1$ and 
Choose $R_{0}>1$ with 
\begin{equation}
\int_{R_{0}}^{\infty}\frac{s}{\haus^3_X(B_s(x))}\d s<1
\end{equation}
because of the non-parabolicity of $(X, \dist, \haus^3_X)$. Fix $R \geqslant R_{0}^2$. Since
\begin{equation}
\int_R^{\infty}\frac{s}{\haus^3_{X_{R, t}}(B_s(x))}\d s \to \int_R^{\infty}\frac{s}{\haus^3_{X_R}(B_s(x))}\d s =\infty,\quad t \to 1^-
\end{equation}
because $(X_R, \dist_R, \haus^3_{X_R})$ is not non-parabolic, we can find $t_{R} \in (0, 1)$ with
\begin{equation}
\int_R^{\infty}\frac{s}{\haus^3_{X_{R, t_{R}}}(B_s(x))}\d s=1.
\end{equation}
Then let us consider a pmGH-convergent sequence of non-parabolic $\RCD(0, 3)$ spaces;
\begin{equation}
\left(X_{R, t_{R}}, \dist_{R, t_{R}}, \haus^3_{X_{R, t_{R}}}, x\right) \stackrel{\mathrm{pmGH}}{\to} \left(X, \dist, \haus^3_X, x\right),\quad R \to \infty,
\end{equation}
where $\dist_{R, t_{R}}$ denotes the canonical intrinsic distance on $X_{R, t_{R}}$.
Since
\begin{align}
F_x^{X_{R, t_{R}}}(1)&=\int_1^R\frac{s}{\haus^3_{X_{R, t_{R}}}(B_s(x))}\d s+1\nonumber \\
&=\int_1^R\frac{s}{s^3\haus^3_X(B_1(x))}\d s +1 \to F_x^X(1)+1,\quad R \to \infty,
\end{align}
Theorem \ref{green convergence} tells us that the Green functions do not converge to the limit one.
Actually we can see this directly as follows. 

Fix $y \in X\setminus \{x\}$. Then for any sufficiently large $R$, Gaussian estimates (\ref{eq:gaussian}) show
\begin{align}
G^{X_{R, t_{R}}}(x, y)&=\int_0^Rp_{X_{R, t_{R}}}(x, y, t)\d t+\int_R^{\infty}p_{X_{R, t_{R}}}(x, y, t)\d t\nonumber \\
&\geqslant \int_0^Rp_{X_{R, t_{R}}}(x, y, t)\d t +\frac{1}{C}\int_{\sqrt{R}}^{\infty}\frac{s}{\haus^3_{X_{R, t_{R}}}(B_s(x))}\d s \nonumber \\
&=\int_0^Rp_{X_{R, t_{R}}}(x, y, t)\d t +\frac{1}{C}
\end{align}
for some $C>1$.
Thus letting $R \to \infty$ with Fatou's lemma and (\ref{convheatkernel}) yields
\begin{equation}\label{9ashaoriaasiuarh}
\liminf_{R\to \infty}G^{X_{R, t_{R}}}(x, y)\geqslant \int_0^{\infty}p_X(x, y, t)\d t+\frac{1}{C}=G^X(x,y)+\frac{1}{C}.
\end{equation}

\subsection{Sharpness III}\label{green sharp1}
The final sharpness result is related to  Corollary \ref{improvement cor}.
An immediate consequence of Corollary \ref{improvement cor} states that if a pointed non-parabolic $\RCD(0, N)$ space $(X, \dist, \meas, x)$ with the finite $N$-volume density $\nu_x<\infty$ is pmGH-close to the $N$-metric measure cone over an $\RCD(N-2, N-1)$ space, then $|\nabla \bist_x|$ attains the maximum $\mathscr C_N\nu_x^{\frac{1}{N-2}}$ almostly at some point which is bounded and away from $x$, whenever (\ref{a9s8s8shsbsvsvsvsv}) holds. In the sequel we prove that (\ref{a9s8s8shsbsvsvsvsv}) cannot be replaced by (\ref{sisbsbs71bs0ansuer6}) to get the same conclusion.
%Thus it is natural to ask whether the assumption ``the function $\frac{s}{\meas (B_s(x))}$ is dominated by $\phi$'' can be replaced by a weaker one ``$F_x(1)$ is bounded'' or not, to get the same conclusion. The main purpose of this subsection is to prove that this improvement is impossible based on the same setting as in the previous subsection \ref{green sharp}.

To do so, under the same notations as in   subsection \ref{green sharp}, let us discuss the behavior of $\bist_x^{X_{R, t_R}}$ as $R \to \infty$. Our claim is that if $r$ is small, then there exists no sequence $y_{R_i} \in X_{R_i, t_{R_i}}$ as $R_i \to \infty$ such that $y_{R_i}$ is bounded and is away from $x$ and that 
\begin{equation}
|\nabla \bist_x^{X_{R_i, t_{R_i}}}|(y_{R_i}) \to \mathscr C _3\nu_x(=3\nu_x)
\end{equation}
The proof is done by a contradiction. If such sequence $y_{R_i}$ exists, then after passing to a subsequence, with no loss of generality we can assume that $y_{R_i}$ converge to some $y \in X\setminus \{x\}$ and that $\bist_{x}^{X_{R_i}, t_{R_i}}$ locally uniformly converge to some $\tilde \bist \in \mathrm{Lip}(X, \dist)$. Note that thanks to Lemma \ref{lem form} and the stability of the Laplacian \cite[Theorem 4.4]{AmbrosioHonda2}, we know that $\tilde \bist \in D(\Delta, X \setminus \{x\})$ and $\tilde \bist^2\in D(\Delta)$ hold with 
\begin{equation}
\Delta \tilde \bist=2\frac{|\nabla \tilde \bist|^2}{\tilde \bist},\quad \Delta \tilde \bist^2=6|\nabla \tilde \bist|^2.
\end{equation}
On the other hand, applying a weak Harnack inequality, Theorem \ref{prop:weak.harnack.super}, for $(3\nu_x)^2-|\nabla \bist_x^{X_{R_i, t_{R_i}}}|^2$ as in (\ref{s8shshsbsyststst}) proves
\begin{equation}
\int_{B_R(x) \setminus B_r(x)}\left|(3\nu_x)^2-|\nabla \bist_x^{X_{R_i, t_{R_i}}}|^2\right|\d \haus^3_{X_{R_i, t_{R_i}}} \to 0
\end{equation}
for all $0<r<R<\infty$. Thus we know 
\begin{equation}
\Delta \tilde \bist^2=6(3\nu_x)^2. 
\end{equation}
Then since $\tilde \bist^2-(3\nu_x)^2\dist_x^2$ is a harmonic function on $X$ with polynomial growth of degree at most $2$, any such function must be a constant if $r$ is small. Thus we have $\tilde \bist^2=(3\nu_x)^2\dist_x^2+d$ for some $d \in \mathbb{R}$. Since $\tilde \bist(x)=0$ by definition, we know $d=0$, namely
\begin{equation}
\tilde \bist=3\nu_x\dist_x.
\end{equation}
In paritcular we have ${\tilde \bist}^{-1}=G^X_x$ which contradicts (\ref{9ashaoriaasiuarh}). Thus the observation above allows us to conclude for all $0<r_1<r_2<\infty$
\begin{equation}
\liminf_{R\to \infty}\left(\inf_{B_{r_2}(x)\setminus B_{r_2}(x)}\left(\mathscr C_3\nu_x-|\nabla \bist_x^{X_{R, t_R}}|\right)\right)>0.
\end{equation}
\begin{remark}
We do not know whether we can replace (\ref{a9s8s8shsbsvsvsvsv}) by  (\ref{sisbsbs71bs0ansuer6}) to get the same conclusions in Theorems \ref{thm:smooth almost} and \ref{thm:1.4}.
\end{remark}
\section{Appendix; Analysis on a drifted Laplace operator $\mathscr L$}\label{sec:3}
In this appendix, we provide detailed proofs of the regularity results for $\mathscr L$-sub/super harmonic functions (recall Definition \ref{llap}), coming directly from the general theory on PI spaces. It is emphasized that these techniques can be applied to more general operators, including, of course,  our Laplacian $\Delta$. We refer \cite[Section 8.5]{BjornBjorn} as a main reference on this topic.
	
	Let $(X, \dist, \meas, x)$ be a pointed non-parabolic $\RCD(0, N)$ space.
	The first result is about the weak Harnack inequality for $\mathscr L$-subharmonic functions. This is justified by applying \cite[Theorem 8.4]{BjornBjorn} to an (incomplete) metric measure space $(B_{100r}(y), \dist, \meas_{G_x})$ because of (\ref{eq:BS.estimate.F}) and (\ref{Ggrad}). %applying (the proof of) the same result, \cite[Theorem 8.4]{BjornBjorn}, to an (incomplete) metric measure space $(B_{100r}(y), \dist, \meas_{G_x})$ completes the proof. %(or we can apply the regularity theory on quasi-subrminimizers, see {\color{red}reference}). 
	%Let us introduce a regularity result on ($\mathscr L$-)superharmonic functions. Since this argument is also well-known, we here refer just \cite[Section 8.5]{BjornBjorn} for the proof.
	%Let us recall fundamental properties on ($\mathscr L$-)superharmonic functions. The first one is the weak Harnack inequality which is also valid in more general spaces, so-called \textit{PI spaces}. For reader's convenience, we provide a detailed proof in our setting in Appendix \ref{sec:3}.
	\begin{proposition}[Weak Harnack inequality for $\mathscr L$-subharmonic functions]\label{lem:weak.harnack.sub22}
		Let $u$ be an $\mathscr L$-subharmonic function on a ball $B_{100r}(y)$ for some $r \leqslant 1$ with $B_{100r}(y) \subset X\setminus B_s(x)$ for some $s \leqslant 1$. 
		Then for all $k\in\R$ and $p>1$, there exists $C=C(N,s,p)>0$ such that\footnote{When we apply directly \cite[Theorem 8.4]{BjornBjorn}, then the conclusion should be written in terms of $\meas_{G_x}$ instead of $\meas$ in (\ref{eq:45}). The difference can be understood as follows.
			Recalling $\d \meas_{G_x}=G_x^2\d \meas =\exp (2\log G_x)\d \meas$, for fixed $y$ as in the statement, under assuming $\|\nabla \log G_x\|_{L^{\infty}} \le L$ on the domain, we have 
			\begin{equation}\label{7snnbbshhss}
			\frac{1}{L} \leqslant \frac{G_x(z)}{G_x(w)}\leqslant L 
			\end{equation}
			because of applying the upper gradient inequality for $\log G_x$. This inequality, (\ref{7snnbbshhss}), allows us to compare $\meas$ with $\meas_G$ quantitatively. Thus we can state the conclusion in terms of $\meas$. The same observation can be applied in the sequel.
		}
		\begin{equation}\label{eq:45}
		\esssup{B_{\frac{r}{2}}(y)}u+k\leqslant C\left(\intav_{B_r(y)}(u+k)_+^p\ \d\meas\right)^\frac 1 p.
		\end{equation}
	\end{proposition}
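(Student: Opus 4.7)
The plan is to reduce the statement to a standard weak Harnack inequality on the auxiliary weighted metric measure space $(B_{100r}(y), \dist, \meas_{G_x})$, and then transfer the inequality back to $\meas$ using the local comparability $\meas_{G_x} \asymp \meas$ on this ball. By the very definition of $\mathscr L$-subharmonicity (Definition \ref{llap}), the function $u$ is subharmonic on $B_{100r}(y)$ with respect to the Dirichlet energy $(f, g) \mapsto \int \langle \nabla f, \nabla g\rangle \d\meas_{G_x}$; consequently, the only ingredients needed for \cite[Theorem 8.4]{BjornBjorn} to apply are the doubling of $\meas_{G_x}$ and a $(1, 1)$-Poincar\'e inequality on balls inside $B_{100r}(y)$, with constants depending only on $N$ and $s$.

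My first step would be to quantitatively bound the oscillation of $G_x$ on $B_{100r}(y)$. Since every point of $B_{100r}(y)$ lies in $X \setminus B_s(x)$, Proposition \ref{prpogreengr} yields $|\nabla \log G_x| \leqslant C(N)/s$ on $B_{100r}(y)$. Combining this pointwise bound with the continuity of $G_x$ on $X \setminus \{x\}$ and a standard chain/Sobolev-to-Lipschitz argument, for any $z, z' \in B_{100r}(y)$ I would obtain
\begin{equation*}
\frac{G_x(z)}{G_x(z')} \leqslant \exp\left(\frac{200\, C(N)}{s}\right) =: L(N, s),
\end{equation*}
where $r \leqslant 1$ has been used. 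Fixing any $y_0 \in B_{100r}(y)$ and setting $c := G_x(y_0)^2$, this gives
\begin{equation*}
L(N, s)^{-2} c\, \meas \leqslant \meas_{G_x} \leqslant L(N, s)^2 c\, \meas, \quad \text{on } B_{100r}(y).
\end{equation*}

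This comparability, together with the doubling of $\meas$ (from Bishop-Gromov) and the $(1,1)$-Poincar\'e inequality (\ref{poincareineq}), transfers both the doubling property and a Poincar\'e inequality to $\meas_{G_x}$ on $B_{100r}(y)$ with constants depending only on $N$ and $s$. At this point \cite[Theorem 8.4]{BjornBjorn} applies to $u$ on the PI ball $(B_{100r}(y), \dist, \meas_{G_x})$ to yield
\begin{equation*}
\esssup{B_{r/2}(y)}(u + k) \leqslant C(N, s, p)\left( \intav_{B_r(y)}(u + k)_+^p \d\meas_{G_x} \right)^{1/p}
\end{equation*}
for every $k \in \mathbb{R}$ and $p > 1$, and the final comparison above converts the $\meas_{G_x}$-average on the right into an $\meas$-average, with $c$ absorbed into a constant depending only on $N, s, p$. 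The main technical obstacle is keeping all the constants uniform in the location of $y$: the raw pointwise values of $G_x$ on $B_{100r}(y)$ can become arbitrarily small when $y$ is far from $x$, but the \emph{multiplicative oscillation} remains bounded thanks to the logarithmic gradient estimate (\ref{Ggrad2}), which is precisely what makes the doubling and Poincar\'e constants of $\meas_{G_x}$ depend only on $N$ and $s$.
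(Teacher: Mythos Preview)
Your proposal is correct and is precisely the high-level justification the paper itself gives in the sentence preceding the proposition and in the footnote: reduce to the standard weak Harnack inequality on the PI space $(B_{100r}(y),\dist,\meas_{G_x})$ via \cite[Theorem 8.4]{BjornBjorn}, and pass back to $\meas$ using the bounded multiplicative oscillation of $G_x$ coming from (\ref{Ggrad2}). One small point worth making explicit in your write-up: to get the ratio bound $G_x(z)/G_x(z')\le L(N,s)$ you should compare each point to the center $y$ (the geodesic from $y$ to any $z\in B_{100r}(y)$ stays inside $B_{100r}(y)\subset X\setminus B_s(x)$), rather than connect $z$ to $z'$ directly, since a geodesic between two arbitrary points of $B_{100r}(y)$ need not remain in $X\setminus B_s(x)$.

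The paper's detailed proof takes a different, self-contained route: it runs the Moser iteration directly with respect to the original measure $\meas$, testing (\ref{0s9s8ssbsbsnsn}) with $v=\eta^2\bar u^{\beta}\hat u$ and absorbing the drift contribution $2\eta^2\bar u^{\beta}\hat u\langle\nabla\log G_x,\nabla u\rangle$ via Young's inequality and the pointwise bound $|\nabla\log G_x|\le C(N)/s$. This yields the same Caccioppoli-type estimate one would get in the undrifted case up to constants depending on $N,s$, after which the Sobolev inequality (\ref{Sobolevineq}) and the usual iteration close the argument. Your black-box approach is shorter and more conceptual; the paper's hands-on iteration avoids invoking the general PI machinery and keeps the appendix self-contained, which is its stated purpose.
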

	%\begin{proof}
	%In the case when $u$ is subharmonic, the conclusion comes from \cite[Theorem 8.4]{BjornBjorn}. Moreover, in the case when $u$ is $\mathscr L$-superharmonic, by (\ref{eq:BS.estimate.F}) and (\ref{Ggrad}), applying (the proof of) the same result, \cite[Theorem 8.4]{BjornBjorn}, to an (incomplete) metric measure space $(B_{100r}(y), \dist, \meas_{G_x})$ completes the proof. %(or we can apply the regularity theory on quasi-subrminimizers, see {\color{red}reference}). 
	%\end{proof}

	%Then we can establish the weak Harnack inequality for subharmonic functions of the operator $\mathscr L$.
	%\begin{proposition}[Weak Harnack inequality for $\mathscr L$-subharmonic functions]\label{lem:weak.harnack.sub}
	%Fix a ball $B_r(y)\subset \Omega$ with $r<1$. If $u\in W^{1,2}_{\loc}(\Omega)$ satisfies
	%\begin{equation}\label{eq:18}
	%\int_\Omega \left(-\<\nabla u,\nabla \phi\>+2\phi \<\nabla \log G_x,\nabla u\>\right)\d\meas\geqslant 0,\ \forall \phi\in W^{1,2}_{0,+}(\Omega),
	%\end{equation}
	%then for any $k\in\R$ and $p>1$, there exists $C=C(N,M,p)>0$ such that
	%\begin{equation}\label{eq:45}
	%\esssup{B_{\frac{r}{2}}(y)}u+k\leqslant C\left(\intav_{B_r(y)}(u+k)_+^p\ \d\meas\right)^\frac 1 p,
	%\end{equation}
	%where $M:=\|\nabla \log G_x\|_{L^\infty(\Omega)}$.
	%\end{proposition}
	\begin{proof}
			First let us assume that $p>2$. For arbitrary $l>0$, let $\hat u=\hat u_k:=(u+k)_+$, and denote by $\bar u=\bar u_{k,l}:=\min\{\hat u_k,l\}$. For $\frac 1 2 r\leqslant r_1<r_2\leqslant r$, take the cutoff function
			\begin{equation}
			\eta(z):=\min\left\{1,\frac{r_2-\dist(z,y)}{r_2-r_1}\right\}_+.
			\end{equation}
			Then we have $0\leqslant \eta\leqslant 1$, $|\nabla \eta|\leqslant \frac 1{r_2-r_1}$ in $B_{r_2}(y)$, $\eta\equiv 1$ in $B_{r_1}(y)$, and $\eta\equiv 0$ outside $B_{r_2}(y)$. For any  $\beta> p-2>0$, define
			\begin{equation}
			v=v_{k,l,\beta}:=\eta^2\bar u^\beta\hat u\in W^{1,2}_{0,+}(B_{100r}(y)).
			\end{equation}
			By direct calculation
			\begin{equation}
			\nabla v=2\eta\bar u^\beta\hat u\nabla \eta+\eta^2\bar u^\beta(\beta\nabla \bar u+\nabla \hat u).
			\end{equation}
			Substitute $\phi=v$ in \eqref{0s9s8ssbsbsnsn},
			\begin{equation}\label{eq:1113}
			\begin{aligned}
			\int_{B_{r_2}(y)}\left(2\eta\bar u^\beta\hat u\<\nabla \eta,\nabla u\>+\eta^2\bar u^\beta(\beta|\nabla \bar u|^2+|\nabla \hat u|^2)
			-
			2\eta^2\bar u^\beta\hat u\<\nabla \log G_x,\nabla u\>\right)\ \d\meas\leqslant 0.
			\end{aligned}
			\end{equation}
			Let $E_k:=\{u\geqslant -k\}$, noting that $\hat u\equiv 0$ outside $E_k$, the integrand above vanishes in $B_{r_2}(y)\setminus E_k$. Using Young's inequality, noticing that fact that $|\nabla \hat u|=|\nabla u|$ in $E_k$,
			\begin{equation}\label{eq:1112}
			\begin{aligned}
			\chi_{E_k}|2\eta\bar u^\beta\hat u\<\nabla \eta,\nabla u\>|\leqslant \chi_{E_k}\left(\frac 1 3\eta^2\bar u^\beta|\nabla \hat u|^2+3 \bar u^\beta\hat u^2|\nabla \eta|^2\right),
			\end{aligned}
			\end{equation}
			%			Note that from the fact that $u\in L^2(B_{100r}(y),\meas)$ it follows that
			%			\begin{equation}
			%			\lim_{l\ra\infty}l^2\meas(E_l)=0,
			%			\end{equation}
			%			thus for any $\delta>0$, we can find $l=l(\delta)$ sufficiently large such that
			%			\begin{equation}
			%			2(l+k)_+^\beta\int_{E_l}\ \eta^2\d\meas\leqslant (l+k)_+^2\meas(E_l)<\delta,
			%			\end{equation} 
			%			and we can rewrite \eqref{eq:23333} as
			%			\begin{equation}
			%			\begin{aligned}
			%			\int_{B_{r_2}(y)\cap D_{l(\delta)}} \left(-2\eta\bar u^\beta\<\nabla u,\nabla \eta\>-\eta^2\bar u^{\beta-1}|\nabla u|^2+2\eta^2\bar u^\beta\<\nabla \log G_x,\nabla u\>\right)\ \d\meas\geqslant -\delta.
			%			\end{aligned}
			%			\end{equation}
			%			Let $\delta\ra 0$ and $l(\delta)\ra\infty$,
			%			\begin{equation}\label{eq:23}
			%			\int_{B_{r_2}(y)} \left(-2\eta\bar u^\beta\<\nabla u,\nabla \eta\>-\eta^2\bar u^{\beta-1}|\nabla u|^2+2\eta^2\bar u^\beta\<\nabla \log G_x,\nabla u\>\right)\ \d\meas\geqslant 0.
			%			\end{equation}
			\begin{equation}\label{eq:25}
			\chi_{E_k}\left|2\eta^2\bar u^\beta\hat u\<\nabla\log G_x,\nabla u\>\right|\leqslant \chi_{E_k}\left(\frac 1 3\eta^2\bar u^{\beta}|\nabla \hat u|^2+3\eta^2\bar u^{\beta}\hat u ^2|\nabla \log G_x|^2\right).
			\end{equation}
			Combining \eqref{eq:1113}, \eqref{eq:1112} and \eqref{eq:25}, it holds that
			\begin{equation}
			\int_{B_{r_2}(y)} \eta^2\bar u^\beta(\beta|\nabla \bar u|^2+|\nabla \hat u|^2)\ \d\meas\leqslant 9\int_{B_{r_2}(y)}\bar u^\beta\hat u^2(|\nabla \eta|^2+\eta^2|\nabla \log G_x|^2)\ \d\meas.
			\end{equation}
			Set $w:=\bar u^\frac\beta 2\hat u$, noting that $\hat u\geqslant \bar u$ and $|\nabla\bar u|$ vanishes in $\{\hat u> l\}$, it is easy to check
			\begin{equation}
			|\nabla w|^2\leqslant \left(\frac\beta 2\bar u^{\frac\beta 2-1}\hat u|\nabla \bar u|+\bar u^\frac{\beta}{2}|\nabla \hat u|\right)^2\leqslant (1+\beta)\bar u^\beta\left(\beta|\nabla \bar u|^2+|\nabla \hat u|^2\right).
			\end{equation}
			Therefore,
			\begin{equation}\label{eq:31}
			\int_{B_{r_2}(y)}\eta^2|\nabla w|^2\ \d\meas\leqslant 9(1+\beta)\int_{B_{r_2}(y)}w^2(|\nabla \eta|^2+\eta^2|\nabla \log G_x|^2)\ \d\meas.
			\end{equation}
			By the Sobolev inequality
			\begin{equation}
			\begin{aligned}
			\left(\int_{B_{r_2}(y)}(\eta w)^{2\xi}\ \d\meas\right)^\frac 1\xi&\leqslant C(N)\frac{r_2^2}{\meas(B_{r_2}(y))^{\frac 2 N}}\int_{B_{r_2}(y)}|\nabla(\eta w)|^2\ \d\meas\\
			&\leqslant C(N)\frac{(1+\beta)r_2^2}{\meas(B_{r_2}(y))^{\frac 2 N}}\int_{B_{r_2}(y)}w^2(|\nabla \eta|^2+\eta^2|\nabla \log G_x|^2)\ \d\meas,
			\end{aligned}
			\end{equation}
			where $\xi=\xi(N):=\frac{N}{N-2}$. Recall that $|\nabla \log G_x|^2$ is bounded above by $C(N, s)$, by the choice of $\eta$,
			\begin{equation}\label{eq:1114}
			\|w\|_{L^{2\xi}(B_{r_1}(y),\meas)}\leqslant C(N,s)\frac{(1+\beta)^\frac 1 2}{\meas(B_{r_2}(y))^{\frac 1 N}}\frac{r_2}{r_2-r_1}\| w\|_{L^2(B_{r_2}(y),\meas)}
			\end{equation}
			For any $\gamma>2$ and $t\in[\frac 1 2r,r]$, set the quantity
			\begin{equation}
			A(\gamma,t):=\left(\int_{B_r(y)}\bar u^{\gamma-2}\hat u^2\ \d\meas\right)^\frac 1 \gamma.
			\end{equation}
			Then \eqref{eq:1114} yields that
			\begin{equation}\label{eq:1115}
			\begin{aligned}
			A(\xi(\beta+2),r_1)=&\left(\int_{B_{r_1}(y)}\bar u^{\xi(\beta+2)-2}\hat u^2\ \d\meas\right)^\frac 1 {\xi(\beta+2)}\\
			\leqslant&
			\left(\int_{B_{r_1}(y)}w^{2\xi}\ \d\meas\right)^\frac 1 {\xi(\beta+2)}\\
			\leqslant&
			\left(C(N,s)\frac{(1+\beta)^\frac 1 2}{\meas(B_{r}(y))^{\frac 1 N}}\frac{r}{r_2-r_1}\right)^\frac{2}{\beta+2}A(\beta+2,r_2),
			\end{aligned}
			\end{equation}
			where we used the following fact from Bishop-Gromov inequality:
			\begin{equation}
			\frac{r_2}{\meas(B_{r_2}(y))^\frac 1 N}\leqslant \frac{r}{\meas(B_{r}(y))^\frac 1 N}.
			\end{equation}
			Let $\beta_n=p\xi^n-2>0$, $\gamma_n:=\beta_n+2>2$, $t_n:=(2^{-1}+2^{-n-1})r$, iterating \eqref{eq:1115},
			\begin{equation}
			A(\gamma_n,t_n)\leqslant\Prod{i=1}{n}\left(2^{i+1}C(N,s)(\gamma_i-1)^\frac 1{2}\right)^\frac 2{\gamma_i} \meas(B_r(y))^{-\frac {2(\xi-\xi^{-n})} {Np(1-\xi)}}A(p, r),
			\end{equation}
			Let $n\ra \infty$, recalling by definition \begin{equation} \|\bar u\|_{L^\gamma(B_t(y),\meas)}\leqslant A(\gamma,t)\leqslant \|\hat u\|_{L^\gamma(B_t(y),\meas)},\end{equation}
			it holds that
			\begin{equation}
			\|\bar u_{k,l}\|_{L^\infty(B_\frac{r}{2}(t),\meas)}\leqslant C(N,s,p)\meas(B_r(y))^{-\frac 1 p}\|\hat u_k\|_{L^p(B_{r}(y),\meas)}.
			\end{equation}
			Since the right-handed side of the inequality is independent on $l$, we may let $l\ra\infty$, namely
			\begin{equation}
			\|\hat u_{k}\|_{L^\infty(B_\frac{r}{2}(t),\meas)}\leqslant C(N,s,p)\meas(B_r(y))^{-\frac 1 p}\|\hat u_k\|_{L^p(B_{r}(y),\meas)},
			\end{equation}
			which finishes the proof under the assumption $p>2$. 
			
			The case where $1<p\leqslant 2$ can be treated via a similar iteration process provided $\hat{u} \in L^\infty(B_r(y),\meas)$. Indeed, for any $\beta>p-1>0$, we can choose a simpler test function without truncation:
			\begin{equation}
			v_{k,\beta}:=\eta^2(u+k)_+^\beta\in W^{1,2}_{0,+}(B_{100r}(y)),
			\end{equation} 
			which can further simplify the proof. Here we omit the details.

	\end{proof}
	Next let us discuss about $\mathscr L$-superharmonic functions. The corresponding results for PI spaces can be found in \cite[Theorem 8.10]{BjornBjorn}. In order to establish a weak Harnack inequality for $\mathscr L$-superharmonic functions, we first recall the definition of bounded mean oscillating (BMO) functions on metric measure spaces and John-Nirenberg's lemma.
	\begin{definition}[BMO]\label{bmo}
		Let $(X,\dist,\meas)$ be a measure-doubling metric measure space, namely there exists a constant $C_d>0$ such that
		\begin{equation}\label{eq:101}
		\meas(B_{2r}(x))\leqslant C_d\meas(B_r(x)), \quad \forall x\in X,\ \forall r>0.
		\end{equation}
		For any open subset $\Omega\subset X$ and any function $f\in L^1_{\mathrm{loc}}(\Omega,\meas)$, we set
		\begin{equation}
		\|f\|_{\mathrm{BMO}(\Omega)}:=\sup_B\intav_B\left|f-f_B\right|\ \d\meas,
		\end{equation}
		where
		%\begin{equation} 
		%f_B:=\intav_B f\ \d\meas
		%\end{equation}
		%denotes the integral average of $f$ over $B$ and 
		the supremum is taken over all open balls $B$ with $B\subset \Omega$. The class of BMO functions on $\Omega$ is the collection
		\begin{equation}
		\mathrm{BMO}(\Omega):=\left\{f\mathop\big|\|f\|_{\mathrm{BMO}(\Omega)}<\infty\right\}.
		\end{equation}
	\end{definition}
	\begin{remark}Note that the doubling assumption is naturally fulfilled for $\RCD(0,N)$ spaces because of the Bishop-Gromov inequality. In particular, in this case, the doubling constant $C_d$ is only dependent on $N$. \end{remark}
	John-Nirenberg's lemma we refer is stated as follows, see \cite[Theorem 3.20]{BjornBjorn}.
	\begin{theorem}[John-Nirenberg's lemma]\label{johnir} Let $(X, \dist, \meas)$ be as in Definition \ref{bmo} and let  $f \in \mathrm{BMO}(B_{5r}(x))$ for some $x\in X$ and $r>0$. Then for any $0<\eps\leqslant A:=\log 2/(4C_d^{15})$, 
		\begin{equation}
		\intav_{B_r(x)}\exp\left(\frac{\eps|f-f_{B_r(x)}|}{\|f\|_{\mathrm{BMO}(B_{5r}(x))}}\right)\ \d\meas\leqslant \frac{A+\eps}{A-\eps}.
		\end{equation}
	\end{theorem}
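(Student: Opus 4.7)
The plan is to prove this by a Calder\'on--Zygmund stopping time / good-$\lambda$ argument, producing an exponential decay for the distribution function of $f - f_{B_r(x)}$ and then integrating.

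After normalizing so that $\|f\|_{\mathrm{BMO}(B_{5r}(x))} = 1$ and replacing $f$ by $f - f_{B_r(x)}$, it suffices to show that for all $\lambda$ large and all sub-balls $B \subset B_r(x)$,
\begin{equation*}
\meas\bigl(\{y \in B : |f(y) - f_B| > \lambda\}\bigr) \leqslant C_1 e^{-c\lambda}\meas(B),
\end{equation*}
with $c$ essentially equal to $A = \log 2/(4C_d^{15})$. To produce this, first I would fix a large threshold $\alpha$ (of order $C_d$ times the BMO norm) and for each ball $B$ run a Vitali/Calder\'on--Zygmund stopping argument: cover $B$ by small balls, and select a maximal disjoint sub-family $\{B_i\}$ such that $\intav_{B_i} |f - f_B| \d\meas > \alpha$ while the average on each ancestor remains $\leqslant \alpha$. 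Standard Vitali covering in doubling spaces (this is where powers of $C_d$ enter, since one must enlarge balls by fixed factors and sum volumes) gives $|f - f_B| \leqslant \alpha$ outside $\bigcup_i B_i$, together with the packing bound $\sum_i \meas(B_i) \leqslant \frac{1}{\alpha}\int_B |f - f_B| \d\meas \leqslant \meas(B)/\alpha$ by the BMO hypothesis.

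The key recursive step is then: on each selected $B_i$, the average $|f_{B_i} - f_B|$ is controlled by $C_d \alpha$ (since the parent ball had average $\leqslant \alpha$), so
\begin{equation*}
\meas\bigl(\{|f - f_B| > \lambda + C_d\alpha\} \cap B\bigr) \leqslant \sum_i \meas\bigl(\{|f - f_{B_i}| > \lambda\} \cap B_i\bigr).
\end{equation*}
Choosing $\alpha$ so that $1/\alpha = 1/2$ (i.e.\ tuning it to $C_d^{15}$-size constants including the enlargements needed for Vitali) and iterating $n$ times on the same distribution function produces $\meas(\{|f - f_B| > nC_d\alpha\} \cap B) \leqslant 2^{-n}\meas(B)$, which is the sought exponential decay with rate $c = \log 2 / (C_d\alpha) = A$.

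Integrating the tail bound: for $\eps < A$,
\begin{equation*}
\intav_{B_r(x)} e^{\eps|f - f_{B_r(x)}|}\d\meas = 1 + \eps\int_0^\infty e^{\eps\lambda}\frac{\meas(\{|f-f_{B_r(x)}|>\lambda\}\cap B_r(x))}{\meas(B_r(x))}\d\lambda \leqslant 1 + \frac{2\eps}{A - \eps} = \frac{A+\eps}{A - \eps},
\end{equation*}
after carefully absorbing the initial constant $C_1$ (which is why one needs the factor $4$ in the definition of $A$, giving slack for the first iteration step). The main obstacle I anticipate is bookkeeping the precise constant $C_d^{15}$: tracking each enlargement in the Vitali covering lemma, the passage from $|f_{B_i} - f_B|$ to $C_d\alpha$ via chaining through parents, and the final integration, and showing that $4C_d^{15}$ suffices (rather than some smaller power), is purely a combinatorial accounting exercise but easy to get wrong.
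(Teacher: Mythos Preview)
The paper does not actually prove this theorem: it simply quotes the statement and cites \cite[Theorem 3.20]{BjornBjorn} for the proof. So there is nothing to compare your argument against here.

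That said, your outline is the standard Calder\'on--Zygmund stopping-time proof of John--Nirenberg in doubling spaces, and it is the approach used in the reference the paper cites. The one place where your sketch is genuinely incomplete is the claim that the constants work out to exactly $A=\log 2/(4C_d^{15})$ and that the tail integral yields precisely $(A+\eps)/(A-\eps)$: you correctly flag this as bookkeeping, but in your final display you silently set $C_1=2$ without justification, and the ``slack for the first iteration step'' is asserted rather than verified. If you were writing a self-contained proof you would need to track these constants honestly; for the purposes of this paper, however, the exact value of $A$ is irrelevant (only its dependence on $C_d$, hence on $N$, matters in the application in Proposition~\ref{prop:weak.harnack.super}), so the authors were right to simply cite the result.
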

	We are now in a position to prove a weak Harnack inequality (see \cite[Theorem 8.10]{BjornBjorn}).
	\begin{proposition}[Weak Harnack inequality for $\mathscr L$-superharmonic functions]\label{prop:weak.harnack.super}
		Let $u$ be a non-negatively valued $\mathscr L$-superharmonic function on a ball $B_{100r}(y)$ for some $r \leqslant 1$ with $B_{100r}(y) \subset X\setminus B_s(x)$ for some $s \leqslant 1$. 
		%\begin{equation}
		%\label{eq:32}
		%\int_\Omega \left(-\<\nabla u,\nabla \phi\>+2\phi \<\nabla \log G_x,\nabla u\>\right)\ \d\meas\leqslant 0,\ \forall \phi\in W^{1,2}_{0,+}(\Omega),
		%\end{equation}
		Then there exist  $p=p(N, s)>0$ and $C=C(N, s)>1$ such that %for every ball $B_r(y)$ with $r<1$ such that $B_{100r}(y)\subset\Omega$,
		\begin{equation}\label{eq:46}
		C\essinf{B_r(y)}u\geqslant\left(\intav_{B_{2r}(y)}u^p \d\meas\right)^\frac 1 p.
		\end{equation}
		%Let $u$ be a superharmonic (or $\mathscr L$-superharmonic, respectively) function on a ball $B_{100r}(y)$ for some $r {\color{red}\leqslant 1}$ (or with $B_{100r}(y) \subset X\setminus B_s(x)$ for some $s \leqslant 1$, respectively). 
		%Assume $u\in W^{1,2}_{loc,+}(\Omega)$ satisfies
		%\begin{equation}
		%\label{eq:32}
		%\int_\Omega \left(-\<\nabla u,\nabla \phi\>+2\phi \<\nabla \log G_x,\nabla u\>\right)\ \d\meas\leqslant 0,\ \forall \phi\in W^{1,2}_{0,+}(\Omega),
		%\end{equation}
		%Then there exist $p=p(N,{\color{red}s})>0$ and $C=C(N,{\color{red}s})>1$ such that %for every ball $B_r(y)$ with $r<1$ such that $B_{100r}(y)\subset\Omega$,
		%\begin{equation}\label{eq:46}
		%C\essinf{B_r(y)}u\geqslant\left(\intav_{B_{2r}(y)}u^p \d\meas\right)^\frac 1 p.
		%\end{equation}
	\end{proposition}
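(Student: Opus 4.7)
The plan is to follow the classical Moser iteration scheme for the weak Harnack inequality, adapted to the drifted operator $\mathscr{L}$, in three steps, mirroring the structure of \cite[Theorem 8.10]{BjornBjorn}. Throughout, we may replace $u$ by $u+\delta$ for small $\delta>0$ and let $\delta\to 0^+$ at the end, so we can assume $u$ is strictly positive. Then $\log u$ is well-defined, and we exploit the identity (valid in the weak/measure-valued sense when $u$ is $\mathscr{L}$-superharmonic)
\begin{equation*}
\mathscr{L}(\log u)=\frac{\mathscr{L}u}{u}-\frac{|\nabla u|^2}{u^2}\le -|\nabla \log u|^2.
\end{equation*}

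First I would carry out a reverse Moser iteration on the negative side. Fix $\beta>0$ and plug the test function $\phi=\eta^2 u^{-\beta-1}$ (with $\eta$ a cutoff as in the proof of Proposition \ref{lem:weak.harnack.sub22}) into (\ref{0s9s8ssbsbsnsn}); the drift term is absorbed by Young's inequality using the pointwise bound $|\nabla \log G_x|\le C(N)/s$ from Proposition \ref{prpogreengr}. Combined with the Sobolev inequality (\ref{Sobolevineq}), iterating in the usual Moser manner over nested balls $r \le \rho_n \le 2r$ yields, for every $q>0$,
\begin{equation*}
\Bigl(\intav_{B_{2r}(y)} u^{-q}\,\d\meas\Bigr)^{-1/q}\le C(N,s,q)\,\essinf_{B_r(y)} u.
\end{equation*}

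Second, I would show that $\log u\in \mathrm{BMO}(B_{4r}(y))$ with norm controlled by $C(N,s)$. Testing the $\mathscr{L}$-superharmonicity against $\phi=\eta^2 u^{-1}$ and using the identity above for $\log u$ together with $|\nabla\log G_x|\le C(N,s)$ produces a Caccioppoli-type inequality
\begin{equation*}
\int_{B_{2\rho}(z)} \eta^2|\nabla\log u|^2\,\d\meas \le C(N,s)\int_{B_{2\rho}(z)} (|\nabla\eta|^2+\eta^2)\,\d\meas\le C(N,s)\meas(B_{2\rho}(z)),
\end{equation*}
valid for every ball $B_{2\rho}(z)\subset B_{4r}(y)$. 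Plugging this into the Poincaré inequality (\ref{poincareineq}) applied to $\log u$ gives the BMO bound.

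Third, I would apply John--Nirenberg's lemma, Theorem \ref{johnir}, to $\log u$ on $B_r(y)\subset B_{5\cdot(2r/5)}(y)$: there exists $p_0=p_0(N,s)>0$ such that
\begin{equation*}
\intav_{B_{2r}(y)} e^{p_0(\log u-c)}\,\d\meas \cdot \intav_{B_{2r}(y)} e^{-p_0(\log u-c)}\,\d\meas \le C(N,s),
\end{equation*}
where $c=(\log u)_{B_{2r}(y)}$. This rewrites as $\bigl(\intav_{B_{2r}(y)} u^{p_0}\bigr)\bigl(\intav_{B_{2r}(y)} u^{-p_0}\bigr)\le C(N,s)$, hence
\begin{equation*}
\Bigl(\intav_{B_{2r}(y)} u^{p_0}\Bigr)^{1/p_0} \le C(N,s)\Bigl(\intav_{B_{2r}(y)} u^{-p_0}\Bigr)^{-1/p_0}.
\end{equation*}
Chaining this with the conclusion of Step 1 applied to $q=p_0$ (after routine scaling of the radii, using the doubling property to pass from $B_{2r}$ to $B_r$ on the infimum side) yields (\ref{eq:46}) with $p=p_0(N,s)$ and $C=C(N,s)$.

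The main technical obstacle will be Step 2, the BMO estimate for $\log u$: one must justify the test function $\eta^2 u^{-1}$ rigorously in the $W^{1,2}_{\mathrm{loc}}$ setting (a standard truncation $\eta^2(u+\delta)^{-1}$ followed by $\delta\to 0^+$ does it) and carefully handle the drift $2\langle\nabla\log G_x,\nabla u\rangle$ so that the bad term is absorbed into $|\nabla\log u|^2$ via Young's inequality, leaving a remainder controlled purely by $C(N,s)\meas$. Once this is in place, Steps 1 and 3 are direct adaptations of the smooth PDE arguments, leveraging the same Sobolev/Poincaré machinery already used in Proposition \ref{lem:weak.harnack.sub22}.
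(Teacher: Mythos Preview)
Your three-step outline (negative-power estimate, BMO bound for $\log u$, John--Nirenberg bridge) is precisely the skeleton the paper uses, and the argument is correct. One difference worth flagging: for Step~1 the paper does not run a fresh Moser iteration. Instead it observes that if $u$ is $\mathscr L$-superharmonic then any piecewise-linear convex function $\psi(t)=\max_{1\le i\le k}(a_i t+b_i)$ with $a_i<0$ yields an $\mathscr L$-subharmonic $\psi\circ u$; choosing $\psi$ to approximate $1/t$ and applying the already-proved Proposition~\ref{lem:weak.harnack.sub22} to $\psi\circ(u+\varepsilon)$ gives the estimate $(\intav_{B_{2r}(y)}u^{-p})^{-1/p}\le C\,\essinf_{B_r(y)}u$ immediately. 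Your direct iteration with test function $\eta^2 u^{-\beta-1}$ is equally valid and standard, just longer; the paper's trick buys economy by recycling the subharmonic result.

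One minor slip in your Step~2 display: the bound $\int(|\nabla\eta|^2+\eta^2)\,\d\meas\le C(N,s)\meas(B_{2\rho}(z))$ is off by a factor of $\rho^{-2}$ coming from $|\nabla\eta|$. The correct local Caccioppoli reads $\intav_{B_{2\rho}(z)}|\nabla\log u|^2\,\d\meas\le C(N,s)\rho^{-2}$, which after multiplying by the Poincar\'e factor $\rho$ gives the uniform BMO bound exactly as you intend, so this does not affect the conclusion.
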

	\begin{proof}
		%As in the proof of Proposition \ref{lem:weak.harnack.sub22}, we discuss only on $\mathscr L$-superharmonic functions.
		First let us assume $u$ is bounded away from $0$. For any fixed $\eps>0$, there exists a piecewise linear function $\psi$ in the form
		\begin{equation}
		\psi(t):=\max_{1\leqslant i\leqslant k}(a_it+b_i),
		\end{equation}
		where $a_i<0$, such that 
		\begin{equation}
		\frac 1 t\leqslant \psi(t)\leqslant \frac 1 t+\eps,\quad \forall t>\eps.
		\end{equation}
		Applying Proposition \ref{lem:weak.harnack.sub22} to $\psi\circ (u+\eps)$, we obtain that for any $p>0$, 
		\begin{equation}
		\begin{aligned}
		&\frac 1{\essinf{B_r(y)}u+\eps}\leqslant \esssup{B_r(y)}\psi\circ(u+\eps)\leqslant C(N,s,p)\left(\intav_{B_{2r}(y)}(\psi\circ(u+\eps))^p\ \d\meas\right)^\frac 1 p\leqslant\\
		&C(N,s,p)\left(\intav_{B_{2r}(y)}\left(\frac 1{u+\eps}+\eps\right)^p\ \d\meas\right)^\frac 1 p\leqslant C(N,s,p)\left(\intav_{B_{2r}(y)}\left(\frac 1{u}+\eps\right)^p\ \d\meas\right)^\frac 1 p.
		\end{aligned}
		\end{equation}
		Letting $\eps\ra 0$, we obtain that 
		\begin{equation}\label{eq:102}
		\begin{aligned}
		\essinf{B_r(y)}u
		&\geqslant C(N,s,p)\left(\intav_{B_{2r}(y)}u^{-p}\ \d\meas\right)^{-\frac 1 p}\\
		&=C(N,s,p)\left(\intav_{B_{2r}(y)}u^{-p}\ \d\meas\intav_{B_{2r}(y)} u^p\ \d\meas\right)^{-\frac 1 p}\left(\intav_{B_{2r}(y)}u^p\ \d\meas\right)^\frac 1 p.
		\end{aligned}
		\end{equation}
		Now it suffices to show that there exists some $p=p(N,s)>0$ such that 
		\begin{equation}
		\intav_{B_{2r}(y)}u^{-p}\ \d\meas\intav_{B_{2r}(y)} u^p\ \d\meas\leqslant C(N,s).
		\end{equation}
		%For any $u$ satisfying \eqref{eq:32}, 
		On the other hand, recalling (\ref{0s9s8ssbsbsnsn}), we can establish an analog of \eqref{eq:31} similarly as in the proof of Proposition \ref{lem:weak.harnack.sub22} for $w:=\log u$ treating the test function $v:=\eta^2$
		\begin{equation}
		\int_{B_{100r}(y)} \eta^2|\nabla w|^2\ \d\meas\leqslant C(N)\int_{B_{100r}(y)} \left(|\nabla \eta|^2+\eta^2|\nabla \log G_x|^2\right)\ \d\meas,
		\end{equation}
		where $\eta$ is any cut-off function with $\supp \eta \subset B_{100r}(y)$. %chosen as in the beginning of the proof of Proposition \ref{lem:weak.harnack.sub22}. 
		Thus choosing $\eta$ with $\eta \equiv 1$ on $B_{50r}(y)$, we obtain
		\begin{equation}
		\intav_{B_{50r}(y)}|\nabla w|^2\ \d\meas\leqslant \frac{ C(N, s)}{r^2}.
		\end{equation}
		Thus using the Poincar\'e inequality (\ref{poincareineq}), for any $z\in B_{10r}(y)$ and $r'<20r$, we have
		\begin{equation}
		\intav_{B_{r'}(z)}\left|w-\intav_{B_{r'}(z)}w\ \d\meas\right|\ \d\meas\leqslant C(N)r\left(\intav_{B_{r'}(z)}|\nabla w|^2\ \d\meas\right)^\frac 1  2\leqslant C(N,s),
		\end{equation}
		which implies
		\begin{equation}\label{eq:103}
		\|w\|_{\mathrm{BMO}(B_{10r}(x))}\leqslant C(N,s).
		\end{equation}
		On the other hand, for this $C(N, s)$ in (\ref{eq:103}),
		applying John-Nirenberg's lemma, Theorem \ref{johnir},  to $w$ with $\eps:=A \cdot C(N, s)/2$ (recall $A$ is taken as a dimensional constant in this setting), we have
		%\footnote{{\color{blue}It might be better to explain BMO shortly because this appendix should be written in self-contained way.}} 
		%\footnote{{\color{blue}It might be better to write the statement explicitly because of the same reason as in the previous footnote.}} to see that there exists $p=p(N,M)$ such that
		\begin{equation}
		\intav_{B_{2r}(y)}e^{\eps|w-w_0|}\ \d\meas\leqslant 3,
		\end{equation}
		where 
		\begin{equation}
		w_0:=\intav_{B_{2r}(y)}w\ \d\meas.
		\end{equation}
		Thus 
		\begin{equation}
		\begin{aligned}
		\intav_{B_{2r}(y)}u^{-\eps}\ \d\meas\intav_{B_{2r}(y)} u^\eps\ \d\meas
		=&\intav_{B_{2r}(y)}e^{\eps w}\ \d\meas\intav_{B_{2r}(y)}e^{-\eps w}\ \d\meas\\
		=&\intav_{B_{2r}(y)}e^{\eps(w-w_{0})}\ \d\meas\intav_{B_{2r}(y)}e^{\eps(w_{0}-w)}\ \d\meas\\
		\leqslant &\left(\intav_{B_{2r}(y)}e^{\eps|w-w_{0}|}\ \d\meas\right)^2\leqslant 9.
		\end{aligned}
		\end{equation}
		By \eqref{eq:102},
		\begin{equation}
		\essinf{B_r(y)}u\geqslant C(N,s)\left(\intav_{B_{2r}(y)}u^\eps\ \d\meas\right)^\frac 1 \eps. 
		\end{equation}
		Therefore, recalling our choice of $\eps$ and \eqref{eq:103}, we have the desired inequality. %comes from H\"older's inequality.
		
		Finally let us assume $u$ is not bounded away from $0$. We can consider $u_\delta=u+\delta$ $(\delta>0)$ instead, and then let $\delta\downarrow 0$.
	\end{proof}

	We are now in a position to introduce a regularity result on ($\mathscr L$-)superharmonic functions. See \cite[Subsection 8.5]{BjornBjorn}, in particular, Proposition 8.24 therein.%Since this argument is also well-known, we here refer just \cite[Section 8.5]{BjornBjorn} for the proof.
	%{\color{red}However we give a proof for reader's convenience in Appendix \ref{sec:3}. }
	\begin{proposition}\label{lem:usc}
		Let $u$ be as in Proposition \ref{lem:weak.harnack.sub22}. Assume that $u$ is locally bounded. 
		Then there exists a unique representative $\bar u$ of $u$ such that 
		every $z\in B_{100r}(y)$ is a Lebesgue point of $\bar u$.
		Moreover $\bar u$ is upper semi-continuous satisfying that  
		for any $z\in B_{100r}(y)$
			\begin{equation}\label{eq:42}
			\bar u(z)=\limsup_{w\ra z} \bar u(w).
			\end{equation}
	Indeed, such a representative can be realized by
	\begin{equation}\label{loerrep}
	\bar u(z):=\lim_{\rho\ra 0}\esssup{w\in B_\rho(z)}u(w).
	\end{equation}
	\end{proposition}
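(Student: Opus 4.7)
The plan is to define the candidate representative by
$$\bar u(z) := \lim_{\rho \to 0^+} \esssup{w \in B_\rho(z)} u(w), \quad z \in B_{100r}(y),$$
which exists because the essential supremum is monotone non-increasing in $\rho$, and is finite thanks to Proposition \ref{lem:weak.harnack.sub22} together with the local boundedness of $u$. Upper semicontinuity is then formal: for $w \in B_{\rho/2}(z)$ one has $B_{\rho/2}(w) \subset B_\rho(z)$, hence $\bar u(w) \leq \esssup{B_{\rho/2}(w)} u \leq \esssup{B_\rho(z)} u$, and letting $w \to z$ and then $\rho \to 0$ gives $\limsup_{w \to z} \bar u(w) \leq \bar u(z)$. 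The reverse inequality, and hence (\ref{eq:42}), follows because within every $B_\rho(z)$ there exist points at which $u$ is arbitrarily close to $\esssup{B_\rho(z)} u$, which is in turn close to $\bar u(z)$ for small $\rho$.

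The heart of the argument is to apply the weak Harnack inequality for $\mathscr L$-\emph{superharmonic} functions, Proposition \ref{prop:weak.harnack.super}, to the auxiliary function $v_\varepsilon := \bar u(z) + \varepsilon - u$, for each fixed $z$ and each $\varepsilon > 0$. The inequality $u \leq \bar u$ $\meas$-almost everywhere is a direct consequence of Lebesgue's differentiation theorem: at $\meas$-a.e.\ $w$, $u(w) = \lim_\rho \intav_{B_\rho(w)} u\,\d\meas \leq \lim_\rho \esssup{B_\rho(w)} u = \bar u(w)$. Combining this with upper semicontinuity of $\bar u$ forces $v_\varepsilon \geq 0$ $\meas$-a.e.\ on a fixed ball $B_\delta(z) \subset B_{100r}(y)$. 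Applying Proposition \ref{prop:weak.harnack.super} on $B_{100\rho}(z) \subset B_\delta(z)$ yields
$$\left( \intav_{B_{2\rho}(z)} v_\varepsilon^p \, \d\meas \right)^{1/p} \leq C(N,s) \essinf{B_\rho(z)} v_\varepsilon = C(N,s)\big(\bar u(z) + \varepsilon - \esssup{B_\rho(z)} u\big),$$
whose right-hand side converges to $C(N,s)\,\varepsilon$ as $\rho \to 0^+$.

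Chebyshev's inequality turns this into a bound on the distribution function of $v_\varepsilon$, i.e.\ on the density of $\{u < \bar u(z) + \varepsilon - t\}$ in $B_{2\rho}(z)$; coupled with the upper bound $u \leq \bar u(z) + \varepsilon$ $\meas$-a.e.\ on $B_\rho(z)$ (valid for small $\rho$ by USC and $u \leq \bar u$ a.e.), a layer-cake integration gives
$$\limsup_{\rho \to 0^+} \intav_{B_{2\rho}(z)} |u - \bar u(z)| \, \d\meas \leq 2\varepsilon + o_\varepsilon(1),\quad o_\varepsilon(1) \to 0\ \text{as}\ \varepsilon \to 0^+.$$
Sending $\varepsilon \to 0^+$ shows every $z \in B_{100r}(y)$ is a Lebesgue point of $u$ with Lebesgue value $\bar u(z)$; in particular $\bar u(z) = u(z)$ for $\meas$-a.e.\ $z$ by Lebesgue differentiation, so $\bar u$ is a representative of $u$, and the Lebesgue point property for $\bar u$ itself then follows. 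Uniqueness is automatic: two upper semicontinuous representatives each having every point as a Lebesgue point must share the same value at every $z$, namely the common Lebesgue-point value of any representative of $u$. The main technical obstacle is the layer-cake step when the exponent $p$ from the weak Harnack is strictly less than $1$, where the tail $\int_\varepsilon^\infty t^{-p}\,\d t$ diverges; this divergence is cut off by the a priori $L^\infty$ bound on $v_\varepsilon$ inherited from local boundedness of $u$, producing an additional $\varepsilon^p$-factor that vanishes with $\varepsilon$.
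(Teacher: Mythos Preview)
Your argument is correct. The overall architecture matches the paper's---define $\bar u$ via the monotone limit of essential suprema, verify upper semicontinuity, then apply the weak Harnack inequality for $\mathscr L$-superharmonic functions to $v_\varepsilon=\bar u(z)+\varepsilon-u$---but two steps are executed differently.

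First, the paper establishes $\bar u=u$ $\meas$-a.e.\ \emph{before} the Lebesgue-point property, by applying the weak Harnack inequality for $\mathscr L$-\emph{sub}harmonic functions (Proposition~\ref{lem:weak.harnack.sub22}) to $u-u(z)$ at each Lebesgue point $z$ of $u$: this gives $\esssup{B_{\rho/2}(z)}(u-u(z))\leqslant C\bigl(\intav_{B_\rho(z)}|u-u(z)|^q\,\d\meas\bigr)^{1/q}\to 0$, forcing $\bar u(z)\leqslant u(z)$. You bypass this and instead derive $\bar u=u$ a.e.\ \emph{a posteriori} from the Lebesgue-point property at every $z$; your route therefore uses only the superharmonic Harnack, one tool fewer.

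Second, for the passage from the $L^p$ bound on $v_\varepsilon$ (with $p<1$) to an $L^1$ bound, the paper is more economical: it first normalizes so that $0<\bar u<1$ locally, whence $0<v_\varepsilon<1$, and then uses the elementary pointwise inequality $v_\varepsilon\leqslant v_\varepsilon^{\,p}$ to write $\intav v_\varepsilon\leqslant\intav v_\varepsilon^{\,p}\leqslant C(\inf v_\varepsilon)^p\leqslant C\varepsilon^p$ in one line. Your Chebyshev/layer-cake argument with the $L^\infty$ cutoff reaches the same conclusion but is heavier machinery; the normalization trick sidesteps it entirely.
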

	%Note that Theorem \ref{upp reg} is justified when applied $u=|\nabla \bist_x|^2G_x$ because of Proposition \ref{prop:subha} and the continuity of $G_x$. 
	
	%The space $W^{1,2}_0(X,\dist,\meas)$ is defined to be the $W^{1,2}$-closure of $\Lip_c(X,\dist)$.
	
	%\rev{Denote by 
	%\begin{equation}\label{testc}
	%\Test_c(X):=\left\{\phi\in \Lip_c(X,\dist)\cap D(\Delta)\mathop\big|\Delta\phi\in W^{1,2}(X,\dist,\meas)\cap L^\infty(X,\meas)\right\},
	%\end{equation} 
	%which is dense in $W^{1,2}_0(X,\dist,\meas)$, see for instance \cite{GP2}.}
	
	\begin{proof}
		%We discuss only the case when $u$ is $\mathscr L$-subharmonic because the other case is similar. The uniqueness is trivial.
		Let $\bar u$ be as in (\ref{loerrep}).
	Firstly, since it is easily checked that the set $\{z\mathop| \bar u(z)<a\}$ is open for any $a \in \mathbb{R}$, $\bar u$ is upper semicontinuous. 
		
		Secondly, let us check $\bar u= u$ $\meas$-a.e. 
		Take a Lebesgue point $z\in B_{100r}(y)$ of $u$. Since $u$ is locally bounded, we have
		\begin{equation}
		\lim_{\rho\ra 0}\intav_{B_\rho(z)}\left|u-u(z)\right|^q\ \d\meas =0, \quad \forall q>0.
		\end{equation}
		For any $\eps>0$, there exists $r_0>0$ such that for all $0<\rho<r_0$, $B_\rho(z)\subset B_{100r}(y)$ and
		\begin{equation}
		\left(\intav_{B_\rho(z)}\left|u-u(z)\right|^p\ \d\meas\right)^{1/p}<\eps,
		\end{equation}
		where $p$ is as in Proposition \ref{lem:weak.harnack.sub22}.
		Applying Proposition \ref{lem:weak.harnack.sub22} proves
		\begin{equation}
		\bar u(z)-u(z)\leqslant\esssup{w\in B_{\frac \rho 2}(z)}\left(u(w)-u(z)\right)\leqslant C\left(\intav_{B_\rho(z)}\left|u-u(z)\right|^p\ \d\meas \right)^\frac 1 p\leqslant C\eps.
		\end{equation}
		Thus $\bar u(z)\leqslant u(z)$. On the other hand
		\begin{equation}
		\bar u(z)=\lim_{\rho\ra 0}\esssup{w\in B_\rho(z)}u(w)\geqslant \lim_{\rho\ra 0}\intav_{B_\rho(z)}u\ \d\meas=u(z).
		\end{equation}
		We obtain that $\bar u(z)=u(z)$. Thus the Lebesgue differentiation theorem allows us to conclude $\bar u=u$ for $\meas$-a.e. Moreover, observe that
		\begin{equation}
		\bar u(z)\geqslant \limsup_{w\ra z}\bar u(w)\geqslant \lim_{\rho\ra 0}\esssup{w\in B_\rho(z)} \bar u(w)=\bar u(z),
		\end{equation}
		therefore we obtain \eqref{eq:42}. 
		
		Thirdly, let us show that every point in the domain is a Lebesgue point of $\bar u$. Without loss of generality we may assume $0<\bar u<1$ in $B_{100r}(y)$ since $u$ is locally bounded. Fix any $z\in B_{100r}(y)$. For any $\eps>0$ with $u(z)<1-\eps$, by the upper semicontinuity, there exists $\rho>0$ sufficiently small such that $0<u(w)<u(z)+\eps<1$ for any $w\in B_\rho(z)$. Let $v:=-u+u(z)+\eps$, then $0<v<1$ in $B_\rho(z)$. Applying the weak Harnack inequality \eqref{eq:46} to $v$, there exist $p=p(N,s)>0$ and $C=C(N,s)>0$ such that 
		\begin{equation}\label{eq:49}
		\begin{aligned}
		\intav_{B_\rho(z)}|\bar u&-\bar u(z)|\ \d\meas=
		\intav_{B_\rho(z)} |v-v(z)|\ \d\meas\leqslant
		\eps+\intav_{B_\rho (z)}v^{1-p}v^p\ \d\meas\\&\leqslant
		\eps+\intav_{B_\rho(z)}v^p\ \d\meas\leqslant \eps+C\left(\inf_{B_\rho(z)}v\right)^p\leqslant\eps+C\eps^p.
		\end{aligned}
		\end{equation}
		Letting $\rho\ra 0$,  we conclude because $\eps$ is arbitrary.
		
		Finally, the uniqueness is obvious because every point in the domain is a Lebesgue point.
		%		Fix any $y\in X$. Without loss of generality we may assume $0<\bar u<1$ since $u$ is locally bounded. For any $\eps>0$ such that $\bar u(y)<1-\eps$, by the upper semi-continuity, there is $r$ sufficiently small such that $0<\bar u(z)<\bar u(y)+\eps<1$ for any $z\in B_r(y)$. Let $v(z)=-\bar u(z)+\bar u(y)+\eps$, then $0<v<1$ in $B_r(y)$. Apply the weak Harnack inequality \eqref{eq:46} to $v$, there is $p=p(N,s)>1$ and $C=C(N,s)>0$ such that 
		%		\begin{equation}\label{eq:49}
		%		\begin{aligned}
		%		\intav_{B_r(y)}|u-u(y)|\ \d\meas=
		%		\intav_{B_r(y)} |v-v(y)|\ \d\meas\leqslant
		%		\eps+\intav_{B_r(y)}v^{1-p}v^p\ \d\meas\\\leqslant
		%		\eps+\intav_{B_r(y)}v^p\ \d\meas\leqslant \eps+C\left(\inf_{B_r(y)}v\right)^p\leqslant\eps+C\eps^p\leqslant C(N,M)\eps.
		%		\end{aligned}
		%		\end{equation}
		%		Let $r\ra 0$ and $\eps\ra 0$ we conclude.
	\end{proof}
	
	%\begin{proposition}\label{prop:Leb.points}
	%	Let $\bar u$ be as in is locally bounded, then every $y\in\Omega$ is a Lebesgue point of $u$. 
	%\end{proposition}
	%\begin{proof}
	%	Fix any $z\in B_{100r}(y)$. For any $\eps>0$ such that $u(z)<1-\eps$, by the upper semi-continuity, there is $r$ sufficiently small such that $0<u(z)<u(y)+\eps<1$ for any $z\in B_r(y)$. Let $v(z)=-u(z)+u(y)+\eps$, then $0<v<1$ in $B_r(y)$. Apply the weak Harnack inequality \eqref{eq:46} to $v$, there is $p=p(N,M)>1$ and $C=C(N,M)>0$ such that 
	%	\begin{equation}\label{eq:49}
	%	\begin{aligned}
	%	\intav_{B_r(y)}|u-u(y)|\ \d\meas=
	%	\intav_{B_r(y)} |v-v(y)|\ \d\meas\leqslant
	%	\eps+\intav_{B_r(y)}v^{1-p}v^p\ \d\meas\\\leqslant
	%	\eps+\intav_{B_r(y)}v^p\ \d\meas\leqslant \eps+C\left(\inf_{B_r(y)}v\right)^p\leqslant\eps+C\eps^p\leqslant C(N,M)\eps.
	%	\end{aligned}
	%	\end{equation}
	%	Let $r\ra 0$ and $\eps\ra 0$ we conclude.
	%\end{proof}
	
	In view of this, we always assume that any $\mathscr L$-subharmonic function $u$ is actually the canonical representative as obtained in Proposition \ref{lem:usc}. Therefore, for example, by \eqref{eq:42},  the ``$\mathrm{ess\ sup}$'' in Proposition \ref{lem:weak.harnack.sub22} (``$\mathrm{ess\ inf}$'' in Proposition \ref{prop:weak.harnack.super}, respectively) can be replaced by ``$\sup$'' (``$\inf$'', respectively). Under this convention, finally let us provide the strong maximum principle for $\mathscr L$-subharmonic functions.
	%\begin{remark}
	%The u.s.c.-regularization of a function $u\in W^{1,2}_{\loc}(\Omega)$ satisfying \eqref{eq:18} is indeed unique. However, we do not need the uniqueness in this paper.
	%\end{remark}
	
	\begin{proposition}[Strong maximum principle for $\mathscr L$-subharmonic functions]\label{thm:strong.max.princi.}
		Let $u$ be a $\mathscr L$-subharmonic function on a connected open subset $\Omega$ in $X \setminus \{x\}$. If its supremum in $\Omega$ attains at a point in $\Omega$, then $u$ is constant.
	\end{proposition}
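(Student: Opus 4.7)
The plan is to run the standard open--closed dichotomy that extracts the strong maximum principle from a weak Harnack inequality, now for the drifted operator $\mathscr{L}$ via Proposition \ref{prop:weak.harnack.super}, together with the upper semicontinuous canonical representative of $u$ supplied by Proposition \ref{lem:usc}.

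Let $M := \sup_\Omega u$, which is finite by hypothesis, and set $E := \{z \in \Omega : u(z) = M\}$. Then $E \neq \emptyset$ by assumption, and the upper semicontinuity of $u$ (adopted as the standing convention after Proposition \ref{lem:usc}) immediately makes $E$ closed in $\Omega$. Since $\Omega$ is connected, it suffices to prove that $E$ is open; the heart of the proof is therefore to establish this openness.

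To do so, I would fix $z_0 \in E$ and consider $v := M - u$, which is nonnegative on $\Omega$, lower semicontinuous, and $\mathscr{L}$-superharmonic on $\Omega$ (constants being $\mathscr{L}$-harmonic). Because $z_0 \in X \setminus \{x\}$, I can choose $s := \min(\dist(x, z_0)/2,\, 1) \leqslant 1$ and then $r \in (0,1]$ so small that $B_{100r}(z_0) \subset \Omega \cap (X \setminus B_s(x))$. Applying Proposition \ref{prop:weak.harnack.super} to $v$ on this ball yields
\begin{equation*}
\left(\intav_{B_{2r}(z_0)} v^p \, \d\meas\right)^{1/p} \leqslant C \essinf{B_r(z_0)} v \leqslant C v(z_0) = 0,
\end{equation*}
using that $v \geqslant 0$ is lower semicontinuous on the open set $B_r(z_0)$ and $\meas$ has full support, so that the essential infimum coincides with the pointwise infimum, which is bounded above by $v(z_0) = 0$. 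Hence $v = 0$ $\meas$-a.e.\ on $B_{2r}(z_0)$, and the same lower semicontinuity plus full-support argument then upgrades this to $v \equiv 0$ pointwise on $B_{2r}(z_0)$, so $B_{2r}(z_0) \subset E$. Connectedness of $\Omega$ then forces $E = \Omega$, i.e.\ $u \equiv M$.

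The step that I expect to demand the most care is the passage from $\meas$-a.e.\ vanishing of $v$ to pointwise vanishing, and dually the identification of the essential infimum with the pointwise infimum; both hinge on the fact that the canonical representatives of $\mathscr{L}$-sub/super harmonic functions produced by Proposition \ref{lem:usc} are upper/lower semicontinuous, combined with $\meas$ having full support. Once this is in hand, the argument is entirely parallel to the classical derivation of the strong maximum principle for subsolutions of uniformly elliptic operators.
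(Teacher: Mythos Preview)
Your proof is correct and follows essentially the same route as the paper: the open--closed argument on the maximum set, applying Proposition~\ref{prop:weak.harnack.super} to $v=M-u$ to force $v\equiv 0$ on a small ball around any maximum point. The only minor imprecision is that lower semicontinuity together with full support alone does not in general give $\essinf{}\,v=\inf v$; what you actually need (and what Proposition~\ref{lem:usc} provides via \eqref{eq:42}) is the stronger property $v(z)=\lim_{\rho\to 0}\essinf{B_\rho(z)}v$ of the canonical representative, which does yield $v(z_0)\geqslant\essinf{B_r(z_0)}v$.
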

	\begin{proof}
		Denote by $A:=\sup_\Omega u$ and put $D:=\{x \in \Omega |u(x)=A\}$. It is trivial that $D$ is closed in $\Omega$. On the other hand, for any $y \in D$, 
		applying Proposition \ref{prop:weak.harnack.super} to $\bar u=A-u$ proves
		\begin{equation}
		0=\inf_{B_r(y)} \bar u\geqslant C\left(\intav_{B_{2r}(y)} \bar u^p\ \d\meas\right)^\frac 1 p,
		\end{equation}
		namely $\bar u=0$ $\meas$-a.e. in $B_{2r}(y)$ for any sufficiently small $r>0$, thus $\bar u \equiv A$ on $B_{2r}(y)$ because of the lower semicontinuity of $\bar u$. This shows that $D$ is open in $\Omega$, Thus $D=\Omega$ because $\Omega$ is connected.
	\end{proof}

\end{document}